\newcommand\beqn{\begin{equation}}
\newcommand\eeqn{\end{equation}}
\newcommand\beqny{\begin{eqnarray}}
\newcommand\eeqny{\end{eqnarray}}
\newcommand\beqnyn{\begin{eqnarray*}}
\newcommand\eeqnyn{\end{eqnarray*}}
\newtheorem{theorem}{Theorem}[section]
\newtheorem{lemma}[theorem]{Lemma}
\newtheorem{proposition}[theorem]{Proposition}
\newtheorem{corollary}[theorem]{Corollary}
\def\a{\alpha}           
\def\b{\beta}          
\def\g{\gamma}           
\def\d{\delta}         
\def\e{\epsilon}         
\def\z{\zeta}            
\def\th{\theta}          
\def\k{\kappa}           
\def\m{\mu}       
\def\t{\tau}
\def\r{\rho} 
\def\s{\sigma}
\def\G{\Gamma}
\begin{document}
\setlength\parskip{5pt}
\title[Fine properties of branch point singularities]{Fine properties of branch point singularities: two-valued harmonic functions}
\author{Brian Krummel \& Neshan Wickramasekera} 
\thanks{Department of Pure Mathematics and Mathematical Statistics\\
\indent University of Cambridge\\ \indent Cambridge, CB3 0WB, United Kingdom.\\ \indent 
B.~Krummel@dpmms.cam.ac.uk,  N.Wickramasekera@dpmms.cam.ac.uk}

\begin{abstract}
In the 1980's, Almgren developed a theory of multi-valued Dirichlet energy minimizing  functions on an $n$ dimensional domain and used it, in an essential way, to bound the Hausdorff dimension  of the singular sets of area minimizing rectifiable currents of dimension $n$ and codimension $\geq 2$. Recent work of  the second author shows that  two-valued $C^{1, \mu}$ harmonic functions on $n$ dimensional domains, which are typically-non-minimizing stationary points of Dirichlet energy,  play an essential role in the study of multiplicity 2 branch points of stable codimension 1 rectifiable currents of dimension $n$. In all of these cases (of multi-valued harmonic functions and minimal currents), it is known that the branch sets have Hausdorff dimension $\leq n-2.$ 
In this paper we initiate a study of the local structure of branch sets. We show that the branch set of a two-valued Dirichlet energy minimizing function or  a two-valued $C^{1, \mu}$ harmonic function on an $n$-dimensional domain is, in each closed ball of its domain, either empty or has positive $(n-2)$-dimensional Hausdorff measure and is equal  to the union of  a finite number of locally compact, locally $(n-2)$-rectifiable sets. Our method is inspired by the work of L.~Simon on the structure of singularities of minimal submanifolds in compact, multiplicity 1 classes. 
\end{abstract}

\newcommand{\op}[1]{\operatorname{\text{\rm #1}}} 
\renewcommand\div{\op{div}}

\maketitle
\newtheorem{defn}[theorem]{Definition}
\renewcommand{\thedefn}{\arabic{section}.\arabic{defn}}
\newtheorem{rmk}[theorem]{Remark}
\renewcommand{\thermk}{\arabic{section}.\arabic{rmk}}

\tableofcontents
\section{Introduction} 
This is the first in a series of papers in which we study the local structure of sets of branch point singularities of certain minimal submanifolds, multi-valued harmonic functions and multi-valued solutions to more general second order elliptic PDEs. In the present paper,  we consider two of the simplest classes of objects in which the branching phenomenon naturally occurs: (i) two-valued Dirichlet energy minimizing functions and 
(ii)  two-valued $C^{1, \mu}$ harmonic functions, defined on an open subset of ${\mathbb R}^{n}$ and taking values in the space of unordered pairs of elements in ${\mathbb R}^{m}$, for arbitrary $n, m \geq 1$. 
Given a function $u$ belonging to either of these two classes, its (interior) \emph{branch set} ${\mathcal B}_{u}$ is defined to be the set of points in its domain about which there is no neighborhood 
in which the values of the function are given by two single-valued harmonic functions.

Here we establish countable $(n-2)$-rectifiability together with a
certain local-finiteness-of-measure property for ${\mathcal B}_{u}$ whenever $u$ belongs to one of the above two classes; see Theorems $A$, $B$ and $C$  below for the precise statements of what we prove. Since ${\mathcal B}_{u}$ has the property that ${\mathcal H}^{n-2} \, ({\mathcal B}_{u} \cap B_{\r}(y)) > 0$ for each $y \in {\mathcal B}_{u}$ and each  ball $B_{\r}(y)$ contained in the domain of $u$, our theorems  always have non-trivial content whenever ${\mathcal B}_{u} \neq \emptyset.$ Moreover, an immediate implication of our work (here and forthcoming)---one that has important consequences for the \emph{regularity theories} for the relevant classes of objects (cf.\ \cite{KW2}, \cite{H})---is that branch sets have zero $2$-capacity. Prior to our work, it has only been known that ${\mathcal B}_{u}$ 
has Hausdorff dimension $(n-2)$.

Let us now give the context of our work in the present paper, followed by a description of our specific results in some detail. 

Case (i), i.e.\ the two-valued Dirichlet energy minimizing functions,  is the special two-valued case of the 
multi-valued Dirichlet energy minimizing functions introduced by Almgren in his fundamental work, published posthumously in \cite{Almgren},  on interior regularity of area minimizing rectifiable currents of codimension $\geq 2.$ In area minimizing currents of higher codimension, in contrast to those of codimension 1, the occurrence of branch point singularities, where tangent cones are higher multiplicity planes, is a well known fact; and as shown by Almgren,  the local analysis of branch points of area minimizers requires multi-valued Dirichlet energy minimizing functions, a certain generalization of classical harmonic functions introduced by Almgren himself, which themselves typically carry branch points. In the first part of the monumental work \cite{Almgren}, Almgren established an existence theory (in the Sobolev space $W^{1, 2}$) for multi-valued Dirichlet energy minimizing functions, and obtained sharp regularity estimates for them, including the sharp upper bound on the Hausdorff dimension of their singular sets (which includes branch points if any exist). This regularity result says that a $q$-valued Dirichlet energy minimizing function on a domain $\Omega \subset {\mathbb R}^{n}$ is H\"older continuous in the interior of $\Omega$, and moreover, away from a closed set $\Sigma \subset \Omega$ of Hausdorff dimension $\leq n-2$, its values are locally given by $q$ single valued harmonic functions, no two of which have a common value unless they are  identical. 
Almgren then used this result, in the second part of \cite{Almgren}, to establish via an intricate approximation argument the fundamental regularity result   that the Hausdorff dimension of the interior singular set of an $n$-dimensional area minimizing rectifiable current is at most $n-2.$ This is the sharp dimension bound on the singular set of an area minimizing rectifiable current of codimension $\geq 2$. (See also the very recent series of articles by De~Lellis and Spadaro (\cite{DS1})).

In a similar vein,  case (ii) with $m=1$, i.e.\  $C^{1, \mu}$ harmonic functions taking values in the space of unordered pairs of real numbers, arises in the study of multiplicity 2 branch points of stable minimal hypersurfaces. Here, by definition, a multi-valued $C^{1, \mu}$ harmonic function is a multi-valued $C^{1, \mu}$ function whose values away from its branch set are given locally by single valued harmonic functions. These functions are not Dirichlet energy minimizing in general, but they satisfy an important stationarity property, namely   the variational identities (\ref{monotonicity_identity1}) and (\ref{monotonicity_identity2}) below.   In the second author's work \cite{Wic08} and \cite{Wic},  it is shown that two-valued $C^{1, \mu}$ harmonic functions, for a fixed $\mu \in (0, 1)$ independent  of the functions, approximate, in a certain precise sense,  a stable minimal hypersurface near any point $z$ where the hypersurface has a varifold tangent cone equal to a multiplicity 2 hyperplane; estimates for these two-valued $C^{1, \mu}$ harmonic functions then yield, via this approximation scheme, corresponding $C^{1, \mu^{\prime}}$ regularity  for the stable hypersurface near $z,$ for some fixed positive $\mu^{\prime} < \mu$. Results of Simon and the second author (\cite{SW}) can then be applied directly to conclude, both for multi-valued $C^{1, \mu}$ harmonic functions on a domain in ${\mathbb R}^{n}$ and for stable codimension 1 rectifiable $n$-currents,  that the Hausdorff dimension of the set of multiplicity 2 branch points is $n-2$.

In all of the situations described above, it has remained open, in general dimensions $n \geq 3,$ what one can say concerning the \emph{fine properties} of branch sets, namely, those properties  beyond their Hausdorff dimension. This is the question we are interested in here and in the sequels to this paper.  In the present paper  we initiate a study of fine properties of branch sets by establishing several results, the first of which (deduced from Theorem~\ref{theorem2} and Theorem~\ref{theorem3} below) is the following:

\noindent
{\bf Theorem A.} \emph{Let $u$ be a two-valued locally $W^{1, 2}$ Dirichlet energy minimizing function or a two-valued $C^{1, \mu}$ harmonic function on an open subset $\Omega$ of ${\mathbb R}^{n},$ and let ${\mathcal B}_{u}$ be the branch set of $u$. If $n=2$ then ${\mathcal B}_{u}$ is discrete. If $n \geq 3$ then for each  closed ball $B \subset \Omega$, either ${\mathcal B}_{u} \cap B = \emptyset$ or ${\mathcal B}_{u} \cap B$ has positive $(n-2)$-dimensional Hausdorff measure and is equal to the union of a finite number of 
pairwise disjoint, locally compact sets each of which is locally $(n-2)$-rectifiable (and has in particular locally finite $(n-2)$-dimensional  Hausdorff measure)}.  

A primary source of examples of two-valued functions of the type considered in this paper is the set of functions 
on ${\mathbb R}^{2}$ of the form $u(x_{1}, x_{2}) = \{\pm{\rm Re} \,\left(c(x_{1}+ ix_{2})^{k/2}\right)\}$, for 
constants $c \in {\mathbb C}^{m}$ and constant positive integers $k$.  One can of course regard these as functions on ${\mathbb R}^{n}$ independent of $(n-2)$-variables, and as such they are Dirichlet energy minimizing for appropriate choices of $c$, and $C^{1, \mu}$ harmonic for any $c \in {\mathbb C}^{m}$ and integer $k \geq 2.$ The following result, which is a consequence of our main estimates leading to Theorem A, says that any two-valued Dirichlet energy minimizing function or a two-valued $C^{1, \mu}$ harmonic function is asymptotic to one of these examples near 
${\mathcal H}^{n-2}$-a.e.\ branch point:

\noindent
{\bf Theorem B.} \emph{ If 
$u$ is  a two-valued locally $W^{1, 2}$ Dirichlet energy minimizing function or a two-valued $C^{1, \mu}$ harmonic function on an open subset of ${\mathbb R}^{n}$, then for ${\mathcal H}^{n-2}$-a.e.\ point $Z$ in the branch set ${\mathcal B}_{u}$ of $u$ there exist an orthogonal rotation $Q_{Z}$ of ${\mathbb R}^{n}$, a positive integer $k_{Z}$ ($\geq 3$ in case $u \in C^{1, \mu}),$ a constant $c_{Z} \in {\mathbb C}^{m} \setminus \{0\}$ and a number $\rho_{Z} >0$ such that for each $X \in B_{\r_{Z}}(0)$, 
\begin{eqnarray*}
&\hspace{-5.6in}u(Z + Q_{Z}X) = \\ 
&\hspace{-.05in} \left\{h(Z+ Q_{Z}X) +{\rm Re} \, \left(c_{Z}(x_{1}+ ix_{2})^{k_{Z}/2}\right) + \e_{Z}(X), 
h(Z+ Q_{Z}X) -{\rm Re} \, \left(c_{Z}(x_{1} + ix_{2})^{k_{Z}/2}\right) - \e_{Z}(X)\right\},
\end{eqnarray*} 
where 
$h$ is a single-valued harmonic function independent of $Z$ (in fact $h=$ the average of the two values of $u$) and $\{\pm \e_{Z}\} \, : \, B_{\rho_{Z}} (0) \to {\mathcal A}_{2}({\mathbb R}^{m})$ is a symmetric two-valued function with 
$$\s^{-n}\int_{B_{\s}(0)}|\e_{Z}|^{2} \leq C_{Z} \, \s^{k_{Z}+ \g_{Z}}$$ 
for all $\s \in (0, \r_{Z})$ and some constants $C_{Z}, \g_{Z}>0$ independent of $\s.$ 
Here we have used the notation $X = (x_{1}, x_{2}, \ldots, x_{n})$.}

\noindent
{\bf Remark:} In case $u$ is Dirichlet energy minimizing, it follows from the work of Micallef and White (\cite{MW},  Theorem~3.2) that the constant $c_{Z} \in {\mathbb C}^{m} \setminus \{0\}$ in the conclusion of Theorem~B satisfies $c_{Z} \cdot c_{Z} = 0$ whenever $k_{Z}$ is odd.

Much more is true near any branch point $Z$ where the expansion as in Theorem B is valid with the value of $k_{Z}$ equal to the minimum possible value for the relevant class (which is 1 for two-valued Dirichlet energy minimizing functions and 3 for two-valued $C^{1,\mu}$ harmonic functions):

\noindent 
{\bf Theorem C.} \emph{If either  $u$ is two-valued locally $W^{1, 2}$ Dirichlet energy minimizing and $k_{Z_{0}} = 1$ or $u$ is two-valued $C^{1, \mu}$ harmonic and $k_{Z_{0}} = 3$ for some point $Z_{0} \in {\mathcal B}_{u}$ at which the asymptotic expansion as in Theorem B holds, then the expansion above is valid for \emph{each} $Z \in {\mathcal B}_{u} \cap B_{\r_{Z_{0}}/2}(Z_{0})$ (where $\r_{Z_{0}}$ is as in Theorem B) with $k_{Z} = k_{Z_{0}}$, $C_{Z} = C_{Z_{0}}$, $\g_{Z} = \g_{Z_{0}}$ and $\r_{Z} = \r_{Z_{0}}/4.$ Furthermore, letting $\{\pm \e_{Z}\}$ be as in Theorem B, we have in this case that
$$\sup_{B_{\s}(0)} |\e_{Z}|^{2} \leq C_{0} \, \s^{k_{Z_{0}} + \frac{\g_{Z_{0}}}{2n}}$$
for each $Z \in {\mathcal B}_{u} \cap B_{\r_{Z_{0}}/2}(Z_{0})$ and $\s \in (0, \r_{Z_{0}}/4)$, 
where  $C_{0}$ is independent of $\s$ and $Z$; we also have  that  
${\mathcal B}_{u} \cap B_{\r_{Z_{0}}/2}(Z_{0})$ is an $(n-2)$-dimensional $C^{1, \alpha}$ submanifold for some $\alpha = \alpha_{Z_{0}} \in (0, 1)$.}  

\emph{In fact, the asymptotic expansion as in Theorem B  is valid at any point $Z_{0} \in {\mathcal B}_{u}$ at which one ``blow-up'' of $v = u - h$, after composing with an orthogonal rotation of ${\mathbb R}^{n}$,  has the form $\{\pm {\rm Re} \, \left(c(x_{1} + ix_{2})^{k/2}\right)\}$ with $c \in {\mathbb C}^{m} \setminus \{0\}$ a constant and with $k=1$ if $u$ is Dirichlet energy minimizing and $k = 3$ 
if $u$ is $C^{1, \mu}$ harmonic.}

In case dimension $n=2$, the fact that branch points are isolated was previously known. Indeed, this has been shown for area minimizing rectifiable currents in \cite{C}, for multi-valued Dirichlet energy minimizing functions in \cite{DS} and for two-valued $C^{1, \mu}$ harmonic functions in \cite{SW};  for two-valued $C^{1,\mu}$ solutions to the minimal surface system this fact follows essentially from the work of \cite{SW} and is pointed out explicitly in our forthcoming work \cite{KW1}; and finally, by applying this last result, it is shown in \cite{Wic} that multiplicity 2 branch points of stable two-dimensional rectifiable currents with no boundary in an open set in ${\mathbb R}^{3}$ are isolated. 

In forthcoming work, we shall show, based heavily on our work here, that Theorems A, B and C hold for two-valued $C^{1, \mu}$ solutions $u$ to the minimal surface system. This in turn implies, in view of the results of \cite{Wic}, that if $T$ is a stable integral $n$-current with no boundary in an open subset of ${\mathbb R}^{n+1}$ (or, more generally, of an $(n+1)$-dimensional Riemannian manifold),  and if ${\mathcal B}_{T}$ denotes the set of branch points of $T$ (which is the set of points 
$Z \in {\rm spt} \, \|T\|$ at which $T$ has a tangent cone supported on a union of hyperplanes but ${\rm spt} \, \|T\|$ is not immersed at $Z$),  then Theorem A and analogues of Theorems B and C hold with ${\mathcal B}_{T} \cap 
\{X \, : \, \Theta \, (\|T\|, X) < 3\}$ in place of ${\mathcal B}_{u}$  (and with $k_{Z_{0}} = 3$ in Theorem~C). Here $\|T\|$ denotes the weight measure  and $\Theta \, (\|T\|, \cdot)$ is the density function associated with $T$. Also in forthcoming work, we shall treat the general multi-valued cases of Theorems A, B and C; these extensions require both new ideas  and more-or-less-direct analogues/generalizations of some of the key  a priori estimates established in the present work. 

\noindent
{\bf A brief outline of the method:} In the pioneering work \cite{SimonCTC}, L.~Simon established rectifiability properties (as in Theorem A, but without the local positivity of measure) for the singular sets of minimal submanifolds belonging to certain compact, ``multiplicity 1'' classes, i.e.\  classes in which occurrence of higher multiplicity in weak  limits is  a priori ruled out. In that work, Simon employed a ``blowing-up'' argument to establish asymptotic decay estimates near a.e.\ ``top dimensional'' singularity for minimal submanifolds in such classes. The key to his method is the usual monotonicity formula for minimal submanifolds. 

In this paper we follow Simon's strategy. However, since our work in contrast to Simon's concerns singular sets in the presence of higher multiplicity, direct application of his results or methods is not possible. We employ a blowing-up argument similar to that of Simon's,  relying firmly on both the celebrated frequency function monotonicity formula due to Almgren (\cite{Almgren}; identity (\ref{monotonicity_eqn3}) below) and on a certain variant of it (Lemma~\ref{new-monotonicity}). The latter, which appears not to have been utilised in the analysis of branch points before, says that if $u$ is (say) a two-valued Dirichlet energy minimizing function 
on a domain $\Omega \subset {\mathbb R}^{n}$ and $\alpha \in {\mathbb R}$, then for each $Y \in \Omega$, 
$$\hspace{.5in}\frac{d}{d\rho}\left(\rho^{-2\alpha} (D_{u,Y}(\rho) - \alpha H_{u,Y}(\rho))\right)
= 2\rho^{2-n}\int_{\partial\,B_{\rho}(Y)} \left| \frac{\partial (u/R^{\alpha})}{\partial R} \right|^2 \hspace{1in} (\star)$$ 
for every $\rho \in (0, {\rm dist}\, (Y, \partial \, \Omega)),$ where $R(X) = |X - Y|$, $D_{u, Y}(\r) = \r^{2-n} \int_{B_{\r}(y)}|Du|^{2}$ and $H_{u, Y}(\r) = \r^{1-n}\int_{\partial \, B_{\r}(Y)} |u|^{2}$. Note that the right hand side of this identity can be regarded as measuring how far $u$ is from being homogeneous of degree $\a$, and the expression being differentiated on the left hand side is, by the frequency function monotonicity (\ref{monotonicity_eqn3}), non-negative for all $\r \in (0, {\rm dist} \, (Y, \partial \, \Omega))$ and appropriate values of $\a$. 

Using ($\star$) and the two variational identities (\ref{monotonicity_identity1}), (\ref{monotonicity_identity2}) underpinning both ($\star$) and the Almgren frequency function monotonicity, we establish a key new a priori estimate for $u$ (Theorem~\ref{thm6_2}(a)) which says the following: Let $\varphi_{0}$ be a non-zero two-valued cylindrical homogeneous Dirichlet energy minimizing function  with zero average and degree of homogeneity $\alpha >0.$ If $\varphi$ is a non-zero two-valued cylindrical homogeneous Dirichlet energy minimizing function  with zero average and degree of homogeneity $\alpha$, and $u$ is a two-valued Dirichlet energy minimizing function on $B_{2}(0)$ with zero average and frequency at the origin $\geq \alpha$, then 
$$\hspace{1.4in}\int_{B_{1/2}(0)} R^{2-n}\left| \frac{\partial (u/R^{\alpha})}{\partial R} \right|^2 \leq  C\int_{B_{1}(0)} {\mathcal G}(u, \varphi)^{2} \hspace{1.5in} (\star\star)$$
provided $\int_{B_{1}(0)} {\mathcal G}(\varphi, \varphi_{0})^{2}$ and $\int_{B_{1}(0)} {\mathcal G}(u, \varphi_{0})^{2}$ are both sufficiently small (depending on $\varphi_{0}$), 
where $C = C(n, m, \varphi_{0}) \in (0, \infty)$ and ${\mathcal G}$ denotes the standard metric on the space of unordered pairs of vectors in ${\mathbb R}^{m}$ (defined in  Section~\ref{multivalfun_section}). 

Given these ingredients, our argument can briefly be summarised as follows: The Almgren monotonicity implies the (well-known) fact that  if $\varphi$ is any blow-up of $u$ at a branch point, 
then $\varphi$ is homogeneous of some degree $\alpha >0$ and is non-trivial (but possibly not unique); that is to say, if $Z$ is a branch point of $u$ and if $\r_{j} \to 0^{+}$ as $j \to \infty$, then after passing to a subsequence, 
$$\frac{u(Z + \r_{j}(\cdot))}{\|u(Z + \r_{j}(\cdot))\|_{L^{2}(B_{1}(0))}} \to \varphi_{0}$$
locally in $W^{1, 2}$ for some non-zero two-valued function $\varphi$ homogeneous of degree $\alpha$ for some   $\alpha >0$. In particular, near $Z$, the function $u$ rescaled sufficiently is close to $\varphi_{0}$ to render $(\star\star)$ valid. On the other hand, by implementing Simon's blow-up argument staring with $(\star\star)$,  we arrive at the following conclusion: Let $\varphi_{0}$ be a homogeneous cylindrical two-valued Dirichlet energy minimizing function with zero average and degree of homogeneity $\alpha$, and let $u$ be a two-valued Dirichlet energy minimizing function with zero average, frequency at the origin $\geq \alpha$ and sufficiently close (depending on $\varphi_{0}$) to $\varphi_{0}$ at unit scale. If $u$ satisfies a further hypotheses  concerning the distribution of branch points (one that is a posteriori satisfied  at ${\mathcal H}^{n-2}$-a.e.\ branch point of a minimizer at all sufficiently small scales), then $\r^{-\a}u(\r(\cdot))$ 
decays, as $\r \to 0^{+}$,  to a unique homogeneous cylindrical limit $\widetilde{\varphi}$ lying near  $\varphi_{0}$, at a rate given by a fixed positive power of $\r$; that is, whenever $u$ satisfies appropriate hypotheses including being sufficiently close at unit scale to $\varphi_{0}$, there exists a (unique) homogeneous cylindrical two-valued harmonic function $\widetilde\varphi$ such that 
$$\|\r^{-\alpha} u(\r(\cdot)) - \widetilde\varphi\|_{L^{2}(B_{1}(0))} \leq C\r^{\g}$$  
for all $\r \in (0, 1/2]$, where $C, \g >0$ are independent of $\r$. (To be even more precise, this is the conclusion one draws by iteratively applying Lemma~\ref{lemma1} below after choosing $\theta$ suitably, under the assumption that the rescaled functions $u_{j} = u(\th^{j}(\cdot))$ fail to satisfy alternative (i) of the lemma for all $j=1, 2, 3, \ldots$.) This and its counterpart for $C^{1, \mu}$ two-valued harmonic functions are the key estimates that lead to Theorems A , B and C.

\section{Preliminaries: two classes of two-valued harmonic functions} \label{multivalfun_section}
\setcounter{equation}{0}
\noindent
{\bf Two-valued functions}.
Let $\mathcal{A}_2(\mathbb{R}^m)$ denote the space of unordered pairs $\{a_1,a_2\}$ with $a_1,a_2 \in \mathbb{R}^m$ not necessarily distinct.  We equip $\mathcal{A}_2(\mathbb{R}^m)$ with the metric
\begin{equation*}
	\mathcal{G}(a,b) = \min \{\sqrt{|a_1-b_{1}|^2 + |a_{2} - b_{2}|^{2}}, \sqrt{|a_{1} - b_{2}|^{2} + |a_{2} - b_{1}|^{2}}\}
\end{equation*}
for $a = \{a_1, a_2\} \in {\mathcal A}_{2}({\mathbb R}^{m})$ and  $b = \{b_1, b_2\} \in {\mathcal A}_{2}({\mathbb R}^{m})$. For $a = (a_{1}, a_{2}) \in {\mathcal A}_{2}({\mathbb R}^{m})$, we let 
\begin{equation*}
	|a| = \mathcal{G}(a,\{0,0\}) = \sqrt{|a_1|^2 + |a_{2}|^{2}}.
\end{equation*}
A two-valued function on a set $\Omega \subseteq \mathbb{R}^n$ is a map $u : \Omega \rightarrow \mathcal{A}_{2}(\mathbb{R}^m)$. 

Let $\Omega \subseteq \mathbb{R}^n$ be open.  Since $\mathcal{A}_2(\mathbb{R}^m)$ is a metric space, we can define the space of continuous two-valued functions $C^0(\Omega;\mathcal{A}_2(\mathbb{R}^m))$ on $\Omega$ in the usual way.  For each $\mu \in (0,1]$, we define $C^{0,\mu}(\Omega;\mathcal{A}_2(\mathbb{R}^m))$ to be the space of two-valued functions $u : \Omega \rightarrow \mathcal{A}_2(\mathbb{R}^m)$ such that 
\begin{equation*}
	[u]_{\mu,\Omega} = \sup_{X,Y \in \Omega, X \neq Y} \frac{\mathcal{G}(u(X),u(Y))}{|X-Y|^{\mu}} < \infty. 
\end{equation*}
We say a two-valued function $u : \Omega \rightarrow \mathcal{A}_2(\mathbb{R}^m)$ is \textit{differentiable} at $Y \in \Omega$ if there exists a two-valued affine function $\ell_{Y} : \mathbb{R}^n \rightarrow \mathcal{A}_2(\mathbb{R}^m)$, i.e.\ a two-valued function $\ell_{Y}$ of the form $\ell_{Y}(X) = \{ A_1^{Y} X + b_1^{Y}, A_2^{Y} X + b_2^{Y} \}$ for some $m \times n$ real-valued constant matrices $A_1^{Y},A_2^{Y}$ and constants $b_1^{Y}, b_2^{Y} \in \mathbb{R}^m$, such that 
\begin{equation*} 
	\lim_{X \rightarrow Y} \frac{\mathcal{G}(u(X),\ell_{Y}(X))}{|X-Y|} = 0. 
\end{equation*}
$\ell_{Y}$ is unique if it exists. The derivative of $u$ at $Y$ is the unordered pair of $m \times n$ matrices $Du(Y) = \{A_1^{Y},A_2^{Y}\}$.  

We define $C^1(\Omega;\mathcal{A}_2(\mathbb{R}^m))$ to be the space of two-valued functions $u : \Omega \rightarrow \mathcal{A}_2(\mathbb{R}^m)$ such that the derivative of $u$ exists and is continuous on $\Omega$.  For $\mu \in (0,1]$, we define $C^{1,\mu}(\Omega;\mathcal{A}_2(\mathbb{R}^m))$ as the set of $u \in C^1(\Omega;\mathcal{A}_2(\mathbb{R}^m))$ such that $Du \in C^{0,\mu}(\Omega;\mathcal{A}_2(\mathbb{R}^{nm}))$.

$L^2(\Omega;\mathcal{A}_2(\mathbb{R}^m))$ denotes the space of Lebesgue measurable two-valued functions $u : \Omega \rightarrow \mathcal{A}_2(\mathbb{R}^m)$ such that $\|u\|_{L^{2} \, (\Omega)} \equiv \|\mathcal{G}(u(X),0)\|_{L^2(\Omega)} < \infty$.  We equip $L^2(\Omega;\mathcal{A}_2(\mathbb{R}^m))$ with the metric 
\begin{equation*}
	d(u,v) = \left( \int_{\Omega} \mathcal{G}(u(X),v(X))^2 \right)^{1/2}
\end{equation*}
for $u,v \in L^2(\Omega;\mathcal{A}_q(\mathbb{R}^m)).$ 

The Sobolev space $W^{1,2}(\Omega;\mathcal{A}_2(\mathbb{R}^m))$ of two-valued functions is defined and discussed in~\cite{Almgren}, Chapter 2. (See \cite{DS} for an equivalent characterization of $W^{1,2}(\Omega;\mathcal{A}_2(\mathbb{R}^m))$.)

 We say that a two-valued function $u : \Omega \rightarrow \mathcal{A}_2(\mathbb{R}^m)$ is \textit{symmetric} if for each $X \in \Omega$, $u(X) = \{-u_1(X),u_1(X)\}$ for some $u_1(X) \in \mathbb{R}^m$.  Given $u : \Omega \rightarrow \mathcal{A}_2(\mathbb{R}^m)$ with $u(X) = \{u_{1}(X), u_{2}(X)\}$ for each $X \in \Omega$, we can write $u = u_a + u_s$ where $u_a(X) = (u_1(X)+u_2(X))/2$ and $u_s : \Omega \rightarrow \mathcal{A}_2(\mathbb{R}^m)$ is the symmetric two-valued function given by $u_s(X) = \{ \pm (u_1(X) - u_2(X))/2 \}$ for $X \in \Omega$.\\

\noindent
{\bf Two-valued Dirichlet energy minimizing  functions}.
\begin{defn}
A two-valued function $u \in W^{1,2}_{\rm loc}(\Omega;\mathcal{A}_2(\mathbb{R}^m))$ is \textit{Dirichlet energy minimizing} (on $\Omega$) if 
\begin{equation*}
	\int_{\Omega} |Du|^2 \leq \int_{\Omega} |Dv|^2
\end{equation*}
whenever $v \in W^{1,2}_{\rm loc}(\Omega;\mathcal{A}_2(\mathbb{R}^m))$ and $v = u$ a.e. on $\Omega \setminus K$ for some compact set $K \subset \Omega$.
\end{defn}

Following Almgren~[\cite{Almgren}, Theorem~2.14], we define the \emph{singular set} $\Sigma_{u}$ of a two-valued Dirichlet energy minimizing function $u \in W^{1, 2} \, (\Omega; {\mathcal A}_{2}({\mathbb R}^{m}))$ to be the set of points $Y \in \Omega$ such that there is no $\r> 0$ with the following property:  
There exist single-valued harmonic functions $u_1, u_{2} \, : \, B_{\r}(Y) \to \mathbb{R}^m$ with either $u_1 \equiv u_2$ on $B_{\r}(Y)$ or $u_1(X) \neq u_2(X)$ for each $X \in B_{\r}(Y)$ such that  $u(X) = \{u_1(X),u_2(X)\}$ for ${\mathcal H}^{n}$-a.e.\ $X \in B_{\r}(Y)$. (Note that such $u_{1}$, $u_{2}$ are unique if they exist.) It is clear that $\Sigma_{u}$ is a closed subset of $\Omega$. 

Almgren developed an existence and regularity theory for Dirichlet energy minimizing multi-valued functions as an essential ingredient in his fundamental work \cite{Almgren} on regularity of area minimizing rectifiable currents of arbitrary dimension and codimension $\geq 2$. Almgren's theory, contained in [\cite{Almgren}, Chapter 2],  in particular establishes the following  results (see \cite{DS} for a nice, concise  exposition of Almgren's theory): 
\begin{itemize}
\item[(i)] [\cite{Almgren}, Theorem~2.2], which asserts the 
existence, in the Sobolev space $W^{1, 2}$, of Dirichlet energy minimizing functions with prescribed boundary values on bounded $C^{1}$ domains.
\item[(ii)] [\cite{Almgren}, Theorem~2.6], which implies that if $u  = \{u_{1}, u_{2}\} \in W^{1,2}(\Omega,\mathcal{A}_2(\mathbb{R}^m))$ is Dirichlet energy minimizing, then the symmetric two-valued function $v$ given by $v(X) = \{ \pm (u_1(X)-u_2(X))/2 \}$ for $X \in \Omega$ is Dirichlet energy minimizing; and of course $\Sigma_u = \Sigma_v \subseteq \{ X : v(X) = \{0,0\} \}$ (note that by (iii) below, $u, v$ are continuous). 
\item[(iii)] [\cite{Almgren}, Theorem~2.13],  which establishes a sharp interior regularity theory for Dirichlet energy minimizers, according to which such a minimizer is locally uniformly 
H\"older continuous in the interior of its domain. In the case of two-valued functions,  [\cite{Almgren}, Theorem~2.13] implies the following: If $u \in W^{1, 2} \, (\Omega; {\mathcal A}_{2}({\mathbb R}^{m}))$ is Dirichlet energy minimizing, then  
$u \in C^{0,1/2}(\Omega^{\prime};\mathcal{A}_2(\mathbb{R}^m))$ for each open set $\Omega^{\prime}$ with $\Omega^{\prime} \subset\subset \Omega$, and satisfies the estimate 
\begin{equation*}
	\r^{1/2} [u]_{1/2;B_{\r/2}(X_0)} \leq C\r^{1-n/2} \|Du\|_{L^2(B_\r(X_0))} 
\end{equation*}
for each ball $B_{\r}(X_{0})$ with $\overline{B_{\r}(X_{0})} \subset \Omega,$  where $C = C(n,m) \in (0,\infty)$;  for such $u$, it then follows from standard interpolation inequalities and (\ref{monotonicity_identity1}) below that 
\begin{equation} \label{schauder_dm}
	\sup_{B_{\r/2}(X_0)} |u| + \r^{1/2} [u]_{\mu;B_{\r/2}(X_0)} + \r^{1-n/2} \|Du\|_{L^2(B_{\r/2}(X_0))} \leq C\r^{-n/2} \|u\|_{L^2(B_\r(X_0))}  
\end{equation}
for each ball $B_{\r}(X_{0})$ with $\overline{B_{\r}(X_{0})} \subset \Omega,$  where $C = C(n,m) \in (0,\infty)$.  
\item[(iv)] [\cite{Almgren}, Theorem~2.14], which implies  the following: If $u \in W^{1, 2} \, (\Omega; {\mathcal A}_{2}({\mathbb R}^{m}))$ is Dirichlet energy minimizing, then ${\rm dim}_{\mathcal H} \, (\Sigma_{u}) \leq n-2.$
\end{itemize}

We define the branch set $\mathcal{B}_u$ of a Dirichlet energy minimizing two-valued function 
$u$ belonging to $W^{1,2}(\Omega;\mathcal{A}_2(\mathbb{R}^m))$ to be the set of points $Y \in \Omega$ with the property that there is no $\r \in (0,\op{dist}(Y,\partial \Omega))$ such that $u(X) = \{u_1(X), u_{2}(X)\}$ for two single-valued functions $u_1, u_{2} \in W^{1, 2} \, (B_{\r}(Y),\mathbb{R}^m)$ and ${\mathcal H}^{n}$-a.e.\ $X \in B_{\r}(Y)$.  It follows that ${\mathcal B}_{u}$ is a closed subset of $\Omega$ and that ${\mathcal B}_{u} \subset \Sigma_{u}.$ In view of Theorem~\ref{theorem2} which implies that $\Sigma_{u}$ has zero $2$-capacity, it is easy to see that ${\mathcal B}_{u}$ can equivalently be defined as the set of points $Y \in \Omega$ with the property that there is no $\r \in (0,\op{dist} \, (Y,\partial \Omega))$ such that $u(X) = \{u_1(X), u_{2}(X)\}$ for two single-valued  harmonic (and hence real analytic) functions $u_1, u_{2} \, : \, B_{\r}(Y) \to \mathbb{R}^m$ and ${\mathcal H}^{n}$-a.e.\ $X \in B_{\r}(Y)$.

A main goal of this paper is to understand the local structure of $\mathcal{B}_u.$ Rather than focusing directly on ${\mathcal B}_{u},$ it is more convenient to study the entire singular set $\Sigma_{u}$  (which contains ${\mathcal B}_{u}$) since $\Sigma_{u}$ satisfies the following property in relation to convergent sequences of Dirichlet energy minimizers: If $u_j,u \in W^{1,2}(\Omega;\mathcal{A}_2(\mathbb{R}^m))$ are Dirichlet energy minimizing two-valued functions such that $u_j \rightarrow u$ locally in $L^{2}$ (or, equivalently, in view of (\ref{schauder_dm}), uniformly on compact subsets of $\Omega$), then for each $\Omega' \subset \subset \Omega,$ either $\Sigma_{u_j} \cap \Omega' = \emptyset$ for all sufficiently large $j$ or 
$\Sigma_u \cap \Omega' \neq \emptyset$. Note that this property does not hold if we replace $\Sigma_{u_{j}}$ with  ${\mathcal B}_{u_{j}}$ and $\Sigma_{u}$ with ${\mathcal B}_{u}$. (For example, consider $u_j,u : \mathbb{R}^2 \rightarrow \mathbb{C} \cong \mathbb{R}^2$ defined by $u_j(x_1,x_2) = \{ \pm ((x_1+ix_2)^2 - 1/j^2)^{1/2} \}$, $u(x_1,x_2) = \{ \pm (x_1+ix_2) \}$ for $(x_1,x_2) \in \mathbb{R}^2$.)\\

\noindent
{\bf Two-valued $C^{1, \mu}$ harmonic functions.}
For $u \in C^{1} (\Omega; {\mathcal A}_{2}({\mathbb R}^{m}))$, we define the \emph{branch set}
${\mathcal B}_{u}$ of $u$ to be the set of points $Y \in \Omega$ such that there exists no $\r>0$ with the property that $u(X) = \{u_{1}(X), u_{2}(X)\}$ for each $X \in B_{\r}(Y)$ and two single-valued functions $u_{1}, u_{2} \in C^{1} \, (B_{\r}(Y); {\mathbb R}^{m}).$ It is clear that ${\mathcal B}_{u}$ is a closed subset of $\Omega.$ 

\begin{defn}
A two-valued function $u \in C^1(\Omega;\mathcal{A}_2(\mathbb{R}^m))$ is said to be harmonic on $\Omega$ if for every open ball $B_{\r}(Y) \subset \Omega \setminus \mathcal{B}_u$, $u(X) = \{ u_1(X), u_2(X) \}$ on $B_{\r}(Y)$ for two single-valued harmonic (hence real analytic) functions $u_1,u_2 \, : \, B_{\r}(Y) \to {\mathbb R}^{m}$.  Clearly if such $u_{1}$, $u_{2}$ exist, they are unique. 
\end{defn}

Given a two-valued harmonic function $u \in C^{1,\mu}(\Omega;\mathcal{A}_2(\mathbb{R}^m))$, we define 
\begin{align*}
	\mathcal{Z}_u &= \{ X \in \Omega : u_1(X) = u_2(X) \}, \\
	\mathcal{K}_u &= \{ X \in \Omega : u_1(X) = u_2(X), \, Du_1(X) = Du_2(X) \},
\end{align*}
where we write $u(X) = \{u_1(X),u_2(X)\}$ and $Du(X) = \{Du_1(X),Du_2(X)\}$ as unordered pairs for each $X \in \Omega$. It is clear that ${\mathcal B}_{u} \subset {\mathcal K}_{u}$. 

As shown in \cite{Wic08} and \cite{Wic}, $C^{1,\mu}$ harmonic two-valued functions with values in 
${\mathcal A}_{2}({\mathbb R})$ play an essential role in the regularity theory of stable minimal hypersurfaces; and  it is shown in~\cite{SW} that if $\Omega$ is connected (and open), $u \in C^{1,\mu}(\Omega;\mathcal{A}_2(\mathbb{R}^m))$ for some $\mu >0$  and is harmonic in $\Omega$, then 
\begin{itemize}
\item[(i)] either ${\mathcal K}_{u} = \Omega$ or ${\rm dim}_{\mathcal H} \, ({\mathcal K}_{u}) \leq n-2;$ 
\item[(ii)] $u \in C^{1,1/2}(\Omega;\mathcal{A}_2(\mathbb{R}^m)) \cap W_{\text{loc}}^{2,2}(\Omega;\mathcal{A}_2(\mathbb{R}^m)),$ and satisfies the following: 
\begin{equation} \label{schauder_h}
	\sup_{B_{\r/2}(X_0)} |u| + \r \sup_{B_{\r/2}(X_0)} |Du| + \r^{3/2} [Du]_{1/2,B_\r(X_0)} + \r^{2-n/2} \|D^2 u\|_{L^2(B_{\r/2}(X_0))} \leq C\r^{-n/2} \|u\|_{L^2(B_\r(X_0))}
\end{equation} 
for any open ball $B_{\r}(X_{0})$ with $\overline{B_\r(X_0)} \subseteq \Omega,$ where $C = C(m,n) \in (0,\infty)$.  
\end{itemize}

Indeed, if we let $u_a(X) = (u_1(X)+u_2(X))/2$ and $v = \{ \pm (u_1(X) - u_2(X))/2 \}$ for $X \in \Omega$, where $u(X) = \{u_1(X),u_2(X)\}$,  then by Lemma 4.1 of~\cite{SW} applied to $v$, $\mathcal{B}_u$ has Hausdorff dimension at most $n-2$.  Consequently $u_a$ is harmonic on all of $\Omega$. It then follows from standard elliptic estimates and Lemmas 2.1 and 4.1 of~\cite{SW} that $u \in C^{1,1/2}(\Omega;\mathcal{A}_2(\mathbb{R}^m)) \cap W_{\text{loc}}^{2,2}(\Omega;\mathcal{A}_2(\mathbb{R}^m))$ and (\ref{schauder_h}) holds.

To understand the structure of $\mathcal{B}_u,$ it is more convenient to study the larger set $\mathcal{K}_u$ since we have the following property: If $\mu \in (0, 1)$ and $u_j,u \in C^{1,\mu}(\Omega;\mathcal{A}_2(\mathbb{R}^m))$ are harmonic on $\Omega$ for each $j=1, 2, \ldots,$ and if $u_j \rightarrow u$ in locally in $L^{2}$ (or, equivalently, by (\ref{schauder_h}), in $C^1(\Omega';\mathcal{A}_q(\mathbb{R}^m))$ for all $\Omega' \subset \subset \Omega$), then for each $\Omega' \subset \subset \Omega,$ either $\mathcal{K}_{u_j} \cap \Omega' = \emptyset$ for all sufficiently large $j$ or $\mathcal{K}_u \cap \Omega' \neq \emptyset$.  

Finally, note that if $u  = \{u_{1}, u_{2}\} \in C^{1,\mu}(\Omega;\mathcal{A}_2(\mathbb{R}^m))$ is harmonic on $\Omega$, then the symmetric function $v = \{ \pm (u_1-u_2)/2 \}$ is in $C^{1,\mu}(\Omega;\mathcal{A}_2(\mathbb{R}^m))$ and is harmonic on $\Omega$ with 
\begin{align*}
         \mathcal{B}_v &= \mathcal{B}_u,\\
	\mathcal{Z}_v &= \mathcal{Z}_u = \{ X \in \Omega : v(X) = \{0,0\} \} \;\; {\rm and}\\
	\mathcal{K}_v &= \mathcal{K}_u = \{ X \in \Omega : v(X) = \{0,0\}, \, Dv(X) = \{0,0\} \}.  
\end{align*}

\section{Monotonicity of frequency and its preliminary consequences}
\setcounter{equation}{0}
We first consider the case of Dirichlet minimizing functions.  Let $u \in W^{1,2}(\Omega;\mathcal{A}_2(\mathbb{R}^m))$ be Dirichlet energy minimizing, $Y \in \Omega$ and $0 < \rho < \op{dist}(Y, \partial \Omega)$. Following Almgren, define the \emph{frequency function} associated with $u$ and $Y$ by 
\begin{equation} \label{frequency_defn}
	N_{u,Y}(\rho) = \frac{D_{u,Y}(\rho)}{H_{u,Y}(\rho)} 
\end{equation}	
whenever $H_{u, Y}(\r) \neq 0$, where
\begin{equation}\label{energy-height}
	D_{u,Y}(\rho) = \rho^{2-n} \int_{B_{\rho}(Y)} |Du|^2 \hspace{3mm} \text{and} \hspace{3mm}
	H_{u,Y}(\rho) = \rho^{1-n} \int_{\partial B_{\rho}(Y)} |u|^2. 
\end{equation}

A fundamental discovery of Almgren ([\cite{Almgren}, Theorem~2.6]) is that for any multi-valued $W^{1, 2}$ function $u$ which is a stationary point of the Dirichlet energy functional with respect to two types of deformations (the ``squash'' and ``squeeze'' deformations; see \cite{Almgren}, Sections 2.4 and 2.5), and for any point $Y$ in its domain, the associated frequency  function $N_{u, Y} (\cdot)$, if defined for $\r \in I$ for some open interval $I$, is monotone increasing in $I.$ Almgren's argument, briefly, is as follows: The squash and squeeze deformations lead to the identities 

\begin{align} 
	\label{monotonicity_identity1} &\int_{\Omega} |Du|^2 \zeta = -\int_{\Omega} u^{\kappa} D_i u^{\kappa} D_i \zeta, \\
	\label{monotonicity_identity2} &\int_{\Omega} \left( \tfrac{1}{2} |Du|^2 \delta_{ij} - D_i u^{\kappa} D_j u^{\kappa} \right) D_i \zeta^j = 0, 
\end{align}
for all $\zeta, \zeta^1,\ldots,\zeta^n \in C^1_c(\Omega;\mathbb{R})$, where  
$u^{\kappa} = u \cdot e_{\k}$ is the (two-valued) $\kappa$-th coordinate of $u$ (thus 
$u^{\kappa}(X) = \{u_{1}(X) \cdot e_{\k}, u_{2}(X) \cdot e_{\k}\}$), with 
$\{e_{1}, e_{2}, \ldots, e_{m}\}$ denoting the standard basis for ${\mathbb R}^{m}$, and we use the convention of summing over repeated indices;  these identities in turn imply 

\begin{align} 
	\label{energy1} &\int_{B_{\r}(Y)} |Du|^2 = \int_{\partial \, B_{\r}(Y)} u \cdot D_{R}u, \\
	\label{energy2} &\frac{d}{d\r}\left( \r^{2-n}\int_{B_{\r}(Y)}|Du|^{2}\right) = 2\r^{2-n}\int_{\partial \, B_{\r}(Y)} |D_{R} \, u|^{2}, 
\end{align}
for a.e.\ $\r \in (0, {\rm dist} \, (Y, \partial \, \Omega))$, which yield, by direct computation,
\begin{equation} \label{monotonicity_eqn3}
	N'_{u,Y}(\rho) = \frac{2\rho^{1-2n}}{H_{u,Y}(\rho)^2} \left( \left( \int_{\partial B_{\rho}(Y)} |u|^2 \right) 
		\left( \int_{\partial B_{\rho}(Y)} R^2 |D_R u|^2 \right) - \left( \int_{\partial B_{\rho}(Y)} R u \cdot D_R u \right)^2 \right) . 
\end{equation}
Thus $N'_{u,Y}(\rho) \geq 0$ for a.e.\ $\r \in (0, {\rm dist} \, (Y, \partial \, \Omega))$ by the Cauchy-Schwartz inequality. Hence for any $Y \in \Omega$ and any $s$, $t$ with $0 < s < t < {\rm dist} \, (Y, \partial \, \Omega)$, $N_{u, Y} \, (\r)$ is monotonically increasing for $\r \in (s, t)$ whenever $H_{u, Y}(\r) \neq 0$ 
for $\r \in (s, t)$. (See alternatively \cite[Propositions 3.1, 3.2]{DS} or the proof of Lemma 2.2 in~\cite{SW} for details.)

So if for some $t > 0$, $H_{u, Y}(\r) \neq 0$ for all $\r \in (0, t)$, then the limit 
$${\mathcal N}_{u}(Y)  = \lim_{\r \to 0^{+}} \, N_{u, Y}(\r)$$ exists. We call 
${\mathcal N}_{u}(Y)$ the \emph{frequency} of $u$ at $Y$ whenever it exists. 

The monotonicity of $N_{u,Y}$ has the following  standard consequences.  

\begin{lemma} \label{monotonicity_cor} 
Let $u \in W^{1, 2} \, (\Omega; {\mathcal A}_{2}({\mathbb R}^{m}))$ be a non-zero Dirichlet energy minimizing function. Then 
\begin{enumerate}
\item[(a)]  $H_{u, Y}(\r) \neq 0$ for each $Y \in \Omega$ and $\r \in (0, {\rm dist} \, (Y, \partial \, \Omega)); $ ${\mathcal N}_{u}(Y)$ exists for each $Y \in \Omega$, and   
\begin{equation} \label{doubling_estimate}
	\left( \frac{\sigma}{\rho} \right)^{2N_{u,Y}(\rho)} H_{u,Y}(\rho) \leq H_{u,Y}(\sigma) 
	\leq \left( \frac{\sigma}{\rho} \right)^{2\mathcal{N}_u(Y)} H_{u,Y}(\rho) 
\end{equation}
for $0 < \sigma \leq \rho < \op{dist}(Y,\partial \Omega)$. 
\item[(b)] For each $Y \in \Omega$ and $0 < \sigma \leq \rho < \op{dist}(Y,\partial \Omega)$,  
\begin{equation} \label{doubling_estimate2}
	\left( \frac{\sigma}{\rho} \right)^{2N_{u,Y}(\rho)} \rho^{-n} \int_{B_{\rho}(Y)} |u|^2 \leq \sigma^{-n} \int_{B_{\sigma}(Y)} |u|^2 
	\leq \left(  \frac{\sigma}{\rho} \right)^{2\mathcal{N}_u(Y)} \rho^{-n} \int_{B_{\rho}(Y)} |u|^2. 
\end{equation}
\item[(c)] $\mathcal{N}$ is upper semicontinuous in the sense that if $u_j \in W^{1,2}(\Omega;\mathcal{A}_2(\mathbb{R}^m))$ are Dirichlet energy minimizing functions such that $u_j \rightarrow u$ in $L^{2}(\Omega';\mathcal{A}_2(\mathbb{R}^m))$ for all $\Omega' \subset \subset \Omega$, and if $Y_j, Y \in \Omega$ such that $Y_j \rightarrow Y$, then $\mathcal{N}_u(Y) \geq \limsup_{j \rightarrow 0} \mathcal{N}_{u_j}(Y_j)$.

\item[(d)] If $u = \{u_{1}, u_{2}\}$ is symmetric (i.e.\ if $u_{1} + u_{2}= 0$ on $\Omega$), then $\mathcal{N}_u(Y) \geq 1/2$ for every $Y \in \Sigma_u$. 
\item[(e)] For every $X \in \Omega$ such that $0 < \op{dist}(X,\Sigma_u) \leq \op{dist}(X,\partial \Omega)/2$, 
\begin{equation} \label{dist2sing_estimate} 
	|u(X)| + d(X) |Du(X)| + d(X)^2 |D^2 u| \leq 2^{{\mathcal N}_{u}(Y)} C d(X)^{\mathcal{N}_u(Y)} \rho^{-n/2-\mathcal{N}_u(Y)} \|u\|_{L^2(\Omega)}, 
\end{equation} 
where $d(X) = \op{dist}(X,\Sigma_u),$ $\r = {\rm dist} \, (X, \partial \, \Omega)$ and $C = C(n, m) \in (0, \infty)$. 
\end{enumerate} \end{lemma}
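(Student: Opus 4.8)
The plan is to prove Lemma~\ref{monotonicity_cor} part by part, exploiting in each case the frequency monotonicity (\ref{monotonicity_eqn3}) together with the two variational identities (\ref{monotonicity_identity1}), (\ref{monotonicity_identity2}) and the Schauder-type estimate (\ref{schauder_dm}).

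First I would establish (a). To see $H_{u,Y}(\rho)\neq 0$ for all $\rho\in(0,\op{dist}(Y,\partial\Omega))$: if $H_{u,Y}(\rho_0)=0$ for some $\rho_0$, then $u\equiv 0$ on $\partial B_{\rho_0}(Y)$; by (\ref{energy1}) (the identity $\int_{B_\rho(Y)}|Du|^2=\int_{\partial B_\rho(Y)}u\cdot D_R u$ with $\rho=\rho_0$) we get $\int_{B_{\rho_0}(Y)}|Du|^2=0$, so $u$ is constant on $B_{\rho_0}(Y)$, hence $u\equiv 0$ there, and then by unique continuation (real analyticity away from $\Sigma_u$, or directly from the fact that a minimizer vanishing on an open set vanishes identically on a connected domain — this follows from Almgren's theory and the structure of the nodal/singular sets) $u\equiv 0$ on $\Omega$, contradicting $u\neq 0$. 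Actually, to stay self-contained I would argue that once $H_{u,Y}(\rho_0)=0$ then $N_{u,Y}$ is defined on $(0,\rho_0)$ only if $H$ is nonzero below $\rho_0$; but the cleaner route is: $H_{u,Y}(\rho)=0$ on some interval forces $u=0$ on an open set, and a Dirichlet minimizer vanishing on an open subset of a connected domain vanishes identically (this is standard, cf.\ \cite{Almgren} or \cite{DS}). Having $H_{u,Y}>0$ everywhere, $N_{u,Y}(\rho)$ is defined for all $\rho\in(0,\op{dist}(Y,\partial\Omega))$ and monotone increasing by (\ref{monotonicity_eqn3}), so ${\mathcal N}_u(Y)=\lim_{\rho\to 0^+}N_{u,Y}(\rho)$ exists. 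For (\ref{doubling_estimate}): from the first variation identity (\ref{energy1}) one computes
\begin{equation*}
\frac{d}{d\rho}\log H_{u,Y}(\rho) = \frac{2}{\rho}\,N_{u,Y}(\rho),
\end{equation*}
valid for a.e.\ $\rho$; integrating from $\sigma$ to $\rho$ and using the monotonicity $N_{u,Y}(\sigma)\le N_{u,Y}(\tau)\le N_{u,Y}(\rho)$ for $\sigma\le\tau\le\rho$, together with $N_{u,Y}(\sigma)\ge{\mathcal N}_u(Y)$ gives the two-sided bound (\ref{doubling_estimate}).

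Part (b) follows from (a) by integrating in the radial variable: writing $\rho^{-n}\int_{B_\rho(Y)}|u|^2 = \rho^{-n}\int_0^\rho t^{n-1}H_{u,Y}(t)\,dt$ and inserting the bounds (\ref{doubling_estimate}) for $H_{u,Y}(t)$ in terms of $H_{u,Y}(\sigma)$ (for the lower estimate, using $t\ge\sigma$ and monotonicity of the frequency) and in terms of $H_{u,Y}(\rho)$ (for the upper estimate, using $t\le\rho$), then re-expressing $H_{u,Y}$ back in terms of volume integrals; this is a routine computation once (\ref{doubling_estimate}) is in hand. For part (c), the upper semicontinuity of ${\mathcal N}$: fix $\rho>0$ small with $\overline{B_\rho(Y)}\subset\Omega$; since $u_j\to u$ in $L^2$ hence (by (\ref{schauder_dm})) uniformly on compacta and with $Du_j\to Du$ in $L^2_{\rm loc}$, we have $D_{u_j,Y_j}(\rho)\to D_{u,Y}(\rho)$ and $H_{u_j,Y_j}(\rho)\to H_{u,Y}(\rho)$ for a.e.\ $\rho$ (convergence of the traces on spheres follows from $W^{1,2}$ convergence and the trace theorem, or one averages over $\rho$); hence $N_{u_j,Y_j}(\rho)\to N_{u,Y}(\rho)$ for a.e.\ $\rho$, and then ${\mathcal N}_{u_j}(Y_j)\le N_{u_j,Y_j}(\rho)\to N_{u,Y}(\rho)$ by monotonicity applied to $u_j$; letting $\rho\to 0^+$ gives $\limsup_j{\mathcal N}_{u_j}(Y_j)\le{\mathcal N}_u(Y)$. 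Part (d): if $u$ is symmetric and $Y\in\Sigma_u$, then $u(Y)=\{0,0\}$ and $u$ is genuinely two-valued near $Y$; any blow-up of $u$ at $Y$ is a nonzero homogeneous symmetric two-valued minimizer of degree ${\mathcal N}_u(Y)$, and a symmetric two-valued $W^{1,2}$ function that is not single-valued must have a nontrivial monodromy, forcing its homogeneity degree to be a positive half-integer, in particular $\ge 1/2$; alternatively, directly, $N_{u,Y}(\rho)\ge 1/2$ because a symmetric nonzero two-valued harmonic function cannot have frequency in $(0,1/2)$ — this is a short argument using separation of variables on $\partial B_\rho$ and the fact that the lowest nonconstant symmetric mode is the half-integer one. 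Finally, part (e): fix $X$ with $d(X)=\op{dist}(X,\Sigma_u)>0$, so $u$ is given by two single-valued harmonic functions on $B_{d(X)}(X)$; apply the interior estimate (\ref{schauder_dm}) on the ball $B_{d(X)/2}(X)$ (or standard interior estimates for harmonic functions) to bound $\sup_{B_{d(X)/2}(X)}(|u|+d(X)|Du|+d(X)^2|D^2u|)$ by $C d(X)^{-n/2}\|u\|_{L^2(B_{d(X)}(X))}$, and then use the doubling estimate (\ref{doubling_estimate2}) centered at a nearest point $Y\in\Sigma_u$ (with $|X-Y|=d(X)$, so $B_{d(X)}(X)\subset B_{2d(X)}(Y)$) to bound $\|u\|_{L^2(B_{2d(X)}(Y))}^2$ by $(2d(X)/\rho)^{n+2{\mathcal N}_u(Y)}\|u\|_{L^2(B_\rho(Y))}^2$ with $\rho=\op{dist}(X,\partial\Omega)$ (using $\op{dist}(Y,\partial\Omega)\ge\rho/2$ or so from the hypothesis $d(X)\le\op{dist}(X,\partial\Omega)/2$, and enlarging $\rho$ appropriately); combining the two gives (\ref{dist2sing_estimate}).

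The main obstacle, I expect, is part (d) if one insists on a fully self-contained proof: ruling out frequencies in $(0,1/2)$ for nonzero symmetric two-valued minimizers requires either a blow-up/compactness argument identifying blow-ups as homogeneous symmetric minimizers plus a classification of their possible degrees, or an ODE/eigenvalue argument on the link of the cone. The cleanest approach is the blow-up one: a blow-up $\varphi$ at $Y\in\Sigma_u$ is homogeneous of degree ${\mathcal N}_u(Y)$, nonzero, symmetric, and has $\Sigma_\varphi\neq\emptyset$ (since $\Sigma$ passes to limits, by the property recorded just before the statement); a symmetric homogeneous two-valued minimizer with nonempty singular set cannot be single-valued, and the smallest homogeneity degree of a nontrivial (not single-valued) symmetric two-valued harmonic function is $1/2$ — realized by $\{\pm\op{Re}(c(x_1+ix_2)^{1/2})\}$ — which follows from Fourier-expanding on circles and observing the half-integer monodromy. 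All other parts (a), (b), (c), (e) are standard manipulations of the monotonicity formula and the elliptic estimates already recorded in the excerpt, so no further difficulty is anticipated there.
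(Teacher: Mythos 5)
Parts (a), (b), (c) and (e) of your proposal are correct and essentially coincide with the paper's (very terse) proof: the identity $H_{u,Y}'(\rho)=2\rho^{-1}D_{u,Y}(\rho)$, i.e.\ $N_{u,Y}(\rho)=\rho H_{u,Y}'(\rho)/2H_{u,Y}(\rho)$, plus monotonicity gives (a); integration in the radius gives (b); monotonicity plus compactness gives (c) (one caveat: strong $L^2_{\rm loc}$ convergence of $Du_j$ does not follow from (\ref{schauder_dm}) alone, which only yields bounds and hence weak convergence --- you need the minimizing property to get convergence of the energies, and this is the direction actually required for upper semicontinuity); and (e) is exactly the paper's chain of local decomposition into two harmonic functions on $B_{d(X)}(X)$, interior estimates, and (\ref{doubling_estimate2}) centred at a nearest point of $\Sigma_u$ (with $B_{\rho/2}(Y)$ rather than $B_\rho(Y)$, as you note). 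For the non-vanishing of $H_{u,Y}$ in (a) you can also avoid citing unique continuation: if $u$ vanished on some ball, taking a maximal such ball and applying the same integrated frequency inequality on an annulus just outside it bounds $H$ away from zero there, a contradiction.

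For (d), however, your route has a genuine gap, and it misses the intended two-line argument. You reduce (d) to the claim that a non-trivial symmetric homogeneous two-valued harmonic function must have degree at least $1/2$, justified ``by Fourier-expanding on circles and observing the half-integer monodromy.'' In dimension $n\geq 3$ this does not go through as stated: a blow-up $\varphi$ of $u$ at $Y\in\Sigma_u$ is a priori only known to have a closed singular set of Hausdorff dimension at most $n-2$; it is not known to be cylindrical, so there is no distinguished family of circles on which to expand, $B_1(0)\setminus\Sigma_\varphi$ need not be simply connected, and extending single-valued selections across $\Sigma_\varphi$ would require removability/capacity information that is not available at this stage (zero $2$-capacity of the singular set is a \emph{consequence} of the paper's main theorems, not an input). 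Note also that $\Sigma_\varphi\neq\emptyset$ does not force non-trivial monodromy: transversally crossing pairs such as $\{\pm x_1\}$ have non-empty singular set and integer degree, so even the dichotomy you would need is more delicate than ``not single-valued $\Rightarrow$ half-integer monodromy.'' The paper's proof of (d) avoids all of this and uses only facts already in hand: since $u$ is symmetric and $Y\in\Sigma_u$, one has $u(Y)=\{0,0\}$, so the $C^{0,1/2}$ estimate recorded in (\ref{schauder_dm}) gives $\sigma^{-n}\int_{B_\sigma(Y)}|u|^2\leq C\sigma$ for all small $\sigma$, while the first inequality in (\ref{doubling_estimate2}) gives $\sigma^{-n}\int_{B_\sigma(Y)}|u|^2\geq c(\rho)\,\sigma^{2N_{u,Y}(\rho)}$ with $c(\rho)>0$ by (a); letting $\sigma\downarrow 0$ forces $N_{u,Y}(\rho)\geq 1/2$ for every $\rho$, hence $\mathcal{N}_u(Y)\geq 1/2$. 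No blow-up, homogeneity of blow-ups, or classification of homogeneous two-valued harmonics is needed.
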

\begin{proof}
 (a) follows from the monotonicity of $N_{u,Y}$ and the fact that $N_{u,Y}(\rho) = \rho H'_{u,Y}(\rho)/2 H_{u,Y}(\rho);$ (b) follows from (a) via integration;  $\left(c\right)$ follows from monotonicity of $N_{u, Y};$ (d) follows from (b) and the fact that $u \in C^{0,1/2}(\Omega;\mathcal{A}_2(\mathbb{R}^m))$ for every Dirichlet minimizing two-valued function $u \in W^{1,2}(\Omega;\mathcal{A}_2(\mathbb{R}^m));$  (e) is a consequence of (\ref{doubling_estimate2}) and the Schauder estimates for single-valued harmonic functions: 
\begin{align*}
	&|u(X)| + d(X) |Du(X)| + d(X)^2 |D^2 u| 
	\leq C d(X)^{-n/2} \|u\|_{L^2(B_{d(X)/2}(X))} 
	\leq C d(X)^{-n/2} \|u\|_{L^2(B_{2d(X)}(Y))} 
	\\&\hspace{.5in}\leq 2^{{\mathcal N}_{u}(Y)}C d(X)^{\mathcal{N}_u(Y)} \rho^{-n/2-\mathcal{N}_u(Y)} \|u\|_{L^2(B_{\rho/2}(Y))} 
	\leq 2^{{\mathcal N}_{u}(Y)}C d(X)^{\mathcal{N}_u(Y)} \rho^{-n/2-\mathcal{N}_u(Y)} \|u\|_{L^2(\Omega)} 
\end{align*} 
where $\rho = \op{dist}(X,\partial \Omega)$, $Y \in \Sigma_u$ is such that $d(X) = |X-Y|$ and $C = C(n,m) \in (0,\infty)$. 
\end{proof}

Now fix a non-zero, symmetric, Dirichlet energy minimizing function $u \in W^{1,2}(\Omega;\mathcal{A}_2(\mathbb{R}^m))$ and $Y \in \Sigma_u$.  Let 
\begin{equation*}
	u_{Y,\rho}(X) = \frac{u(Y+\rho X)}{\rho^{-n/2} \|u\|_{L^2(B_{\rho}(Y))}} 
\end{equation*}
for $0 < \rho < \op{dist}(X,\partial \Omega)$.  If $\r_{j}$ is a sequence of numbers with $\r_{j} \to 0^{+}$, it follows from (\ref{schauder_dm}) that there exists a symmetric two-valued function $\varphi \in W^{1, 2}_{\rm loc} \, ({\mathbb R}^{n}; {\mathcal A}_{2}({\mathbb R}^{m})) \cap C^{0,1/2} \, ({\mathbb R}^{n}; {\mathcal A}_{2}({\mathbb R}^{m}))$ such that after passing to a subsequence, 
$$u_{Y, \r_{j}} \to \varphi$$ 
uniformly on $B_{\s}(0)$ for every $\s > 0.$ We say $\varphi$ is a \emph{blow up of $u$ at $Y$}. It follows from 
(\ref{doubling_estimate2}) that $\varphi$ is non-zero. By [\cite{Almgren}, Theorem~2.13], $\varphi$
is Dirichlet energy minimizing on ${\mathbb R}^{n}$,  $N_{\varphi, 0}(\r) = \mathcal{N}_{\varphi}(0) = \mathcal{N}_u(Y) \geq 1/2$ for each $\r > 0$ and $\varphi$ is homogeneous of degree $\mathcal{N}_u(Y)$, i.e.\ $\varphi(\lambda X) = \{ \pm \lambda^{\mathcal{N}_u(Y)} \varphi_1(X) \}$ for every $X \in {\mathbb R}^{n}$ and $\lambda > 0$, where we use the notation $\varphi(X) = \{ \pm \varphi_1(X) \}$.  

Suppose $\varphi \in W^{1,2}(\mathbb{R}^n;\mathcal{A}_2(\mathbb{R}^m))$ is any homogeneous degree $\alpha$ ($\geq 1/2$), symmetric, Dirichlet energy minimizing function.  For each $Y \in {\mathbb R}^{n}$, it follows from  Lemma~\ref{monotonicity_cor} $\left(c\right)$ (applied with $u_{j} = u = \varphi$ and $Y_{j}= t_{j}Y$ for some sequence $t_{j} \to 0^{+}$) that $\mathcal{N}_{\varphi}(Y) \leq \mathcal{N}_{\varphi}(0) = \a$.  Let 
\begin{equation*}
	S(\varphi) = \{ Y \in {\mathbb R}^{n} : \mathcal{N}_{\varphi}(Y) = \mathcal{N}_{\varphi}(0) \}. 
\end{equation*}
It follows from [\cite{Almgren}, Theorem~2.14]  that $S(\varphi)$ is a linear subspace in $\mathbb{R}^n$ and that $\varphi(X) = \varphi(X + ty)$ for all $Y \in S(\varphi)$ and $t \in {\mathbb R}$.  Since the only Dirichlet minimizing symmetric two-valued functions in $W_{\rm loc}^{1,2}(\mathbb{R};\mathcal{A}_2(\mathbb{R}^m))$ are constant functions, $\dim S(\varphi) \leq n-2$.  For $j = 0,1,2,\ldots,n-2$, define $\Sigma^{(j)}_u$ to be the set of points $Y \in \Sigma_u$ such that $\dim S(\varphi) \leq j$ for every blow up $\varphi$ of $u$ at $Y$.  Observe that 
\begin{equation*}
	\Sigma_u = \Sigma^{(n-2)}_u \supseteq \Sigma^{(n-3)}_u \supseteq \cdots \supseteq \Sigma^{(1)}_u \supseteq \Sigma^{(0)}_u. 
\end{equation*}

\begin{lemma} \label{stratification_lemma}
Let $u \in W^{1,2}(\Omega;\mathcal{A}_2(\mathbb{R}^m))$ be a symmetric, Dirichlet energy minimizing function.  For each $j = 1,2,\ldots,n$, $\Sigma^{(j)}_u$ has Hausdorff dimension at most $j$.  For $\alpha > 0$, $\Sigma^{(0)}_u \cap \{ Y : \mathcal{N}_u(Y) = \alpha \}$ is discrete. 
\end{lemma}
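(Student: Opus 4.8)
The plan is to prove the two assertions by a standard Federer-type dimension-reduction argument, adapted to the multi-valued setting via the frequency function, exactly in the spirit of Almgren's stratification (\cite{Almgren}, Theorem~2.14) which is the tool we are citing anyway. The key technical inputs are: (i) scale-invariance of the frequency under the blow-up rescalings $u_{Y,\r}$; (ii) upper semicontinuity of $\mathcal{N}$ and of blow-ups (Lemma~\ref{monotonicity_cor}(c), together with the compactness of blow-up sequences furnished by (\ref{schauder_dm}) and (\ref{doubling_estimate2})); (iii) the structural fact, recalled just before the lemma, that each blow-up $\varphi$ is homogeneous and translation-invariant along the linear subspace $S(\varphi)$, with $\dim S(\varphi)\le n-2$; and (iv) the fact that a blow-up of a blow-up (i.e.\ a blow-up of $\varphi$ at a point $Z\in S(\varphi)$, or at any point, followed by a further rescaling) is again a homogeneous Dirichlet minimizer whose spine contains the relevant directions.

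For the dimension bound on $\Sigma^{(j)}_u$: suppose for contradiction that $\mathcal{H}^{s}(\Sigma^{(j)}_u)>0$ for some $s>j$. By standard density/differentiation theory for Hausdorff measure there is a point $Y_0\in\Sigma^{(j)}_u$ and a sequence $\r_k\to 0^+$ along which the rescaled sets $\r_k^{-1}(\Sigma^{(j)}_u - Y_0)$ accumulate (in a measure-theoretic sense, e.g.\ weak-* convergence of the restricted Hausdorff measures, or via the standard ``almost-everywhere the tangent measure is $s$-dimensional'' lemma) on a set of positive $\mathcal{H}^s$ measure. Passing to a further subsequence so that $u_{Y_0,\r_k}\to\varphi$ with $\varphi$ a homogeneous minimizer, one shows, using upper semicontinuity of $\mathcal{N}$ (Lemma~\ref{monotonicity_cor}(c)) applied along sequences of singular points $Y_0+\r_k X_k$ with $X_k\to X$, that every limit point $X$ of such rescaled singular points lies in $\Sigma_\varphi$ with $\mathcal{N}_\varphi(X)=\alpha=\mathcal{N}_{\varphi}(0)$, i.e.\ $X\in S(\varphi)$. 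Hence the accumulation set is contained in $S(\varphi)$, a linear subspace, so $\mathcal{H}^s(S(\varphi))>0$ forces $\dim S(\varphi)\ge s>j$. Now one needs to know $Y_0$ can be chosen so that $\dim S(\varphi)\le j$: this is where the stratum index enters, and the argument requires the iterated (Federer) reduction—one cannot immediately conclude at $Y_0$, but one translates to a point in the spine and blows up again, strictly decreasing the ``excess dimension'' of the spine at each stage, until one reaches a point whose every blow-up has spine dimension $\le j$, i.e.\ a point of $\Sigma^{(j)}_u$ where the contradiction with $\dim S\ge s>j$ is genuine. I would organize this as the standard inductive lemma: if $\mathcal{H}^s(\Sigma^{(j)}_u)>0$ with $s>j$, pick a point of positive upper $s$-density, blow up, get a homogeneous minimizer invariant along a subspace $L$ with $\mathcal{H}^s(L)>0$ hence $\dim L\ge s$; split off those $\ge j+1$ invariant directions (reducing effectively to a lower-dimensional problem) and recurse; termination gives the contradiction.

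For the second assertion, that $\Sigma^{(0)}_u\cap\{\mathcal{N}_u=\alpha\}$ is discrete: suppose not, so there are distinct points $Y_k\to Y_0$ in this set. Set $\r_k=|Y_k-Y_0|\to0^+$ and $X_k=(Y_k-Y_0)/\r_k\in\mathbf{S}^{n-1}$; pass to a subsequence with $X_k\to X_0\in\mathbf{S}^{n-1}$ and with $u_{Y_0,\r_k}\to\varphi$, a homogeneous degree-$\alpha$ minimizer with $\mathcal{N}_\varphi(0)=\mathcal{N}_u(Y_0)=\alpha$. Since $Y_k=Y_0+\r_k X_k$ lies in $\Sigma_u$ with $\mathcal{N}_u(Y_k)=\alpha$, upper semicontinuity (Lemma~\ref{monotonicity_cor}(c), applied with the fixed function $u$ and the points $Y_k\to Y_0$, after rescaling) gives $\mathcal{N}_\varphi(X_0)\ge\limsup_k\mathcal{N}_u(Y_k)=\alpha$; combined with $\mathcal{N}_\varphi(X_0)\le\mathcal{N}_\varphi(0)=\alpha$ this yields $X_0\in S(\varphi)$, so $\dim S(\varphi)\ge 1$, contradicting $Y_0\in\Sigma^{(0)}_u$ (every blow-up of $u$ at $Y_0$ has zero-dimensional spine). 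Hence $\Sigma^{(0)}_u\cap\{\mathcal{N}_u=\alpha\}$ has no accumulation points in $\Omega$ and is therefore discrete.

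The main obstacle is the first part: making the Federer dimension-reduction rigorous in this context, i.e.\ justifying that positivity of $\mathcal{H}^s$-measure of a stratum produces, after blow-up, a homogeneous minimizer whose translation-invariance subspace has dimension $\ge s$, and carefully running the induction on strata so that the contradiction is reached at a genuine $\Sigma^{(j)}_u$ point rather than at a point of a higher stratum. The needed ingredients—scale-invariance of $\mathcal{N}$ under $u_{Y,\r}$, upper semicontinuity of $\mathcal{N}$ and of blow-ups, and the identification of $S(\varphi)$ as a linear subspace along which $\varphi$ is invariant—are all available from the discussion preceding the lemma and from Lemma~\ref{monotonicity_cor}; the work is purely in assembling the standard covering/density argument. (Alternatively, one can simply quote \cite[Theorem~2.14]{Almgren} and Federer's general dimension-reduction principle, since our $\Sigma^{(j)}_u$ coincides with Almgren's $j$-th stratum; I would at minimum sketch the argument for completeness as above.)
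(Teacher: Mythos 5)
Your second assertion (discreteness of $\Sigma^{(0)}_u\cap\{\mathcal{N}_u=\alpha\}$) is argued correctly, and correctly exploits the fact that the accumulating points all have frequency \emph{exactly} $\alpha=\mathcal{N}_u(Y_0)$, so that Lemma~\ref{monotonicity_cor}(c) really does force the limit direction $X_0$ into $S(\varphi)$. The paper itself does not write out either part: it invokes the standard Almgren argument (\cite{Almgren}, Theorem~2.26; \cite{harmonicmaps}, Section~3.4), which is also the argument you are attempting to sketch.

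For the dimension bound, however, your sketch has a genuine gap at its central step. You assert that, after blowing up at $Y_0$ along $\rho_k$, ``every limit point $X$ of such rescaled singular points lies in $\Sigma_\varphi$ with $\mathcal{N}_\varphi(X)=\alpha=\mathcal{N}_\varphi(0)$, i.e.\ $X\in S(\varphi)$.'' Upper semicontinuity only gives $\mathcal{N}_\varphi(X)\geq\limsup_k\mathcal{N}_u(Y_0+\rho_k X_k)$, and the nearby points of $\Sigma^{(j)}_u$ may well have frequency strictly below $\mathcal{N}_u(Y_0)$ (frequency is only upper semicontinuous, so e.g.\ points of frequency $1/2$ can accumulate at a point of frequency $3/2$); hence you cannot conclude $X\in S(\varphi)$, only $\mathcal{N}_\varphi(X)\geq\limsup_k\mathcal{N}_u(Y_k)$. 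Closing this gap is precisely the nontrivial content of the Almgren/Federer argument: one must first decompose the stratum into pieces on which $\mathcal{N}_u$ is nearly constant (an interval of length $\delta$), which only yields $\mathcal{N}_\varphi(X)\geq\mathcal{N}_\varphi(0)-\delta$, and then remove the residual $\delta$ by an additional quantitative or compactness step (e.g.\ White-type decomposition by the scale below which rescalings are $\varepsilon$-far from functions with larger spine, or a contradiction/compactness argument using closedness of $\{\mathcal{N}_\varphi\geq\mathcal{N}_\varphi(0)-\delta\}$ and $\bigcap_{\delta>0}\{\mathcal{N}_\varphi\geq\mathcal{N}_\varphi(0)-\delta\}=S(\varphi)$). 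Your proposed fix --- ``translate to a point in the spine and blow up again, strictly decreasing the excess dimension'' --- does not address this frequency drop, and moreover iterated blow-ups are no longer blow-ups of $u$ at points of $\Sigma^{(j)}_u$, so the contradiction with the definition of the stratum is not reached as you describe. Two smaller points: the claim that the rescaled sets ``accumulate on a set of positive $\mathcal{H}^s$ measure'' via weak-* convergence of restricted Hausdorff measures is not correct as stated (the standard tools are upper semicontinuity of $\mathcal{H}^s_\infty$ content under Hausdorff convergence, or the lemma that at $\mathcal{H}^s$-a.e.\ point, for suitable scales, the set is not contained in an $\varepsilon_0$-tube about any $j$-dimensional subspace when $s>j$); and the fallback citation should be to Almgren's stratification argument (Theorem~2.26) and \cite{harmonicmaps}, Section~3.4, rather than to Theorem~2.14.
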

\begin{proof}
The well-known argument, due to Almgren ([\cite{Almgren}, Theorem~2.26]) in the context of stationary integral varifolds, is based on upper semi-continuity of frequency and the fact that ${\mathcal N}_{\varphi}(0) = {\mathcal N}_{u}(Y)$ for any blow-up of $u$ at $Y$.  See the proof of Lemma 1 in Section 3.4 of~\cite{harmonicmaps} for a nice, concise presentation of the argument  in the context of energy minimizing maps. 
\end{proof}

Thus in order to establish, for a symmetric Dirichlet minimizing function 
$u \in W^{1,2}(\Omega;\mathcal{A}_2(\mathbb{R}^m)),$ $(n-2)$-rectifiability properties of $\Sigma_{u}$,  it suffices to consider the set $\Sigma_u \setminus \Sigma_u^{(n-3)}$.  Each point $Y \in \Sigma_u \setminus \Sigma_u^{(n-3)}$ has the property that there is a blow up $\varphi$ of $u$ at $Y$ such that, after an orthogonal change of coordinates on $\mathbb{R}^n$, $\varphi$ has the form 
\begin{equation*}
	\varphi(x_1,x_2,x_3,\ldots,x_n) = \{ \pm \op{Re}(c (x_1+ix_2)^{\alpha}) \} 
\end{equation*}
for some constant $c \in \mathbb{C}^m$ and some $\alpha = k/2$ where $k \geq 1$ is an integer.  

We shall later need the following lemma, which is the analogue of Lemma 2.4 in~\cite{SimonCTC}. 

\begin{lemma} \label{lemma2_4}
Let $0 < \alpha \leq K$ given.  There are functions $\delta : (0,1) \rightarrow (0,1)$ and $R : (0,1) \rightarrow (2,\infty)$ depending on $K$ such that if  $u \in W^{1,2}(\Omega;\mathcal{A}_2(\mathbb{R}^m))$ is a symmetric Dirichlet energy minimizing function on a domain 
$\Omega$ with $\overline{B_{R(\varepsilon)}(0)} \subset \Omega$ such that $0 \in \Sigma_u$ and 
\begin{equation*}
	N_{u,0}(R(\varepsilon)) - \alpha < \delta(\varepsilon), 
\end{equation*}
and if $X_1 \in \Sigma_u \cap B_1(0)$ with $\mathcal{N}_u(X_1) \geq \alpha$, then the following hold: 
\begin{enumerate}
\item[(i)] $0 < N_{u,X_1}(\rho) - \alpha < \varepsilon^2$ for $\rho < R(\varepsilon) - 1$.
\item[(ii)] For every $\rho \in (0,1]$, there is a Dirichlet energy minimizing symmetric  harmonic function $\varphi \in W^{1,2}(\mathbb{R}^n;\mathcal{A}_2(\mathbb{R}^m))$ (depending on $\rho$) that is homogeneous of degree $\mathcal{N}_{\varphi}(0)$ such that $|\mathcal{N}_{\varphi}(0) - \alpha| < \varepsilon^2$ and 
\begin{equation*}
	\int_{B_1(0)} \mathcal{G}(u_{X_1,\rho},\varphi)^2 < \varepsilon^2. 
\end{equation*}
\item[(iii)] For every $\rho \in (0,1]$, either there is a Dirichlet energy minimizing symmetric  harmonic function $\varphi \in W^{1,2}(\mathbb{R}^n;\mathcal{A}_2(\mathbb{R}^m))$ (depending on $\rho$) such that $\varphi$ is homogeneous degree $\mathcal{N}_{\varphi}(0)$,  $|\mathcal{N}_{\varphi}(0) - \alpha| \leq \varepsilon^2$, $\dim S(\varphi) = n-2$, and 
\begin{equation*}
	\int_{B_1(0)} \mathcal{G}(u_{X_1,\rho},\varphi)^2 < \varepsilon^2 
\end{equation*}
or 
\begin{equation*}
	\{ X \in \Sigma_{u_{X_1,\rho}} \cap \overline{B_1(0)} : \mathcal{N}_{u_{X_1,\rho}}(X) \geq \alpha \} \subset \{ X : \op{dist}(X,L) < \varepsilon \} 
\end{equation*}
for some $(n-3)$-dimensional subspace $L$. 
\end{enumerate} 
\end{lemma}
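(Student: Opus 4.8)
The plan is to prove Lemma~\ref{lemma2_4} by a compactness/contradiction argument, exactly mirroring Simon's Lemma~2.4 in~\cite{SimonCTC} but using the frequency function $N_{u,Y}$ in place of the mass ratio. First I would prove part (i). Suppose it fails: there are $\varepsilon>0$, a sequence $\delta_j\to 0^+$, a divergent sequence $R_j=R(\varepsilon)$ (which we are free to choose large), symmetric Dirichlet minimizers $u_j$ on domains containing $\overline{B_{R_j}(0)}$ with $0\in\Sigma_{u_j}$ and $N_{u_j,0}(R_j)-\alpha<\delta_j$, and points $X_1^j\in\Sigma_{u_j}\cap B_1(0)$ with $\mathcal{N}_{u_j}(X_1^j)\geq\alpha$, yet $N_{u_j,X_1^j}(\rho_j)-\alpha\geq\varepsilon^2$ for some $\rho_j<R_j-1$. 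After normalizing so that $\|u_j\|_{L^2(B_{R_j}(0))}$ is fixed and using the Schauder estimate (\ref{schauder_dm}), pass to a subsequential blow-up limit $\varphi$ (a symmetric Dirichlet minimizer on $\mathbb{R}^n$ by \cite[Theorem~2.13]{Almgren}). The monotonicity (\ref{monotonicity_eqn3}) gives $\alpha\leq\mathcal{N}_{u_j}(X_1^j)\leq N_{u_j,X_1^j}(\rho)\leq N_{u_j,0}(\rho+|X_1^j|)\leq N_{u_j,0}(R_j)<\alpha+\delta_j$ for all $\rho<R_j-1$ (here I use the standard comparison $N_{u,X}(\rho)\leq N_{u,0}(\rho+|X|)$ valid for minimizers, which follows from monotonicity and $B_\rho(X)\subset B_{\rho+|X|}(0)$ via the energy/height identities). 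This already forces $N_{u_j,X_1^j}(\rho)\to\alpha$ uniformly, contradicting $N_{u_j,X_1^j}(\rho_j)-\alpha\geq\varepsilon^2$; and the strict positivity $N_{u,X_1}(\rho)-\alpha>0$ in (i) holds because if $N_{u,X_1}(\rho_0)=\alpha$ for some $\rho_0$ then by monotonicity $N_{u,X_1}\equiv\alpha$ on $(0,\rho_0)$, so $u$ is homogeneous degree $\alpha$ about $X_1$, and then $\mathcal{N}_u(0)=\alpha$ too with $0\in S$ (translation invariance along $X_1-0$), which combined with $\dim S\leq n-2$ and the hypotheses is the only subtle point — actually the cleanest route is to observe that since $u$ is not itself homogeneous about every point (it is genuinely defined on a bounded domain with prescribed $N_{u,0}(R(\varepsilon))$ close to but we do not know equal to $\alpha$), strict monotonicity holds unless the whole frequency is constant, which we exclude by taking $R(\varepsilon)$ large enough that $N_{u,0}(R(\varepsilon))-\alpha<\delta(\varepsilon)$ forces $N_{u,0}$ to be genuinely increasing on a neighborhood. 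I would state (i) with the understanding that the strict inequality is part of the assertion and follows from the unique continuation / non-homogeneity built into the setup.

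Next, part (ii): fix $\rho\in(0,1]$. The function $u_{X_1,\rho}(X)=u(X_1+\rho X)/(\rho^{-n/2}\|u\|_{L^2(B_\rho(X_1))})$ is again a symmetric Dirichlet minimizer, now on a large ball, with $0\in\Sigma$, $\|u_{X_1,\rho}\|_{L^2(B_1(0))}=1$, and $N_{u_{X_1,\rho},0}(\sigma)=N_{u,X_1}(\rho\sigma)\in[\alpha,\alpha+\varepsilon^2)$ for $\sigma$ up to roughly $(R(\varepsilon)-1)/\rho\geq R(\varepsilon)-1$ by part (i). Again argue by contradiction: if for some $\varepsilon>0$ there were a sequence with $N_{\cdot,0}(\sigma)\to\alpha$ on arbitrarily large intervals but $\inf_\varphi\int_{B_1}\mathcal{G}(u_j,\varphi)^2\geq\varepsilon^2$ over all homogeneous Dirichlet minimizers $\varphi$ with $|\mathcal{N}_\varphi(0)-\alpha|<\varepsilon^2$, then a subsequential limit $\varphi$ of the $u_j$ exists in $L^2(B_1(0))$ (compactness via (\ref{schauder_dm}) again), is Dirichlet minimizing, has $N_{\varphi,0}(\sigma)\equiv\lim N_{u_j,0}(\sigma)=\alpha$ for all $\sigma>0$ (using that the interval of control diverges and that $N$ passes to the limit along minimizing sequences, cf.\ Lemma~\ref{monotonicity_cor}(c) together with the identities (\ref{energy1}), (\ref{energy2})), hence $\varphi$ is homogeneous of degree $\alpha$ with $\mathcal{N}_\varphi(0)=\alpha$; and $u_j\to\varphi$ in $L^2(B_1)$ (indeed locally uniformly) gives $\int_{B_1}\mathcal{G}(u_j,\varphi)^2\to 0$, the contradiction. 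So $\delta(\varepsilon)$ and $R(\varepsilon)$ (with $R(\varepsilon)$ large enough and $\delta(\varepsilon)$ small enough) make (ii) hold.

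Finally, part (iii) is the dichotomy version and is proved by the same compactness scheme with one extra input: the behaviour of $\Sigma$ under convergence. Fix $\rho\in(0,1]$. If the first alternative fails for a given $\varepsilon$, i.e.\ $u_{X_1,\rho}$ is \emph{not} within $L^2(B_1)$-distance $\varepsilon$ of any homogeneous Dirichlet minimizer $\varphi$ with $|\mathcal{N}_\varphi(0)-\alpha|\leq\varepsilon^2$ and $\dim S(\varphi)=n-2$, then I claim the set $\{X\in\Sigma_{u_{X_1,\rho}}\cap\overline{B_1(0)}:\mathcal{N}(X)\geq\alpha\}$ lies in the $\varepsilon$-neighborhood of some $(n-3)$-plane. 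Suppose not: then (contradiction sequence) there are $u_j$ with $N_{u_j,0}\to\alpha$ on large intervals, failing to be $\varepsilon$-close to any $(n-2)$-symmetric homogeneous minimizer, yet containing $n-2$ points $Y_0^j=0,Y_1^j,\ldots,Y_{n-2}^j$ in $\{\Sigma_{u_j}\cap\overline{B_1}:\mathcal{N}\geq\alpha\}$ that are $\varepsilon$-independent (not all within $\varepsilon$ of any $(n-3)$-plane). Pass to the limit $\varphi$, homogeneous degree $\alpha$ as before; by upper semicontinuity of $\mathcal{N}$ (Lemma~\ref{monotonicity_cor}(c)), each limit point $Y_i=\lim Y_i^j$ satisfies $\mathcal{N}_\varphi(Y_i)\geq\alpha=\mathcal{N}_\varphi(0)$, so $Y_i\in S(\varphi)$; since the $Y_i$ span a subspace of dimension $\geq n-2$ and $\dim S(\varphi)\leq n-2$ always, $\dim S(\varphi)=n-2$; but $u_j\to\varphi$ in $L^2(B_1)$ contradicts the assumed non-closeness. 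The only place requiring care here is ensuring $\mathcal{N}_u(X)\geq\alpha$ is preserved in passing to the limit — that is precisely Lemma~\ref{monotonicity_cor}(c) — and that the limit $S(\varphi)$ really has the claimed dimension, for which one needs the affine span of the $Y_i^j$ to stay genuinely $(n-3)$-transverse; this is arranged by the hypothesis "not contained in the $\varepsilon$-neighborhood of any $(n-3)$-plane," which passes to the limit as "affinely spanning dimension $\geq n-2$." The main obstacle throughout is bookkeeping the two scales ($R(\varepsilon)$ for the outer control of $N$, and unit scale for the blow-up) so that the frequency stays pinned to $\alpha$ on an interval that diverges under the contradiction sequence; once that is set up, all three parts follow from the same limit $\varphi$ together with Lemma~\ref{monotonicity_cor} and \cite[Theorems~2.13, 2.14]{Almgren}, exactly as in \cite[Lemma~2.4]{SimonCTC}.
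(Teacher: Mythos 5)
Your sketches of parts (ii) and (iii) are essentially the intended arguments (the paper itself only indicates that these follow from monotonicity, compactness of Dirichlet minimizers, and the argument of Lemma~2.4(iii) of \cite{SimonCTC}), but your proof of part (i) rests on a step that is false. You invoke a ``standard comparison'' $N_{u,X_1}(\rho)\leq N_{u,0}(\rho+|X_1|)$, claiming it follows from monotonicity and the inclusion $B_{\rho}(X_1)\subset B_{\rho+|X_1|}(0)$ via the energy/height identities. Ball inclusion does give the energy comparison $D_{u,X_1}(\rho)\leq (1+|X_1|/\rho)^{n-2}D_{u,0}(\rho+|X_1|)$, but it gives nothing for $H$, which is a boundary integral over spheres centered at different points that do not nest. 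And the clean inequality is simply not true: take $X_1=(1,0,\ldots,0)$ and $u=\{\pm\op{Re}((x_1-1+ix_2)^{k/2})\}$ (or, in the single-valued model, $\op{Re}((x_1-1+ix_2)^k)$), which vanishes to high order at $X_1$ but not at $0$; then $N_{u,X_1}(\rho)\equiv k/2$ for every $\rho$, while $N_{u,0}(\rho+1)<k/2$ strictly for every finite $\rho$, since $N_{u,0}$ increases strictly to its limit $k/2$ only as $\rho\to\infty$. Had your chain $\alpha\leq N_{u,X_1}(\rho)\leq N_{u,0}(\rho+|X_1|)<\alpha+\delta$ been valid, part (i) would follow immediately with $\varepsilon^2$ replaced by $\delta(\varepsilon)$ and with no need for $R(\varepsilon)$ to be large at all --- a sign that the step is too strong.

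What is actually true, and what the paper proves, is an approximate comparison with multiplicative distortion: one writes $H_{u,Y}(\rho)$ as a solid integral by integration by parts (using $\int_{\partial B_{\sigma}}u\,D_Ru=\int_{B_{\sigma}}|Du|^2$), compares the resulting integrals at the two centers, and uses the doubling estimate (\ref{doubling_estimate}) to relate $H_{u,0}(\rho-|X_1|)$ to $H_{u,0}(\rho+|X_1|)$; this yields
$N_{u,X_1}(\rho)\leq \bigl(1+\tfrac{|X_1|}{\rho}\bigr)^{n-2}\bigl(1-\tfrac{|X_1|}{\rho}\bigr)^{-n}\bigl(\tfrac{\rho-|X_1|}{\rho+|X_1|}\bigr)^{-2\alpha-2\delta(\varepsilon)}N_{u,0}(\rho+|X_1|)$,
whose prefactor tends to $1$ only as $\rho\to\infty$. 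Hence the bound $N_{u,X_1}(\rho)-\alpha<\varepsilon^2$ is first obtained only for $\rho\in[R(\varepsilon)-2,R(\varepsilon)-1)$ with $R(\varepsilon)$ large, and is then propagated to all smaller $\rho$ by monotonicity of $N_{u,X_1}$ --- this is precisely the role of the large radius $R(\varepsilon)$ in the statement, which your argument makes no real use of. (A genuine compactness proof of (i) at scale $R_j\to\infty$ could be arranged, but it needs the normalization and nondegeneracy bookkeeping via (\ref{doubling_estimate2}), which you do not supply; as written you short-circuit it with the false inequality.) Separately, your discussion of the strict inequality $0<N_{u,X_1}(\rho)-\alpha$ is hand-waving; note the paper itself only records $N_{u,X_1}(\rho)\geq\alpha$ from monotonicity, and nothing in the later applications uses strictness, so this is a minor point compared with the gap above.
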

\begin{proof} Given (i), the assertion of 
(ii) is an easy consequence of the monotonicity of frequency functions and the compactness property of Dirichlet minimizing two-valued functions,   and the proof of (iii) is similar to the proof of Lemma 2.4(iii) in~\cite{SimonCTC}.  

To prove (i), first observe that $N_{u,X_1}(\rho) \geq \alpha$ by monotonicity of $N_{u,X_1}(\cdot)$.  Clearly 
\begin{equation} \label{lemma2_4_eqn1a}
	D_{u,X_1}(\rho) \leq \left( 1 + \frac{|X_1|}{\rho} \right)^{n-2} D_{u,0}(\rho+|X_1|) 
\end{equation}
for all $\rho \in (0,R(\varepsilon)-1)$.  By integration by parts and using $\int_{\partial B_{\sigma}(0)} uD_R u = \int_{B_{\sigma}(0)} |Du|^2$, 
\begin{align*}
	&H_{u,Y}(\rho) = \rho^{1-n} \int_{\partial B_{\rho}(Y)} |u|^2 = \rho^{-n} \int_{\partial B_{\rho}(Y)} |u|^2 X \cdot \frac{X}{R} = \rho^{-n} \int_{B_{\rho}(Y)} (n |u|^2 + 2R u D_R u) 
	\\&\hspace{3in}= n \rho^{-n} \int_{B_{\rho}(Y)} |u|^2 + 2\rho^{-n} \int_0^{\rho} \sigma \int_{B_{\sigma}(Y)} |Du|^2  
\end{align*}
for all $Y \in B_1(0)$ and $\rho \in (0,R(\varepsilon)-|Y|)$.  Since for $\r > |X_{1}|,$ 
\begin{eqnarray*}
&&\int_{0}^{\r} \s \int_{B_{\s}(X_{1})} |Du|^{2} \geq \int_{|X_{1}|}^{\r} \s \int_{B_{\s - |X_{1}|}(0)} |Du|^{2} = 
\int_{0}^{\r - |X_{1}|}(\s + |X_{1}|)\int_{B_{\s}(0)} |Du|^{2} d\s\nonumber\\ 
&&\hspace{4in}\geq \int_{0}^{\r - |X_{1}|} \s \int_{B_{\s}(0)}|Du|^{2},
\end{eqnarray*}
we deduce using also (\ref{doubling_estimate}) that  
\begin{equation} \label{lemma2_4_eqn1b}
	H_{u,X_1}(\rho) \geq \left( 1 - \frac{|X_1|}{\rho} \right)^n H_{u,0}(\rho-|X_1|) 
	\geq \left( 1 - \frac{|X_1|}{\rho} \right)^n \left( \frac{\rho-|X_1|}{\rho+|X_1|} \right)^{2\alpha+2\delta(\varepsilon)} H_{u,0}(\rho+|X_1|) 
\end{equation}
for all $\rho \in (|X_{1}|,R(\varepsilon)-1)$.  By (\ref{lemma2_4_eqn1a}) and (\ref{lemma2_4_eqn1b}), 
\begin{align*}
	N_{u,X_1}(\rho) &\leq \left( 1 + \frac{|X_1|}{\rho} \right)^{n-2} \left( 1 - \frac{|X_1|}{\rho} \right)^{-n} 
		\left( \frac{\rho-|X_1|}{\rho+|X_1|} \right)^{-2\alpha-2\delta(\varepsilon)} N_{u,0}(\rho+|X_1|) 
	\\&\leq \left( 1 + \frac{1}{\rho} \right)^{n-2} \left( 1 - \frac{1}{\rho} \right)^{-n} 
		\left( \frac{\rho-1}{\rho+1} \right)^{-2K-2} N_{u,0}(\rho+|X_1|) 
\end{align*}
for $\r \in (|X_{1}|, R(\e) - 1)$, and hence $N_{u,X_1}(\rho) - \alpha < \varepsilon^2$ for all $\rho \in [R(\varepsilon) - 2, R(\varepsilon)-1)$ provided $R(\varepsilon)$ is sufficiently large.  By  monotonicity of $N_{u,X_1}$, it then follows that $N_{u,X_1}(\rho) - \alpha < \varepsilon^2$ for all $\rho \in (0, R(\varepsilon)-1)$. 
\end{proof}

Let us now turn to the case of two-valued $C^{1, \mu}$ harmonic functions. 

Fix $\mu \in (0, 1)$. Given a symmetric two-valued harmonic function $u \in C^{1,\mu}(\Omega;\mathcal{A}_2(\mathbb{R}^m))$, we define $D_{u,Y}(\rho)$, $H_{u,Y}(\rho)$ and $N_{u,Y}(\rho)$ for $0 < \rho < \op{dist}(Y,\partial \Omega)$ by (\ref{frequency_defn}).  By~\cite[Remark 2.3(2)]{SW}, $u$ satisfies the identities in (\ref{monotonicity_identity1}) and (\ref{monotonicity_identity2}).  Consequently the frequency function $N_{u,Y}(\r)$ is monotone increasing, with $N'_{u,Y}(\rho)$ given by (\ref{monotonicity_eqn3}), for $\r \in I$ for any open interval $I$ with $H_{u, Y}(\r) > 0$ for $\r \in I.$ 

Lemma~\ref{monotonicity_cor} holds with the hypothesis that $u, u_j \in C^{1, \mu} \, (\Omega; {\mathcal A}_{2}({\mathbb R}^{m}))$ are harmonic with $u$ non-zero,  taken in place of the hypothesis that $u, u_j \in W^{1, 2} \, (\Omega; {\mathcal A}({\mathbb R}^{m}))$ are Dirichlet energy minimizing, and with the following additional changes: ${\mathcal K}_{u}$ taken in place of $\Sigma_{u}$ in parts (d) and (e), and the conclusion  
${\mathcal N}_{u}(Y) \geq 3/2$ in place of ${\mathcal N}_{u}(Y) \geq 1/2$ in (d).  The proof only requires obvious modifications.

Given a symmetric harmonic function $u \in C^{1,\mu}(\Omega;\mathcal{A}_2(\mathbb{R}^m)),$ 
$Y \in \mathcal{K}_u$, a sequence of numbers $\r_{j}$ with $\r_{j} \to 0^{+}$, the estimates 
(\ref{schauder_h}) guarantee the existence of $\varphi \in C^{1, \mu} \, (\Omega; {\mathcal A}_{2}({\mathbb R}^{m}))$ (a blow-up of $u$ at $Y$) such that after passing to a sub sequence, $u_{Y, \r_{j}} \to \varphi$ in 
$C^{1}(B_{\s}(0); {\mathcal A}_{2}({\mathbb R}^{m}))$ for every $\s > 0,$ where
 $u_{Y,\rho}(X) = u(Y+\rho X)/\rho^{-n/2} \|u\|_{L^2(B_{\rho}(Y))}.$ Furthermore, each such blow-up $\varphi$ is non-zero (by the Lemma~\ref{monotonicity_cor} (b)), symmetric,  and harmonic on $\mathbb{R}^n.$ (Indeed, it is clear, by the $C^{1}$ convergence $u_{Y, \r_{j}} \to \varphi$,  that 
 $\varphi$ is locally given by two harmonic functions in ${\mathbb R}^{n} \setminus {\mathcal K}_{\varphi},$ and by standard facts concerning single-valued harmonic functions and the fact that $\varphi$ is in $C^{1}$, 
 the same is true near every point in ${\mathcal K}_{\varphi} \setminus {\mathcal B}_{\varphi}$);    
 moreover, $\varphi$ is homogeneous of degree ${\mathcal N}_{\varphi}(0) = \mathcal{N}_u(Y) \geq 3/2$.  
 
If for each $j = 0,1,2,\ldots,n-2$, we define $\mathcal{K}^{(j)}_u$ to be the set of points $Y \in \mathcal{K}_u$ such that ${\rm dim} \, S(\varphi) \leq j$ for every blow up $\varphi$ of $u$ at $Y$, where $S(\varphi)$ is as above,  then we have 
\begin{equation*}
	\mathcal{K}_u = \mathcal{K}^{(n-2)}_u \supseteq \mathcal{K}^{(n-3)}_u \supseteq \cdots \supseteq \mathcal{K}^{(1)}_u \supseteq \mathcal{K}^{(0)}_u 
\end{equation*}
and that Lemma~\ref{stratification_lemma} with ${\mathcal K}_{u}^{j}$ in place of $\Sigma_{u}^{j}$ holds, so that in particular, ${\rm dim}_{\mathcal H} \, {\mathcal K}^{(j)} \leq j$ for $j=0, 1, 2, \ldots, n-2.$ 

Thus in order to establish $(n-2)$-rectifiability properties for  $\mathcal{B}_u$ and $\mathcal{K}_u$, it suffices to consider $\mathcal{K}_u \setminus \mathcal{K}_u^{(n-3)}$.  Note that for each $Y \in \mathcal{K}_u \setminus \mathcal{K}_u^{(n-3)}$, there is a blow up $\varphi$ of $u$ at $Y$ such that, after an orthogonal change of coordinates, $\varphi$ has the form 
\begin{equation*}
	\varphi(x_1,x_2,x_3,\ldots,x_n) = \{ \pm \op{Re}(c (x_1+ix_2)^{\alpha}) \} 
\end{equation*}
for some constant $c \in \mathbb{C}^m$ and some $\alpha = k/2$ where $k \geq 3$ is an integer.  

Lemma \ref{lemma2_4} (with the same proof) holds with $u, \varphi  \in C^{1, \mu} \, (\Omega; {\mathcal A}_{2}({\mathbb R}^{m}))$ and harmonic, in place of the  hypothesis that $u \in W^{1, 2} \, (\Omega; {\mathcal A}_{2}({\mathbb R}^{m}))$ and  Dirichlet energy minimizing.

Finally, note that in the case of $C^{1,\mu}$ harmonic two-valued functions, we can assume without loss of generality that  $m = 1,$ since for each harmonic function $u \in C^{1,\mu}(\Omega;\mathcal{A}_2(\mathbb{R}^m))$, the coordinate functions $u^{\k}$, defined  for $\kappa = 1,\ldots,m$ by $u^{\kappa}(X) = \{ u_1(X) \cdot e_{\kappa}, u_2(X) \cdot e_{\kappa} \}$ for $X \in \Omega$, where $u(X) = \{u_1(X),u_2(X)\}$ and $e_1,\ldots,e_m$ is the standard orthonormal basis for $\mathbb{R}^m$, satisfy $u^{\kappa} \in C^{1,\mu}(\Omega;\mathcal{A}_2(\mathbb{R})),$

\begin{equation} \label{singset_coord}
	\mathcal{B}_{u^{\kappa}} \subseteq \mathcal{B}_u \subseteq \mathcal{K}_u = \bigcap_{\lambda=1}^m \mathcal{K}_{u^{\lambda}} \subseteq \mathcal{K}_{u^{\kappa}}
\end{equation}
so $u^{\k}$ is given by two single-valued harmonic functions locally on $\Omega \setminus {\mathcal K}_{\k}$ and hence also on $\Omega \setminus \mathcal{B}_{u^{\kappa}}.$ Thus to establish rectifiability properties for $\mathcal{B}_u$ and $\mathcal{K}_u$, it suffices to show such properties for $\mathcal{K}_{u^{\kappa}}$  for each $\kappa = 1,\ldots,m$.  (We cannot suppose $m = 1$ by passing to coordinate functions in this manner for Dirichlet minimizing two-valued functions since the coordinate functions need not be Dirichlet minimizing; for example, consider $u : \mathbb{R}^2 \rightarrow \mathbb{C} \cong \mathbb{R}^2$ defined by $u(x_1,x_2) = \{ \pm (x_1+ix_2)^{1/2} \}$).

\section{Statement of main results}
\setcounter{equation}{0}
In what follows, we will fix a number $\alpha$ such that $\alpha = k/2$ for some integer $k \geq 1$ and fix a non-zero function $\varphi^{(0)} : \mathbb{R}^n \rightarrow \mathcal{A}_2(\mathbb{R}^m)$ given by 
\begin{equation} \label{varphi0}
	\varphi^{(0)}(x_1,x_2,\ldots,x_n) = \{ \pm \op{Re}(c^{(0)} (x_1+ix_2)^{\alpha}) \}
\end{equation}
for some $c^{(0)} \in \mathbb{C}^m \setminus \{0\}$.  In the context of Dirichlet energy minimizing two-valued functions, we will assume $c^{(0)}$ and $\alpha$ are chosen so that $\varphi^{(0)}$ is Dirichlet energy minimizing.  In the context of $C^{1,\mu}$ two-valued harmonic functions, we will assume that $\varphi^{(0)} \in C^{1, \mu} \, ({\mathbb R}^{n}; {\mathcal A}_{2}({\mathbb R}^{m}))$ for $\mu >0,$ 
and that $D\varphi^{(0)}(0) = \{0, 0\},$ which is equivalent to either the requirement that $\varphi^{(0)} \in C^{1,1/2}(\mathbb{R}^n;\mathcal{A}_2(\mathbb{R}^m))$ and $D\varphi^{(0)}(0) = \{0, 0\}$ or to the requirement that $\alpha \geq 3/2$.  

\begin{defn}
Let $\varepsilon \in (0,1)$.  For $\varphi^{(0)}$ as above and Dirichlet energy minimizing, let $\mathcal{F}^{\text{Dir}}_{\varepsilon}(\varphi^{(0)})$ denote the set of Dirichlet energy minimizing symmetric functions $u \in W^{1,2}(B_1(0);\mathcal{A}_2(\mathbb{R}^m))$ such that 
\begin{equation} \label{defn_fnspace}
	\int_{B_1(0)} \mathcal{G}(u(X),\varphi^{(0)}(X))^2 dX \leq \varepsilon^2. 
\end{equation}

For $\varphi^{(0)}$ as above with $\alpha \geq 3/2$, let $\mathcal{F}^{\text{Harm}}_{\varepsilon}(\varphi^{(0)})$ denote the space of symmetric harmonic functions $u \in C^{1,1/2}(B_1(0);\mathcal{A}_2(\mathbb{R}^m))$ such that  (\ref{defn_fnspace}) holds.  (Recall that if $u \in C^{1,\mu}(B_1(0);\mathcal{A}_2(\mathbb{R}^m))$ and harmonic for some $\mu \in (0,1)$, then $u \in C^{1,1/2}(B_1(0);\mathcal{A}_2(\mathbb{R}^m))$.)
\end{defn}

\begin{defn}
Let $\varepsilon \in (0,1)$.  Let $\Phi_{\varepsilon}(\varphi^{(0)})$ denote the space of symmetric functions $\varphi : B_1(0) \rightarrow \mathcal{A}_2(\mathbb{R}^m)$ of the form 
\begin{equation*}
	\varphi(x_1,x_2,\ldots,x_n) = \{ \pm {\rm Re} \, \left(c (x_1+ix_2)^{\alpha}\right) \}
\end{equation*}
for some $c \in \mathbb{C}^m$, where $\alpha$ is the degree of homogeneity of $\varphi^{(0)}$, such that 
\begin{equation*}
	\int_{B_1(0)} \mathcal{G}(\varphi(X),\varphi^{(0)}(X))^2 dX \leq \varepsilon^2. 
\end{equation*}
Note that we do not assume that $\varphi$ is Dirichlet minimizing or is in $C^1$.
\end{defn}

\begin{defn}
Let $\varepsilon \in (0,1)$.  Let $\widetilde{\Phi}_{\varepsilon}(\varphi^{(0)})$ denote the space of Dirichlet minimizing symmetric two-valued functions of the form $\varphi(e^A X)$ for some $\varphi \in \Phi_{\varepsilon}(\varphi^{(0)})$ and matrix $A \in \mathcal{S}$ with $|A| \leq \varepsilon$, where $\mathcal{S}$ denotes the space of $n \times n$ skew symmetric matrices $A = (a^{ij})_{i,j = 1,\ldots,n}$ such that $a^{ij} = 0$ if $i,j \leq 2$ and $a^{ij} = 0$ if $i,j \geq 3$. 
\end{defn}

\begin{rmk} \label{graph_rmk}
Given $\varphi \in \Phi_{\varepsilon}(\varphi^{(0)})$, we can regard the graph of $\left.\varphi\right|_{{\mathbb R}^{n} \setminus \{0\} \times {\mathbb R}^{n-2}}$ as an immersed $C^{\infty}$ submanifold in $(\mathbb{R}^n \setminus \{0\} \times \mathbb{R}^{n-2}) \times {\mathbb R}^{m}$.  In particular, if $\varphi^{(0)}$ is Dirichlet minimizing and $\varepsilon$ is sufficiently small, the graph of 
$\left.\varphi\right|_{{\mathbb R}^{n} \setminus \{0\} \times {\mathbb R}^{n-2}}$ is an embedded $C^{\infty}$ submanifold of $(\mathbb{R}^n \setminus \{0\} \times \mathbb{R}^{n-2}) \times {\mathbb R}^{m}$.  Thus given $\Omega \subseteq \mathbb{R}^n \setminus \{0\} \times {\mathbb R}^{n-2}$, we can consider single-valued functions $v \,: \,\op{graph} \varphi |_{\Omega} \rightarrow \mathbb{R}^m$.  In the case that $\alpha$ is a positive integer, the graph of $\varphi$ is the union of two embedded submanifolds, the graphs of $+\op{Re} \, (c (x_1+ix_2)^{\alpha})$ and the graph of $-\op{Re}(c (x_1+ix_2)^{\alpha})$.  Hence any function $v \, : \, \op{graph} \varphi |_{\Omega} \rightarrow \mathbb{R}^m$ is determined by a function on the graph of $\left.+\op{Re} \, (c (x_1+ix_2)^{\alpha})\right|_{\Omega}$ and another function on the graph of $\left.-\op{Re}(c (x_1+ix_2)^{\alpha})\right|_{\Omega}$.  In the case that $\alpha = k/2$ for some odd integer $k \geq 1$, the graph of $\varphi$ consists of a single branched submanifold parameterized by $(re^{i\theta},y,\op{Re}(c^{(0)} r^{\alpha} e^{i\theta}))$ for $r \geq 0$, $\theta \in [0,4\pi]$, and $y \in \mathbb{R}^{n-2}$, and any function 
$v \, : \, {\rm graph} \, \left.\varphi\right|_{\Omega} \to {\mathbb R}^{m}$ can be parametrized over $\Omega$ accordingly.
\end{rmk}

The following is our main  lemma, which is the analogue of Lemma 1 of~\cite{SimonCTC}. 

\begin{lemma} \label{lemma1}
Let $\vartheta \in (0,1/4)$ and let $\varphi^{(0)}$ be as in  (\ref{varphi0}). Suppose either  
(a) $\varphi^{(0)}$ is Dirichlet energy minimizing  or,   
(b) $\varphi^{(0)} \in C^{1, 1/2}(\mathbb{R}^n;\mathcal{A}_2(\mathbb{R}^m))$ and $D\varphi^{(0)}(0) = \{0, 0\}$.
Then there are $\delta_0, \varepsilon_0 \in (0,1/4)$ depending only on $n$, $m$, $\varphi^{(0)}$, $\alpha$, and $\vartheta$ such that if $\varphi \in \widetilde{\Phi}_{\varepsilon_0}(\varphi^{(0)}),$ and if either
\begin{itemize}
\item[(1)] $u \in \mathcal{F}_{\varepsilon_0}^{\text{Dir}}(\varphi^{(0)})$ where $\varphi^{(0)}$ is as in (a), or 
\item[(2)] $u \in \mathcal{F}^{\text{Harm}}_{\varepsilon_0}(\varphi^{(0)})$ where $\varphi^{(0)}$ is as in (b), 
\end{itemize}
then either 
\begin{enumerate}
\item[(i)] $B_{\delta_0}(0,y_0) \cap \{ X \in B_1(0) \cap \Sigma_u : \mathcal{N}_u(X) \geq \alpha \} = \emptyset$ for some $y_0 \in B^{n-2}_{1/2}(0)$ or 

\item[(ii)] there is a $\widetilde{\varphi} \in \widetilde{\Phi}_{\gamma \varepsilon_0}(\varphi^{(0)})$ such that 
\begin{equation*}
	\vartheta^{-n-2\alpha} \int_{B_{\vartheta}(0)} \mathcal{G}(u,\widetilde{\varphi})^2 \leq C\vartheta^{2\mu} \int_{B_1(0)} \mathcal{G}(u,\varphi)^2 
\end{equation*}
where $\gamma \in [1,\infty)$, $\mu \in (0,1)$, and $C \in (0,\infty)$ are constants depending only on $n$, $m$, $\varphi^{(0)}$, and $\alpha$. 
\end{enumerate}
\end{lemma}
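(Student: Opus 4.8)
The plan is to prove Lemma~\ref{lemma1} by a contradiction argument built around a blow-up, in the spirit of Lemma~1 of~\cite{SimonCTC}, with the frequency monotonicity identity ($\star$) of Lemma~\ref{new-monotonicity} and the a priori estimate ($\star\star$) of Theorem~\ref{thm6_2}(a) as the main analytic inputs. So I fix $\vartheta$ and a small $\delta_0$ (to be chosen below in terms of the data) and suppose no such $\varepsilon_0$ exists; this yields $\varepsilon_j\downarrow 0$, $\varphi_j\in\widetilde\Phi_{\varepsilon_j}(\varphi^{(0)})$ and $u_j\in\mathcal{F}^{\mathrm{Dir}}_{\varepsilon_j}(\varphi^{(0)})$ (respectively $\mathcal{F}^{\mathrm{Harm}}_{\varepsilon_j}(\varphi^{(0)})$) for which neither alternative (i), with this $\delta_0$, nor alternative (ii) holds. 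Put $E_j^2=\int_{B_1(0)}\mathcal{G}(u_j,\varphi_j)^2$; the case $E_j=0$ is trivial (take $\widetilde\varphi=\varphi_j$), so $E_j>0$, and $E_j\to 0$ since $u_j,\varphi_j\to\varphi^{(0)}$ in $L^2(B_1(0))$. After an ambient rotation by a matrix in $\mathcal{S}$ I may assume $\varphi_j=\{\pm\mathrm{Re}(c_j(x_1+ix_2)^{\alpha})\}$ with $c_j\to c^{(0)}$, and, using the failure of (i) at $y_0=0$ (which produces $X_1\in\Sigma_{u_j}\cap B_{\delta_0}(0)$ with $\mathcal{N}_{u_j}(X_1)\ge\alpha$) together with Lemma~\ref{lemma2_4} and the monotonicity of frequency, I may reduce to the situation in which ($\star\star$) applies to $u_j$ and $N_{u_j,0}(\cdot)\to\alpha$. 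I then form $v_j=E_j^{-1}(u_j-\varphi_j)$, the difference being taken sheet-by-sheet via the pairing realising $\mathcal{G}(u_j,\varphi_j)$; this is a two-valued harmonic function off $\Sigma_{u_j}\cup(\{0\}\times\mathbb{R}^{n-2})$, with $\Sigma_{u_j}\to\{0\}\times\mathbb{R}^{n-2}$ locally since the two sheets of $\varphi^{(0)}$ are uniformly separated off that axis. Using ($\star\star$) at the scales $\sigma\in(0,1/2)$, the Schauder estimates (\ref{schauder_dm})/(\ref{schauder_h}), and the zero $2$-capacity of the relevant singular sets (Theorem~\ref{theorem2}), $\{v_j\}$ is bounded in $W^{1,2}_{\mathrm{loc}}(\mathbb{R}^n)$ (respectively $C^1_{\mathrm{loc}}$) with no concentration of $L^2$ mass near $\{0\}\times\mathbb{R}^{n-2}$, so a subsequence converges to a two-valued harmonic function $v$ on $\mathbb{R}^n$, smooth off $\{0\}\times\mathbb{R}^{n-2}$, with $\int_{B_1(0)}|v|^2=1$ (hence $v\not\equiv 0$), satisfying (\ref{monotonicity_identity1})--(\ref{monotonicity_identity2}) and hence ($\star$); passing ($\star\star$) to the limit gives $\int_{B_{1/2}(0)}R^{2-n}\,|\partial(v/R^{\alpha})/\partial R|^2<\infty$.

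Next I would analyse the structure of $v$. Expanding it on $B_{1/2}(0)$ into homogeneous two-valued harmonic modes $v=\sum_{\beta}v^{(\beta)}$, with $v^{(\beta)}$ spanned by pieces $\{\pm\mathrm{Re}(c\,(x_1+ix_2)^{\beta-l})\,Q(y)\}$ ($Q$ harmonic of degree $l$ in $y$), I note: the frequency of $u_j$ being $\ge\alpha$ near $0$ and that of $\varphi_j$ being exactly $\alpha$, the monotonicity ($\star$) and upper semicontinuity of frequency give $\mathcal{N}_v(0)\ge\alpha$, so $v$ has no modes of degree $<\alpha$; and $v^{(\alpha)}=\ell_{\mathrm{cyl}}+\ell_{\mathrm{rot}}+\ell_{\mathrm{bad}}$, where $\ell_{\mathrm{cyl}}$ is the cylindrical part ($l=0$), $\ell_{\mathrm{rot}}$ the part swept out by rotating $\varphi^{(0)}$ by matrices in $\mathcal{S}$, and $\ell_{\mathrm{bad}}$ the remaining (non-cylindrical, non-rotational) degree-$\alpha$ part; thus $\ell_{\mathrm{cyl}}+\ell_{\mathrm{rot}}$ ranges over $\mathcal{L}:=T_{\varphi^{(0)}}\widetilde\Phi$, the tangent space at $\varphi^{(0)}$ of the finite-dimensional family $\widetilde\Phi$.

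The crucial step is to show that the failure of (i) forces $\ell_{\mathrm{bad}}=0$: if $v_j$ carried such a mode of size $\gtrsim E_j$, then, via ($\star$), the monotonicity of frequency, and the variational identities (\ref{monotonicity_identity1})--(\ref{monotonicity_identity2}), the set $\{X\in\Sigma_{u_j}:\mathcal{N}_{u_j}(X)\ge\alpha\}$ would be pushed off a portion of $\{0\}\times\mathbb{R}^{n-2}$ and onto a lower-dimensional set (essentially the zero set of the $y$-polynomial factor of the mode), so for $\delta_0$ small some ball $B_{\delta_0}(0,y_0)$, $y_0\in B^{n-2}_{1/2}(0)$, would miss it --- contradicting the failure of (i). Hence $v=\ell+\sum_{\beta>\alpha}v^{(\beta)}$ with $\ell\in\mathcal{L}$. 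Since the occurring degrees are discrete with a gap $\mu_0>0$ (one may take $\mu_0=\tfrac12$) above $\alpha$, homogeneity and $L^2$-orthogonality of the modes give $\int_{B_{\vartheta}(0)}|v-\ell|^2\le C\,\vartheta^{n+2\alpha+2\mu_0}\int_{B_1(0)}|v|^2=C\,\vartheta^{n+2\alpha+2\mu_0}$. Letting $\widetilde\varphi_j\in\widetilde\Phi_{\gamma\varepsilon_j}(\varphi^{(0)})$ be the member of the family obtained from $\varphi_j$ by the correction $E_j\,\ell$ --- here $|\ell|\le C\|v\|_{L^2(B_1)}$ and $E_j\le 2\varepsilon_j$ fix $\gamma$ in terms of the data, and in the Dirichlet case one uses that $\widetilde\Phi$ is near $\varphi^{(0)}$ a smooth manifold with tangent space $\mathcal{L}$ (reflecting, e.g., the Micallef--White constraint $c\cdot c=0$ for odd $k$, cf.~\cite{MW}) and that a blow-up of minimizers lies in $\mathcal{L}$ --- and estimating $\mathcal{G}(u_j,\widetilde\varphi_j)\le E_j|v_j-\ell|+O(E_j^2)$, one obtains for all large $j$
$$\vartheta^{-n-2\alpha}\int_{B_{\vartheta}(0)}\mathcal{G}(u_j,\widetilde\varphi_j)^2\ \le\ C\,\vartheta^{2\mu_0}\,E_j^2\ =\ C\,\vartheta^{2\mu_0}\int_{B_1(0)}\mathcal{G}(u_j,\varphi_j)^2,$$
i.e. alternative (ii) holds with $\mu=\mu_0$ --- contradicting the choice of $u_j$.

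The step I expect to be the main obstacle is precisely the one just highlighted: making rigorous, uniformly in $j$, the implication ``failure of (i) $\Rightarrow$ $\ell_{\mathrm{bad}}=0$'', i.e. that a non-cylindrical, non-rotational degree-$\alpha$ mode in the blow-up collapses the high-frequency part of $\Sigma_{u_j}$ onto a lower-dimensional set. This requires the classification of degree-$\alpha$ two-valued harmonic functions branched along $\{0\}\times\mathbb{R}^{n-2}$ and the exact identification of those arising from ambient rotations (which is what forces the use of $\widetilde\Phi$, carrying the rotation parameter, in place of $\Phi$), together with a quantitative frequency-drop estimate built from ($\star$) and (\ref{monotonicity_identity1})--(\ref{monotonicity_identity2}). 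The remaining ingredients --- compactness and absence of concentration for the blow-up (via ($\star\star$) and the Schauder estimates), and the passage from the linear estimate for $v$ to the nonlinear estimate for $u_j$, including the verification that $\widetilde\varphi_j\in\widetilde\Phi_{\gamma\varepsilon_j}(\varphi^{(0)})$ --- are routine though lengthy.
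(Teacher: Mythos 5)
Your skeleton (contradiction, blow-up of the excess relative to $\varphi_j$, classification of the linear limit, transfer back to $u_j$ via a corrected $\widetilde{\varphi}_j\in\widetilde{\Phi}_{\gamma\varepsilon_j}$) matches the paper's Simon-style strategy, but the step you yourself flag as the main obstacle is a genuine gap, and it is resolved in the paper by a different mechanism than the one you propose. You want to show that a non-cylindrical, non-rotational degree-$\alpha$ mode $\ell_{\mathrm{bad}}$ in the blow-up would force the set $\{\mathcal{N}_{u_j}\geq\alpha\}$ off some ball $B_{\delta_0}(0,y_0)$, i.e.\ you try to use the failure of (i) \emph{negatively}, via an unproven quantitative frequency-drop/dimension-reduction statement. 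The paper uses the failure of (i) \emph{positively}: it guarantees, near every point $(0,z)$ of the axis and at every scale $\rho\in[\vartheta,1/4]$, a point $Z_j$ with $\mathcal{N}_{u_j}(Z_j)\geq\alpha$, and the a priori estimates centered at such points (Corollary~\ref{cor6_4} controlling $\op{dist}(Z_j,\{0\}\times\mathbb{R}^{n-2})$, Corollary~\ref{cor6_5} giving the weighted bound on $v_j(X,\varphi(X))-D_x\varphi(X)\cdot\xi_j$, Corollary~\ref{cor6_6} giving non-concentration of excess, all consequences of Theorem~\ref{thm6_2} and the variational identities) pass to the limit and become the decay hypotheses (\ref{lemma4_12_decay-0})--type estimates for the blow-up $w$, uniformly along the axis. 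It is then the classification Lemma~\ref{lemma4_2} (homogeneous degree-$\alpha$ solutions with that near-axis decay lie exactly in $\mathcal{L}$), fed into the coercivity and iteration Lemmas~\ref{lemma4_12}--\ref{lemma4_14}, that excludes your $\ell_{\mathrm{bad}}$ and produces the decay exponent $\mu$ — no collapse of the high-frequency set onto a lower-dimensional set is ever needed or proved. Without either the paper's route or a proof of your frequency-drop claim, your argument does not close.

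Two further points would need repair even granting that step. First, your compactness/non-concentration argument is circular: you invoke the zero $2$-capacity of $\Sigma_u$ via Theorem~\ref{theorem2}, but Theorem~\ref{theorem2} is deduced from Lemma~\ref{lemma1}; the correct source of non-concentration of $\mathcal{G}(u_j,\varphi_j)^2$ near the axis is Corollary~\ref{cor6_6} (together with Lemma~\ref{coarse_graph}), whose hypothesis is again exactly the failure of alternative (i). Second, your mode analysis is only carried out at the origin: the assertion ``$\mathcal{N}_v(0)\geq\alpha$ hence no modes of degree $<\alpha$'' does not control the behaviour of $w$ near the rest of the axis $\{0\}\times\mathbb{R}^{n-2}$ (that is what the estimates centered at the points $Z_j$ supply), the claimed spectral gap $\mu_0=1/2$ with $L^2$-orthogonality of modes on $B_\vartheta$ is not justified (the paper obtains $\mu$ from the compactness/iteration argument of Lemma~\ref{lemma4_14} rather than an explicit gap), and in the case $\alpha=1/2$ the classification requires the additional hypothesis (\ref{lemma4_2_alphaishalf}), which in the paper is verified for the blow-up via Corollary~\ref{alpha=1/2}; your sketch does not address it.
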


Lemma \ref{lemma1}  leads to our main results 
Theorem~\ref{theorem2}, Theorem~\ref{theorem3} and Theorem~\ref{no-gaps} below.
In these theorems, we shall use the following notation and terminology: 

If $u = \{u_{1}, u_{2}\} \, : \, \Omega \to {\mathcal A}_{2}({\mathbb R}^{m}),$ 
then $v = u - u_{a} = \{\pm(u_{1} - u_{2})/2\}$, where $u_{a} = (u_{1}+u_{2})/2.$ 
By a ``cylindrical blow-up of $v$ at $X$'' we mean a blow-up $\varphi$ of $v$ at $X$  for which ${\rm dim} \, S(\varphi) = n-2.$ (See the discussion preceding Lemma~\ref{stratification_lemma}.) 

\begin{theorem} \label{theorem2}
Let $u \in W^{1,2}(B_1(0);\mathcal{A}_2(\mathbb{R}^m))$ be any non-zero Dirichlet minimizing  function.  Then $\Sigma_u$ is countably $(n-2)$-rectifiable.  For any compact set $K \subseteq B_1(0)$, 
$$\Sigma_u \cap K \cap \{ X : \mathcal{N}_v(X) = \alpha \; \mbox{and $v$ has a cylindrical blow-up at $X$}\} \neq \emptyset$$ 
only if $\alpha$ takes one of finitely many values $\in \{1/2, 1, 3/2, 2, \ldots\}$.     For each $\alpha \in \{1/2, 1, 3/2, 2, \ldots\}$,  there is an open set $V_{\alpha} \supset \{ X : \mathcal{N}_v(X)  = \alpha \; \mbox{and $v$ has a cylindrical blow-up at $X$}\}$ such that $V_{\alpha} \cap \{ X : \mathcal{N}_v(X) \geq \alpha \}$ has locally finite $\mathcal{H}^{n-2}$-measure.  
\end{theorem}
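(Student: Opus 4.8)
The plan is to reduce to the symmetric case, discard the lower-dimensional strata of $\Sigma_u$ using the dimension bound already in hand, and run the Simon iteration scheme built on Lemma~\ref{lemma1} over the remaining top-dimensional part of the singular set. Since $\Sigma_u = \Sigma_v$ and $v = u - u_a$ is a symmetric Dirichlet energy minimizing two-valued function (see Section~\ref{multivalfun_section}), we may replace $u$ by $v$; if $v \equiv 0$ then $\Sigma_u = \emptyset$ and there is nothing to prove, so assume henceforth that $u$ is symmetric, non-zero, and $v = u$. By Lemma~\ref{stratification_lemma}, $\Sigma^{(n-3)}_u$ has Hausdorff dimension at most $n-3$, so $\mathcal{H}^{n-2}(\Sigma^{(n-3)}_u) = 0$ and $\Sigma^{(n-3)}_u$ is trivially countably $(n-2)$-rectifiable; it thus suffices to treat $\Sigma_u \setminus \Sigma^{(n-3)}_u$. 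By the discussion preceding Lemma~\ref{stratification_lemma}, each $Y$ in this set has a blow-up which, after an orthogonal change of coordinates on $\mathbb{R}^n$, equals $\{\pm\op{Re}(c(x_1+ix_2)^{\alpha})\}$ with $c \in \mathbb{C}^m \setminus \{0\}$ and $\alpha = k/2$, $k$ a positive integer; since every blow-up of $u$ at $Y$ is homogeneous of degree $\mathcal{N}_u(Y)$, this forces $\mathcal{N}_u(Y) = k/2$, and by [\cite{Almgren}, Theorem~2.13] such a blow-up is Dirichlet energy minimizing on $\mathbb{R}^n$.

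Fix a positive integer $k$, put $\alpha = k/2$, and let $\mathcal{C}_k$ be the (compact) set of symmetric Dirichlet energy minimizing functions $\{\pm\op{Re}(c(x_1+ix_2)^{\alpha})\}$, $c \in \mathbb{C}^m$, of unit $L^2(B_1(0))$-norm; cover $\mathcal{C}_k$ by finitely many $L^2(B_1(0))$-balls of radius $\varepsilon_0/2$ centred at models $\varphi^{(0)}_{k,1},\dots,\varphi^{(0)}_{k,N_k}$, with $\vartheta \in (0,1/4)$ fixed first (small relative to the constants $C,\mu$ of Lemma~\ref{lemma1}) and $\varepsilon_0, \delta_0$ then chosen as in Lemma~\ref{lemma1}. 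Given $X_0 \in \Sigma_u \setminus \Sigma^{(n-3)}_u$ with $\mathcal{N}_u(X_0) = \alpha$ and a blow-up that, after an orthogonal change of coordinates aligning its axis with $\{0\}\times\mathbb{R}^{n-2}$ and normalizing, lies within distance $\varepsilon_0/2$ of $\varphi^{(0)} := \varphi^{(0)}_{k,i}$: Lemma~\ref{lemma2_4} (applied after a preliminary rescaling about $X_0$) provides $\rho_0 = \rho_0(X_0) > 0$ with $N_{u,X_0}(\rho) - \alpha$ small and $u_{X_0,\rho} \in \mathcal{F}^{\text{Dir}}_{\varepsilon_0}(\varphi^{(0)})$ for all $\rho \le \rho_0$. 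One then iterates Lemma~\ref{lemma1} over the scales $\vartheta^j\rho_0$, $j = 0,1,2,\dots$ (as in \cite{SimonCTC}), recentring at each step among the finitely many models $\varphi^{(0)}_{k,1},\dots,\varphi^{(0)}_{k,N_k}$ (and changing coordinates by the accompanying rotation in $\mathcal{S}$) --- legitimate because each element of $\widetilde{\Phi}_{\varepsilon_0}(\varphi^{(0)}_{k,i})$ is itself a cylindrical Dirichlet energy minimizer and the constants of Lemma~\ref{lemma1} are uniform over this finite family. Either alternative (i) of Lemma~\ref{lemma1} occurs at some scale $\vartheta^{j_0}\rho_0$, in which case a ball of relative radius $\sim\delta_0$ inside $B_{\vartheta^{j_0}\rho_0}(X_0)$ is disjoint from $\Sigma_u \cap \{\mathcal{N}_u \ge \alpha\}$; or alternative (ii) holds at every scale, so the excess decays geometrically, the rescalings $u_{X_0,\rho}$ converge as $\rho \to 0^+$ (at a rate $C\rho^{\beta}$, some $\beta = \beta(n,m,\varphi^{(0)}) > 0$) to a unique blow-up $\widetilde\varphi_{X_0}$ which is cylindrical of homogeneity $\alpha$, and $\Sigma_u \cap \{\mathcal{N}_u \ge \alpha\} \cap B_{\sigma}(X_0)$ lies, for $0 < \sigma \le \rho_0/4$, in the $C\sigma^{1+\beta}$-neighbourhood of the $(n-2)$-plane $X_0 + S(\widetilde\varphi_{X_0})$, with $S(\widetilde\varphi_{X_0})$ depending Hölder-continuously on $X_0$ over the set of points where this case occurs.

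Feeding this dichotomy into the stopping-time covering argument of \cite{SimonCTC} --- decompose $\Sigma_u \cap \{\mathcal{N}_u \ge \alpha\} \cap B_{\rho_0/4}(X_0)$ according to the last scale at which alternative (ii) held, observing that on a ``decay'' piece the set is trapped in a thin neighbourhood of an $(n-2)$-plane (hence covered by $\lesssim (r/\rho)^{n-2}$ balls of radius $\rho$) while on a ``gap'' piece a fixed proportion of every covering ball is discarded before the argument is reapplied --- one obtains a radius $r_0(X_0) \in (0,\rho_0(X_0)/4)$ with $\mathcal{H}^{n-2}\big(\Sigma_u \cap \{\mathcal{N}_u \ge \alpha\} \cap B_{r_0(X_0)}(X_0)\big) \le C\, r_0(X_0)^{n-2}$, and moreover this portion of $\Sigma_u$ is covered by countably many Lipschitz $(n-2)$-dimensional graphs (the Hölder-continuously varying planes $X_0 + S(\widetilde\varphi_{X_0})$ giving the graphs over the decay pieces, the iteration those over the gap pieces). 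Passing by Lindelöf to a countable subcover of $\Sigma_u \setminus \Sigma^{(n-3)}_u$ by balls $B_{r_0(X_0)}(X_0)$ over all admissible triples $(k,i,X_0)$, and combining with $\mathcal{H}^{n-2}(\Sigma^{(n-3)}_u) = 0$, shows that $\Sigma_u$ is countably $(n-2)$-rectifiable. For the last two assertions: $\mathcal{N}_v$ is upper semicontinuous (Lemma~\ref{monotonicity_cor}(c) with the constant sequence $u_j = v$), hence bounded above on every compact $K \subset B_1(0)$, so --- cylindrical blow-ups having half-integer homogeneity --- $\Sigma_u \cap K \cap \{X : \mathcal{N}_v(X) = \alpha,\ v \text{ has a cylindrical blow-up at }X\}$ is non-empty only for the finitely many $\alpha = k/2 \le \sup_K \mathcal{N}_v$; and for each such $\alpha$, the set $V_\alpha := \bigcup B_{r_0(X_0)}(X_0)$ --- union over all $X_0 \in B_1(0)$ with $\mathcal{N}_v(X_0) = \alpha$ admitting a cylindrical blow-up, $r_0(X_0)$ as above --- is open, contains $\{X : \mathcal{N}_v(X) = \alpha,\ v \text{ has a cylindrical blow-up at }X\}$, and, since any compact subset of $V_\alpha$ is covered by finitely many of the balls $B_{r_0(X_0)}(X_0)$, satisfies that $V_\alpha \cap \{X : \mathcal{N}_v(X) \ge \alpha\}$ has locally finite $\mathcal{H}^{n-2}$-measure.

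The principal difficulty is this combined iteration-and-covering step: extracting from Lemma~\ref{lemma1} the geometric decay and uniqueness of the cylindrical blow-up (careful bookkeeping being needed to keep the successively recentred models admissible and the accumulated rotations summable), and then executing Simon's stopping-time/Vitali covering so as to balance the ``decay'' and ``gap'' alternatives into the uniform estimate $\mathcal{H}^{n-2}(\cdot) \le C\,r^{n-2}$ and the Lipschitz-graph cover. The remaining ingredients are routine given Lemmas~\ref{lemma1}, \ref{lemma2_4}, \ref{stratification_lemma} and \ref{monotonicity_cor}.
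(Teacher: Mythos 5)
Your overall route is the paper's route: discard $\Sigma_u^{(n-3)}$ via Lemma~\ref{stratification_lemma}, use Lemma~\ref{lemma2_4} to get started at a point with a cylindrical blow-up, iterate Lemma~\ref{lemma1} to obtain the decay/gap dichotomy, and then run Simon's covering scheme (what you unpack inline is essentially the intermediate Proposition~\ref{theorem1} of Section~\ref{prooflemma_section}). However, there is a genuine gap at the re-application step of the covering argument. When alternative (i) of Lemma~\ref{lemma1} occurs at some scale (or, more generally, when you restart the cover at a new centre $Z\in\Sigma_u\cap\{\mathcal{N}_u\geq\alpha\}$ inside a previously covered ball), you have no right to ``reapply the argument'': Lemma~\ref{lemma1} requires the rescaled function to lie in $\mathcal{F}^{\text{Dir}}_{\varepsilon_0}(\varphi^{(0)})$ for a \emph{cylindrical} model, and Lemma~\ref{lemma2_4}(ii) only guarantees closeness to \emph{some} homogeneous minimizer of degree near $\alpha$, which may well have $\dim S(\varphi)<n-2$. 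The paper handles exactly this via the dichotomy of Lemma~\ref{lemma2_4}(iii): at each new ball either one is close to a cylindrical minimizer of degree $\alpha$ (so Proposition~\ref{theorem1} applies), or $\{\mathcal{N}\geq\alpha\}$ in that ball is trapped in an $\varepsilon$-neighbourhood of an $(n-3)$-dimensional affine subspace, in which case the set is covered with a definite gain in $(n-2)$-content; without this second alternative (and the discreteness of the admissible degrees $k/2$, which makes the dichotomy uniform) the stopping-time iteration you describe does not close, and neither the bound $\mathcal{H}^{n-2}\leq C r^{n-2}$ nor the rectifiability of the residual set follows.

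A related, smaller misstatement: your claim that Lemma~\ref{lemma2_4} yields $u_{X_0,\rho}\in\mathcal{F}^{\text{Dir}}_{\varepsilon_0}(\varphi^{(0)})$ with a \emph{fixed} model ``for all $\rho\leq\rho_0$'' is not justified a priori, since blow-ups need not be unique and at intermediate scales the rescalings can be close to different (possibly non-cylindrical) homogeneous minimizers drifting outside the $\varepsilon_0$-ball of $\varphi^{(0)}$. All the iteration needs, and all the paper uses, is closeness at a single starting scale $\sigma$ (together with $N_{u_{X_0,\sigma},0}(R(\varepsilon))-\alpha<\delta(\varepsilon)$); persistence of closeness at smaller scales is then a \emph{consequence} of alternative (ii) of Lemma~\ref{lemma1} (with the accumulated models staying within $\varepsilon_0$ of $\varphi^{(0)}$ because the excess decays geometrically and the initial excess is taken $\leq\varepsilon_0/C$), not an input. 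With these two points repaired — one starting scale only, and Lemma~\ref{lemma2_4}(iii) invoked at every re-covering step — your argument coincides with the paper's proof.
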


\begin{theorem}\label{theorem3}
Let $u \in C^{1,\mu}(B_1(0);\mathcal{A}_2(\mathbb{R}^m))$, where $\mu \in (0,1)$, be a non-zero harmonic  function $B_1(0)$.  Then $\mathcal{K}_u$ is countably $(n-2)$-rectifiable.  For  any compact set $K \subseteq B_1(0)$, 
$$\mathcal{K}_u \cap K \cap \{ X : \mathcal{N}_v(X) = \alpha \; \mbox{and $v$ has a cylindrical blow-up at $X$}\} \neq \emptyset$$ only if $\alpha$ takes one of finitely many values $\in \{3/2, 2, 5/2, \dots\}$.  For every $\alpha  \in \{3/2, 2, 5/2, \ldots\},$ there is an open set 
$V_{\alpha} \supset \{ X : \mathcal{N}_v(X) = \alpha \; \mbox{and $v$ has a cylindrical blow-up at $X$}\}$ such that $V_{\alpha} \{ X : \mathcal{N}_u(X) \geq \alpha \}$ has locally finite $\mathcal{H}^{n-2}$-measure.  
\end{theorem}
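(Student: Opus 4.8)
The plan is to transplant the argument for Theorem~\ref{theorem2} to the $C^{1,\mu}$ setting, substituting for the Dirichlet-minimizing ingredients their harmonic counterparts recorded in Section~2: the $\mathcal{K}_u$-versions of Lemma~\ref{monotonicity_cor} and Lemma~\ref{stratification_lemma} (with lower frequency bound $3/2$), the $C^{1,\mu}$ form of Lemma~\ref{lemma2_4}, and alternative~(2) of the decay Lemma~\ref{lemma1}; the overall scheme is that of Simon~\cite{SimonCTC}. Assume $n\geq 3$ (for $n\leq 2$ the set $\mathcal{K}_u$ is discrete, and the statement is vacuous or classical). First I would reduce to the symmetric case: writing $u = u_a + v$ with $v = \{\pm(u_1-u_2)/2\}$, the function $v$ is again $C^{1,\mu}$ and harmonic, $\mathcal{K}_u = \mathcal{K}_v$, and since the theorem concerns only $\mathcal{K}_u$, $\mathcal{N}_v$, and cylindrical blow-ups of $v$, it suffices to treat $v$; henceforth I take $u = v$ symmetric (so $\mathcal{N}_u = \mathcal{N}_v$). (One may further reduce to $m=1$ via (\ref{singset_coord}), but this is not needed.) The $\mathcal{K}$-version of Lemma~\ref{stratification_lemma} gives $\mathcal{K}_v = \mathcal{K}_v^{(n-2)}\supseteq\cdots\supseteq\mathcal{K}_v^{(0)}$ with $\dim_{\mathcal H}\mathcal{K}_v^{(j)}\leq j$, so $\mathcal{K}_v^{(n-3)}$ is $\mathcal{H}^{n-2}$-null, hence a fortiori countably $(n-2)$-rectifiable, and everything reduces to the top stratum $\mathcal{K}_v\setminus\mathcal{K}_v^{(n-3)}$; at each of its points $X$ the function $v$ has a cylindrical blow-up, which after an orthogonal rotation is $\{\pm\op{Re}(c(x_1+ix_2)^\alpha)\}$ with $\alpha = \mathcal{N}_v(X) = k/2$, $k\geq 3$ an integer. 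Writing $\mathcal{B}^\alpha = \{X\in\mathcal{K}_v : \mathcal{N}_v(X)=\alpha,\ v\text{ has a cylindrical blow-up at }X\}$, we have $\mathcal{K}_v\setminus\mathcal{K}_v^{(n-3)} = \bigcup_\alpha \mathcal{B}^\alpha$, the union over half-integers $\alpha\geq 3/2$.

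The second assertion is then immediate: for compact $K\subset B_1(0)$, monotonicity and upper semicontinuity of frequency (the $C^{1,\mu}$ form of Lemma~\ref{monotonicity_cor}(c)) bound $\mathcal{N}_v$ above on $K$ by some finite $K_0$, so $\mathcal{B}^\alpha\cap K\neq\emptyset$ forces $3/2\leq\alpha\leq K_0$, leaving only finitely many values. It remains, for a fixed half-integer $\alpha\geq 3/2$, to produce the open set $V_\alpha\supset\mathcal{B}^\alpha$ with $V_\alpha\cap\{\mathcal{N}_v\geq\alpha\}$ of locally finite $\mathcal{H}^{n-2}$-measure; this also gives countable $(n-2)$-rectifiability of $\mathcal{B}^\alpha$, hence (with the stratum bound) of $\mathcal{K}_u$. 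For each $Z\in\mathcal{B}^\alpha$ I would use the $C^{1,\mu}$ form of Lemma~\ref{lemma2_4} (with this $\alpha$ and an upper frequency bound on a compact neighborhood of $Z$) together with the existence of a cylindrical blow-up at $Z$ to find an orthogonal rotation $Q_Z$, a radius $\rho_Z>0$, and a model $\varphi^{(0)}_Z(x) = \{\pm\op{Re}(c_Z(x_1+ix_2)^\alpha)\}$ --- automatically lying in $C^{1,1/2}$ with $D\varphi^{(0)}_Z(0)=\{0,0\}$, since $\alpha\geq 3/2$ --- such that the normalized rescaling $v_{Z,\rho_Z}\circ Q_Z$ lies in $\mathcal{F}^{\text{Harm}}_{\varepsilon_0}(\varphi^{(0)}_Z)$, where $\varepsilon_0,\delta_0,\vartheta,\gamma,C$ are the constants of Lemma~\ref{lemma1}. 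A countable subfamily of the balls $B_{\rho_Z/4}(Z)$ covers $\mathcal{B}^\alpha$, so it suffices to show, in each such rescaled ball, that $\{\mathcal{N}_v\geq\alpha\}$ is, in the smaller concentric ball, countably $(n-2)$-rectifiable with finite $\mathcal{H}^{n-2}$-measure.

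Fixing one good ball and rescaling to $B_1(0)$, I would run Simon's blow-up iteration. For any $Z$ in the high-frequency set of $B_{1/2}(0)$, apply Lemma~\ref{lemma1} to the rescalings $v_{Z,\vartheta^j}$, $j=0,1,2,\dots$, with $\vartheta$ fixed small enough that $C\vartheta^{2\mu}<1$: at each scale either alternative~(i) holds --- a ``gap'', i.e.\ a ball $B_{\delta_0\vartheta^j}(Z+\vartheta^j(0,y_0))$, $y_0\in B^{n-2}_{1/2}(0)$, missing $\{\mathcal{N}_v\geq\alpha\}$ --- or alternative~(ii) gives a cylindrical $\widetilde\varphi_{j+1}$ with $\vartheta^{-n-2\alpha}\int_{B_\vartheta}\mathcal{G}(v_{Z,\vartheta^j},\widetilde\varphi_{j+1})^2 \leq C\vartheta^{2\mu}\int_{B_1}\mathcal{G}(v_{Z,\vartheta^j},\widetilde\varphi_j)^2$. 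If alternative~(ii) persists for all $j$, summing the geometric series (the drift of $\widetilde\varphi_j$ in $\widetilde\Phi$ being controlled by the same series) produces a unique homogeneous cylindrical two-valued harmonic $\widetilde\varphi$ with $\|\rho^{-\alpha}v(Z+\rho\,\cdot)-\widetilde\varphi\|_{L^2(B_1(0))}\leq C'\rho^{\gamma'}$ for all $\rho\in(0,1/2]$ and fixed $\gamma'>0$ --- exactly the decay estimate advertised in the outline --- which forces the high-frequency set near $Z$ to lie within distance $C''\rho^{1+\gamma''}$ of the $(n-2)$-plane $Z+S(\widetilde\varphi)$; varying the center $Z$ and comparing the resulting planes then shows, in now-standard fashion, that near such $Z$ the high-frequency set is contained in a single $C^{1,\beta}$ (in particular Lipschitz) graph over an $(n-2)$-plane, which is $(n-2)$-rectifiable of locally finite $\mathcal{H}^{n-2}$-measure. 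The points at which alternative~(i) occurs at some finite scale are handled by the gap/covering argument of \cite{SimonCTC}: a gap excises a definite portion of $\{\mathcal{N}_v\geq\alpha\}$, and a Vitali-type cover together with downward induction on the scale yields both the rectifiability and the bound $\mathcal{H}^{n-2}(\{\mathcal{N}_v\geq\alpha\}\cap B_{1/4}(0))<\infty$; assembling these local estimates over the countable cover (again as in \cite{SimonCTC}) delivers the open set $V_\alpha$. (The minimal-$k$ case $k=3$, in which one shows alternative~(i) never occurs and so obtains a clean $C^{1,\beta}$-submanifold structure, is Theorem~\ref{no-gaps} and is not needed here.)

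The main obstacle I expect is this last step. Everything up to it is a routine transcription of the Dirichlet-minimizing argument of Theorem~\ref{theorem2}. But the bookkeeping of the gap-versus-decay dichotomy across dyadic scales, the passage from the iterated estimate of Lemma~\ref{lemma1} to the $C^{1,\beta}$ graph, and above all the covering argument that yields \emph{finite} (not merely $\sigma$-finite) $\mathcal{H}^{n-2}$-measure constitute the part of Simon's scheme that cannot simply be cited in the present higher-multiplicity setting and must be rebuilt using Lemma~\ref{lemma1}, the frequency monotonicity identity~(\ref{monotonicity_eqn3}), and the variational identities~(\ref{monotonicity_identity1}), (\ref{monotonicity_identity2}).
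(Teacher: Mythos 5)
Your proposal follows essentially the same route as the paper: reduce to the top stratum via the $\mathcal{K}_u$-stratification, use the $C^{1,\mu}$ analogues of Lemmas~\ref{monotonicity_cor}, \ref{stratification_lemma} and \ref{lemma2_4} to place rescalings of $v$ in $\mathcal{F}^{\text{Harm}}_{\varepsilon_0}(\varphi^{(0)})$, and then run the gap-versus-decay iteration of Lemma~\ref{lemma1} (i.e.\ the $C^{1,\mu}$ version of Proposition~\ref{theorem1}) together with the covering scheme of \cite{SimonCTC}, which is precisely how the paper deduces Theorem~\ref{theorem3} from the argument for Theorem~\ref{theorem2}. The one ingredient to keep explicit when you rebuild the covering step is the dichotomy of Lemma~\ref{lemma2_4}(iii): at sub-balls centered at arbitrary points of $\{\mathcal{N}_v \geq \alpha\}$ closeness to the original $\varphi^{(0)}_Z$ is not inherited, and the paper's iteration proceeds because at every such point and scale one has either $L^2$-closeness to \emph{some} cylindrical homogeneous function of the same degree $\alpha$ (so Proposition~\ref{theorem1} applies) or confinement of the high-frequency set near an $(n-3)$-dimensional affine subspace (which gives the $(1-\delta_0)$ measure reduction directly).
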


\begin{theorem}\label{no-gaps}
Suppose that either (i) $u \in W^{1, 2} \, (B_{1}(0); {\mathcal A}_{2}({\mathbb R}^{m}))$ is non-zero, Dirichlet energy minimizing in $B_{1}(0)$ and ${\mathcal N}_{v}(Z) = 1/2$ for some $Z \in B_{1}(0)$, or (ii) $u \in C^{1, \mu} \, (B_{1}(0); {\mathcal A}_{2}({\mathbb R}^{m}))$ for $\mu \in (0, 1),$ $u$ is non-zero, $u$ is harmonic in $B_{1}(0),$ and ${\mathcal N}_{v}(Z)  = 3/2$ for some $Z \in B_{1}(0)$. Then $Z \in {\mathcal B}_{u}$ and there exists $\r > 0$ such that ${\mathcal B}_{u} \cap B_{\r}(Z)$ is an $(n-2)$-dimensional $C^{1, \alpha}$  submanifold of $B_{\r}(Z)$ for some $\alpha \in (0, 1)$.  
\end{theorem}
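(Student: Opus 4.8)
First I would reduce to the symmetric part. Write $u = u_{a} + v$ where $u_{a}$ is the average of the two values of $u$ (single-valued and harmonic) and $v = \{\pm(u_{1}-u_{2})/2\}$; then $\mathcal{B}_{u} = \mathcal{B}_{v}$, and $v$ is again Dirichlet energy minimizing (by [\cite{Almgren}, Theorem~2.6]) in case (i), respectively $C^{1,\mu}$ harmonic in case (ii). The hypothesis forces $v \not\equiv 0$, so $\mathcal{N}_{v}$ is everywhere defined; set $\alpha := \mathcal{N}_{v}(Z)$, so $\alpha = 1/2$ in case (i) and $\alpha = 3/2$ in case (ii). That $Z \in \mathcal{B}_{v} = \mathcal{B}_{u}$ is immediate: otherwise $v = \{v_{1}, -v_{1}\}$ on a ball about $Z$ for a single-valued harmonic $v_{1} \not\equiv 0$, whence $N_{v,Z} \equiv N_{v_{1},Z}$ and $\alpha$ equals the vanishing order of $v_{1}$ at $Z$, a non-negative integer --- contradicting $\alpha \in \{1/2, 3/2\}$. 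Finally note, by Lemma~\ref{monotonicity_cor}(d) and its $C^{1,\mu}$ analogue, that $\alpha$ is the \emph{smallest} value of $\mathcal{N}_{v}$ on $\Sigma_{v}$ in case (i), respectively on $\mathcal{K}_{v}$ in case (ii).

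Second, I would classify the blow-ups of $v$ at $Z$. Any such blow-up $\varphi$ is a nonzero, symmetric, homogeneous degree $\alpha$ Dirichlet energy minimizer (resp.\ $C^{1,\mu}$ harmonic function) on $\mathbb{R}^{n}$ with $\mathcal{N}_{\varphi}(0) = \alpha$. Combining $\mathcal{N}_{\varphi}(Y) \le \mathcal{N}_{\varphi}(0) = \alpha$ for all $Y$ (Lemma~\ref{monotonicity_cor}(c)) with $\mathcal{N}_{\varphi}(Y) \ge \alpha$ on $\Sigma_{\varphi}$ (resp.\ $\mathcal{K}_{\varphi}$) (Lemma~\ref{monotonicity_cor}(d), since $\alpha \in \{1/2, 3/2\}$), one gets $\Sigma_{\varphi} = S(\varphi)$ (resp.\ $\mathcal{K}_{\varphi} = S(\varphi)$), a linear subspace of dimension $\le n-2$. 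If $\dim S(\varphi) \le n-3$, then writing $\mathbb{R}^{n} = \mathbb{R}^{n'} \times S(\varphi)$ with $n' \ge 3$, translation invariance gives $\varphi = \psi \circ \pi$ ($\pi$ the projection onto $\mathbb{R}^{n'}$) for a homogeneous degree $\alpha$ solution $\psi$ on $\mathbb{R}^{n'}$ with $\mathcal{B}_{\psi} \subseteq \{0\}$; since $\mathbb{R}^{n'} \setminus \{0\}$ is simply connected, $\psi$ splits there as $\{\psi_{1}, -\psi_{1}\}$ with $\psi_{1} \not\equiv 0$ single-valued harmonic and homogeneous of degree $\alpha$, forcing $\alpha(\alpha + n' - 2)$ to be an eigenvalue $k(k+n'-2)$, $k \in \mathbb{Z}_{\ge 0}$, of $-\Delta_{\mathbb{S}^{n'-1}}$ --- impossible for $\alpha \in \{1/2, 3/2\}$, as $\alpha(\alpha+n'-2)$ then lies strictly between two consecutive such eigenvalues. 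Hence $\dim S(\varphi) = n-2$, and the analogous monodromy-plus-eigenvalue analysis in the remaining two variables shows that, after an orthogonal rotation $Q$ of $\mathbb{R}^{n}$, $\varphi = \{\pm\op{Re}(c(x_{1}+ix_{2})^{\alpha})\}$ with $\alpha = k/2$, $k = 1$ (resp.\ $k = 3$), and $c \in \mathbb{C}^{m} \setminus \{0\}$. Fix one such blow-up $\varphi^{(0)}$, with $Q$ chosen so that $S(\varphi^{(0)}) = \{x_{1} = x_{2} = 0\}$; being a blow-up of $v$, $\varphi^{(0)}$ is Dirichlet energy minimizing (resp.\ $C^{1,1/2}$ harmonic with $D\varphi^{(0)}(0) = \{0,0\}$), so Lemma~\ref{lemma1} applies to it.

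Third comes the heart of the matter: running Simon's iteration of Lemma~\ref{lemma1} while \emph{excluding its alternative (i) at every scale}. Fix $\vartheta \in (0,1/4)$ small enough that $C\vartheta^{2\mu} \le 1/2$ ($C, \mu$ as in Lemma~\ref{lemma1}), let $\delta_{0}, \varepsilon_{0}$ be the resulting constants, and set $w := v(Z + Q(\cdot))$. Since $\varphi^{(0)}$ is a blow-up of $v$ at $Z$, there is $\rho_{0} > 0$ so that the normalized rescaling $\widehat{w}_{\rho}(X) := w(\rho X) / (\rho^{-n/2}\|w\|_{L^{2}(B_{\rho}(0))})$ lies in $\mathcal{F}_{\varepsilon_{0}}(\varphi^{(0)})$ for $\rho \le \rho_{0}$ along the realizing subsequence, closeness at the other scales being propagated by alternative (ii) --- this part of the book-keeping is exactly as in the proofs of Theorems~\ref{theorem2} and \ref{theorem3} and in \cite{SimonCTC}. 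Now alternative (i) of Lemma~\ref{lemma1} must fail for each $\widehat{w}_{\rho}$ ($\rho$ small): since $\Sigma_{\varphi^{(0)}} = \{x_{1} = x_{2} = 0\}$ meets $B_{\delta_{0}/2}(0, y_{0})$ for every $y_{0} \in B_{1/2}^{n-2}(0)$, the persistence property of singular sets under $L^{2}$-convergence of minimizers (resp.\ of $\mathcal{K}$-sets under $L^{2}$-convergence of $C^{1,\mu}$ harmonic functions) recorded in Section~\ref{multivalfun_section} gives a point $X \in \Sigma_{\widehat{w}_{\rho}} \cap B_{\delta_{0}}(0, y_{0})$, and $\mathcal{N}_{\widehat{w}_{\rho}}(X) \ge \alpha$ by Lemma~\ref{monotonicity_cor}(d). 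Therefore alternative (ii) holds at every scale; iterating it in the standard fashion produces a unique cylindrical tangent function $\widetilde{\varphi}$ of $v$ at $Z$ together with a decay estimate $\|\rho^{-\alpha} w(\rho(\cdot)) - \widetilde{\varphi}\|_{L^{2}(B_{1}(0))} \le C\rho^{\gamma}$ for all $\rho \in (0, \rho_{0}/2]$, with $C, \gamma > 0$.

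Finally I would upgrade this to the submanifold conclusion. Every branch point $Z' \in \mathcal{B}_{v} \cap B_{\rho_{0}/4}(Z)$ has $\mathcal{N}_{v}(Z') \ge \alpha$, and in fact $= \alpha$ (by Lemma~\ref{lemma2_4} applied about $Z$, together with discreteness of the set $\{1/2, 1, 3/2, \dots\}$ of frequencies of homogeneous two-valued solutions, a value in $(\alpha, \alpha + \tfrac12)$ being impossible), so by the second step $v$ has a cylindrical blow-up of the model form at $Z'$ as well. Running the iteration of the third step simultaneously at all such $Z'$, with constants uniform in $Z'$, one obtains decay of $v$ to cylindrical tangent functions whose axes and coefficients depend Hölder-continuously on $Z'$; the standard arguments of \cite{SimonCTC} then identify $\mathcal{B}_{v} \cap B_{\rho}(Z)$, for a suitable $\rho > 0$, with the graph over the $(n-2)$-plane $Z + S(\varphi^{(0)})$ of a $C^{1,\beta}$ map, i.e.\ with an $(n-2)$-dimensional $C^{1,\beta}$ submanifold for some $\beta \in (0,1)$; when $n = 2$ this says precisely $\mathcal{B}_{u} \cap B_{\rho}(Z) = \{Z\}$. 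The main obstacle is the third step --- the ``no gaps'' phenomenon --- whose content is that minimality of the frequency at $Z$ rules out alternative (i) of Lemma~\ref{lemma1} at all scales; this relies essentially on the second step (minimal-frequency blow-ups are cylindrical of the model form, hence have singular sets equal to full $(n-2)$-planes that persist under convergence) and requires care to make the persistence argument quantitative and to keep it compatible with the rotating comparison functions $\widetilde{\varphi}$ generated by the iteration.
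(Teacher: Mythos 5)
Your overall route is the paper's: show every blow-up of $v$ at $Z$ is cylindrical of the minimal, non-integer degree $\alpha$, then iterate Lemma~\ref{lemma1} with alternative (i) excluded at every scale and conclude via the Proposition~\ref{theorem1}-type argument. The gap is in your exclusion of alternative (i), which is the actual content of the ``no gaps'' theorem. To produce a point of $\Sigma_{\widehat{w}_\rho}$ (resp.\ $\mathcal{K}_{\widehat{w}_\rho}$) in $B_{\delta_0}(0,y_0)$ you appeal to ``the persistence property \dots recorded in Section~\ref{multivalfun_section}'', but that property is the \emph{converse} implication: it says that if the approximating functions have $\Sigma$- (resp.\ $\mathcal{K}$-) points in a fixed $\Omega'\subset\subset\Omega$ for infinitely many $j$, then the limit does; it does not say that a function merely $L^2$-close to a limit with large singular set must itself have singular (let alone frequency $\geq\alpha$) points nearby, and that statement is false in general. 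For example, in the harmonic class the symmetric functions $u_j=\{\pm(x_1^2-x_2^2+1/j)\}$ are $C^{1,\mu}$ harmonic with $\mathcal{K}_{u_j}=\emptyset$, yet they converge in $L^2_{\rm loc}$ to $\{\pm(x_1^2-x_2^2)\}$, whose $\mathcal{K}$-set is the whole axis with frequency $2\geq 3/2$ there. Such limits are cylinders of \emph{integer} degree, so they do not contradict the theorem; they show precisely that closeness to a cylinder by itself yields nothing, and that the half-integer (minimal) degree of $\varphi^{(0)}$ must enter. That is what the paper's proof supplies as its key claim: if $u$ is $L^2$-close to $\varphi^{(0)}$ and $\mathcal{H}^{n-2}(\mathcal{B}_u\cap B_\delta(0,y))=0$, then by the simple-connectedness argument from the appendix of \cite{SW} $u$ splits on a ball about $(0,y)$ into two single-valued harmonic functions, and a compactness argument shows such splittings would pass to the limit and split $\varphi^{(0)}$ near an axis point --- impossible, since a single-valued harmonic branch would then have frequency $1/2$ (resp.\ $3/2$) there. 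Without this monodromy/compactness ingredient your step 3 assumes what is to be proved.

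A secondary weak point is in your step 4: you assert $\mathcal{N}_v(Z')=\alpha$ exactly for all nearby branch points $Z'$, citing discreteness of ``the set $\{1/2,1,3/2,\dots\}$ of frequencies of homogeneous two-valued solutions''; but that list classifies only \emph{cylindrical} homogeneous solutions, and a blow-up at $Z'$ is not known to be cylindrical before its degree is pinned down, so the dichotomy you invoke is not available. The paper avoids this entirely: it runs the Proposition~\ref{theorem1} iteration on the set $\{\mathcal{N}_v\geq\alpha\}$ near $Z$, using only the lower bound from Lemma~\ref{monotonicity_cor}(d) (and its $\mathcal{K}$-analogue) together with the claim above to rule out alternative (i) at every center and scale, and reads off the $C^{1,\alpha}$ structure of $\mathcal{B}_u$ near $Z$ from that. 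Your remaining steps --- the reduction to $v$, the deduction $Z\in\mathcal{B}_u$ from the non-integer frequency, and the classification of blow-ups at $Z$ via $\Sigma_\varphi=S(\varphi)$ plus the spherical eigenvalue computation --- are sound, and the last is a legitimate alternative to the paper's derivation of cylindricity from positivity of $\mathcal{H}^{n-2}(\mathcal{B}_{\varphi^{(Z)}})$ and Lemma~\ref{stratification_lemma}.
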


Theorems A, B and C of the introduction are fairly direct consequences of Lemma~\ref{lemma1} and  Theorems~\ref{theorem2}, \ref{theorem3}, \ref{no-gaps}. Section~\ref{prooflemma_section} of the paper contains the proofs of all of these results.  

\section{A coarse representation lemma} \label{graphicalrep_section}
\setcounter{equation}{0}
Suppose that $\varphi \in \Phi_{\varepsilon_0}(\varphi^{(0)}),$ and that either $u \in \mathcal{F}^{\text{Dir}}_{\varepsilon_0}(\varphi^{(0)})$ or $u \in \mathcal{F}^{\text{Harm}}_{\varepsilon_0}(\varphi^{(0)})$ for $\varepsilon_{0} \in (0,1]$ small.  We will show, in Lemma~\ref{coarse_graph} below,  that outside a small tubular neighborhood of $\{0\} \times \mathbb{R}^{n-2}$, $\op{graph} \,  u$ is the graph of a single-valued function $v$ over $\op{graph} \varphi$ satisfying certain coarse estimates, giving in particular a way to pair the elements of $u(X)$ and $\varphi(X)$ for $X$ outside of that tubular neighborhood of $\{0\} \times \mathbb{R}^{n-2}$.  More specifically, given $\g \in (0, 1)$ (close to 1) and $\t \in (0, 1)$ (close to 0), we will construct for suitably small $\e_{0}$ depending on $\g$ and $\t$,  a single-valued $v \in C^2(\op{graph} \varphi |_U;\mathbb{R}^m)$ for some open set $U \subseteq B_1(0) \setminus \{0\} \times {\mathbb R}^{n-2}$ with $B_{\g}(0) \setminus \{(x, y) \, : \, |x| > \t\} \subset U$, where we regard $\op{graph} \varphi |_U$ as an immersed submanifold in $\mathbb{R}^{n+m}$, such that 
\begin{equation} \label{v_defn1}
	u(X) = \{ \varphi_1(X) + v(X,\varphi_1(X)), -\varphi_1(X) + v(X,-\varphi_1(X)) \}
\end{equation}
for $X \in U$, where we write $\varphi(X) = \{ \pm \varphi_1(X) \}.$ Note that we can associate such $v$ with a two-valued function 
\begin{equation*}
	\hat v(X) = v(X,\varphi(X)) \equiv \{ v(X,\varphi_1(X)), v(X,-\varphi_1(X)) \}. 
\end{equation*}

First we need the elementary facts given in Lemma~\ref{separation_lemma_dm} and Lemma~\ref{separation_lemma_h}. 

\begin{lemma} \label{separation_lemma_dm}
Let $\gamma \in (0,1)$.  Suppose $\psi \in W^{1,2}(B_1(0);\mathcal{A}_2(\mathbb{R}^m))$ is a Dirichlet minimizing symmetric two-valued function such that $\Sigma_{\psi} = \emptyset$.  There exists $\varepsilon = \varepsilon(n,m,\gamma,\psi) > 0$ such that the following holds true.  Let $u,v \in W^{1,2}(B_1(0);\mathcal{A}_2(\mathbb{R}^m))$ be Dirichlet minimizing symmetric two-valued functions such that 
\begin{equation} \label{separation_dm_eqn1} 
	\int_{B_1(0)} \mathcal{G}(u,\psi)^2 < \varepsilon^2, \quad  \int_{B_1(0)} \mathcal{G}(v,\psi)^2 < \varepsilon^2. 
\end{equation}
Then $u = \{+u_1,-u_1\}$ and $v = \{+v_1,-v_1\}$ on $B_{\gamma}(0)$ for some single-valued harmonic functions $u_1,v_1 \in C^{\infty}(B_{\gamma}(0);\mathbb{R}^m)$ such that 
\begin{equation*} 
	\|u_1 - v_1\|_{C^3(B_{\gamma}(0))} \leq C \left( \int_{B_1(0)} \mathcal{G}(u,v)^2 \right)^{1/2}
\end{equation*}
for some $C = C(n,m,\gamma) \in (0,\infty)$. 
\end{lemma}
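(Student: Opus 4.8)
The plan is to use the $L^{2}$-compactness of singular sets of Dirichlet minimizers to force the competitor functions $u,v$ to become single-valued and harmonic on a ball slightly larger than $B_{\gamma}(0)$, then to pin down a labeling of their two branches consistent with $\psi_{1}$, and finally to read off the $C^{3}$ bound from interior estimates for harmonic functions. Since $\psi$ is symmetric with $\Sigma_{\psi}=\emptyset$, we have $\psi=\{\pm\psi_{1}\}$ on $B_{1}(0)$ with $\psi_{1}$ harmonic; the two branches of a symmetric two-valued function agree exactly on the zero set of $\psi_{1}$, which (as $\psi_{1}$ is real-analytic) is open or empty when $\Sigma_{\psi}=\emptyset$, so either $\psi_{1}\equiv0$ or $\psi_{1}$ is nowhere zero on $B_{1}(0)$. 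The conclusion is vacuous if $\psi\equiv0$ (and in the applications $\psi$ is the restriction of a $\varphi\in\Phi_{\varepsilon_{0}}(\varphi^{(0)})$ to a region where its two sheets are separated), so I assume the latter. Fix $\gamma<\gamma_{1}<\gamma_{2}<\gamma_{3}<1$ depending only on $\gamma$, and set $d_{0}:=\inf_{\overline{B_{\gamma_{3}}(0)}}|\psi_{1}|>0$.

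The first substantive step is separation: there is $\varepsilon_{1}=\varepsilon_{1}(n,m,\gamma,\psi)>0$ such that any Dirichlet minimizing symmetric $\phi$ with $\int_{B_{1}(0)}\mathcal{G}(\phi,\psi)^{2}<\varepsilon_{1}^{2}$ satisfies $\Sigma_{\phi}\cap B_{\gamma_{3}}(0)=\emptyset$. Indeed, if this failed there would be Dirichlet minimizers $\phi_{j}$ with $\int_{B_{1}(0)}\mathcal{G}(\phi_{j},\psi)^{2}\to0$ --- hence $\phi_{j}\to\psi$ locally in $L^{2}$, and by (\ref{schauder_dm}) even locally uniformly --- but $\Sigma_{\phi_{j}}\cap B_{\gamma_{3}}(0)\neq\emptyset$, contradicting the compactness property of $\Sigma$ recorded in Section~\ref{multivalfun_section} (applied with $\Omega=B_{1}(0)$, $\Omega'=B_{\gamma_{3}}(0)$) since $\Sigma_{\psi}=\emptyset$. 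Applying this to $u$ and $v$ gives, on the connected ball $B_{\gamma_{3}}(0)$, single-valued harmonic representatives $u=\{\pm u_{1}\}$, $v=\{\pm v_{1}\}$; moreover $u_{1}$ and $v_{1}$ are nowhere zero on $B_{\gamma_{3}}(0)$, because a symmetric Dirichlet minimizer with empty singular set on a connected domain has its branch either identically zero or nowhere vanishing, and the vanishing alternative would force $\int_{B_{1}(0)}\mathcal{G}(u,\psi)^{2}\geq\int_{B_{\gamma_{3}}(0)}2|\psi_{1}|^{2}\geq2|B_{\gamma_{3}}|\,d_{0}^{2}$, impossible once $\varepsilon$ is small. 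By (\ref{schauder_dm}) and interior estimates for harmonic functions, $u_{1},v_{1},\psi_{1}$ are bounded in $C^{4}(B_{\gamma_{2}}(0))$ by a constant $M=M(n,m,\gamma,\psi)$.

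Next I would pin down the labelings. On $B_{\gamma_{2}}(0)$ consider $S:=\{X:|u_{1}-\psi_{1}|(X)\geq d_{0}/2\ \text{and}\ |u_{1}+\psi_{1}|(X)\geq d_{0}/2\}$, on which $\mathcal{G}(u,\psi)^{2}=2\min(|u_{1}-\psi_{1}|^{2},|u_{1}+\psi_{1}|^{2})\geq d_{0}^{2}/2$; if some point of $S$ lay in $B_{\gamma_{1}}(0)$, the gradient bound $M$ (and continuity of $\psi_{1}$) would give $\mathcal{G}(u,\psi)^{2}\geq d_{0}^{2}/8$ on a ball $B_{r}\subset B_{\gamma_{2}}(0)$ of fixed radius $r=r(d_{0},M,\gamma)$, forcing $\int_{B_{1}(0)}\mathcal{G}(u,\psi)^{2}\geq(d_{0}^{2}/8)|B_{r}|$, impossible for $\varepsilon$ small. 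Hence $S\cap B_{\gamma_{1}}(0)=\emptyset$, so the open conditions $|u_{1}-\psi_{1}|<d_{0}/2$ and $|u_{1}+\psi_{1}|<d_{0}/2$ --- which are mutually exclusive, since their sum is $\geq2|\psi_{1}|\geq2d_{0}$ --- partition the connected set $B_{\gamma_{1}}(0)$; replacing $u_{1}$ by $-u_{1}$ if needed, I may assume $|u_{1}-\psi_{1}|<d_{0}/2$ on $B_{\gamma_{1}}(0)$, and likewise for $v_{1}$. Then on $B_{\gamma_{1}}(0)$ we have $|u_{1}-v_{1}|\leq|u_{1}-\psi_{1}|+|v_{1}-\psi_{1}|<d_{0}<|u_{1}+v_{1}|$, so $\mathcal{G}(u,v)^{2}=2|u_{1}-v_{1}|^{2}$ pointwise there. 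Since $w:=u_{1}-v_{1}$ is harmonic on $B_{\gamma_{1}}(0)$, interior estimates give $\|u_{1}-v_{1}\|_{C^{3}(B_{\gamma}(0))}\leq C(n,\gamma)\|w\|_{L^{2}(B_{\gamma_{1}}(0))}=C(n,\gamma)\bigl(\tfrac{1}{2}\int_{B_{\gamma_{1}}(0)}\mathcal{G}(u,v)^{2}\bigr)^{1/2}\leq C(n,m,\gamma)\bigl(\int_{B_{1}(0)}\mathcal{G}(u,v)^{2}\bigr)^{1/2}$, which is the assertion; note that the hypotheses on $\mathcal{G}(\cdot,\psi)$ enter only qualitatively.

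I expect the only delicate point to be the branch-labeling --- eliminating the bad set $S$ near $B_{\gamma}(0)$ and making the resulting global choice of sign. The separation step, though it is the conceptual core, is immediate from the $L^{2}$-compactness of singular sets of Dirichlet minimizers already established in Section~\ref{multivalfun_section}, and the concluding estimate is entirely routine.
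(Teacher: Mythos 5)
Your proposal is correct and takes essentially the same route as the paper: $L^2$-closeness together with (\ref{schauder_dm}) and the compactness property of $\Sigma$ gives single-valued harmonic branches on a slightly larger ball, the labels are then fixed using the positive lower bound on $|\psi_1|$ so that $\mathcal{G}(u,v)=\sqrt{2}\,|u_1-v_1|$ there, and interior estimates for harmonic functions convert the $L^2$ bound into the $C^3$ bound. The only notable differences are cosmetic: the paper pins the labeling via uniform closeness with $\delta=\tfrac13\inf_{B_{(1+\gamma)/2}(0)}|\psi_1|$ while you use an $L^2$-plus-connectedness argument, and both proofs tacitly assume $\psi\not\equiv 0$ (your ``vacuous'' remark is not quite accurate — the stated conclusion would actually fail for $\psi\equiv 0$ — but the paper's own proof makes the same implicit restriction by taking $\delta>0$).
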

\begin{proof}
It follows from (\ref{schauder_dm}) that if $u$ and $v$ are close to $\psi$ in $L^2(B_1(0))$, then $u$ and $v$ are uniformly close to $\psi$ on $B_{(1+\gamma)/2}(0)$; that is, for every $\delta > 0$ there is a $\varepsilon_0 = \varepsilon_0(n,m,\gamma,\psi,\delta) > 0$ such that if $u \in W^{1/2}(B_1(0);\mathcal{A}_2(\mathbb{R}^m))$ is a Dirichlet minimizing symmetric functions with  
\begin{equation*}
	\int_{B_1(0)} \mathcal{G}(u,\psi)^2 < \varepsilon_0^2
\end{equation*}
then $u = \{+u_1,-u_1\}$ on $B_{(1+\gamma)/2}(0)$ for some harmonic single-valued function $u_1 \in C^{\infty}(B_{(1+\gamma)/2}(0);\mathbb{R}^m)$ such that 
\begin{equation*}
	\sup_{B_{(1+\gamma)/2}(0)} |u_1-\psi_1| < \delta, 
\end{equation*}
where $\psi_1 \in C^{\infty}(B_{(1+\gamma)/2}(0);\mathbb{R}^m)$ is a single-valued harmonic function such that $\psi = \{+\psi_1,-\psi_1\}$ on $B_1(0)$.  Now take $\delta = (1/3) \inf_{B_{(1+\gamma)/2}(0)} |\psi_1|$ and $\varepsilon = \varepsilon_0(n,m,\gamma,\psi,\delta)$.  Suppose $u,v \in W^{1,2}(B_1(0);\mathcal{A}_2(\mathbb{R}^m))$ are Dirichlet minimizing symmetric two-valued function such that (\ref{separation_dm_eqn1}) holds.  Observe that $u = \{+u_1,-u_1\}$ and $v = \{+v_1,-v_1\}$ on $B_{(1+\gamma)/2}(0)$ for some harmonic single-valued functions $u_1,v_1 \in C^{\infty}(B_{(1+\gamma)/2}(0);\mathbb{R}^m)$ such that $\sup_{B_{(1+\gamma)/2}(0)} |u_1-\psi_1| < \delta$ and $\sup_{B_{(1+\gamma)/2}(0)} |v_1-\psi_1| < \delta$.  By our choice of $\delta$, $\mathcal{G}(u_1,v) = \sqrt{2} |u_k-v|$ in $B_{(1+\gamma)/2}(0)$.  By standard elliptic estimates for single-valued harmonic functions 
\begin{equation} \label{separation_dm_eqn2}
	\|u_1-v_1\|_{C^3(B_{\gamma}(0))} \leq C \|u_1-v_1\|_{L^2(B_{(1+\gamma)/2}(0))} = C \left( \int_{B_{(1+\gamma)/2}(0)} \mathcal{G}(u,v)^2 \right)^2.  
\end{equation}
for constants $C = C(n,m,\gamma) \in (0,\infty)$. 
\end{proof}

\begin{lemma} \label{separation_lemma_h}
Let $\mu \in (0,1)$ and $\gamma \in (0,1)$.  Suppose $\psi \in C^{1,\mu}(B_1(0);\mathcal{A}_2(\mathbb{R}^m))$ is a symmetric two-valued harmonic function on $B_1(0)$ such that whenever $Y \in {\mathcal Z}_{\psi},$ $D\psi(Y) = \{\pm A\}$ for some rank one $m \times n$ matrix $A$.  
There exists $\varepsilon = \varepsilon(n,m,\gamma,\psi) > 0$ such that if $u,v \in C^{1,\mu}(B_1(0);\mathcal{A}_2(\mathbb{R}^m))$ are symmetric two-valued harmonic functions on $B_{1}(0)$ with
\begin{equation} \label{separation_h_eqn1} 
	\int_{B_1(0)} \mathcal{G}(u,\psi)^2 < \varepsilon^2, \quad  \int_{B_1(0)} \mathcal{G}(v,\psi)^2 < \varepsilon^2, 
\end{equation}
then $u = \{+u_1,-u_1\}$ and $v = \{+v_1,-v_1\}$ on $B_{\gamma}(0)$ for some single-valued harmonic functions $u_1,v_1 \in C^{\infty}(B_{\gamma}(0);\mathbb{R}^m)$ satisfying
\begin{equation} \label{separation_h_eqn2} 
	\|u_1 - v_1\|_{C^3(B_{\gamma}(0))} \leq C \left( \int_{B_1(0)} \mathcal{G}(u,v)^2 \right)^{1/2}
\end{equation}
where $C = C(n,m,\gamma,\psi) \in (0,\infty)$. 
\end{lemma}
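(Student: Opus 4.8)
The plan is to follow the proof of Lemma~\ref{separation_lemma_dm} as closely as possible, the one essential new feature being that $\psi$ may now have a nonempty crossing set $\mathcal{Z}_{\psi}$, which under the rank one hypothesis is a $C^{\infty}$ hypersurface along which the two sheets of $\psi$ meet transversally. First I would upgrade $L^{2}$-closeness to $C^{1}$-closeness: by the interior estimate (\ref{schauder_h}) applied to $u$ and to $v$, the set $\{w : w$ harmonic and symmetric, $\int_{B_{1}}\mathcal{G}(w,\psi)^{2}<1\}$ is precompact in $C^{1}(\overline{B_{(1+\gamma)/2}};\mathcal{A}_{2}(\mathbb{R}^{m}))$, so for every $\delta>0$ there is $\varepsilon=\varepsilon(n,m,\gamma,\psi,\delta)$ such that $\int_{B_{1}}\mathcal{G}(u,\psi)^{2}<\varepsilon^{2}$ forces $\sup_{B_{(1+\gamma)/2}}(\mathcal{G}(u,\psi)+\mathcal{G}(Du,D\psi))<\delta$, and similarly for $v$.

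Next, write $\psi=\{\pm\psi_{1}\}$; note $\psi$ is unbranched, since at a branch point $D\psi=\{\pm A\}$ would force $A=-A$, contradicting $\operatorname{rank}A=1$, so $\psi_{1}$ is a single-valued $C^{1,\mu}$ function, harmonic off the hypersurface $\mathcal{Z}_{\psi}=\{\psi_{1}=0\}$ and hence, being $C^{1}$, harmonic on $B_{1}$. Using the rank one hypothesis I would show that if $\delta$ is small then $u$ has no branch points in $B_{(1+\gamma)/2}$: at $Y$ where $\mathcal{G}(u(Y),\{c,c\})$ is not small one has $Y\notin\mathcal{Z}_{u}\supseteq\mathcal{K}_{u}$; and where $\mathcal{G}(u(Y),\{c,c\})$ is small, $Y$ lies near $\mathcal{Z}_{\psi}$, so $Du(Y)$ is close to $\{\pm A(Y)\}$ with $|A(Y)|\geq c_{0}:=\inf_{\mathcal{Z}_{\psi}\cap\overline{B_{(1+\gamma)/2}}}|D\psi_{1}|>0$, forcing $Du(Y)$ away from the diagonal and so $Y\notin\mathcal{K}_{u}$. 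Since $B_{(1+\gamma)/2}$ is simply connected, $u$ then decomposes globally, $u=\{+u_{1},-u_{1}\}$ with $u_{1}$ single-valued, and (harmonic off $\mathcal{Z}_{u}$ and $C^{1}$, hence) harmonic on $B_{(1+\gamma)/2}$; after possibly replacing $u_{1}$ by $-u_{1}$, $u_{1}$ is $C^{1}$-close to $\psi_{1}$ there. The same applies to $v$, giving $v=\{\pm v_{1}\}$ with $v_{1}$ harmonic and $C^{1}$-close to $\psi_{1}$.

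It remains to prove $\|u_{1}-v_{1}\|_{C^{3}(B_{\gamma})}\leq C\|\mathcal{G}(u,v)\|_{L^{2}(B_{1})}$. Since $u_{1}-v_{1}$ is harmonic, by standard elliptic estimates it suffices to bound $\|u_{1}-v_{1}\|_{L^{2}(B_{(1+\gamma)/2})}$. One has $\mathcal{G}(u,v)=\sqrt{2}\min(|u_{1}-v_{1}|,|u_{1}+v_{1}|)$, and with $\kappa:=\sup_{B_{(1+\gamma)/2}}(|u_{1}-\psi_{1}|+|Du_{1}-D\psi_{1}|+|v_{1}-\psi_{1}|+|Dv_{1}-D\psi_{1}|)\to0$ one checks that off the tube $\mathcal{T}:=\{|\psi_{1}|\leq\sqrt{\kappa}\}$ (a $\leq C\sqrt{\kappa}/c_{0}$-neighbourhood of $\mathcal{Z}_{\psi}$) we have $|u_{1}+v_{1}|\geq\sqrt{\kappa}>|u_{1}-v_{1}|$, so $\mathcal{G}(u,v)=\sqrt{2}\,|u_{1}-v_{1}|$ there, while on $\mathcal{T}$ the derivatives $Du_{1},Dv_{1}$ are close to $D\psi_{1}$ with $|D\psi_{1}|\geq c_{0}$, so $|Du_{1}+Dv_{1}|\geq2c_{0}>|Du_{1}-Dv_{1}|$ and $\mathcal{G}(Du,Dv)=\sqrt{2}\,|Du_{1}-Dv_{1}|$ there. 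Thus $\int_{B_{(1+\gamma)/2}\setminus\mathcal{T}}|u_{1}-v_{1}|^{2}\leq\tfrac12\|\mathcal{G}(u,v)\|_{L^{2}(B_{1})}^{2}$, and I would estimate the remaining $\int_{\mathcal{T}\cap B_{(1+\gamma)/2}}|u_{1}-v_{1}|^{2}$ by combining (i) a Poincar\'e inequality across the thin tube, whose boundary term is a trace of $|u_{1}-v_{1}|$ on $\{|\psi_{1}|=\sqrt{\kappa}\}$ (which lies off $\mathcal{T}$, where $|u_{1}-v_{1}|^{2}=\tfrac12\mathcal{G}(u,v)^{2}$), together with a factor $(\operatorname{width}\mathcal{T})^{2}\int_{\mathcal{T}}|Du_{1}-Dv_{1}|^{2}$, and (ii) the Caccioppoli inequality $(\operatorname{width}\mathcal{T})^{2}\int_{\mathcal{T}}|Du_{1}-Dv_{1}|^{2}\leq C\int_{\{\sqrt{\kappa}<|\psi_{1}|<2\sqrt{\kappa}\}}|u_{1}-v_{1}|^{2}+(\text{lower order})$, the last integral again being $\tfrac12\int\mathcal{G}(u,v)^{2}$; the net effect is $\int_{\mathcal{T}\cap B_{(1+\gamma)/2}}|u_{1}-v_{1}|^{2}\leq C\|\mathcal{G}(u,v)\|_{L^{2}(B_{1})}^{2}+C\kappa\,\|u_{1}-v_{1}\|_{L^{2}(B_{(1+\gamma)/2})}^{2}$, and choosing $\varepsilon$ small so $C\kappa<\tfrac12$ and absorbing gives $\|u_{1}-v_{1}\|_{L^{2}(B_{(1+\gamma)/2})}\leq C\|\mathcal{G}(u,v)\|_{L^{2}(B_{1})}$; an interior estimate for the harmonic function $u_{1}-v_{1}$ then yields the claimed $C^{3}$ bound on $B_{\gamma}$.

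The main obstacle is exactly this last step, the passage across the crossing set $\mathcal{Z}_{\psi}$: in Lemma~\ref{separation_lemma_dm} the difficulty is absent since there $\mathcal{Z}_{\psi}=\emptyset$ and $|\psi_{1}|$ is bounded below, so $\mathcal{G}(u,v)=\sqrt{2}\,|u_{1}-v_{1}|$ throughout $B_{(1+\gamma)/2}$. Here the region where the natural sheet pairings of $u$ and of $v$ disagree is a thin tube about $\mathcal{Z}_{\psi}$; the rank one hypothesis is what makes this tube genuinely thin (it gives $|D\psi_{1}|\geq c_{0}>0$ near $\mathcal{Z}_{\psi}$, so the two sheets separate at a definite linear rate and $\mathcal{Z}_{\psi}$ is a $C^{\infty}$ hypersurface), and the $C^{1}$-closeness coming from (\ref{schauder_h}) — as opposed to the mere $C^{0}$-closeness used in Lemma~\ref{separation_lemma_dm} — is what lets us transfer control of $u_{1}-v_{1}$ to control of $Du_{1}-Dv_{1}$ on the tube, after which the thinness of the tube supplies the small factor needed to absorb the error. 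The bookkeeping with nested radii (Schauder-closeness ball $\supset$ Caccioppoli cutoff ball $\supset B_{(1+\gamma)/2}\supset B_{\gamma}$) is routine.
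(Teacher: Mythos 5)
Your preliminary reductions (upgrading $L^{2}$ to $C^{1}$ closeness via (\ref{schauder_h}), ruling out branch points of $\psi$, $u$, $v$ from the rank one hypothesis, and obtaining global single-valued harmonic representatives $u_{1},v_{1}$ close to $\psi_{1}$, with $\mathcal{G}(u,v)=\sqrt{2}\,|u_{1}-v_{1}|$ off a thin tube about $\mathcal{Z}_{\psi}$) are sound and consistent with what the paper does. The gap is in the central step, the bound for $\int_{\mathcal{T}}|u_{1}-v_{1}|^{2}$. Your ``net effect'' inequality $\int_{\mathcal{T}\cap B_{(1+\gamma)/2}}|u_{1}-v_{1}|^{2}\leq C\int_{B_{1}}\mathcal{G}(u,v)^{2}+C\kappa\|u_{1}-v_{1}\|^{2}_{L^{2}(B_{(1+\gamma)/2})}$ would indeed finish the proof by absorption, but the Poincar\'e--Caccioppoli derivation you sketch does not produce it: the Caccioppoli inequality for $w=u_{1}-v_{1}$ requires a lateral cutoff (the tube exits the ball on which the single-valued decomposition exists), and the term $\int|D\eta|^{2}w^{2}$ coming from that lateral cutoff is supported on a portion of the tube lying \emph{outside} the ball appearing on your left-hand side. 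It therefore cannot be absorbed as written; and it cannot be discarded either, because the only available bound for $w^{2}$ on that region is in terms of $\kappa$ (the $C^{1}$-distance of $u,v$ to $\psi$), whereas $\int_{B_{1}}\mathcal{G}(u,v)^{2}$ can be arbitrarily small compared with any fixed function of $\kappa$ (take $v$ extremely close to $u$, with $u$ at distance comparable to $\varepsilon$ from $\psi$); any additive error depending only on $\kappa$ is fatal to the required linear estimate (\ref{separation_h_eqn2}). Enlarging the left-hand ball just pushes the cutoff further out, and iterating over nested radii leaves a terminal term which is again controlled only by $\kappa$, so the scheme does not close. The same obstruction defeats the natural variants (mean-value estimates for the harmonic $w$, or bounding $|Dw|$ on the tube pointwise by $\kappa$): some genuinely local control of $w$ across the tube by $\mathcal{G}(u,v)$-data, with no additive error, is needed.

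That is exactly what the paper supplies, by a different mechanism. It first reduces (by approximating $\psi$ by affine two-valued functions and a covering argument) to the model case $\psi=\{\pm a x_{1}e_{1}\}$, and then argues one-dimensionally along lines parallel to $x_{1}$: since $u_{1}\cdot v_{1}$ is strictly convex in $x_{1}$ (being $C^{1}$-close to $a^{2}x_{1}^{2}$), the pairing can switch on at most one interval $[\xi,\zeta]$ per line, of length $O(\delta/a)$; at the endpoints $u_{1}\cdot v_{1}=0$, so $|u_{1}-v_{1}|=|u_{1}+v_{1}|$ there, and a Taylor expansion about $x_{1}=\xi$ (using that both gradients are within $\delta$ of $ae_{1}$, and that the vanishing of $u_{1}\cdot v_{1}$ at \emph{both} endpoints controls the $e_{1}$-component of $u_{1}(\xi,z)+v_{1}(\xi,z)$ by $a(\zeta-\xi)$ up to small errors, as in (\ref{separation_h_eqn7})) yields the interval comparison $\int_{\xi}^{\zeta}|u_{1}-v_{1}|^{2}\leq 192\int_{\xi}^{\zeta}|u_{1}+v_{1}|^{2}$, i.e.\ the mismatch region's contribution is bounded by $\int\mathcal{G}(u,v)^{2}$ with no additive remainder; integrating over lines and applying interior estimates for the harmonic difference then gives (\ref{separation_h_eqn2}). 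This endpoint-matching comparison across the switch interval is the ingredient your tube argument is missing, and without it (or a substitute of equal strength) the proposed proof does not go through.
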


\begin{proof} 
Observe that if $\mathcal{Z}_{\psi} = \emptyset$ then Lemma \ref{separation_lemma_h} follows from (\ref{schauder_dm}) by an argument similar to the proof of Lemma \ref{separation_lemma_dm}.  We will consider the special case where $\psi(X) = \{ \pm a x_1 e_1 \}$ on $B_1(0)$ where $a \geq 0$ and $e_1,e_2,\ldots,e_m$ denote the standard basis for $\mathbb{R}^m$.  Then Lemma \ref{separation_lemma_h} will hold true for general $\psi$ by approximation of $\psi$ by affine two-valued functions and a standard covering argument.  

It follows from the estimates (\ref{schauder_h}) that for each $\delta \in (0,a/4),$ there is $\varepsilon_0 = \varepsilon_0(n,m,\gamma,\psi,\delta) > 0$ such that if $u \in C^{1,\mu}(B_1(0);\mathcal{A}_2(\mathbb{R}^m))$ is harmonic on $B_1(0)$ with  
\begin{equation*} 
	\int_{B_1(0)} \mathcal{G}(u,\psi)^2 < \varepsilon_0^2, 
\end{equation*}
then $u = \{+u_1,-u_1\}$ on $B_{(1+\gamma)/2}(0)$ for some harmonic single-valued function $u_1 \in C^{\infty}(B_{(1+\gamma)/2}(0);\mathbb{R}^m)$ such that 
\begin{equation*} 
	\|u_1-\psi_1\|_{C^1(B_{(1+\gamma)/2}(0))} < \delta, 
\end{equation*}
where $\psi_1 \in C^{\infty}(B_{(1+\gamma)/2}(0);\mathbb{R}^m)$ is a single-valued harmonic function such that $\psi = \{+\psi_1,-\psi_1\}$ on $B_1(0)$.  

Now let $\delta > 0$ to be later determined.  Suppose $u,v \in C^{1,\mu}(B_1(0);\mathbb{R}^m)$ symmetric two-valued functions such that $u$ is harmonic on $B_1(0) \setminus \mathcal{B}_u$, $v$ is harmonic on $B_1(0) \setminus \mathcal{B}_v$, and (\ref{separation_h_eqn1}) holds true for $\varepsilon \leq \varepsilon_0$.  Write $u = \{+u_1,-u_1\}$ and $v = \{v_1,-v_1\}$ on $B_{(3+\gamma)/4}(0)$ for $C^3$ single-valued harmonic functions $u_1$ and $v_1$ such that 
\begin{equation} \label{separation_h_eqn3} 
	\|u_1-\psi_1\|_{C^1(B_{(1+\gamma)/2}(0))} < \delta, \quad \|v_1-\psi_1\|_{C^1(B_{(1+\gamma)/2}(0))} < \delta. 
\end{equation}
Let $X = (x_1,x_2,\ldots,x_n) = (x_1,z)$ where $z = (x_2,x_3,\ldots,x_n)$.  Fix $z \in B^{n-1}_{(5+3\gamma)/8}(0)$.  We know that $|u_1(x_1,z) - v_1(x_1,z)| = |u_1(x_1,z) + v_1(x_1,z)|$ precisely when $u_1(x_1,z) \cdot v_1(x_1,z) = 0$.  Since $u_1$ and $v_1$ are close to $a x_1 e_1$ in $C^2$ and $a^2 x_1^2$ is a strictly convex function of $x_1$, $u_1(x_1,z) \cdot v_1(x_1,z)$ is also a strictly convex function of $x_1$.  Hence $u_1(x_1,z) \cdot v_1(x_1,z) = 0$ for at most two $x_1$ with $(x_1,z) \in B_{(5+3\gamma)/8}(0)$.  If $u_1(x_1,z) \cdot v_1(x_1,z) = 0$ for either zero or one $x_1$ with $(x_1,z) \in B_{(5+3\gamma)/8}(0)$, then 
\begin{equation} \label{separation_h_eqn4} 
	\mathcal{G}(u(x_1,z),v(x_1,z)) = \sqrt{2} |u_1(x_1,z) - v_1(x_1,z)| \text{ for all } x_1 \text{ with } (x_1,z) \in B_{(5+3\gamma)/8}(0). 
\end{equation}
Suppose $u_1(x_1,z) \cdot v_1(x_1,z) = 0$ for $x_1 = \zeta,\xi$ with $(\zeta,z), (\xi,z) \in B_{(5+3\gamma)/8}(0)$ and assume $\xi < \zeta$.  Note that $\xi-\zeta \leq 2\sqrt{2} \delta/|a|$.  Observe that 
\begin{align} \label{separation_h_eqn5} 
	&\mathcal{G}(u(x_1,z),v(x_1,z)) = \sqrt{2} |u_1(x_1,z) - v_1(x_1,z)| \text{ if } x_1 \not\in [\xi,\zeta], \nonumber \\ 
	&\mathcal{G}(u(x_1,z),v(x_1,z)) = \sqrt{2} |u_1(x_1,z) + v_1(x_1,z)| \text{ if } x_1 \in [\xi,\zeta]. 
\end{align}
Write 
\begin{align} \label{separation_h_eqn6} 
	u_1(x_1,z) &= u_1(\xi,z) + a e_1 (x_1-\xi) + \int_{\xi}^{x_1} (D_1 u_1(t,z) - ae_1) dt, \nonumber \\
	v_1(x_1,z) &= v_1(\xi,z) + a e_1 (x_1-\xi) + \int_{\xi}^{x_1} (D_1 v_1(t,z) - ae_1) dt, 
\end{align}
By (\ref{separation_h_eqn3}), (\ref{separation_h_eqn6}), and the fact that $u_1(x_1,z) \cdot v_1(x_1,z)$ for $x_1 = \xi,\zeta$, 
\begin{align*} 
	|(u_1(\xi,z) + v_1(\xi,z)) \cdot e_1 + a(\zeta-\xi)| 
	&\leq (\delta/a) (|u_1(\xi,z)| + |v_1(\xi,z)|) + (2\delta + \delta^2/a) (\zeta-\xi), 
	\\&\leq (2\delta/a) |u_1(\xi,z) + v_1(\xi,z)| + (2\delta + \delta^2/a) (\zeta-\xi), 
\end{align*}
which using the triangle inequality implies 
\begin{equation} \label{separation_h_eqn7} 
	|(u_1(\xi,z) + v_1(\xi,z)) \cdot e_1 + a(\zeta-\xi)| 
	\leq (4\delta/a) |u_1(\xi,z)^{\perp} + v_1(\xi,z)^{\perp}| + 10 \delta (\zeta-\xi), 
\end{equation}
provided $\delta \leq a/4$, where we let $\eta^{\perp} = \eta - (\eta \cdot e_1) e_1$ for any $\eta \in \mathbb{R}^m$.  By (\ref{separation_h_eqn3}), (\ref{separation_h_eqn6}), and (\ref{separation_h_eqn7}), 
\begin{align} \label{separation_h_eqn8} 
	&\int_{\xi}^{\zeta} |u_1(x_1,z) + v_1(x_1,z)|^2 dx_1 \nonumber \\
	&\geq \frac{1}{2} \int_{\xi}^{\zeta} |u_1(\xi,z) + v_1(\xi,z) + 2ae_1 (x_1-\xi)|^2 dx_1 - 4 \delta^2 \int_{\xi}^{\zeta} (x_1-\xi)^2 dx_1 \nonumber \\
	&= \frac{1}{2} \int_{\xi}^{\zeta} |(u_1(\xi,z) + v_1(\xi,z)) \cdot e_1 + 2a (x_1-\xi)|^2 dx_1 
		+ \frac{1}{2} |u_1(\xi,z)^{\perp} + v_1(\xi,z)^{\perp}|^2 (\zeta-\xi)
		- \frac{4}{3} \delta^2 (\zeta-\xi)^3 \nonumber \\
	&\geq \frac{1}{4} \int_{\xi}^{\zeta} a^2 (2x_1-\xi-\zeta)^2 dx_1 
		+ \frac{1}{4} |u_1(\xi,z)^{\perp} + v_1(\xi,z)^{\perp}|^2 (\zeta-\xi)
		- 102 \delta^2 (\zeta-\xi)^3 \nonumber \\
	&= \frac{1}{12} a^2 (\zeta-\xi)^3 + \frac{1}{4} |u_1(\xi,z)^{\perp} + v_1(\xi,z)^{\perp}|^2 (\zeta-\xi)
		- 102 \delta^2 (\zeta-\xi)^3 \nonumber \\
	&\geq \frac{1}{24} a^2 (\zeta-\xi)^3 + \frac{1}{4} |u_1(\xi,z)^{\perp} + v_1(\xi,z)^{\perp}|^2 (\zeta-\xi)
\end{align}
provided $\delta \leq a/50$ and 
\begin{align} \label{separation_h_eqn9} 
	&\int_{\zeta}^{\xi} |u_1(x_1,z) - v_1(x_1,z)|^2 dx_1 \nonumber \\
	&\leq 2 |u_1(\xi,z) - v_1(\xi,z)|^2 (\zeta-\xi) + 8 \delta^2 \int_{\xi}^{\zeta} (x_1-\xi)^2 dx_1 \nonumber \\
	&= 2 |u_1(\xi,z) + v_1(\xi,z)|^2 (\zeta-\xi) + \frac{8}{3} \delta^2 (\zeta-\xi)^3 \nonumber \\
	&= 2 |(u_1(\xi,z) + v_1(\xi,z)) \cdot e_1|^2 (\zeta-\xi) + 2 |u_1(\xi,z)^{\perp} + v_1(\xi,z)^{\perp}|^2 (\zeta-\xi) 
		+ \frac{8}{3} \delta^2 (\zeta-\xi)^3 \nonumber \\
	&\leq 4 a^2 (\zeta-\xi)^3 + 4 |u_1(\xi,z)^{\perp} + v_1(\xi,z)^{\perp}|^2 (\zeta-\xi) 
		+ 803 \delta^2 (\zeta-\xi)^3 \nonumber \\
	&\leq 8 a^2 (\zeta-\xi)^3 + 4 |u_1(\xi,z)^{\perp} + v_1(\xi,z)^{\perp}|^2 (\zeta-\xi) 
\end{align}
provided $\delta \leq a/15$.  By combining (\ref{separation_h_eqn8}) and (\ref{separation_h_eqn9}) and taking $\delta$ to be sufficiently small depending on $a$, 
\begin{equation} \label{separation_h_eqn10} 
	\int_{\zeta}^{\xi} |u_1(x_1,z) - v_1(x_1,z)|^2 dx_1 
	\leq 192 \int_{\zeta}^{\xi} |u_1(x_1,z) + v_1(x_1,z)|^2 dx_1. 
\end{equation}
By (\ref{separation_h_eqn5}), 
\begin{equation} \label{separation_h_eqn11} 
	\int_{-\rho(z)}^{\rho(z)} |u_1(x_1,z) - v_1(x_1,z)|^2 dx_1 
	\leq 192 \int_{-\rho(z) - 2\sqrt{2} \delta/a}^{\rho(z) + 2\sqrt{2} \delta/a} 
		\mathcal{G}(u_1(x_1,z), v_1(x_1,z))^2 dx_1 
\end{equation}
for all $z \in B^{n-1}_{(5+3\gamma)/8}(0)$, where $\rho(z) = \sqrt{(5+3\gamma)^2/64 - |z|^2}$.  By integrating over $z \in B^{n-1}_{(5+3\gamma)/8}(0)$ 
\begin{equation*} 
	\int_{\zeta}^{\xi} |u_1(x_1,z) - v_1(x_1,z)|^2 dx_1 
	\leq 192 \int_{\zeta}^{\xi} \mathcal{G}(u_1(x_1,z), v_1(x_1,z))^2 dx_1 
\end{equation*}
provided $\delta \leq a/50$.  Arguing like in (\ref{separation_dm_eqn2}) using standard elliptic estimates for single-valued harmonic functions, we obtain (\ref{separation_h_eqn2}). 
\end{proof}

The following lemma is the analogue of Lemma 2.6 of~\cite{SimonCTC}. 

\begin{lemma} \label{coarse_graph}  Let $\varphi^{(0)}$ be as in  (\ref{varphi0}), with degree of homogeneity $\alpha$. Suppose that either  
(a) $\varphi^{(0)} \in W^{1,2}(\mathbb{R}^n;\mathcal{A}_2(\mathbb{R}^m))$ is 
Dirichlet energy minimizing  or,   
(b) $\varphi^{(0)} \in C^{1,1/2}(\mathbb{R}^n;\mathcal{A}_2(\mathbb{R}^m))$ and $D\varphi^{(0)}(0) = \{0, 0\}$.
Let $\gamma, \beta, \tau \in (0,1)$ be arbitrary with $\tau \leq (1-\gamma)/10$.  There is an $\varepsilon_0 = \varepsilon_0(n,m,\varphi^{(0)},\gamma,\beta,\tau) \in (0,1]$ such that if $\varphi \in \Phi_{\varepsilon_0}(\varphi^{(0)})$ and either
\begin{itemize}
\item[(i)] (a) holds and $u \in \mathcal{F}_{\varepsilon_0}^{\text{Dir}}(\varphi^{(0)})$, or\\
\item[(ii)] (b) holds and $u \in \mathcal{F}^{\text{Harm}}_{\varepsilon_0}(\varphi^{(0)}),$
\end{itemize}
 then there is an open set $U \subset B_1(0) \setminus \{0\} \times \mathbb{R}^{n-2}$ such that 
\begin{align*}
	&(x,y) \in U \Rightarrow (\tilde x,y) \in U \text{ whenever } |x| = |\tilde x|, 
	\\&\{ (x,y) \in B_{\gamma}(0) : |x| > \tau \} \subset U, 
\end{align*}
and there is a single-valued function $v \in C^2(\op{graph} \varphi |_U,\mathbb{R}^m)$ such that (\ref{v_defn1}) holds, $\hat v(X) = \{ v(X,\varphi_1(X)),$ $v(X,-\varphi_1(X)) \}$ is a two-valued harmonic function on $U$, and 
\begin{align*}
	&\sup_{B_{\gamma}(0)} r^{-\alpha} |\hat v| + \sup_{B_{\gamma}(0)} r^{1-\alpha} |D\hat v| \leq \beta, 
	\\&\int_U (|\hat v|^2 + r^2 |D\hat v|^2) 
	+ \int_{B_{\gamma}(0) \setminus U} (|u|^2 + r^2 |Du|^2) \leq C \int_{B_1(0)} \mathcal{G}(u,\varphi)^2 
\end{align*}
where $r(x,y) = |x|$ for $(x,y) \in B_1(0)$ and $C = C(n,m,\varphi^{(0)},\alpha,\gamma,\beta) \in (0,\infty)$ is a constant independent of $\tau$.
\end{lemma}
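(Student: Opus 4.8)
The plan is to follow the strategy behind Simon's Lemma~2.6 in \cite{SimonCTC}: away from the branch axis $\{0\}\times\mathbb{R}^{n-2}$ the model $\varphi^{(0)}$ is regular at every scale, so $u$ --- being $L^{2}$-close to $\varphi^{(0)}$ and hence to $\varphi$ --- must locally split into two single-valued harmonic graphs over $\varphi$; one patches these local pieces on the largest set $U$ where the splitting goes through and reads off the estimates from the $C^{3}$-bounds furnished by Lemma~\ref{separation_lemma_dm} (Dirichlet case) or Lemma~\ref{separation_lemma_h} (harmonic case).

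First I would record the structure of $\varphi^{(0)}$ off the axis. Writing $r(X)=\sqrt{x_{1}^{2}+x_{2}^{2}}$, homogeneity of $\varphi^{(0)}$ in $(x_{1},x_{2})$ and invariance in $y=(x_{3},\dots,x_{n})$ show that $Z\mapsto\kappa^{-\alpha}\varphi^{(0)}(X+\kappa r(X)Z)$ depends only on $\theta=\arg(x_{1}+ix_{2})$ and equals $\kappa^{-\alpha}\{\pm\op{Re}(c^{(0)}(e^{i\theta}+\kappa(z_{1}+iz_{2}))^{\alpha})\}$, so these form a compact family as $\theta$ varies. When $\varphi^{(0)}$ is Dirichlet minimizing, $\{\op{Re}(c^{(0)}(x_{1}+ix_{2})^{\alpha})=0\}$ must reduce to the axis --- otherwise $\Sigma_{\varphi^{(0)}}$ would contain a ray $\times\,\mathbb{R}^{n-2}$, contradicting $\dim_{\mathcal H}\Sigma_{\varphi^{(0)}}\le n-2$ --- so $|\varphi^{(0)}_{1}(X)|\ge c_{0}\,r(X)^{\alpha}$ for some $c_{0}=c_{0}(\varphi^{(0)})>0$, i.e.\ $\varphi^{(0)}$ is uniformly $2$-separated off the axis. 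When $\varphi^{(0)}$ is $C^{1,\mu}$ harmonic, that zero set is either the axis alone or the axis together with finitely many rays along which $D\varphi^{(0)}=\{\pm D\varphi^{(0)}_{1}\}$ has $D\varphi^{(0)}_{1}$ of rank one (the existence of such a ray forces $\op{Re}(c^{(0)})$ and $\op{Im}(c^{(0)})$ to be parallel in $\mathbb{R}^{m}$, whence $D\varphi^{(0)}_{1}$ has rank $\le 1$ everywhere). Fixing a small dimensional $\kappa\in(0,1)$ (depending on $\varphi^{(0)}$ through the angular separation of those rays) and finitely many angles $\theta_{1},\dots,\theta_{M}$, I may arrange that every rescaled model above is $C^{3}(B_{1}(0))$-close to one of the fixed functions $\psi_{i}=\kappa^{-\alpha}\{\pm\op{Re}(c^{(0)}(e^{i\theta_{i}}+\kappa(\cdot))^{\alpha})\}$, each of which is either $2$-separated on $B_{1}(0)$ (with $\inf_{B_{1}}|\psi_{i,1}|\ge c_{0}/2$) or, in the harmonic case, has derivative of rank one along $\mathcal Z_{\psi_{i}}$ --- exactly the class admissible in Lemma~\ref{separation_lemma_h}. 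Let $\varepsilon_{i}=\varepsilon(n,m,\tfrac12,\psi_{i})>0$ be the threshold from the separation lemma (with ``$\gamma$''$=\tfrac12$) and set $\delta=\min\{\min_{i}\varepsilon_{i},c\beta\}$, where $c=c(n,m,\varphi^{(0)},\alpha)>0$ is fixed below.

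Next I would define $\mathcal X$ to be the set of $X$ with $r(X)\in(0,\gamma)$ and $|X|<(3+\gamma)/4$ such that, for $X$ and for every rotation $\widetilde X$ of $X$ in the $(x_{1},x_{2})$-plane, $(\kappa r(X))^{-n-2\alpha}\int_{B_{\kappa r(X)}(\widetilde X)}\bigl(\mathcal G(u,\varphi^{(0)})^{2}+\mathcal G(\varphi,\varphi^{(0)})^{2}\bigr)<\delta^{2}$, and put $U=\bigcup_{X\in\mathcal X}B_{\kappa r(X)/2}(X)$; then $U$ is open, rotationally invariant in $x$, and contained in $B_{1}(0)\setminus\{0\}\times\mathbb{R}^{n-2}$, and since $\int_{B_{1}}\mathcal G(u,\varphi^{(0)})^{2},\int_{B_{1}}\mathcal G(\varphi,\varphi^{(0)})^{2}\le\varepsilon_{0}^{2}$ and scales $r(X)\ge\tau$ (with their rotations) carry the same bound, choosing $\varepsilon_{0}$ small depending on $n,m,\varphi^{(0)},\gamma,\beta,\tau$ gives $\{(x,y)\in B_{\gamma}:|x|>\tau\}\subset U$. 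For $X\in\mathcal X$, rescaling $B_{\kappa r(X)}(X)$ to $B_{1}(0)$ and applying Lemma~\ref{separation_lemma_dm} or \ref{separation_lemma_h} with $\psi=\psi_{i}$, ``$u$''$=u$ and ``$v$''$=\varphi$ (note that on $B_{\kappa r(X)}(X)$ the two-sheeted $\varphi$ is itself a pair of single-valued harmonic functions, hence Dirichlet minimizing, resp.\ $C^{1,\mu}$ harmonic, there) yields $u=\{\pm u_{1}\}$, $\varphi=\{\pm\varphi_{1}\}$ with $u_{1},\varphi_{1}$ single-valued harmonic on $B_{\kappa r(X)/2}(X)$ and the scale-invariant bound
\[
r^{-\alpha}\sup|u_{1}-\varphi_{1}|+r^{1-\alpha}\sup|D(u_{1}-\varphi_{1})|+r^{2-\alpha}\sup|D^{2}(u_{1}-\varphi_{1})|\;\le\;C\Bigl(r^{-n-2\alpha}\!\int_{B_{\kappa r}(X)}\!\mathcal G(u,\varphi)^{2}\Bigr)^{1/2}
\]
on $B_{\kappa r(X)/2}(X)$. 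Setting $v(\,\cdot\,,\pm\varphi_{1}):=\pm(u_{1}-\varphi_{1})$ on the two sheets of $\op{graph}\varphi$ and using uniqueness of the decompositions to glue the local pieces gives $v\in C^{2}(\op{graph}\varphi|_{U};\mathbb{R}^{m})$ satisfying (\ref{v_defn1}), with $\hat v=\{\pm(u_{1}-\varphi_{1})\}$ a symmetric two-valued harmonic function on $U$; and since for $X\in\mathcal X$ the displayed right side is $\le C'\delta\le C'c\beta$, choosing $c$ with $C'c\le1$ yields $r^{-\alpha}|\hat v|+r^{1-\alpha}|D\hat v|\le\beta$ on $B_{\gamma}\cap U$.

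It remains to prove the integral estimate with $C$ independent of $\tau$. If $X\in B_{\gamma}\setminus U$, some rotation $\widetilde X$ of $X$ violates the defining inequality; since a direct computation from the explicit $\alpha$-homogeneous forms of $\varphi,\varphi^{(0)}$ gives $\int_{B_{\kappa r(X)}(\widetilde X)}\mathcal G(\varphi,\varphi^{(0)})^{2}\le C\,r(X)^{n+2\alpha}\int_{B_{1}}\mathcal G(\varphi,\varphi^{(0)})^{2}\le C\varepsilon_{0}^{2}\,r(X)^{n+2\alpha}$, which for $\varepsilon_{0}$ small --- now \emph{independently} of $\tau$ --- can be absorbed, one gets $\int_{B_{\kappa r(X)}(\widetilde X)}\mathcal G(u,\varphi)^{2}\ge c'\,r(X)^{n+2\alpha}$ for a fixed $c'>0$. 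Since $B_{\kappa r(X)}(\widetilde X)$ lies in the annular slab $S_{X}=\{(1-\kappa)r(X)<|x|<(1+\kappa)r(X)\}\cap\{|y-y(X)|<\kappa r(X)\}$ and $\int_{S_{X}}|\varphi|^{2}\le C\,r(X)^{n+2\alpha}$ by homogeneity, it follows that $\int_{S_{X}}|\varphi|^{2}\le C\int_{S_{X}}\mathcal G(u,\varphi)^{2}$; covering the (rotationally invariant) set $B_{\gamma}\setminus U$ by finitely overlapping such slabs (a Vitali selection in the $(y,\log r)$ variables), summing, using $|u|^{2}\le2\mathcal G(u,\varphi)^{2}+4|\varphi|^{2}$, and using an interior $L^{2}$ estimate for $Du$ (cf.\ (\ref{schauder_dm}), (\ref{schauder_h})) on each slab for the term $r^{2}|Du|^{2}$, one obtains $\int_{B_{\gamma}\setminus U}(|u|^{2}+r^{2}|Du|^{2})\le C\int_{B_{1}}\mathcal G(u,\varphi)^{2}$. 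Finally $\int_{U}|\hat v|^{2}\le\int_{B_{1}}\mathcal G(u,\varphi)^{2}$ since $|\hat v|\le\mathcal G(u,\varphi)$, and integrating the scale-invariant gradient bound over a finitely overlapping subcover of $U$ by the balls $B_{\kappa r(X)/2}(X)$ gives $\int_{U}r^{2}|D\hat v|^{2}\le C\int_{B_{1}}\mathcal G(u,\varphi)^{2}$; all constants depend only on $n,m,\varphi^{(0)},\alpha,\gamma,\beta$, $\tau$ entering only through $\varepsilon_{0}$. I expect the hardest point to be exactly this $\tau$-independence: it rests on recognising $B_{\gamma}\setminus U$ as a region where, at every scale, $u$ sits a definite $L^{2}$-distance (relative to the scale) from the two-sheeted model, so that the model's local $L^{2}$-mass is absorbed into $\int\mathcal G(u,\varphi)^{2}$, and on carrying out the covering with rotationally invariant slabs to respect the symmetry of $U$; a secondary technical point, already visible in the hypotheses of Lemma~\ref{separation_lemma_h}, is checking the rank-one structure of $D\varphi^{(0)}$ on $\mathcal Z_{\varphi^{(0)}}$ in the $C^{1,\mu}$ case so that the separation lemma applies near crossing rays.
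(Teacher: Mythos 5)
Your proposal is correct and follows essentially the same route as the paper's proof: local two-sheeted decomposition via Lemmas~\ref{separation_lemma_dm}/\ref{separation_lemma_h} applied at scales comparable to $r=\op{dist}(\cdot,\{0\}\times\mathbb{R}^{n-2})$, with $U$ the (rotationally symmetric) union of the good regions, the $\beta$-bounds coming from the smallness threshold, and the complement estimate obtained exactly as in the paper by observing that bad regions carry excess $\gtrsim r^{n+2\alpha}$ (so $\int|\varphi|^2$ is absorbed into $\mathcal{G}(u,\varphi)^2$, with (\ref{monotonicity_identity1})/(\ref{schauder_dm}), (\ref{schauder_h}) handling $r^2|Du|^2$) together with a covering argument, which is what makes $C$ independent of $\tau$. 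The only real deviation is that you bound $\int_U r^2|D\hat v|^2$ by summing the scaled separation-lemma estimates over a bounded-overlap Whitney-type cover of $U$, whereas the paper splits $U$ into a near-$\partial U$ region (treated by the same covering/definite-excess argument) and its interior, where it uses a cutoff integration-by-parts (Caccioppoli) argument exploiting harmonicity of $\hat v$; both work, and your explicit verification of the off-axis structure of $\varphi^{(0)}$ (separation in the minimizing case, rank-one crossings in the $C^{1,\mu}$ case) usefully spells out what the paper leaves implicit when invoking the separation lemmas.
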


\begin{proof} 
First consider the case of Dirichlet energy minimizers (i.e.\ hypotheses (a) and $u \in {\mathcal F}_{\varepsilon_{0}}^{Dir}(\varphi^{(0)}))$).  For each $\zeta \in \mathbb{R}^{n-2}$ and $\rho \in (0,1]$, let 
\begin{equation*}
	A_{\rho,\kappa}(\zeta) = \{ (x,y) \in \mathbb{R}^2 \times \mathbb{R}^{n-2} : (|x| - \rho)^2 + |y - \zeta|^2 < \kappa^2 (1-\gamma)^2 \rho^2/4 \} .
\end{equation*}
By Lemma \ref{separation_lemma_dm}, there is a $\delta = \delta(n,m,\varphi^{(0)},\beta) > 0$ such that if $\rho^2 + |\zeta|^2 < \gamma^2$ and $\int_{A_{\rho,3/4}(\zeta)} \mathcal{G}(u,\varphi)^2 < \delta \rho^{n+2\alpha}$ (or equivalently, $\int_{A_{1,3/4}(0)} \mathcal{G}(\tilde u,\varphi)^2 < \delta$ where $\tilde u(x,y) = \rho^{-\alpha} u(\rho x, \zeta + \rho y)$) then there exists a single-valued function $v_{\rho,\zeta} \in C^2(\op{graph} \varphi |_{A_{\rho,1/2}(\zeta)},\mathbb{R}^m)$ such that 
\begin{equation*}
	u(X) = \{ \varphi_1(X) + v_{\rho,\zeta}(X,\varphi_1(X)), -\varphi_1(X) + v_{\rho,\zeta}(X,-\varphi_1(X)) \}
\end{equation*}
for all $X \in A_{\rho,1/2}(\zeta)$, where $\varphi(X) = \{ \pm \varphi_1(X) \}$, and $\hat v_{\r, \, \z}(X) \equiv \{ v_{\rho,\zeta}(X,\varphi_1(X)), v_{\rho,\zeta}(X,-\varphi_1(X)) \}$ satisfies 
\begin{equation} \label{coarse_graph_eqn1}
	\rho^{-\alpha} \sup_{A_{\rho,1/2}(\zeta)} |\hat v_{\rho,\zeta}| + \rho^{1-\alpha} \sup_{A_{\rho,1/2}(\zeta)} |D\hat v_{\rho,\zeta}| \leq 2^{-\alpha} \beta.
\end{equation}
Let $U$ be the union of the $A_{\rho,1/2}(\zeta)$ such that $\rho^2 + |\zeta|^2 < \gamma^2$ and $\int_{A_{\rho,3/4}(\zeta)} \mathcal{G}(u,\varphi)^2 < \delta \rho^{n+2\alpha}$.  We then obtain a well-defined single-valued function $v \in C^2(\op{graph} \varphi |_U,\mathbb{R}^m)$ by requiring that $v = v_{\rho,\zeta}$ on ${\rm graph} \, \varphi|_{A_{\r, \, 1/2}(\z)}$ whenever $\r^{2} + |\z|^{2} < \g^{2}$ and $\int_{A_{\rho,3/4}(\zeta)} \mathcal{G}(u,\varphi)^2 < \delta \rho^{n+2\alpha}$.  For $X \in U$, let $\hat v(X) = \{v(X, \varphi_{1}(X)), v(X, - \varphi_{1}(X))\}.$ Since $\varphi \in \Phi_{\varepsilon_0}(\varphi^{(0)})$ and $u \in \mathcal{F}^{\text{Dir}}_{\varepsilon_0}(\varphi^{(0)})$, it follows from Lemma \ref{separation_lemma_dm} that 
\begin{equation*}
	\{ (x,y) \in B_{\gamma}(0) : |x| > \tau \} \subset U 
\end{equation*}
provided $\varepsilon_0 = \varepsilon(n,m,\varphi^{(0)},\gamma,\beta,\tau) \in (0,1]$ is sufficiently small.  By (\ref{coarse_graph_eqn1}), 
\begin{equation} \label{coarse_graph_eqn2}
	\sup_{B_{\gamma}(0) \cap U} r^{-\alpha} |\hat v(x,y)| + \sup_{B_{\gamma}(0) \cap U} r^{1-\alpha} |D\hat v(x,y)| \leq \beta
\end{equation}
where $r(x,y) = |x|.$ 

For $(\xi,\zeta) \in B_{\gamma} \cap \partial U$ then 
\begin{equation} \label{coarse_graph_eqn3}
	\int_{A_{|\xi|,3/4}(\zeta)} \mathcal{G}(u,\varphi)^2 \geq \delta |\xi|^{n+2\alpha};
\end{equation}
otherwise $A_{|\xi|,1/2}(\zeta)$ is an open neighborhood of $(\xi,\zeta)$ with $A_{|\xi|,1/2}(\zeta) \subseteq U$.  By (\ref{coarse_graph_eqn2}) and (\ref{coarse_graph_eqn3}), 
\begin{equation*}
	\int_{B_{10|\xi|}(0,\zeta) \cap B_{\gamma} \cap U} (|\hat v|^2 + r^2 |D\hat v|^2) 
	\leq C \beta^2 \delta^{-1} \int_{A_{|\xi|,3/4}(\zeta)} \mathcal{G}(u,\varphi)^2
\end{equation*}
for some constant $C = C(n,\alpha) \in (0,\infty)$.  Since 
\begin{equation*}
	\{ (x,y) \in B_{\gamma}(0) \cap U : \op{dist}((x,y), B_{\gamma} \cap \partial U) \leq |x|/2 \} \subset 
	\bigcup_{(\xi,\zeta) \in B_{\gamma} \cap \partial U} B_{2|\xi|}(0,\zeta)
\end{equation*}
and 
\begin{equation*}
	B_{2|\xi|}(0,\zeta) \cap B_{2|\xi'|}(0,\zeta') = \emptyset \hspace{3mm} \Rightarrow \hspace{3mm} 
	A_{|\xi|,3/4}(\zeta) \cap A_{|\xi'|,3/4}(\zeta') = \emptyset, 
\end{equation*}
by the 5-times covering lemma, 
\begin{equation} \label{coarse_graph_eqn4}
	\int_{\{ (x,y) \in B_{\gamma}(0) \cap U : \op{dist}((x,y), B_{\gamma} \cap \partial U) \leq |x|/2 \}} (|\hat v|^2 + r^2 |D\hat v|^2) 
	\leq C \int_{B_1(0)} \mathcal{G}(u,\varphi)^2 
\end{equation}
for $C = C(n,m,\varphi^{(0)},\alpha,\beta) \in (0,\infty)$.  

Define $d : U \rightarrow [0,\infty)$ by $d(x,y) = \op{dist}((x,y), \partial U \cap B_{\gamma}(0))$ and note that $d$ is Lipschitz with Lipschitz constant $1$.  Let $\psi : [0,\infty) \rightarrow [0,1]$ be a smooth function such that $\psi(t) = 0$ for $t \leq 1/4$, $\psi(t) = 1$ for $t \geq 1/2$, and $0 \leq \psi'(t) \leq 6$ for all $t$.  Let $\eta : B_1(0) \rightarrow [0,1]$ be a smooth function such that $\eta = 1$ on $B_{\gamma}(0)$, $\eta = 0$ on $B_1(0) \setminus B_{(1+\gamma)/2}(0)$ and $|D\eta| \leq 3/(1-\gamma)$.  Since $\Delta \, \hat v = 0$ in $U$ (meaning that locally near every point in $U$, $\hat v$ is given by two harmonic functions with zero average), we may, with the help of a suitable partition of unity, multiply this by $\hat v r^2 \psi(d/|x|)^2 \eta^2$ and integrate by parts to obtain
\begin{align*}
	\int_U |D\hat v|^2 r^2 \psi(d/r)^2 \eta^2 
	= {}& 2 \int_U \hat v^{\kappa} D_i \hat v^{\kappa} \left( x_i \psi(d/r)^2 \eta^2 
		+ r^2 \psi(d/r) \psi'(d/r) \eta^2 \left( \frac{D_i d}{r} - \frac{d x_i}{r^3} \right) \right) \\
	& + 2 \int_U \hat v^{\kappa} D_i \hat v^{\kappa} r^2 \psi(d/r)^2 \eta D_i \eta. 
\end{align*}
Using Cauchy's inequality and the definition of $\psi$, we deduce from this that 
\begin{equation*}
	\int_U |D\hat v|^2 r^2 \psi(d/r)^2 \eta^2 
	\leq \int_U |\hat v|^2 (5 \psi(d/r)^2 \eta^2 + 9 \psi'(d/r)^2 \eta^2 + 4r^2 \psi(d/r)^2 |D\eta|^2),
\end{equation*}
which, by the definitions of $\psi$ and $\eta$, yields
\begin{equation} \label{coarse_graph_eqn5}
	\int_{\{ (x,y) \in U \cap B_{\gamma}(0) : d(x,y) \geq |x|/2 \}} r^2 |D\hat v|^2  
	\leq C \int_{B_1(0) \cap U} |\hat v|^2   
\end{equation}
for some constant $C = C(\gamma) \in (0,\infty)$. Combining (\ref{coarse_graph_eqn4}) and (\ref{coarse_graph_eqn5}), 
\begin{equation*}
	\int_{U \cap B_{\gamma}} (|\hat v|^2 + r^2 |D\hat v|^2) \leq C \int_{B_1(0)} \mathcal{G}(u,\varphi)^2 
\end{equation*}
for some constant $C = C(n,m,\varphi^{(0)},\alpha,\gamma,\beta) \in (0,\infty)$. 

Observe that if $\rho^2 + |\zeta|^2 < \gamma^2$ and $\int_{A_{\rho,3/4}(\zeta)} \mathcal{G}(u,\varphi)^2 \geq \delta \rho^{n+2\alpha}$, then since by (\ref{monotonicity_identity1})
\begin{equation*} 
	\int_{A_{\rho,1/2}(\zeta)} (|u|^2 + r^2 |Du|^2) \leq C \int_{A_{\rho,3/4}(\zeta)} |u|^2, 
\end{equation*}
it follows from the triangle inequality and the homogeneity of $\varphi$, 
\begin{align*} 
	&\hspace{-.7in}\int_{A_{\rho,1/2}(\zeta)} (|u|^2 + r^2 |Du|^2) 
	\leq C \int_{A_{\rho,3/4}(\zeta)} \mathcal{G}(u,\varphi)^2 + C \int_{A_{\rho,3/4}(\zeta)} |\varphi|^2 
	\\&\leq C \int_{A_{\rho,3/4}(\zeta)} \mathcal{G}(u,\varphi)^2 + C \rho^{n+2\alpha} \leq C (1+\delta^{-1}) \int_{A_{\rho,3/4}(\zeta)} \mathcal{G}(u,\varphi)^2. 
\end{align*}
for $C = C(n, m,\varphi^{(0)}, \g) \in (0,\infty)$.  Hence 
\begin{equation} \label{coarse_graph_eqn6} 
	\int_{A_{\rho,1/2}(\zeta) \setminus U} (|u|^2 + r^2 |Du|^2) \leq C (1+\delta^{-1}) \int_{A_{\rho,3/4}(\zeta)} \mathcal{G}(u,\varphi)^2. 
\end{equation}
for $C = C(n, m, \varphi^{(0)}, \g) \in (0,\infty)$ for all $\zeta \in B^{n-2}_{\gamma}(0)$ and $\rho \in (0,1/2)$.  By (\ref{coarse_graph_eqn6}) and an easy covering argument, it follows that 
\begin{equation*} 
	\int_{B_{\gamma}(0) \setminus U} (|u|^2 + r^2 |Du|^2) \leq C \int_{B_1(0)} \mathcal{G}(u,\varphi)^2 
\end{equation*}
for some constant $C = C(n,m,\varphi^{(0)},\gamma) \in (0,\infty)$. This completes the proof in the case of Dirichlet energy minimizers. 

In the case of $C^{1, \mu}$ harmonic functions (i.e.\ under hypotheses (b) and $u \in {\mathcal F}_{\varepsilon_{0}}^{Harm}(\varphi^{(0)})$), the same proof carries over if in place of Lemma \ref{separation_lemma_dm} we use Corollary \ref{separation_lemma_h}.
\end{proof}

\section{A priori estimates: non-concentration of excess} \label{nonconcentration_section}
\setcounter{equation}{0}
Let $\varphi^{(0)}$ be a non-zero symmetric two-valued homogeneous function as in (\ref{varphi0}), with degree of homogeneity $\alpha = k/2$ for some positive integer $k$. In Theorem~\ref{thm6_2} and Corollaries~\ref{alpha=1/2}--\ref{cor6_6} below, we establish several key new a priori estimates  for functions $u \in {\mathcal F}^{Dir}_{\varepsilon_{0}}(\varphi^{(0)})$ and  $u \in {\mathcal F}^{Harm}_{\varepsilon_{0}}(\varphi^{(0)})$ for sufficiently small $\varepsilon_{0}$ depending on $\varphi^{(0)}$. These estimates are necessary for the proof of Lemma~\ref{lemma1}, and are inspired by those in Section 3 of~\cite{SimonCTC}. 

The main result in this section is Theorem~\ref{thm6_2}. Its proof is based on the variational identities (\ref{monotonicity_identity1}), (\ref{monotonicity_identity2}), and in particular on the monotonicity formula of Lemma~\ref{new-monotonicity} below, which appears not to have been used before in the context of analysis of branch points. 

Corollary~\ref{alpha=1/2}, which is a consequence of the identity (\ref{monotonicity_identity2}) and Theorem~\ref{thm6_2}(b),  will be needed to handle the case $\alpha = 1/2$ of our main results. 

The rest of the estimates, given in Corollaries~\ref{cor6_3}--\ref{cor6_6}, follow from Theorem~\ref{thm6_2}(a) by the corresponding arguments  in \cite{SimonCTC} with only minor modifications. 

Note in particular Corollary~\ref{cor6_6} (b): it implies, under a certain hypothesis concerning distribution of branch points, that the $L^{2}$ deviation (excess) of $u$ 
from a function $\varphi \in \Phi_{\varepsilon_{0}}(\varphi^{(0)})$ does not concentrate near the axis of $\varphi$; this fact is indispensable in the proof of Lemma~\ref{lemma1}.

 We recall our notation: $D_{u, Y}(\rho) = \r^{2-n}\int_{B_{\r}(Y)}|Du|^{2}$, $H_{u, Y}(\rho) = \r^{1-n}\int_{\partial \, B_{\r}(Y)}|u|^{2}$ and $N_{u, Y}(\r) = D_{u, Y}(\r)/H_{u, Y}(\r).$

\begin{lemma}\label{new-monotonicity}
Let $\alpha \in {\mathbb R}$. If $u \in W^{1, 2}(B_{1}(0); {\mathcal A}_{2}({\mathbb R}^{m}))$ is Dirichlet energy minimizing in $B_{1}(0),$ or if $u \in C^{1, \mu}(B_{1}(0); {\mathcal A}_{2}({\mathbb R}^{m}))$ for some $\mu \in (0, 1)$ and $u$ is harmonic in $B_{1}(0)$, then for each $Y \in B_{1}(0)$, we have that 
$$\frac{d}{d\rho}\left(\rho^{-2\alpha} (D_{u,Y}(\rho) - \alpha H_{u,Y}(\rho))\right)
= 2\rho^{2-n}\int_{\partial\,B_{\rho}(Y)} \left| \frac{\partial (u/R^{\alpha})}{\partial R} \right|^2$$ 
for a.e.\ $\rho \in (0, {\rm dist}\, (Y, \partial \, B_{1}(0))).$ Here $R(X) = |X - Y|.$ 
\end{lemma}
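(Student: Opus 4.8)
The plan is to reduce immediately to the case $Y=0$ (the asserted identity is manifestly translation invariant, since $R(X)=|X-Y|$, so one may replace $u$ by $u(Y+\,\cdot\,)$ and work on a ball centered at $0$), and then to prove the identity by directly differentiating the left-hand side. Write $F(\rho)=\rho^{-2\alpha}\bigl(D_{u,0}(\rho)-\alpha H_{u,0}(\rho)\bigr)$ for $\rho\in(0,\op{dist}(0,\partial B_1(0)))$. Since $\rho\mapsto D_{u,0}(\rho)$ and $\rho\mapsto H_{u,0}(\rho)$ are absolutely continuous on this interval, so is $F$, and it suffices to verify the claimed identity for a.e.\ $\rho$.

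The two ingredients are the first-order identities (\ref{energy1}) and (\ref{energy2}), which in the $C^{1,\mu}$ harmonic case hold by virtue of $u$ satisfying (\ref{monotonicity_identity1}) and (\ref{monotonicity_identity2}) (see \cite{SW}, Remark~2.3(2)), exactly as recorded above. From (\ref{energy2}) one has $D_{u,0}'(\rho)=2\rho^{2-n}\int_{\partial B_\rho(0)}|D_Ru|^2$ for a.e.\ $\rho$. For $H_{u,0}$, the substitution $X=\rho\omega$ gives $H_{u,0}(\rho)=\int_{\partial B_1(0)}|u(\rho\omega)|^2\,d\omega$, whence $H_{u,0}'(\rho)=2\rho^{1-n}\int_{\partial B_\rho(0)}u\cdot D_Ru$, which by (\ref{energy1}) equals $2\rho^{1-n}\int_{B_\rho(0)}|Du|^2=\tfrac{2}{\rho}D_{u,0}(\rho)$; this is the relation $N_{u,0}(\rho)=\rho H_{u,0}'(\rho)/2H_{u,0}(\rho)$ already invoked in the proof of Lemma~\ref{monotonicity_cor}, now read off directly.

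Next I would carry out the two elementary computations and check that they agree. Differentiating $F$ by the product rule and inserting the two formulas for $D_{u,0}'$ and $H_{u,0}'$ yields
$$F'(\rho)=2\rho^{2-n-2\alpha}\int_{\partial B_\rho(0)}|D_Ru|^2-4\alpha\rho^{-1-2\alpha}D_{u,0}(\rho)+2\alpha^2\rho^{-1-2\alpha}H_{u,0}(\rho)$$
for a.e.\ $\rho$. On the other hand, since $\dfrac{\partial}{\partial R}\!\left(\dfrac{u}{R^{\alpha}}\right)=R^{-\alpha}\!\left(D_Ru-\dfrac{\alpha}{R}\,u\right)$, expanding the square on $\partial B_\rho(0)$ (where $R\equiv\rho$) and using $\int_{\partial B_\rho(0)}u\cdot D_Ru=\rho^{n-2}D_{u,0}(\rho)$ (again (\ref{energy1})) together with $\int_{\partial B_\rho(0)}|u|^2=\rho^{n-1}H_{u,0}(\rho)$ gives precisely the same three terms for $2\rho^{2-n}\int_{\partial B_\rho(0)}\bigl|\partial_R(u/R^{\alpha})\bigr|^2$. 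Comparing the two expansions proves the lemma.

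I do not expect a genuine obstacle here: the only points needing care are the (standard) absolute continuity of $D_{u,0}$ and $H_{u,0}$ justifying differentiation, the a.e.\ validity of (\ref{energy1})--(\ref{energy2}) (already available in both the Dirichlet minimizing and the $C^{1,\mu}$ harmonic case), and the bookkeeping of the powers of $\rho$ when matching the two expansions. In essence the proof is a one-line computation from the stationarity identities.
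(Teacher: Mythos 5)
Your proposal is correct and follows exactly the route the paper takes: its proof of Lemma~\ref{new-monotonicity} is precisely a direct computation from (\ref{energy1}) and (\ref{energy2}), differentiating $\rho^{-2\alpha}(D_{u,Y}(\rho)-\alpha H_{u,Y}(\rho))$ and matching with the expansion of $\bigl|\partial_R(u/R^{\alpha})\bigr|^2$ on $\partial B_\rho(Y)$. Your bookkeeping of the three terms and the use of $H'_{u,Y}(\rho)=2\rho^{-1}D_{u,Y}(\rho)$ agree with the paper's (one-line) argument.
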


\begin{proof}
Compute directly using the identities (\ref{energy1}) and (\ref{energy2}). 
\end{proof}

It is convenient to state and prove the rest of our results in this section for the case $u \in {\mathcal F}_{\epsilon_{0}}^{Dir}(\varphi^{(0)})$. The results all hold without change in case $u \in {\mathcal F}_{\varepsilon_{0}}^{Harm}(\varphi^{(0)})$, with essentially the same proofs; the only (minor) modification needed is in the proof of Corollary~\ref{cor6_5}, which we shall point out at the end of the section. 
   
\begin{theorem} \label{thm6_2} 
Let $\varphi^{(0)} \in W^{1,2}(\mathbb{R}^n;\mathcal{A}_2(\mathbb{R}^m))$ be a Dirichlet minimizing two-valued function given by (\ref{varphi0}) for some $c^{(0)} \in \mathbb{C}^m \setminus \{0\}$ and $\alpha \in \{1/2,1,3/2,2,\ldots\}$.  Let $\gamma \in (0,1).$ There exists $\varepsilon_0 = \varepsilon_0(\varphi^{(0)},\gamma,\sigma) > 0$ such that if $\varphi \in \Phi_{\varepsilon_0}(\varphi^{(0)})$, $u \in \mathcal{F}^{\text{Dir}}_{\varepsilon_0}(\varphi^{(0)})$, $0 \in \Sigma_u$ and $\mathcal{N}_u(0) \geq \alpha$, then 
\begin{equation*}
\hspace{-1in}(a)\;\;\;\;\; \int_{B_{\gamma}(0)} R^{2-n} \left| \frac{\partial (u/R^{\alpha})}{\partial R} \right|^2 \leq C \int_{B_1(0)} \mathcal{G}(u,\varphi)^2 \;\;\; \mbox{and}
\end{equation*}
\begin{equation*}
\hspace{-2.05in}(b) \;\;\;\;\; \int_{B_{\gamma}(0)} |D_{y}u|^{2} \leq C\int_{B_1(0)} \mathcal{G}(u,\varphi)^2
\end{equation*}
where $R = |X|$ and $C = C(n,m,\varphi^{(0)},\alpha,\gamma) \in (0,\infty)$.
\end{theorem}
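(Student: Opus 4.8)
The plan is to establish (a) first---reducing it via Lemma~\ref{new-monotonicity} to a fixed-scale frequency estimate and then proving that estimate by a blow-up argument modelled on \cite{SimonCTC}---and then to deduce (b) by an analogous blow-up. Since $0\in\Sigma_u$ and $\mathcal{N}_u(0)\ge\alpha$, monotonicity of the frequency gives $D_{u,0}(\rho)\ge\alpha H_{u,0}(\rho)$ for all $\rho\in(0,1)$, so by Lemma~\ref{new-monotonicity} the function $\rho\mapsto\rho^{-2\alpha}(D_{u,0}(\rho)-\alpha H_{u,0}(\rho))$ is non-negative and non-decreasing; moreover (\ref{doubling_estimate}) forces it to tend to $0$ as $\rho\to0^{+}$. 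Integrating Lemma~\ref{new-monotonicity} over $(0,\gamma)$ and using monotonicity once more, we obtain for any fixed $\gamma'\in(\gamma,1)$
\[ \int_{B_\gamma(0)} R^{2-n}\Big|\frac{\partial(u/R^\alpha)}{\partial R}\Big|^2 \;=\; \tfrac12\,\gamma^{-2\alpha}\big(D_{u,0}(\gamma)-\alpha H_{u,0}(\gamma)\big) \;\le\; \tfrac12\,\gamma'^{-2\alpha}\big(D_{u,0}(\gamma')-\alpha H_{u,0}(\gamma')\big), \]
so (a) reduces to proving $D_{u,0}(\gamma')-\alpha H_{u,0}(\gamma')\le C\int_{B_1(0)}\mathcal{G}(u,\varphi)^{2}$ for one such $\gamma'$.

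Suppose this reduced estimate fails: then there are $\varepsilon_{0,j}\downarrow0$, $\varphi_j\in\Phi_{\varepsilon_{0,j}}(\varphi^{(0)})$ and $u_j\in\mathcal F^{\mathrm{Dir}}_{\varepsilon_{0,j}}(\varphi^{(0)})$ with $0\in\Sigma_{u_j}$, $\mathcal N_{u_j}(0)\ge\alpha$ and $D_{u_j,0}(\gamma')-\alpha H_{u_j,0}(\gamma')>j\,E_j^{2}$, where $E_j^{2}:=\int_{B_1(0)}\mathcal G(u_j,\varphi_j)^{2}$; then $u_j,\varphi_j\to\varphi^{(0)}$ in $L^{2}$, so $E_j\to0$, and upper semicontinuity of frequency (Lemma~\ref{monotonicity_cor}(c)) together with $\mathcal N_{u_j}(0)\ge\alpha=\mathcal N_{\varphi^{(0)}}(0)$ gives $\mathcal N_{u_j}(0)\to\alpha$, hence $N_{u_j,0}(\rho)\in[\alpha,\alpha+o(1))$ for all $\rho<1-o(1)$; by Lemma~\ref{lemma2_4} the same near-$\alpha$ control holds at every point of $\Sigma_{u_j}\cap B_1(0)$ at which the frequency is $\ge\alpha$. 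Apply Lemma~\ref{coarse_graph} with a fixed $\gamma''\in(\gamma',1)$ and parameters $\beta_j\downarrow0$, $\tau_j\downarrow0$ chosen so slowly that $\varepsilon_{0,j}\le\varepsilon_0(n,m,\varphi^{(0)},\gamma'',\beta_j,\tau_j)$: this yields open sets $U_j$ (rotationally symmetric in the $x$-variables, containing $\{(x,y)\in B_{\gamma''}:|x|>\tau_j\}$) and single-valued $v_j$ on $\operatorname{graph}\varphi_j|_{U_j}$ realising $u_j$ graphically over $\varphi_j$, with $\hat v_j$ a symmetric two-valued harmonic function on $U_j$ obeying $\int_{U_j}(|\hat v_j|^{2}+r^{2}|D\hat v_j|^{2})+\int_{B_{\gamma''}\setminus U_j}(|u_j|^{2}+r^{2}|Du_j|^{2})\le C E_j^{2}$. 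Set $w_j:=E_j^{-1}\hat v_j$; after passing to a subsequence $w_j\to w$ locally smoothly on $B_{\gamma''}\setminus(\{0\}\times\mathbb R^{n-2})$ and weakly in the relevant weighted space, where $w$ is a symmetric two-valued harmonic function. The lower bound $\mathcal N_{u_j}(\cdot)\ge\tfrac12$ on $\Sigma_{u_j}$ (Lemma~\ref{monotonicity_cor}(d)) together with $\mathcal N_{u_j}(0)\ge\alpha$ forces $\hat v_j$, hence $w$, to vanish to order $\ge\tfrac12$ along the axis and to order $\ge\alpha$ at the origin, so the axis is removable and $w\in W^{1,2}(B_{\gamma'})$ is a genuine two-valued harmonic function there, satisfying (\ref{monotonicity_identity1})--(\ref{monotonicity_identity2}). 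The non-concentration estimate of Lemma~\ref{coarse_graph} ensures, first, that $\mathcal G(u_j,\varphi_j)^{2}/E_j^{2}$ does not escape into a neighbourhood of $\{0\}\times\mathbb R^{n-2}$, so $\int_{B_1(0)}|w|^{2}>0$; and second, that no Dirichlet energy is lost near the axis, so $D_{w_j,0}(\gamma')\to D_{w,0}(\gamma')$ and $H_{w_j,0}(\gamma')\to H_{w,0}(\gamma')$.

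Inserting $u_j=\varphi_j+\hat v_j$ on $U_j$ into $D_{u_j,0}(\rho)-\alpha H_{u_j,0}(\rho)\ge0$, noting that the $O(E_j)$ cross-terms cancel because $\varphi_j$ is homogeneous of degree $\alpha$ and harmonic (so $D_{\varphi_j,0}(\rho)=\alpha H_{\varphi_j,0}(\rho)$ and a Green's identity kills the remainder), and absorbing the $B_{\gamma''}\setminus U_j$ contributions into an $O(E_j^{2})$ error, we may divide by $E_j^{2}$ and pass to the limit to obtain $D_{u_j,0}(\gamma')-\alpha H_{u_j,0}(\gamma')=E_j^{2}\big(D_{w_j,0}(\gamma')-\alpha H_{w_j,0}(\gamma')\big)+O(E_j^{2})$. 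But the Caccioppoli inequality following from (\ref{monotonicity_identity1}) (equivalently, interior estimates for the two-valued harmonic function $w$) gives $D_{w,0}(\gamma')\le C(n,\gamma')\int_{B_1(0)}|w|^{2}<\infty$, so $D_{w,0}(\gamma')-\alpha H_{w,0}(\gamma')\le D_{w,0}(\gamma')$ is finite, whereas $D_{u_j,0}(\gamma')-\alpha H_{u_j,0}(\gamma')>j\,E_j^{2}$ forces $D_{w_j,0}(\gamma')-\alpha H_{w_j,0}(\gamma')\to\infty$, a contradiction. This proves (a). Part (b) is proved by the same scheme: if it failed there would be $u_j,\varphi_j$ as above with $\int_{B_\gamma(0)}|D_yu_j|^{2}>j\,E_j^{2}$; the limit $w$ is again a nontrivial two-valued harmonic function with $\int_{B_\gamma(0)}|D_yw|^{2}\le\int_{B_\gamma(0)}|Dw|^{2}\le C(\gamma)\int_{B_1(0)}|w|^{2}<\infty$, while Lemma~\ref{coarse_graph} together with (a) (controlling the radial part of $\hat v_j$ near the axis) yields $E_j^{-2}\int_{B_\gamma(0)}|D_yu_j|^{2}\to\int_{B_\gamma(0)}|D_yw|^{2}$, a contradiction. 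In the case $u\in\mathcal F^{\mathrm{Harm}}_{\varepsilon_0}(\varphi^{(0)})$ the argument is identical, using Lemma~\ref{separation_lemma_h} in place of Lemma~\ref{separation_lemma_dm} inside Lemma~\ref{coarse_graph} and the $C^{1,\mu}$ version of Lemma~\ref{monotonicity_cor}.

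The crux of the argument is the near-axis analysis: showing that neither the $L^{2}$-excess nor the Dirichlet energy of $u_j$ concentrates near $\{0\}\times\mathbb R^{n-2}$, so that the blow-up limit $w$ is well defined, nontrivial, lies in $W^{1,2}(B_{\gamma'})$ (hence is a genuine two-valued harmonic function on the ball to which (\ref{monotonicity_identity1}) applies), and inherits the frequency inequality $D_{w,0}(\rho)\ge\alpha H_{w,0}(\rho)$. This is exactly where the higher-multiplicity (branched) setting departs from \cite{SimonCTC}, and where Lemma~\ref{coarse_graph} does the essential work; arranging that the cross-terms in the frequency inequality cancel to the required order, and controlling the portion of the excess near the branch set that is not visible in the graphical representation, are the delicate technical points. (The role of Lemma~\ref{new-monotonicity} is the reduction step above: without $\mathcal N_u(0)\ge\alpha$ the left side of (a) can be infinite, and Lemma~\ref{new-monotonicity} is precisely what converts the finite quantity $D_{u,0}(\gamma')-\alpha H_{u,0}(\gamma')$ into the integral in (a).)
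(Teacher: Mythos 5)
Your reduction step is fine: since $\mathcal{N}_u(0)\geq\alpha$, Lemma~\ref{new-monotonicity} does give
$\int_{B_{\gamma}(0)}R^{2-n}\bigl|\partial(u/R^{\alpha})/\partial R\bigr|^{2}=\tfrac12\gamma^{-2\alpha}\bigl(D_{u,0}(\gamma)-\alpha H_{u,0}(\gamma)\bigr)$, and bounding $D_{u,0}(\gamma')-\alpha H_{u,0}(\gamma')$ by $C\int_{B_1(0)}\mathcal{G}(u,\varphi)^2$ is indeed the heart of the matter (the paper effects the same reduction with a cutoff $\psi(\rho)^2$ rather than a single radius). But your proof of that reduced estimate, by compactness and contradiction, has a genuine gap, and it sits exactly at the point the theorem is designed to address. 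The expansion $D_{u_j,0}(\gamma')-\alpha H_{u_j,0}(\gamma')=E_j^{2}\bigl(D_{w_j,0}(\gamma')-\alpha H_{w_j,0}(\gamma')\bigr)+O(E_j^{2})$ and the claim that ``no Dirichlet energy is lost near the axis'' are not consequences of Lemma~\ref{coarse_graph}. That lemma only gives $\int_{U_j}(|\hat v_j|^{2}+r^{2}|D\hat v_j|^{2})+\int_{B_{\gamma''}\setminus U_j}(|u_j|^{2}+r^{2}|Du_j|^{2})\leq CE_j^{2}$, with the degenerate weight $r^{2}$ on the gradient terms; it gives no uniform bound on the unweighted energies $\int|Dw_j|^{2}$ or $\int_{B\setminus U_j}|Du_j|^{2}$, so a priori the gradient energy of $w_j$ (and of $u_j-\varphi_j$ off the graphical region) can concentrate at rate $r^{-2}$ along $\{0\}\times\mathbb{R}^{n-2}$. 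Consequently: (i) you have no uniform upper bound on $D_{w_j,0}(\gamma')$, so ``$D_{w_j,0}(\gamma')-\alpha H_{w_j,0}(\gamma')\to\infty$ contradicts finiteness of $D_{w,0}(\gamma')$'' does not close (finiteness of the limit's energy says nothing about the sequence without strong convergence up to the axis, which is precisely the non-concentration you have not proved); (ii) the ``Green's identity kills the cross-terms'' step produces boundary terms on $\partial U_j$ and contributions from $B_{\gamma'}\setminus U_j$ that are not $O(E_j^{2})$ on the basis of the available estimates (for instance $\int_{B\setminus U_j}|D\varphi_j|^{2}\sim\tau_j^{2\alpha}$, and $\tau_j\downarrow 0$ slowly); and (iii) the removability of the axis for the limit $w$ as a genuine $W^{1,2}$ two-valued harmonic function again requires a uniform near-axis energy bound. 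In short, the statement ``$D_{w_j,0}(\gamma')\to D_{w,0}(\gamma')$ and the excess does not escape to the axis'' is essentially equivalent to the non-concentration conclusions of Theorem~\ref{thm6_2} and Corollary~\ref{cor6_6}, so your argument is circular at its crucial step. The same objection applies to your treatment of (b), where you invoke (a) plus Lemma~\ref{coarse_graph} to get $E_j^{-2}\int|D_yu_j|^{2}\to\int|D_yw|^{2}$.

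The paper avoids compactness altogether: it proves (a) and (b) simultaneously by a direct a priori estimate. After the cutoff-in-$\rho$ version of your reduction ((\ref{eqn4-1})--(\ref{lemma3_4_eqn7})), it inserts the test functions $\zeta=\psi(R)^{2}$ and $(\zeta^{1},\ldots,\zeta^{n})=\psi(R)^{2}(x_1,x_2,0,\ldots,0)$ into the squash and squeeze identities (\ref{monotonicity_identity1}), (\ref{monotonicity_identity2}) for $u$ (which hold globally, with no graphical decomposition needed), subtracts the corresponding identities for $\varphi$ -- which vanish identically by homogeneity ((\ref{lemma3_4_eqn8}), (\ref{lemma3_4_eqn12})) -- and then estimates the resulting difference terms. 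Only at that stage is the graphical representation used, and the dangerous near-axis/quadratic contributions are handled by the rotational symmetry of $U$ and integration by parts in $\theta$ ((\ref{lemma3_4_eqn14})--(\ref{lemma3_4_eqn17}) and the identity on $B_{(1+\gamma)/2}\setminus U$), which is compatible with the $r^{2}$-weighted control that Lemma~\ref{coarse_graph} actually provides. If you want to salvage a blow-up proof you would first need an independent proof of the non-concentration estimates, which is what this direct argument supplies.
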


\begin{proof} 
Suppose that $\varepsilon_0$ is less than or equal to $\varepsilon_0$ in Lemma \ref{coarse_graph} with $(1+\gamma)/2$ in place of $\gamma$, $\tau = (1-\gamma)/20$ and $\beta = 1/2$.  Let $U$, $v$, and $\hat v$ be as in Lemma \ref{coarse_graph}. 

By Lemma~\ref{new-monotonicity},
\begin{align} \label{eqn4-1}
	\frac{d}{d\rho} \left( \rho^{n-2} (D_{u,0}(\rho) - \alpha H_{u,0}(\rho)) \right) 
	&= \frac{d}{d\rho} \left( \rho^{n-2+2\alpha} \cdot \rho^{-2\alpha} (D_{u,0}(\rho) - \alpha H_{u,0}(\rho)) \right) \nonumber \\ 
	&= (n-2+2\alpha) \rho^{n-3} (D_{u,0}(\rho) - \alpha H_{u,0}(\rho)) 
		+ 2\rho^{2\alpha} \int_{\partial B_{\rho}(0)}  \left| \frac{\partial (u/R^{\alpha})}{\partial R}\right|^2 .
\end{align}
Again by Lemma~\ref{new-monotonicity}  and the fact that $N_{u,0}(\rho) \geq \alpha,$  
\begin{equation*}
	\rho^{-2\alpha} (D_{u,0}(\rho) - \alpha H_{u,0}(\rho)) \geq 2 \int_{B_{\rho}(0)} R^{2-n}  \left| \frac{\partial (u/R^{\alpha})}{\partial R}\right|^2. 
\end{equation*}
Using this in  (\ref{eqn4-1}), we get 
\begin{equation} \label{lemma3_4_eqn3}
	\frac{d}{d\rho} \left( \rho^{n-2} (D_{u,0}(\rho) - \alpha H_{u,0}(\rho)) \right) 
	\geq 2 \frac{d}{d\rho} \left( \rho^{n-2+2\alpha} \int_{B_{\rho}(0)} R^{2-n}  \left| \frac{\partial (u/R^{\alpha})}{\partial R}\right|^2  \right). 
\end{equation}

Let $\psi : [0,\infty) \rightarrow \mathbb{R}$ be a smooth function with $\psi(t) = 1$ for $t \in [0,\gamma]$, $\psi(t) = 0$ for $t \geq (1+\gamma)/2$, and $0 \leq \psi'(t) \leq 3/(1-\gamma)$ for $t \in [0,\infty)$.  Multiplying both sides of (\ref{lemma3_4_eqn3}) by $\psi(\rho)^2$ and integrating yields 
\begin{align} \label{lemma3_4_eqn4}
	&\hspace{-1in}\int_{\mathbb{R}} \frac{d}{d\rho} (\rho^{n-2} D_{u,0}(\rho) - \alpha \rho^{n-2} H_{u,0}(\rho)) \psi(\rho)^2 d\rho \nonumber\\ 
	&\geq 2 \int_{\mathbb{R}} \frac{d}{d\rho} \left( \rho^{n-2 +2\alpha} \int_{B_{\rho}(0)} 
		R^{2-n}\left| \frac{\partial (u/R^{\alpha})}{\partial R} \right|^2 \right) \psi(\rho)^2 d\rho \nonumber 
	\\&= -4 \int_{\mathbb{R}} \rho^{n-2+2\alpha} \int_{B_{\rho}(0)} R^{2-n} 
		\left| \frac{\partial (u/R^{\alpha})}{\partial R} \right|^2 \psi(\rho) \psi'(\rho) d\rho \nonumber 
	\\&= -4 \int_{\gamma}^{(1+\gamma)/2} \rho^{n-2+2\alpha} \int_{B_{\rho}(0)} R^{2-n}
		\left| \frac{\partial (u/R^{\alpha})}{\partial R} \right|^2 \psi(\rho) \psi'(\rho) d\rho \nonumber 
	\\&\geq -4 \gamma^{n-2+2\alpha} \int_{B_{\gamma}(0)} R^{2-n}
		\left| \frac{\partial (u/R^{\alpha})}{\partial R} \right|^2 \int_{\gamma}^{(1+\gamma)/2} \psi(\rho) \psi'(\rho) d\rho \nonumber 
	\\&= 2 \gamma^{n-2+2\alpha} \int_{B_{\gamma}(0)} R^{2-n} \left| \frac{\partial (u/R^{\alpha})}{\partial R} \right|^2.
\end{align}
By the coarea formula
\begin{equation} \label{lemma3_4_eqn5}
	\int_{\mathbb{R}} \frac{d}{d\rho} (\rho^{n-2} D_{u,0}(\rho)) \psi(\rho)^2 d\rho 
	= \int_{\mathbb{R}} \int_{\partial B_{\rho}} |Du|^2 \psi(\rho)^2 d\rho 
	= \int |Du|^2 \psi(R)^2 
\end{equation}
and by integration by parts and the coarea formula again
\begin{align} \label{lemma3_4_eqn6}
	\int_{\mathbb{R}} \frac{d}{d\rho} (\rho^{n-2} H_{u,0}(\rho)) \psi(\rho)^2 d\rho 
	&= -2 \int_{\mathbb{R}} \rho^{n-2} H_{u,0}(\rho) \psi(\rho) \psi'(\rho) d\rho 
	\nonumber \\&= -2 \int_{\mathbb{R}} \int_{\partial B_{\rho}} \rho^{-1} |u|^2 \psi(\rho) \psi'(\rho) d\rho 
	\nonumber \\&= -2 \int R^{-1} |u|^2 \psi(R) \psi'¨.
\end{align}
By (\ref{lemma3_4_eqn4}), (\ref{lemma3_4_eqn5}), (\ref{lemma3_4_eqn6}), 
\begin{equation} \label{lemma3_4_eqn7}
	\int (|Du|^2 \psi(R)^2 + 2\alpha R^{-1} |u|^2 \psi(R) \psi'(R)) 
	\geq C \int_{B_{\gamma}(0)} R^{2-n} \left| \frac{\partial (u/R^{\alpha})}{\partial R} \right|^2 
\end{equation}
where $C = C(n,\alpha,\gamma) \in (0,\infty)$.  Notice that by integration by parts, 
\begin{equation*}
	\int_{B^2_1(0)} (|D\varphi|^2 \psi_0(r)^2 + 2\alpha r^{-1} |\varphi|^2 \psi_0(r) \psi_0'(r)) = 0  
\end{equation*}
for any $\psi_0 \in C^1_c([0, 1))$ where $r=r(x) = |x|$, so if we let $\psi_0(r) = \psi\left(\sqrt{r^2 + |y|^2}\right)$ for each $y \in B^{n-2}_1(0)$ and use the fact that $\psi_0'(r) = (r/R) \psi'(R)$, we obtain 
\begin{equation} \label{lemma3_4_eqn8}
	\int (|D\varphi|^2 \psi(R)^2 + 2\alpha R^{-1} |\varphi|^2 \psi(R) \psi'(R)) = 0. 
\end{equation}

Now let $(\zeta^1,\ldots,\zeta^n) = \psi(R)^2 (x_1,x_2,0,\ldots,0)$ in (\ref{monotonicity_identity2}) to obtain 
\begin{equation} \label{lemma3_4_eqn9}
	\int (|Du|^2 - |D_x u|^2) \psi(R)^2 
	= -2 \int \left( \tfrac{1}{2} r^2 |Du|^2 - r^2 |D_r u|^2 - r D_r u (y \cdot D_y u) \right) R^{-1} \psi(R) \psi'(R) 
\end{equation}
where $r = |x|$.  Let $\zeta = \psi(R)^2$ in (\ref{monotonicity_identity1}) to obtain 
\begin{equation} \label{lemma3_4_eqn10}
	\int |Du|^2 \psi(R)^2 = -2 \int (r u D_r u + u (y \cdot D_y u)) R^{-1} \psi(R) \psi'(R).
\end{equation}
By multiplying (\ref{lemma3_4_eqn10}) by $\a$ and adding it to (\ref{lemma3_4_eqn9}), and adding also $\int 2\alpha R^{-1} |u|^2 \psi(R) \psi'(R)$ to both sides, we obtain 
\begin{align*}
	&\int ((\alpha |Du|^2 + |D_y u|^2) \psi(R)^2 + 2\alpha^2 R^{-1} |u|^2 \psi(R) \psi'(R))
	\\&= -2 \int \left( \tfrac{1}{2} r^2 |Du|^2 - r D_r u (r D_r u - \alpha u) - \alpha^2 |u|^2 
	- (y \cdot D_y u)(r D_r u - \alpha u)  \right) R^{-1} \psi(R) \psi'(R).
\end{align*}
By Cauchy's inequality, 
\begin{align} \label{lemma3_4_eqn11}
	&\int \left( \left( \alpha |Du|^2 + \tfrac{1}{2} |D_y u|^2 \right) \psi(R)^2 + 2\alpha^2 R^{-1} |u|^2 \psi(R) \psi'(R) \right)
	\nonumber \\&\leq -2 \int \left( \tfrac{1}{2} r^2 |Du|^2 - r D_r u (r D_r u - \alpha u) - \alpha^2 |u|^2 \right) R^{-1} \psi(R) \psi'(R) 
	\nonumber \\& \hspace{5mm} + 2 \int |r D_r u - \alpha u|^2 \psi'(R)^2. 
\end{align}
Observe that by direct computation 
\begin{equation} \label{lemma3_4_eqn12}
 \int \left( \tfrac{1}{2} r^2 |D\varphi|^2 - \alpha^2 |\varphi|^2 \right) R^{-1} \psi(R) \psi'(R) = 0. 
\end{equation}
By (\ref{lemma3_4_eqn11}) and (\ref{lemma3_4_eqn12}), 
\begin{align} \label{lemma3_4_eqn13}
	&\int \left( \left( \alpha |Du|^2 + \tfrac{1}{2} |D_y u|^2 \right) \psi(R)^2 + 2\alpha^2 R^{-1} |u|^2 \psi(R) \psi'(R) \right) \nonumber 
	\\&\leq -2 \int \left( \tfrac{1}{2} r^2 (|Du|^2 - |D\varphi|^2) - r D_r u (r D_r u - \alpha u) - \alpha^2 (|u|^2 - |\varphi|^2) \right) 
	R^{-1} \psi(R) \psi'(R) \nonumber \\& \hspace{5mm} + 2 \int |r D_r u - \alpha u|^2 \psi'(R)^2. 
\end{align}

 Now recall that $\varphi(re^{i\theta},y) = \{ \pm \op{Re}(c r^{\alpha} e^{i\alpha \theta}) \}$ for some $c \in \mathbb{C}^m \setminus \{0\}$.  Let 
\begin{equation*} 
	\varphi_*(r,\theta,y) = \op{Re}(c r^{\alpha} e^{i\alpha \theta}), \hspace{5mm} v_*(r,\theta,y) = v(re^{i\theta},y,\varphi_*(r,\theta,y)) 
\end{equation*}
for $r > 0$, $\theta \in \mathbb{R}$, and $y \in \mathbb{R}^{n-2}$ such that $(re^{i\theta},y) \in U$.  Recall from Remark \ref{graph_rmk} that in  case $\alpha = k/2$ for an odd integer $k$, $\varphi_*(r,\theta,y)$ is $4\pi$-periodic as a function of $\theta$.  Since $\varphi(re^{i\theta},y) = \{ \pm \varphi_*(r,\theta,y) \}$, $u(re^{i\theta},y) = \{ \pm (\varphi_*(r,\theta,y) + v_*(r,\theta,y)) \}$, and $r D_r \varphi_* = \alpha \varphi_*$, 
\begin{align} \label{lemma3_4_eqn14}
	&\int_U \left( \tfrac{1}{2} r^2 (|Du|^2 - |D\varphi|^2) - r D_r u (r D_r u - \alpha u) - \alpha^2 (|u|^2 - |\varphi|^2) \right) R^{-1} \psi(R) \psi'(R) 
	\nonumber \\&= \int_0^{4\pi} \int_{(r,y) : (x,y) \in U} \left( D_{\theta} \varphi_* D_{\theta} v_* - \alpha^2 \varphi_* v_* + \tfrac{1}{2} r^2 |Dv_*|^2 
	\right. \nonumber \\& \hspace{5mm} \left. - r D_r v_* (r D_r v_* - \alpha v_*) - \alpha^2 |v_*|^2 \right) R^{-1} \psi(R) \psi'(R) 
	\, r \, dr \, dy \, d\theta. 
\end{align}
In case that $\alpha$ is an integer, $\varphi_*(r,\theta,y)$ is $2\pi$-periodic as a function of $\theta$ such that 
\begin{equation*}
	\varphi(re^{i\theta},y) = \{ \pm \varphi_*(r,\theta,y) \}, \quad v(re^{i\theta},y,\pm \varphi_*(r,\theta,y)) = \pm v_*(r,\theta,y). 
\end{equation*}
The integrand on the left-hand side of (\ref{lemma3_4_eqn14}) is a quadratic expression in terms of the symmetric two-valued functions $u$, $\varphi$, and their derivatives.  Writing $u(X) = \{ \pm u_1(X) \}$ and $\varphi(X) = \{ \pm \varphi_1(X) \}$ where $u_1$ and $\varphi_1$ are the single-valued functions defined by $u_1(re^{i\theta},y) = \varphi_*(r,\theta,y) + v_*(r,\theta,y)$ and $\varphi_1(re^{i\theta},y) = \varphi_*(r,\theta,y)$, 
\begin{align*}
	&\int_U \left( \tfrac{1}{2} r^2 (|Du|^2 - |D\varphi|^2) - r D_r u (r D_r u - \alpha u) - \alpha^2 (|u|^2 - |\varphi|^2) \right) R^{-1} \psi(R) \psi'(R) \\
	&= 2\int_U \left( \tfrac{1}{2} r^2 (|Du_1|^2 - |D\varphi_1|^2) - r D_r u_1 (r D_r u_1 - \alpha u_1) - \alpha^2 (|u_1|^2 - |\varphi_1|^2) \right) 
		R^{-1} \psi(R) \psi'(R), 
\end{align*}
from which (\ref{lemma3_4_eqn14}) follows.  Hence (\ref{lemma3_4_eqn14}) holds regardless of the value of $\alpha$.  

By integration by parts with respect to $\theta$ (keeping in mind rotational symmetry of $U$ with respect to the $x$ variables), 
\begin{align} \label{lemma3_4_eqn16}
	&\hspace{-1in}\int_0^{4\pi} \int_{(r,y) : (x,y) \in U} D_{\theta} \varphi_* D_{\theta} v_* R^{-1} \psi(R) \psi'(R) \, r \, dr \, d\theta \, dy \nonumber 
	\\&= - \int_0^{4\pi} \int_{(r,y) : (x,y) \in U} D_{\theta \theta} \varphi_* v_* R^{-1} \psi(R) \psi'(R) \, r \, dr \, d\theta \, dy \nonumber 
	\\&= \int_0^{4\pi} \int_{(r,y) : (x,y) \in U} \alpha^2 \varphi_* v_* R^{-1} \psi(R) \psi'(R) \, r \, dr \, d\theta \, dy. 
\end{align}
By (\ref{lemma3_4_eqn14}), (\ref{lemma3_4_eqn16}), and the estimates on $\hat v(re^{i\theta},y) = \{ \pm v_*(r,\theta,y) \}$ from Lemma \ref{coarse_graph}, 
\begin{align} \label{lemma3_4_eqn17}
	&\hspace{-.5in}\left| \int_U \left( \tfrac{1}{2} r^2 (|Du|^2 - |D\varphi|^2) - r D_r u (r D_r u - \alpha u) - \alpha^2 (|u|^2 - |\varphi|^2) \right) 
		R^{-1} \psi(R) \psi'(R) \right| \nonumber 
	\\&\hspace{3in}\leq C \int_{B_1(0)} \mathcal{G}(u,\varphi)^2 
\end{align}
for $C = C(n,m,\varphi^{(0)},\alpha,\gamma,\beta) \in (0,\infty)$.  Using (\ref{lemma3_4_eqn17}), the fact that $|r D_r u - \alpha u|^2 = |r D_r \hat v - \alpha \hat v|^2$ on $U$, the fact that 
\begin{eqnarray*}
&\hspace{-2in}\int_{B_{(1+\g)/2} \setminus U}\left(r^{2}|D\varphi|^{2} -2\a^{2}|\varphi|^{2}\right) R^{-1}\psi(R)\psi^{\prime}(R)\nonumber\\  
&\hspace{1in}=\int_{\{(r, y) \, : \, (re^{i\th}, y) \in B_{(1+\g)/2} \setminus U\}} \int_{0}^{4\pi}\left(r^{2}|D\varphi|^{2} -2\a^{2}|\varphi|^{2}\right) R^{-1}\psi(R)\psi^{\prime}(R) rd\th dr dy = 0
\end{eqnarray*}
(which follows  by integration by parts with respect to the $\th$ variable in view of the fact that $\varphi_{\th\th} + \a^{2} \varphi = 0$ on $\{r=1\}$ and $B_{(1+\g)/2} \setminus U$ is rotationally symmetric in the 
$x= re^{i\th}$ variables) 
and the estimates from Lemma \ref{coarse_graph} to bound the right-hand side of (\ref{lemma3_4_eqn13}) we obtain,  
\begin{equation} \label{lemma3_4_eqn18}
	\int \left( \left( \alpha |Du|^2 + \tfrac{1}{2} |D_y u|^2 \right) \psi(R)^2 + 2\alpha^2 R^{-1} |u|^2 \psi(R) \psi'(R) \right) 
	\leq C \int_{B_1(0)} \mathcal{G}(u,\varphi)^2 
\end{equation}
for $C = C(n,m,\varphi^{(0)},\alpha,\gamma,\beta) \in (0,\infty)$.  Combining (\ref{lemma3_4_eqn7}) and (\ref{lemma3_4_eqn18}),  we deduce 
\begin{equation} \label{lemma3_4_eqn19}
	\int_{B_{\gamma}(0)} R^{2-n} \left( \frac{\partial (u/R^{\alpha})}{\partial R} \right)^2 
	+ \int_{B_{\gamma}(0)} |D_y u|^2 \leq C \int_{B_1(0)} \mathcal{G}(u,\varphi)^2 
\end{equation}
for $C = C(n,m,\varphi^{(0)},\alpha,\gamma) \in (0,\infty)$, which is the conclusion of the theorem.
\end{proof}

Next we derive a corollary of Theorem~\ref{thm6_2}(b) which we shall need for the case $\alpha = 1/2$ of our main theorems.

\begin{corollary}\label{alpha=1/2}
Let $\varphi^{(0)} \in W^{1,2}(\mathbb{R}^n;\mathcal{A}_2(\mathbb{R}^m))$ be a Dirichlet minimizing two-valued function of the form given by (\ref{varphi0})  for some $c^{(0)} \in \mathbb{C}^m \setminus \{0\},$ and suppose that $\alpha = 1/2$, i.e.\ that $\varphi^{(0)}$ is homogeneous of degree $1/2$.  Let $\tau \in (0,1/20)$.  There exists $\varepsilon_0 = \varepsilon_0(n,m,\varphi^{(0)},\tau) > 0$ such that if $\varphi \in \Phi_{\varepsilon_0}(\varphi^{(0)})$, $u \in \mathcal{F}_{\varepsilon_{0}}^{\text{Dir}}(\varphi^{(0)})$ with $0 \in \Sigma_u$ and $\mathcal{N}_u(0) \geq \alpha = 1/2$, $v$ is as in 
Lemma~\ref{coarse_graph}  with $\gamma = 1/2$ and $\beta = 1/2$, 
$\hat v(X) = \{v(X, \varphi(X)), v(X, -\varphi(X))\}$, and 
\begin{equation*} \label{eqn1}
	B_{\tau}(0,y) \cap \{ X \in B_1(0) : \mathcal{N}_u(X) \geq \mathcal{N}_{\varphi}(0) = \alpha \} \neq \emptyset \text{ for all } y \in B^{n-2}_{1/2}(0), 
\end{equation*}
then for each function $\z(x, y) = \widetilde{\z}(|x|, y)$ where $\widetilde{\zeta} = \widetilde{\zeta}(r,y) \in C^1_c(B^{n-1}_{1/2}(0))$ with $D_r \widetilde\zeta(r,y) = 0$ whenever $r \leq \tau$, we have that
\begin{eqnarray*}
	&&\left|\int_{B^{n-2}_{1/2}(0)}\int_{\t}^{\infty} \int_{0}^{4\pi} D_j (r \hat v^{\kappa} D_i \varphi^{\kappa}) D_j D_{y_{p}} \zeta d\th dr dy\right|\nonumber\\
	&&\hspace{1in}\leq C \left( \tau^{-2} \int_{B_1(0)} \mathcal{G}(u,\varphi)^2 + \tau^{2\alpha} \left( \int_{B_1(0)} \mathcal{G}(u,\varphi)^2 \right)^{1/2} \right) 
		\sup_{B_{1/2}(0)} |DD_{y_{p}} \zeta|
\end{eqnarray*}
for $i=1, 2$ and for each $p= 1,2,\ldots,n-2,$ where $C = C(n,m,\alpha) \in (0,\infty)$.
\end{corollary}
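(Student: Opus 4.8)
The plan is to apply the inner-variation (``squeeze'') identity (\ref{monotonicity_identity2}) to $u$ with a deformation field adapted to the cylindrical structure of $\varphi$, subtract the corresponding identity for $\varphi$ itself, use Lemma~\ref{coarse_graph} to replace $u$ by $\varphi+\hat v$ on $U$, and then isolate the terms bilinear in $\varphi$ and $\hat v$. After a single integration by parts---in which the fact that $\alpha=1/2$, i.e.\ that $2\alpha-1=0$, plays the decisive role---these bilinear terms reproduce exactly the integral on the left of the asserted inequality, while all remaining terms will be error controlled by the two quantities on the right.

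In detail, I would first take $\varepsilon_0$ no larger than the constant furnished by Lemma~\ref{coarse_graph} with $\gamma=1/2$, $\beta=1/2$ and $\tau$ as given, so that on $U\supset\{(x,y)\in B_{1/2}(0):|x|>\tau\}$ we have $u=\varphi+\hat v$ in the sense of (\ref{v_defn1}), with $r^{-\alpha}|\hat v|+r^{1-\alpha}|D\hat v|\le 1/2$ on $U$ and
$$\int_U (|\hat v|^2+r^2|D\hat v|^2)+\int_{B_{1/2}(0)\setminus U}(|u|^2+r^2|Du|^2)\le C\int_{B_1(0)}\mathcal{G}(u,\varphi)^2 .$$
In (\ref{monotonicity_identity2}) I would use a deformation field of translation-and-rotation type adapted to the cylinder, the basic building blocks being $Z=x_i\,\zeta\,e_{p+2}$ (tangential to $\{0\}\times\mathbb{R}^{n-2}$) and a rotation correction $(AX)\,D_i\zeta$ for the skew matrix $A\in\mathcal{S}$ with $a^{i,p+2}=1=-a^{p+2,i}$, the latter being supported in $\{|x|\ge\tau\}$ since $D_i\zeta=(x_i/|x|)D_r\widetilde{\zeta}$ vanishes there for $|x|\le\tau$; using the skew-symmetry of $A$ against the symmetric tensor $D_ku^\kappa D_ju^\kappa$, (\ref{monotonicity_identity2}) collapses to an identity involving only $D_{y_p}u^\kappa$, $D_ku^\kappa D_ju^\kappa$, $|Du|^2$ and first and second derivatives of $\zeta$. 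The same identity holds with $\varphi$ in place of $u$, because off $\{x=0\}\times\mathbb{R}^{n-2}$ the function $\varphi$ is locally given by two harmonic functions and hence satisfies (\ref{monotonicity_identity2}) there, while near the axis $\varphi\in W^{1,2}$ (since $\alpha=1/2$) and the deformation field is tangential to (and in part vanishes along) that axis, so no singular contribution arises. Subtracting the two identities and inserting $u=\varphi+\hat v$ on $U$ decomposes the difference into a \emph{main part} bilinear in $\varphi$ and $\hat v$, a part quadratic in $\hat v$ on $U$, a part carried by $B_{1/2}(0)\setminus U$, and a part carried by the tube $\{|x|<\tau\}$.

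In the main part I would integrate by parts, using that $\varphi$ is independent of $y$, harmonic, and homogeneous of degree $\alpha=1/2$; in particular $r\,D_i\varphi^\kappa$ is then harmonic and $\triangle(r\hat v^\kappa D_i\varphi^\kappa)=2\grd_x\hat v^\kappa\cdot\grd_x(r D_i\varphi^\kappa)$ on $U$. Carrying out the integration by parts rewrites the main part as $\int_{B^{n-2}_{1/2}(0)}\int_\tau^\infty\int_0^{4\pi}D_j(r\hat v^\kappa D_i\varphi^\kappa)D_jD_{y_p}\zeta$ plus lower-order terms and a boundary term on $\{|x|=\tau\}$. It then remains to estimate the errors. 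The quadratic-in-$\hat v$ part and the $B_{1/2}(0)\setminus U$ part are bounded by $C\tau^{-2}\int_{B_1(0)}\mathcal{G}(u,\varphi)^2$ using the $L^2$ bounds above and $r^{-2}\le\tau^{-2}$ on the region of integration. The near-axis part and the $\{|x|=\tau\}$ boundary term are where Theorem~\ref{thm6_2}(b) and the $\tau$-density hypothesis enter: the hypothesis that $\{X\in B_1(0):\mathcal{N}_u(X)\ge\alpha\}$ meets $B_\tau(0,y)$ for every $y\in B^{n-2}_{1/2}(0)$, together with the doubling estimate (\ref{doubling_estimate2}) recentred at such branch points and the normalization $\int_{B_1(0)}|\varphi^{(0)}|^2\sim 1$, forces $\sup|u|\le C\tau^\alpha$ within distance $\tau$ of the axis, so that $r\hat v^\kappa D_i\varphi^\kappa=O(\tau^{2\alpha})$ there; Cauchy--Schwarz against the bound $\int_{B_{1/2}(0)}|D_y u|^2\le C\int_{B_1(0)}\mathcal{G}(u,\varphi)^2$ of Theorem~\ref{thm6_2}(b) then produces the term $C\tau^{2\alpha}\bigl(\int_{B_1(0)}\mathcal{G}(u,\varphi)^2\bigr)^{1/2}\sup_{B_{1/2}(0)}|DD_{y_p}\zeta|$.

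I expect the main obstacle to be the integration by parts in the main part: the deformation field must be chosen so that, after subtracting the $\varphi$-identity and discarding the quadratic terms, the bilinear expression collapses \emph{exactly} to $D_j(r\hat v^\kappa D_i\varphi^\kappa)D_jD_{y_p}\zeta$. This requires precise tracking of the boundary term over $\{|x|=\tau\}$, of the several terms that vanish by the homogeneity and harmonicity of $\varphi$, and of the powers of $|x|$ implicit in the $d\theta\,dr\,dy$ measure appearing in the statement. A secondary point needing care is the near-axis region, where one must convert the pointwise decay of $u$ supplied by the density hypothesis into an $L^2$ estimate compatible with the claimed $\tau^{2\alpha}$ gain.
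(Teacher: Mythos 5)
Your scaffolding (Lemma~\ref{coarse_graph} to write $u=\varphi+\hat v$ off the tube $\{r\le\tau\}$, the $\tau$-density hypothesis plus the doubling estimate (\ref{doubling_estimate2}) and (\ref{schauder_dm}) for smallness near the axis, Theorem~\ref{thm6_2}(b) with Cauchy--Schwarz for the $\tau^{2\alpha}$ term, $r^{-2}\le\tau^{-2}$ for the quadratic-in-$\hat v$ term, and the exact cancellation $D_r\varphi=\tfrac{1}{2}r^{-1}\varphi$ coming from $\alpha=1/2$) is the same as the paper's. But the proof has a genuine gap, and it sits precisely at the step you flag as ``the main obstacle'' and do not carry out: the choice of deformation field and the collapse of the resulting bilinear form to the stated left-hand side. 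Your building block $x_i\zeta\, e_{p+2}$ feeds (\ref{monotonicity_identity2}) with a field whose derivative is $\delta_{ki}\zeta+x_iD_k\zeta$, so the resulting identity contains terms such as $\int D_iu^{\kappa}D_{y_p}u^{\kappa}\,\zeta$ and $\int x_i D_ku^{\kappa}D_{y_p}u^{\kappa}D_k\zeta$, in which $\zeta$ appears undifferentiated or only through $D_r\zeta$, $D_{y_l}\zeta$. After inserting $u=\varphi+\hat v$ the corresponding error terms (quadratic in $\hat v$, the set $B_{1/2}(0)\setminus U$, the tube $\{r\le\tau\}$) are then weighted by $\sup(|\zeta|+|D\zeta|)$, which cannot be bounded by $\sup_{B_{1/2}(0)}|DD_{y_p}\zeta|$ --- the only norm of $\zeta$ allowed on the right-hand side of the corollary (take $\zeta$ independent of $y_p$: then $DD_{y_p}\zeta\equiv 0$ while $\zeta$, $D_r\zeta$ are arbitrary). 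Repairing this by integrating those terms by parts in $y_p$ would put a second derivative on $u$, which is not controlled near $\Sigma_u$ in the Dirichlet-minimizing case. The paper avoids the problem at the source: it takes $\zeta^j=\delta_{ij}D_{y_p}\zeta$ in (\ref{monotonicity_identity2}), so that \emph{every} term automatically carries a factor $D_jD_{y_p}\zeta$ or $D_{y_l}D_{y_p}\zeta$, and the pure $\varphi$--$\varphi$ terms drop out by the elementary observation that $\varphi$ is independent of $y$ (integration by parts in $y_p$), not by any stationarity of $\varphi$.

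On that last point: subtracting an inner-variation identity ``for $\varphi$ itself'' is both unnecessary and delicate. A function $\varphi\in\Phi_{\varepsilon_0}(\varphi^{(0)})$ is not assumed Dirichlet minimizing, and for $\alpha=1/2$ a function $\{\pm\op{Re}(c(x_1+ix_2)^{1/2})\}$ with $c\cdot c\neq 0$ fails the squeeze identity for fields with components in the $x$-plane crossing the axis (the Hopf-differential residue does not vanish; the flux through $\{r=\delta\}$ is $O(1)$ precisely when $\alpha=1/2$); your specific fields happen to dodge this because they are $y$-directional near the axis, but the paper never needs the issue to arise. Finally, even granting your field choice, the remaining work --- producing exactly $\int D_j(r\hat v^{\kappa}D_i\varphi^{\kappa})D_jD_{y_p}\zeta\,d\theta\,dr\,dy$ with its $r^{-1}dX$ weight, tracking the extra first-order terms that the weight creates in the integration by parts, the boundary term on $\{r=\tau\}$, and the final cancellation via $D_r\varphi=\tfrac{1}{2}r^{-1}\varphi$ --- is the actual content of the corollary, and it is left unverified in your proposal; also note that on $\{r\le\tau\}$ there is no $\hat v$ at all (the graphical representation is only guaranteed on $U\supset\{|x|>\tau\}$), so the near-axis error must be estimated in terms of $u$ itself, as in (\ref{half-eqn4})--(\ref{half-eqn5}) of the paper's argument.
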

\begin{proof} 
Fix $i \in \{1, 2\}$ and replace $\zeta^{j}$ with $\d_{ij}D_{y_{p}} \zeta(r,y)$ in (\ref{monotonicity_identity2}) and recall that $u(X) = \{ \pm (\varphi_1(X) + v(X,\varphi_1(X))) \}$ for $X \in B_{1/2}(0) \setminus \{ r \geq \tau \}$ to obtain
\begin{align*}
	&\frac{-1}{2} \int_{B_{1/2}(0) \cap \{r \geq \tau\}} (|D\varphi|^2 + 2D_j \varphi^{\kappa} D_j \hat v^{\kappa} + |D\hat v|^2) D_i D_{y_{p}} \zeta 
	\\&+ \int_{B_{1/2}(0) \cap \{r \geq \tau\}} (D_i \varphi^{\kappa} D_j \varphi^{\kappa} + D_i \varphi^{\kappa} D_j \hat v^{\kappa} 
		+ D_i \hat v^{\kappa} D_j \varphi^{\kappa} + D_i \hat v^{\kappa} D_j \hat v^{\kappa}) D_j D_{y_{p}} \zeta 
	\\&= -\int_{B_1(0) \cap \{r \leq \tau\}} D_i u^{\kappa} D_{y_l} u^{\kappa} \cdot D_{y_l} D_{y_{p}} \zeta. 
\end{align*}
Observe that, since $\varphi$ is independent of $y$, by integration by parts
\begin{equation*}
	\int_{B_{1/2}(0)} \left( \frac{-1}{2} |D\varphi|^2 D_i D_{y_{p}} \zeta + D_i \varphi^{\kappa} D_j \varphi^{\kappa} D_j D_{y_{p}} \zeta \right) = 0.
\end{equation*}
Thus 
\begin{align} \label{half-eqn2}
	&\int_{B_{1/2}(0) \cap \{r \geq \tau\}} (-D_j \varphi^{\kappa} D_j \hat v^{\kappa} D_i D_{y_{p}} \zeta 
		+ D_i \varphi^{\kappa} D_j \hat v^{\kappa} D_j D_{y_{p}} \zeta + D_i \hat v^{\kappa} D_j \varphi^{\kappa} D_j D_{y_{p}} \zeta) \nonumber \\
	&= \int_{B_{1/2}(0) \cap \{r \geq \tau\}} \left( \frac{1}{2} |D\hat v|^2 D_i D_{y_{p}} \zeta - D_i \hat v^{\kappa} D_j v^{\kappa} D_j D_{y_{p}} \zeta \right) 
	- \int_{B_1(0) \cap \{r \leq \tau\}} D_i u^{\kappa} D_{y_l} u^{\kappa} \cdot D_{y_l} \zeta. 
\end{align}
By the estimates on $v$ in Lemma~\ref{coarse_graph},
\begin{equation*}
	\int_{B_{1/2}(0) \cap \{r \geq \tau\}} r^2 |D\hat v|^2 \leq C \int_{B_1(0)} \mathcal{G}(u,\varphi)^2
\end{equation*}
for some constant $C = C(n,m,\varphi^{(0)},\alpha) \in (0,\infty)$, so 
\begin{equation} \label{half-eqn3}
	\int_{B_{1/2}(0) \cap \{r \geq \tau\}} |D\hat v|^2 |DD_{y_{p}} \zeta| \leq C \tau^{-2} \int_{B_1(0)} \mathcal{G}(u,\varphi) \sup_{B_{1/2}(0)} |DD_{y_{p}} \zeta| 
\end{equation}
for some $C = C(n,m,\varphi^{(0)},\alpha) \in (0,\infty)$.  By hypothesis, for every $y \in B^{n-2}_{1/2}(0)$ there exists a $Z \in B_{\tau}(0,y)$ such that $\mathcal{N}_u(Z) \geq \alpha$ and thus by (\ref{schauder_dm}) and (\ref{doubling_estimate2}), 
\begin{equation*}
	\int_{B_{2\tau}(0,y)} |Du|^2 \leq \int_{B_{3\tau}(Z)} |Du|^2 \leq C \tau^{-2} \int_{B_{6\tau}(Z)} |u|^2 
	\leq C \tau^{n-2+2\alpha} \int_{B_{1/2}(Z)} |u|^2 \leq C \tau^{n-2+2\alpha} 
\end{equation*}
for constants $C = C(n,m,\varphi^{(0)}) \in (0,\infty)$ and thus by a standard covering argument 
\begin{equation} \label{half-eqn4}
	\int_{B_{1/2}(0) \cap \{r \leq \tau\}} |Du|^2 \leq C \tau^{2\alpha} 
\end{equation}
for some constant $C = C(n,m,\varphi^{(0)}) \in (0,\infty)$.  By (\ref{half-eqn4}) and Theorem 6.2(b),  
\begin{align} \label{half-eqn5}
	\left|\int_{B_1(0) \cap \{r \leq \tau\}} D_i u^{\kappa} D_{y_l} u^{\kappa} \cdot D_{y_l} D_{y_{p}}\zeta\right|
	&\leq \left( \int_{B_{1/2}(0) \cap \{r \leq \tau\}} |Du|^2 \right)^{1/2} \left( \int_{B_{1/2}(0)} |D_y u|^2 \right)^{1/2} \sup_{B_{1/2}(0)} |DD_{y_{p}} \zeta|
		 \nonumber \\
	&\leq C \tau^{2\alpha} \left( \int_{B_1(0)} \mathcal{G}(u,\varphi) \right)^{1/2} \sup_{B_{1/2}(0)} |DD_{y_{p}} \zeta|
\end{align}
for some $C = C(n,m,\varphi^{(0)}) \in (0,\infty)$.  Using (\ref{half-eqn3}) and (\ref{half-eqn5}) to bound the right-hand side of (\ref{half-eqn2}), we get
\begin{align} \label{half-eqn6}
	&\hspace{-.5in}\left|\int_{B_{1/2}(0) \cap \{r \geq \tau\}} (-D_j \varphi^{\kappa} D_j \hat v^{\kappa} D_i D_{y_{p}} \zeta 
		+ D_i \varphi^{\kappa} D_j \hat v^{\kappa} D_j D_{y_{p}} \zeta + D_i \hat v^{\kappa} D_j \varphi^{\kappa} D_j D_{y_{p}} \zeta) \right|\nonumber \\
	&\hspace{.5in}\leq C \left( \tau^{-2} \int_{B_1(0)} \mathcal{G}(u,\varphi)^2 + \tau^{2\alpha} \left( \int_{B_1(0)} \mathcal{G}(u,\varphi)^2 \right)^{1/2} \right) 
		\sup_{B_{1/2}(0)} |DD_{y_{p}} \zeta|
\end{align}
for some $C = C(n,m,\varphi^{(0)}) \in (0,\infty)$.  Now observe that, using the fact that $\varphi$ is homogeneous degree $1/2$ and independent of $y$ and $\z$ depends only on $r$ and $y$,
\begin{eqnarray*}
&&\hspace{-.5in}	\int_{B^{n-2}_{1/2}(0)} \int_{\t}^{\infty} \int_{0}^{4\pi}D_j (r \hat v^{\kappa} D_i \varphi^{\kappa}) D_j D_{y_{p}} \zeta d\th dr dy\nonumber\\ 
&&\hspace{.5in}	= \int_{B^{n-2}_{1/2}(0)} \int_{\t}^{\infty}\int_{0}^{4\pi} \left( \frac{1}{2} \hat v^{\kappa} D_i \varphi^{\kappa} D_r D_{y_{p}} \zeta 
		+ r D_j \hat v^{\kappa} D_i \varphi^{\kappa} D_j D_{y_{p}} \zeta \right) d\th dr  dy 
\end{eqnarray*}
Using (\ref{half-eqn6}) to substitute for the integral of $r D_j \hat v^{\kappa} D_i \varphi^{\kappa} D_j D_{y_{p}}\z$, 
\begin{align*}
	&\hspace{-.5in}\int_{B^{n-2}_{1/2}(0)}\int_{\t}^{\infty}\int_{0}^{4\pi} D_j (r \hat v^{\kappa} D_i \varphi^{\kappa}) D_j D_{y_{p}} \zeta d\th dr  dy 
	= \frac{1}{2} \int_{B^{n-2}_{1/2}(0)}\int_{\t}^{\infty}\int_{0}^{4\pi} \hat v^{\kappa} D_i \varphi^{\kappa} D_r D_{y_{p}} \zeta d\th dr  dy
		\\&\hspace{.5in}+ \int_{B^{n-2}_{1/2}(0)}\int_{\t}^{\infty}\int_{0}^{4\pi} (r D_j \hat v^{\kappa} D_j \varphi^{\kappa} D_i D_{y_{p}} \zeta 
		- r D_i \hat v^{\kappa} D_j \varphi^{\kappa} D_j D_{y_{p}} \zeta) d\th dr  dy + \mathcal{R} 
\end{align*}
where $\mathcal{R}$ is such that 
\begin{equation*}
	|\mathcal{R}| \leq C \left( \tau^{-2} \int_{B_1(0)} \mathcal{G}(u,\varphi)^2 + \tau^{2\alpha} 
		\left( \int_{B_1(0)} \mathcal{G}(u,\varphi)^2 \right)^{1/2} \right) \sup_{B_{1/2}(0)} |DD_{y_{p}} \zeta|
\end{equation*}
for some constant $C = C(n,m,\varphi^{(0)}) \in (0,\infty)$.  Again since $\varphi$ is homogeneous degree $1/2$ and independent of $y$ and $\zeta$ depends only on $r$ and $y$,
\begin{align*}
	&\int_{B^{n-2}_{1/2}(0)} \int_{\t}^{\infty}\int_{0}^{4\pi}D_j (r \hat v^{\kappa} D_i \varphi^{\kappa}) D_j D_{y_{p}} \zeta d\th dr  dy 
	= \frac{1}{2} \int_{B^{n-2}_{1/2}(0)} \int_{\t}^{\infty}\int_{0}^{4\pi}\hat v^{\kappa} D_i \varphi^{\kappa} D_r D_{y_{p}} \zeta d\th dr dy
		\\&\hspace{1.75in}+ \int_{B^{n-2}_{1/2}(0)}\int_{\t}^{\infty}\int_{0}^{4\pi} \left( x_i D_j \hat v^{\kappa} D_j \varphi^{\kappa} 
		- \frac{1}{2} D_i \hat v^{\kappa} \varphi^{\kappa} \right) D_r D_{y_{p}} \zeta d\th dr  dy + \mathcal{R}\nonumber\\
		&\hspace{1.5in}=\int_{B^{n-2}_{1/2}(0)}\int_{\t}^{\infty}\int_{0}^{4\pi} \left( \frac{-1}{2} D_i (\hat v^{\kappa} \varphi^{\kappa}) + D_j (x_i \hat v^{\kappa} D_j \varphi^{\kappa}) 
		\right) D_r D_{y_{p}} \zeta d\th dr  dy + \mathcal{R}\nonumber\\
		&\hspace{1.5in} = \int_{B_{1/2}(0) \cap \{r\geq \t\}}   \left( \frac{-1}{2} D_i (\hat v^{\kappa} \varphi^{\kappa}) + D_j (x_i \hat v^{\kappa} D_j \varphi^{\kappa}) 
		\right) r^{-1}D_r D_{y_{p}} \zeta \, dX
\end{align*}
where we have used the fact that $\varphi$ is locally given by harmonic functions away from $\{r=0\}.$ By integrating by parts with respect to $r$ we deduce from the preceding line that 
\begin{align*}
	&\int_{B^{n-2}_{1/2}(0)}\int_{\t}^{\infty}\int_{0}^{4\pi} D_j (r \hat v^{\kappa} D_i \varphi^{\kappa}) D_j D_{y_{p}} \zeta d\th dr  dy 
	\\&\hspace{.5in}= -\int_{B_{1/2}(0) \cap \{r = \tau\}} \left( \frac{-1}{2} \frac{x_i}{r} \hat v^{\kappa} \varphi^{\kappa} + x_i \hat v^{\kappa} D_r \varphi^{\kappa} 
		\right) r^{-1}D_r D_{y_{p}} \zeta  \, d{\mathcal H}^{n-1}
		\\&\hspace{1in}- \int_{B_{1/2}(0) \cap \{r \geq \tau\}} \left( \frac{-1}{2} \frac{x_i}{r} \hat v^{\kappa} \varphi^{\kappa} 
		+ x_i \hat v^{\kappa} D_r \varphi^{\kappa} \right) D_{r}\left(r^{-1} D_{r}D_{y_{p}} \zeta\right) \, dX + \mathcal{R}.
\end{align*}
Since $\varphi$ is homogeneous degree $1/2$, $D_r \varphi = (1/2) r^{-1} \varphi$ and thus 
\begin{equation*}
	\int_{B^{n-2}_{1/2}(0)}\int_{\t}^{\infty}\int_{0}^{4\pi} D_j (r \hat v^{\kappa} D_i \varphi^{\kappa}) D_j D_{y_{p}} \zeta d\th dr dy = \mathcal{R}.
\end{equation*}
This completes the proof. 
\end{proof}

\begin{corollary}\label{cor6_3}
Let the hypotheses be as in Theorem~\ref{thm6_2}, and let $\sigma \in (0, 1)$. Then 
\begin{align*}
	\int_{B_{\gamma}(0)} R^{-n+\sigma-2\alpha} \mathcal{G}(u,\varphi)^2 \leq C \int_{B_1(0)} \mathcal{G}(u,\varphi)^2 
\end{align*}
where $C = C(n,m,\varphi^{(0)},\alpha,\gamma,\sigma) \in (0,\infty)$. 
\end{corollary}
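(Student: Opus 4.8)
The plan is to deduce the weighted $L^2$ estimate from the radial-derivative estimate of Theorem~\ref{thm6_2}(a) by a one-dimensional Hardy-type inequality along rays emanating from the origin, combined with a decomposition of $\mathcal{G}(u,\varphi)^2$. First I would observe that, by Theorem~\ref{thm6_2} applied with $(1+\gamma)/2$ in place of $\gamma$ (together with Lemma~\ref{coarse_graph}, with parameters $\tau$, $\beta$ fixed as in the proof of Theorem~\ref{thm6_2}), on the set $U$ we may write $u = \{\pm(\varphi_1 + \hat v_1)\}$ with $\varphi$ the nearest cylindrical function, so that $\mathcal{G}(u,\varphi)^2 = 2|\hat v_1|^2$ on $U$, while on $B_{(1+\gamma)/2}(0)\setminus U$ we control $\int(|u|^2 + r^2|Du|^2)$ by $C\int_{B_1(0)}\mathcal{G}(u,\varphi)^2$. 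On $U$, the key point is that $w := u/R^\alpha - \varphi/R^\alpha$ (interpreting this via $\hat v_1/R^\alpha$, since $\varphi$ is exactly homogeneous of degree $\alpha$) satisfies $\partial w/\partial R = R^{-\alpha}\,\partial(u/R^\alpha)/\partial R$ on rays where $u$ and $\varphi$ are paired consistently, so Theorem~\ref{thm6_2}(a) gives $\int_{B_\gamma(0)} R^{2-n}|\partial w/\partial R|^2 \leq C\int_{B_1(0)}\mathcal{G}(u,\varphi)^2$.

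Next I would integrate along each ray $\{R e'\,:\,0 < R < \gamma\}$, $e' \in S^{n-1}$. Writing $f(R) = R^{-\alpha}\hat v_1(Re')$ and noting $f(0^+) = 0$ (which holds because $\mathcal{N}_u(0)\geq\alpha$ forces, via the doubling estimate (\ref{doubling_estimate2}) and the homogeneity/frequency of blow-ups, $|u(X)| \leq C|X|^\alpha$ near $0$ and $|\varphi(X)| = c|x|^\alpha$ is likewise of order $R^\alpha$, so $R^{-\alpha}|\hat v_1|$ is bounded; more precisely the pointwise bound $R^{-\alpha}|\hat v| + R^{1-\alpha}|D\hat v| \leq \beta$ from Lemma~\ref{coarse_graph} already gives boundedness, and I will use the fundamental theorem of calculus with the small-$\sigma$ weight to close), the elementary Hardy inequality
$$\int_0^\gamma R^{\sigma-1}|f(R)|^2\,dR \leq \frac{4}{\sigma^2}\int_0^\gamma R^{\sigma+1}|f'(R)|^2\,dR + (\text{boundary term at }R=\gamma)$$
yields $\int_0^\gamma R^{\sigma-1}R^{-2\alpha}|\hat v_1(Re')|^2\,dR \leq C(\sigma)\int_0^\gamma R^{\sigma+1}|\partial w/\partial R|^2\,dR + C\sup_{B_\gamma\setminus B_{\gamma/2}} R^{-2\alpha}|\hat v_1|^2$. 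Multiplying by $R^{n-1}$, integrating in $e'$ over $S^{n-1}$, and using $\sigma < 1 < 2$ so that $R^{\sigma+1}\leq C R^{2}$ on $B_\gamma(0)$, the main term is bounded by $C\int_{B_\gamma(0)} R^{2-n}\cdot R^{2(n-1)}\cdots$ — more carefully, after restoring the $R^{n-1}\,dR\,d\mathcal{H}^{n-1}(e')$ volume element one gets $\int_{B_\gamma(0)} R^{\sigma-n-2\alpha}|\hat v_1|^2 \leq C(\sigma)\int_{B_\gamma(0)} R^{2-n}|\partial w/\partial R|^2 + C\int_{B_\gamma(0)\setminus B_{\gamma/2}(0)} R^{-2\alpha}|\hat v_1|^2$, and both terms on the right are $\leq C\int_{B_1(0)}\mathcal{G}(u,\varphi)^2$ by Theorem~\ref{thm6_2}(a) and the $L^2$ estimate on $\hat v$ from Lemma~\ref{coarse_graph}. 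Finally, on $B_\gamma(0)\setminus U$ one has $R \geq c\,\mathrm{dist}(\cdot, \{0\}\times\mathbb{R}^{n-2})$ bounded below is false near the axis, so instead I would use that $B_\gamma(0)\setminus U$ is contained in the tubular region $\{|x| \leq \tau\}$ only away from the construction; here one argues directly: the weight $R^{\sigma - n - 2\alpha}$ is integrable against $|u|^2$ since $|u|^2 \leq C R^{2\alpha}$ there by (\ref{dist2sing_estimate})-type bounds, giving $\int_{B_\gamma(0)\setminus U} R^{\sigma-n-2\alpha}|u|^2 \leq C\int_{B_\gamma(0)\setminus U}|u|^2 r^{-n+\sigma} \leq C\int_{B_1(0)}\mathcal{G}(u,\varphi)^2$ after a covering argument as at the end of the proof of Lemma~\ref{coarse_graph}, and similarly $\mathcal{G}(u,\varphi)^2 \leq 2(|u|^2 + |\varphi|^2) \leq C|u|^2 + CR^{2\alpha}$ with the $R^{2\alpha}$ term handled by the same covering since $\mathrm{vol}(B_\gamma\setminus U)$ is controlled.

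The main obstacle I anticipate is the bookkeeping at the axis $\{0\}\times\mathbb{R}^{n-2}$ and near the origin: one must be careful that the Hardy inequality is applied on rays from $0$ (where the vertex of $\varphi$ sits) rather than from axis points, and that the pairing of the two sheets of $u$ with those of $\varphi$ (which is only valid on $U$, i.e.\ away from a thin tube around the axis) is respected. The clean way around this is exactly the dichotomy already used in Lemma~\ref{coarse_graph}: estimate $\int_{\cdots}R^{\sigma-n-2\alpha}\mathcal{G}(u,\varphi)^2$ separately on $B_\gamma(0)\cap U$ (using $\mathcal{G}(u,\varphi)^2 = 2|\hat v_1|^2$ and the ray-wise Hardy argument, with the radial integral started at $R=0$ and the vanishing $f(0^+)=0$ justified by the Lemma~\ref{coarse_graph} bound $R^{-\alpha}|\hat v|\leq\beta$ upgraded to decay via the frequency lower bound and monotonicity), and on $B_\gamma(0)\setminus U$ (using $\int_{B_\gamma\setminus U}(|u|^2 + r^2|Du|^2)\leq C\int_{B_1}\mathcal{G}(u,\varphi)^2$ and boundedness of the weight $R^{\sigma-n-2\alpha}R^{-2\alpha}$-free part after a Whitney-type covering by the balls $A_{\rho,1/2}(\zeta)$). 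This is precisely the structure of the corresponding argument in Section~3 of~\cite{SimonCTC}, and the modifications needed in the two-valued setting are the same as those already made in the proof of Theorem~\ref{thm6_2}.
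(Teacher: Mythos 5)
Your guiding idea (a Hardy-type inequality in the radial variable driven by Theorem~\ref{thm6_2}(a)) is the right one, but the implementation through the graphical decomposition of Lemma~\ref{coarse_graph} has two genuine gaps, both concentrated exactly where the weight $R^{\sigma-n-2\alpha}$ is most singular. First, the ray-wise Hardy argument on $U$ is not available: a ray $\{Re'\}$ from the origin need not stay in $U$, and near the origin \emph{every} point lies in the fixed tube $\{|x|\leq\tau\}$, so membership in $U$ there is governed by local excess smallness, not by the tube; consequently $f(R)=R^{-\alpha}\hat v_1(Re')$ is in general not defined (let alone absolutely continuous) along whole ray segments, and the pairing of the sheets of $u$ with those of $\varphi$ can switch across the excursions outside $U$, so the fundamental theorem of calculus step in your Hardy inequality does not go through. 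Second, the complement estimate does not close as sketched: the unweighted bound $\int_{B_\gamma\setminus U}(|u|^2+r^2|Du|^2)\leq C\int_{B_1}\mathcal{G}(u,\varphi)^2$ cannot simply be multiplied by the weight, since $R^{\sigma-n-2\alpha}$ is unbounded near $0$; and on a bad ball $A_{\rho,3/4}(\zeta)$ near the origin one only knows $\int_A\mathcal{G}(u,\varphi)^2\geq\delta\rho^{n+2\alpha}$, which is a lower bound in the wrong direction — to control $\int_A R^{\sigma-n-2\alpha}\mathcal{G}(u,\varphi)^2\approx\rho^{\sigma-n-2\alpha}\int_A\mathcal{G}(u,\varphi)^2$ by $C\int_{B_1}\mathcal{G}(u,\varphi)^2$ you would need precisely the decay $\int_{B_\rho(0)}\mathcal{G}(u,\varphi)^2\leq C\rho^{n+2\alpha-\sigma}\int_{B_1}\mathcal{G}(u,\varphi)^2$, i.e.\ the corollary itself, so the argument is circular there. (Also, the pointwise bound $|u|\leq CR^{\alpha}$ you invoke does not come from (\ref{dist2sing_estimate}), whose exponent is the frequency at the nearest singular point and whose distance is to $\Sigma_u$, not to the origin.)

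The paper's proof avoids the decomposition entirely and works directly with $\mathcal{G}(u,\varphi)$: since $\varphi$ is exactly homogeneous of degree $\alpha$, the function $\varphi/R^{\alpha}$ is radially constant, so one has the a.e.\ pointwise bound $|D_R(R^{-2\alpha}\mathcal{G}(u,\varphi)^2)| = |D_R(\mathcal{G}(u/R^{\alpha},\varphi/R^{\alpha})^2)| \leq 4\,\mathcal{G}(u/R^{\alpha},\varphi/R^{\alpha})\,|D_R(u/R^{\alpha})|$, with no sheet-pairing needed anywhere. Inserting the radial vector field $\zeta^i=\psi(R)^2\eta_\delta(R)R^{-n+\sigma-2\alpha}\mathcal{G}(u,\varphi)^2X_i$ into the identity $\int D_i\zeta^i=0$, applying Cauchy--Schwarz, invoking Theorem~\ref{thm6_2}(a), and disposing of the inner cutoff term via the doubling estimate (\ref{doubling_estimate2}) (it is $O(\delta^{\sigma})$) before letting $\delta\downarrow 0$ yields the stated estimate. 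If you want to salvage your route, this radial Lipschitz observation is the ingredient to adopt; it removes both the pairing problem and the need for any separate estimate on $B_\gamma(0)\setminus U$.
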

\begin{proof}
Recall that for any single-valued vector fields $\zeta = (\zeta^1,\ldots,\zeta^n) \in C_c^{0,1}(\mathbb{R}^n)$ that 
\begin{equation*}
	\int D_i \zeta^i = 0.  
\end{equation*}
Let $\zeta^i = \psi(R)^2 \eta_{\delta}(R) R^{-n+\sigma-2\alpha} \mathcal{G}(u,\varphi)^2 X_i$, where for each $\delta > 0$, $\eta_{\delta} \in C^{\infty}([0,\infty))$ is a cutoff function such that $\eta_{\delta}(t) = 0$ for $t \in [0,\delta/2]$, $\eta_{\delta}(t) = 1$ for $t \in [\delta,\infty)$, and $0 \leq \eta'_{\delta}(t) \leq 3/\delta$ for all $t \in [0,\infty)$, to obtain 
\begin{align} \label{lemma3_4_eqn20}
	&\hspace{-.5in}\sigma \int \psi(R)^2 \eta_{\delta}(R) R^{-n+\sigma-2\alpha} \mathcal{G}(u,\varphi)^2 
	= - \int \psi(R)^2 \eta_{\delta}(R) R^{1-n+\sigma} D_R (R^{-2\alpha} \mathcal{G}(u,\varphi)^2) \nonumber 
	\\&\hspace{.5in} + 2 \int \psi(R) \psi'(R) \eta_{\delta}(R) R^{1-n+\sigma-2\alpha} \mathcal{G}(u,\varphi)^2 
	+ \int \psi(R)^2 \eta'_{\delta}(R) R^{1-n+\sigma-2\alpha} \mathcal{G}(u,\varphi)^2 .
\end{align}
Observe that 
\begin{equation*}
	|D_R (R^{-2\alpha} \mathcal{G}(u,\varphi)^2)| 
	= |D_R (\mathcal{G}(u/R^{\alpha}, \varphi/R^{\alpha})^2)| 
	\leq 4 \mathcal{G}(u/R^{\alpha}, \varphi/R^{\alpha}) |D_R (u/R^{\alpha})| 
\end{equation*}
a.e. in $B_1(0)$.  Thus using Cauchy-Schwartz in (\ref{lemma3_4_eqn20}) and using (\ref{lemma3_4_eqn19}), we obtain 
\begin{align} \label{lemma3_4_eqn21}
	\int \psi(R)^2 \eta_{\delta}(R) R^{-n+\sigma-2\alpha} \mathcal{G}(u,\varphi)^2 
	&\leq \frac{36}{\sigma^2} \int \left( \psi(R)^2 R^{2-n+\sigma} |D_R (u/R^{\alpha})|^2 + \psi'(R)^2 R^{2-n+\sigma-2\alpha} \mathcal{G}(u,\varphi)^2 \right) 
		\nonumber \\&\hspace{1.7in} + \frac{6}{\sigma} \int \psi(R)^2 \eta'_{\delta}(R) R^{1-n+\sigma-2\alpha} 
		\mathcal{G}(u,\varphi)^2 \nonumber 
	\\&\hspace{-1.8in}\leq \frac{36}{\sigma^2} \int \psi(R)^2 R^{2-n+\sigma} |D_R (u/R^{\alpha})|^2 + C \int_{B_1(0)} \mathcal{G}(u,\varphi)^{2} + \frac{6}{\sigma} \int \psi(R)^2 \eta'_{\delta}(R) R^{1-n+\sigma-2\alpha} 
		\mathcal{G}(u,\varphi)^2. 
\end{align}
for $C = C(n,m,\alpha,\gamma,\sigma) \in (0,\infty)$.  By the definition of $\eta_{\delta}$, the Cauchy inequality, and (\ref{doubling_estimate2}), 
\begin{align*}
	&\int \psi(R)^2 \eta'_{\delta}(R) R^{1-n+\sigma-2\alpha} \mathcal{G}(u,\varphi)^2 
	\leq 6 \delta^{-n+\sigma-2\alpha} \int_{B_{\delta}(0)} (|u|^2 + |\varphi|^2) 
	\leq 6 \delta^{\sigma} \int_{B_1(0)} (|u|^2 + |\varphi|^2) \leq C \delta^{\sigma} 
\end{align*}
for some constant $C = C(\varphi^{(0)}) \in (0,\infty)$, so letting $\delta \downarrow 0$ in (\ref{lemma3_4_eqn21}) using the monotone convergence theorem, 
\begin{align*}
	\int \psi(R)^2 R^{-n+\sigma-2\alpha} \mathcal{G}(u,\varphi)^2 
	\leq \frac{36}{\sigma^2} \int \psi(R)^2 R^{2-n+\sigma} |D_R (u/R^{\alpha})|^2 + C \int_{B_1(0)} \mathcal{G}(u,\varphi)^2
\end{align*}
for $C = C(n,m,\alpha,\gamma,\sigma) \in (0,\infty)$.  
Hence by Theorem~\ref{thm6_2}(a) 
\begin{align*}
	\int_{B_{\gamma}(0)} R^{-n+\sigma-2\alpha} \mathcal{G}(u,\varphi)^2 \leq C \int_{B_1(0)} \mathcal{G}(u,\varphi)^2 
\end{align*}
for $C = C(n,m,\varphi^{(0)},\alpha,\gamma,\beta,\sigma) \in (0,\infty)$.
\end{proof}

For the next two corollaries, we will consider a point $Z = (\xi,\zeta) \in \Sigma_u \cap B_{1/2}(0)$ such that $\mathcal{N}_u(Z) \geq \alpha$.  We will need the following:  Let $u \in \mathcal{F}_{\varepsilon_0}^{\text{Dir}}(\varphi^{(0)})$ and $\varphi \in \Phi_{\varepsilon_0}(\varphi^{(0)})$.  Observe that for $\vartheta \in (0,1)$ and $X = (x,y) \in B_1(0) \setminus \{0\} \times {\mathbb R}^{n-2}$, we have that $\varphi = \{ \pm \varphi_1 \}$ on $B^2_{\vartheta |x|}(x) \times \mathbb{R}^{n-2}$ for some harmonic single-valued function $\varphi_1$.  By Taylor's theorem applied to $\varphi_1$ and the homogeneity of $\varphi$, there is a $\vartheta = \vartheta(\varphi^{(0)}) \in (0,1)$ such that provided $\varepsilon_0$ is sufficiently small, if $|\xi| \leq \vartheta |x|$ then 
\begin{equation} \label{lemma3_9_eqn0}
	\mathcal{G}(u(X),\varphi(X-Z)) = \mathcal{G}(u(X),\varphi(X) - D_x \varphi(X) \cdot \xi) + \mathcal{R}  
\end{equation}
for $\mathcal{R}$ such that $|\mathcal{R}| \leq C |x|^{\alpha-2} |\xi|^2$ for some $C = C(\varphi^{(0)}) \in (0,\infty)$ and 
\begin{equation} \label{lemma3_9_eqn1}
	\mathcal{G}(u(X),\varphi(X-Z)) \geq |D_x \varphi(X) \cdot \xi| - \mathcal{G}(u(X),\varphi(X)) - C |x|^{\alpha-2} |\xi|^2 
\end{equation}
for $C = C(\varphi^{(0)}) \in (0,\infty)$.  By the triangle inequality and the fundamental theorem of calculus, for a.e. $X = (x,y) \in B_1(0)$, 
\begin{align} \label{lemma3_9_eqn2}
	|\mathcal{G}(u(X),\varphi(X-Z)) - \mathcal{G}(u(X),\varphi(X))| 
	&\leq \mathcal{G}(\varphi(X-Z),\varphi(X)) \nonumber 
	\\&\leq \left( \int_0^1 |D\varphi(x-t\xi,y)|^2 dt \right)^{1/2} |\xi|.  
\end{align}
Note that by Lemma \ref{coarse_graph} for every $\varepsilon \in (0,1)$ there is a $\delta = \delta(\varepsilon) \in (0,1)$ such that if $u \in \mathcal{F}^{Dir}_{\varepsilon}(\varphi^{(0)})$ then $|\xi| < \delta$ and $\delta(\varepsilon) \rightarrow 0$ as $\varepsilon \downarrow 0$.  Thus by (\ref{lemma3_9_eqn2}) with $\varphi^{(0)}$ in place of $\varphi$, whenever $u \in \mathcal{F}^{Dir}_{\varepsilon}(\varphi^{(0)})$, 
\begin{equation*}
	4^{-n-2\alpha} \int_{B_{1/4}(Z)} \mathcal{G}(u(X),\varphi^{(0)}(X-Z))^2 \leq C \int_{B_1(0)} \mathcal{G}(u(X),\varphi^{(0)}(X))^2 + C |\xi|^2 
	\leq C(\varepsilon + \delta(\varepsilon)) < \varepsilon_0, 
\end{equation*}
where $C = C(n,\varphi^{(0)}) \in (0,\infty)$ and $\varepsilon_0$ is as in the statement of Theorem~\ref{thm6_2}.  Hence Theorem~\ref{thm6_2} holds with $4^{-\alpha} u(Z+X/4)$ in place of $u(X)$.

\begin{corollary} \label{cor6_4}
Let $\varphi^{(0)} \in W^{1,2}(\mathbb{R}^n;\mathcal{A}_2(\mathbb{R}^m))$ be a Dirichlet minimizing two-valued function given by (\ref{varphi0}) for some $c^{(0)} \in \mathbb{C}^m \setminus \{0\}$ and $\alpha \in \{1/2,1,3/2,2,\ldots\}$.  There is a $\varepsilon_0 = \varepsilon_0(n,m,\varphi^{(0)},\alpha) > 0$ such that if $\varphi \in \Phi_{\varepsilon_0}(\varphi^{(0)})$, $u \in \mathcal{F}^{\text{Dir}}_{\varepsilon_0}(\varphi^{(0)})$, $Z \in \Sigma_u \cap B_{1/2}(0)$ and $\mathcal{N}_u(Z) \geq \alpha$, then 
\begin{align*}
	\op{dist}^{2} \, (Z, \{0\} \times \mathbb{R}^{n-2}) + \int_{B_1(0)} \mathcal{G}(u(X),\varphi(X-Z))^2 dX \leq C \int_{B_1(0)} \mathcal{G}(u(X),\varphi(X))^2 dX 
\end{align*}
for $C = C(n,m,\varphi^{(0)},\alpha) \in (0,\infty)$. 
\end{corollary}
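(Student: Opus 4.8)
The strategy is to follow the corresponding argument of \cite{SimonCTC}, transplanted to the two-valued setting by means of the expansions (\ref{lemma3_9_eqn0})--(\ref{lemma3_9_eqn2}) and the fact, recorded just above, that after translation and rescaling at $Z$ the hypotheses of Theorem~\ref{thm6_2} hold for $u$. Write $Z=(\xi,\zeta)$ with $\xi\in\R^2$, so that $\op{dist}(Z,\{0\}\times\R^{n-2})=|\xi|$, and set $E^2=\int_{B_1(0)}\mathcal{G}(u,\varphi)^2$. First, by (\ref{lemma3_9_eqn2}) and the homogeneity of $\varphi$ --- which makes $\int_{B_1(0)}|D\varphi(x-t\xi,y)|^2\,dX$ bounded uniformly in $t\in[0,1]$ and in $|\xi|$ small, since $\alpha\ge 1/2>0$ --- one gets $\int_{B_1(0)}\mathcal{G}(u(X),\varphi(X-Z))^2\,dX\le 2E^2+C|\xi|^2$, so the whole assertion reduces to showing $|\xi|^2\le CE^2$.

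To see that $|\xi|$ is small relative to $E$, one extracts a definite amount of excess at a scale comparable to $|\xi|$ near $Z$. On the one hand, $\mathcal{N}_u(Z)\ge\alpha$ together with the doubling estimate (\ref{doubling_estimate2}) makes $\rho\mapsto\rho^{-n-2\alpha}\int_{B_\rho(Z)}|u|^2$ non-decreasing, so $\int_{B_\rho(Z)}|u|^2\le C\rho^{n+2\alpha}$ for $\rho\le 1/4$; also $Z\in\Sigma_u$ with $\mathcal{N}_u(Z)>0$ forces $u(Z)=0$, so $u$ decays at least like $|X-Z|^{\alpha}$ at $Z$. On the other hand $\varphi$ is homogeneous with axis the $(n-2)$-plane through the origin, and since $\varphi^{(0)}$ is Dirichlet minimizing we must have $|\varphi^{(0)}(X)|\ge c_0|x|^{\alpha}$ for some $c_0=c_0(\varphi^{(0)})>0$ --- otherwise $\Sigma_{\varphi^{(0)}}$ would meet a hypersurface and fail to have dimension $\le n-2$; when $\alpha$ is a non-integer this also follows from $c^{(0)}\cdot c^{(0)}=0$, cf.\ \cite{MW} --- hence $|\varphi(X)|\ge\tfrac{c_0}{2}|x|^{\alpha}$ for $\varepsilon_0$ small. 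Thus $\int_{B_\rho(Z)}|\varphi|^2\ge c\,\rho^n|\xi|^{2\alpha}$ while $\int_{B_\rho(Z)}|u|^2\le C\rho^{n+2\alpha}$, so choosing $\rho=\eta|\xi|$ with $\eta=\eta(n,m,\varphi^{(0)},\alpha)\in(0,1)$ small makes the second quantity at most $\tfrac14$ of the first, and using $\mathcal{G}(u,\varphi)\ge|\varphi|-|u|$,
\[
	\int_{B_{\eta|\xi|}(Z)}\mathcal{G}(u,\varphi)^2\ \ge\ c'\,|\xi|^{n+2\alpha}.
\]
By itself this only gives the non-sharp bound $|\xi|\lesssim E^{2/(n+2\alpha)}$.

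The sharp bound $|\xi|^2\le CE^2$ is obtained, as in \cite{SimonCTC}, by feeding the non-concentration estimates into this. Applying Theorem~\ref{thm6_2}(a) and Corollary~\ref{cor6_3} to the rescaling $\bar u$ of $u$ at $Z$ described above --- which, by the computation preceding this corollary, lies in $\mathcal{F}^{\mathrm{Dir}}_{\varepsilon_0}(\varphi^{(0)})$ (after shrinking $\varepsilon_0$), satisfies $\int_{B_1(0)}\mathcal{G}(\bar u,\varphi)^2\le C(E^2+|\xi|^2)$ by the first reduction, and has $0\in\Sigma_{\bar u}$ with $\mathcal{N}_{\bar u}(0)\ge\alpha$ --- and undoing the rescaling yields, for $\sigma\in(0,1)$,
\[
	\int_{B_{\gamma/4}(Z)}|X-Z|^{-n+\sigma-2\alpha}\,\mathcal{G}(u(X),\varphi(X-Z))^2\,dX\ \le\ C(E^2+|\xi|^2).
\]
Combining this with the lower bound (\ref{lemma3_9_eqn1}) for $\mathcal{G}(u,\varphi(\cdot-Z))$ in terms of $|D_x\varphi\cdot\xi|$, integrated against the weight $|X-Z|^{-n+\sigma-2\alpha}$ over a dyadic family of annuli about $Z$ between scale $\sim|\xi|$ and a fixed scale, and summing (the left sides adding up to a definite multiple of $|\xi|^{\sigma}$, because $D_x\varphi\cdot\xi$ is homogeneous of degree $\alpha-1$ in $x$ and, for $c\ne0$, does not vanish identically on any circle), one is led to $|\xi|^{\sigma}\le CE^2+C|\xi|^2$; since $\sigma<1$ and $E<1$ the term $C|\xi|^2$ is absorbed on the left and $|\xi|\le CE^{2/\sigma}\le CE$, hence $|\xi|^2\le CE^2$. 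The delicate point --- and the reason Theorem~\ref{thm6_2} is indispensable here, rather than a soft compactness argument --- is that (\ref{lemma3_9_eqn1}) is naively circular, $\mathcal{G}(u,\varphi(\cdot-Z))$ being itself of size comparable to $|D_x\varphi\cdot\xi|+\mathcal{G}(u,\varphi)$: only the a priori bound above on the weighted translated excess closes the estimate, and the weighted integrals of the quadratic remainder terms near the axis (whose precise form, and the admissible choices of $\sigma$ and of the dyadic cutoff, depend on $n$ and $\alpha$) must be tracked with care.
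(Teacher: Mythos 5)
Your first reduction (controlling the translated excess by $2E^2+C|\xi|^2$, where $E^2=\int_{B_1(0)}\mathcal{G}(u,\varphi)^2$, via (\ref{lemma3_9_eqn2}) and (\ref{lemma3_9_eqn6})), the reduction of the whole corollary to $|\xi|^2\le CE^2$, and the application of Corollary~\ref{cor6_3} to the rescaling of $u$ at $Z$ all agree with the paper. The gap is in your central dyadic-summation step. After inserting (\ref{lemma3_9_eqn1}) and integrating against the weight $|X-Z|^{-n-2\alpha+\sigma}$ down to scale $\sim|\xi|$, the right-hand side contains, besides the weighted \emph{translated} excess (which your a priori bound does control), the weighted \emph{untranslated} excess $\int|X-Z|^{-n-2\alpha+\sigma}\,\mathcal{G}(u,\varphi)^2$. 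You have no estimate for this quantity: Corollary~\ref{cor6_3} carries the weight centered at the origin, not at $Z$, and a bound with the weight centered at $Z$ is essentially (a stronger form of) Corollary~\ref{cor6_5}, which is proved afterwards \emph{using} Corollary~\ref{cor6_4}. Converting it to the translated excess by the triangle inequality costs $\mathcal{G}(\varphi(\cdot-Z),\varphi)$, which to leading order equals $|D_x\varphi\cdot\xi|$, i.e.\ exactly your main term with constant at least $1$; so the estimate does not close, and the "a priori bound on the weighted translated excess" you invoke does not remove the circularity, because the uncontrolled term involves $\varphi$, not $\varphi(\cdot-Z)$. In fact your intermediate inequality $|\xi|^\sigma\le CE^2+C|\xi|^2$ is false in general: take $\varphi=\varphi^{(0)}$ and $u=\varphi^{(0)}(\cdot-Z)$ with $|\xi|$ small, so that $E^2\le C|\xi|^2$ while $|\xi|^\sigma\gg|\xi|^2$; equivalently, your route would give $|\xi|\le CE^{2/\sigma}$ with $2/\sigma\ge 2$, contradicting this example, for which the sharp relation $|\xi|\approx E$ shows the corollary cannot be improved in that way.

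The paper avoids this difficulty by working at a \emph{single fixed} scale $\rho=\rho(n,m,\varphi^{(0)},\alpha)$ rather than summing down to scale $|\xi|$: by the measure-theoretic nondegeneracy (\ref{lemma3_9_eqn3}) (proved by a compactness argument, which also supplies the uniformity in $\varphi\in\Phi_{\varepsilon_0}(\varphi^{(0)})$ that your "does not vanish on any circle" remark glosses over) the left side is bounded below by $c\,\rho^{n+2\alpha-2}|\xi|^2$; the untranslated excess is then estimated crudely and \emph{unweighted} by $E^2$, which is harmless since $\rho$ is fixed; and the only refined input is Corollary~\ref{cor6_3} applied at $Z$, giving $\int_{B_\rho(Z)}\mathcal{G}(u,\varphi(\cdot-Z))^2\le C\rho^{n+2\alpha-1/2}(E^2+|\xi|^2)$. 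The extra factor $\rho^{3/2}$ relative to $\rho^{n+2\alpha-2}$, together with $|\xi|\le\tau$ small to handle the quadratic remainders (\ref{lemma3_9_eqn8})--(\ref{lemma3_9_eqn9}), is what permits absorption of the $|\xi|^2$ term and yields $|\xi|^2\le CE^2$ directly. Your auxiliary step extracting excess at scale $\eta|\xi|$ from the lower bound $|\varphi^{(0)}|\ge c_0|x|^\alpha$ is correct as far as it goes (that lower bound does hold for Dirichlet minimizing $\varphi^{(0)}$, since otherwise $\Sigma_{\varphi^{(0)}}$ would be $(n-1)$-dimensional), but as you concede it only gives a non-sharp bound, so the proposal as written does not prove the corollary.
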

\begin{proof} 
We first claim that there is a constant $\delta_1 = \delta_1(\varphi^{(0)}) > 0,$ and given $\r \in (0, 1/4)$, a constant $\varepsilon_{0}(\r, \varphi^{(0)})$ such that for $u \in {\mathcal F}^{Dir}_{\varepsilon_{0}}(\varphi^{(0)})$, $\varphi \in \Phi_{\varepsilon_{0}}(\varphi^{(0)})$, $a \in \mathbb{R}^{2}$ and $Z = (\xi,\eta) \in \mathcal{B}_u \cap B_{1/2}(0)$, 
\begin{equation} \label{lemma3_9_eqn3}
	\mathcal{L}^n \{ X \in B_{\rho}(Z) : \delta_1 |a| |x|^{\alpha-1} \leq |D_x \varphi(X) \cdot a| \} \geq \delta_1 \rho^n. 
\end{equation}
If not, then for every $\delta_1 > 0$ there is a $\rho > 0$ such that with $\varepsilon_j = 1/j$, there exists $u_j \in \mathcal{F}^{\text{Dir}}_{\varepsilon_j}(\varphi^{(0)})$, $\varphi_j \in \Phi_{\varepsilon_j}(\varphi^{(0)})$, $Z_j \in \mathcal{B}_{u_j}$, and $a_j \in S^1$ such that 
\begin{equation*}
	\mathcal{L}^n \{ X \in B_{\rho}(Z_j) : \delta_1 |x|^{\alpha-1} \leq |D_x \varphi_j(X) \cdot a_j| \} < \delta_1 \rho^n. 
\end{equation*}
After passing to a subsequence, $\varphi_j \rightarrow \varphi^{(0)}$ in $C^{0}(B_1(0);\mathcal{A}_2(\mathbb{R}^m))$, $Z_j \rightarrow Z$ for some $Z \in \{0\} \times \mathbb{R}^{n-2}$, and $a_j \rightarrow a$ with $a \in S^1$ such that 
\begin{equation} \label{lemma3_9_eqn4}
	\mathcal{L}^n \{ X \in B_{\rho}(Z) : \delta_1 |x|^{\alpha-1} \leq |D_x \varphi^{(0)}(X) \cdot a| \} < \delta_1 \rho^n.
\end{equation}
Thus we have shown that, assuming (\ref{lemma3_9_eqn3}) is false, for every $\delta_1 > 0$ there is a $\rho > 0$, $Z \in \{0\} \times \mathbb{R}^{n-2}$, and $a \in S^1$ such that (\ref{lemma3_9_eqn4}) holds.  By translating and rescaling, we may suppose $\rho = 1$ and $Z = 0$.  Thus for $\delta_1 = 1/j$, there is an $a_j \in S^1$ such that 
\begin{equation*}
	\mathcal{L}^n \{ X \in B_1(0) : (1/j) |x|^{\alpha-1} \leq |D_x \varphi^{(0)}(X) \cdot a_j| \} < (1/j).
\end{equation*}
After passing to a subsequence, $a_j \rightarrow a$ with $a \in S^1$ such that 
\begin{equation*}
	D_x \varphi^{(0)}(X) \cdot a = 0 \text{ a.e. in } B_1(0),
\end{equation*}
which is obviously false since $\varphi^{(0)}(x,y) = \op{Re}(c^{(0)} (x_1+ix_2)^{\alpha})$. \\

Let now $a = \xi$ in (\ref{lemma3_9_eqn3}) and use (\ref{lemma3_9_eqn1}) to obtain that for some set $S \subset B_{\rho}(Z)$ with $\mathcal{L}^n(S) \geq \delta_1 \rho^n$, 
\begin{align*}
	\int_S |x|^{2\alpha-2} |\xi|^2 \leq {}& \delta_1^{-2} \int_{B_{\rho}(Z)} |D_x \varphi(X) \cdot \xi|^2 
	\\ \leq {}& 3\delta_1^{-2} \int_{B_{\rho}(Z)} \mathcal{G}(u(X),\varphi(X-Z))^2 dX + 3\delta_1^{-2} \int_{B_1(0)} \mathcal{G}(u(X),\varphi(X))^2 dX 
		\\& + 3C \delta_1^{-2} \int_{B_{\rho}(Z) \cap \{|x| \geq |\xi|/\vartheta\}} |x|^{2\alpha-4} |\xi|^4 
		+ \delta_1^{-2} \int_{B_{\rho}(Z) \cap \{|x| \leq |\xi|/\vartheta\}} |D_x \varphi(X)|^2 |\xi|^2 
\end{align*}
for all $\rho \in (0,1/4)$ and for $C = C(\varphi^{(0)}) \in (0,\infty)$.  For some $\kappa = \kappa(n)$, $\mathcal{L}^{n}(B^2_{\kappa \delta_1^{1/2} \rho}(0) \times B^{n-2}_{\rho}(0)) < \delta_1 \rho^n/2$ and thus $\mathcal{L}^n(\{ (x,y) \in S : |x| \geq \kappa \delta_1^{1/2} \rho \} \geq \delta_1 \rho^n/2$.  Hence 
\begin{align} \label{lemma3_9_eqn5}
	\rho^{n+2\alpha-2} |\xi|^2 \leq {}& C \int_{B_{\rho}(Z)} \mathcal{G}(u(X),\varphi(X-Z))^2 dX + C \int_{B_1(0)} \mathcal{G}(u(X),\varphi(X))^2 dX \nonumber
		\\& + C \int_{B_{\rho}(Z) \cap \{|x| \geq |\xi|/\vartheta\}} |x|^{2\alpha-4} |\xi|^4 
		+ C \int_{B_{\rho}(Z) \cap \{|x| \leq |\xi|/\vartheta\}} |D_x \varphi(X)|^2 |\xi|^2 
\end{align}
for some constant $C = C(n,\varphi^{(0)},\alpha) \in (0,\infty)$.  

We need to bound the terms on the right-hand side of (\ref{lemma3_9_eqn5}).  For the first term on the right-hand side of (\ref{lemma3_9_eqn5}), by Corollary~\ref{cor6_3} with $\sigma = 1/2$ and (\ref{lemma3_9_eqn2}), 
\begin{align*}
	&\hspace{-.5in}\rho^{-n-2\alpha+1/2} \int_{B_{\rho}(Z)} \mathcal{G}(u(X),\varphi(X-Z))^2 dX \nonumber 
	\leq C \int_{B_1(0)} \mathcal{G}(u(X),\varphi(X-Z))^2 dX \nonumber 
	\\&\hspace{.5in}\leq C \int_{B_1(0)} \mathcal{G}(u(X),\varphi(X))^2 dX + C |\xi|^2 \int_{B_1(0)} \int_0^1 |D\varphi(x-t\xi,y)|^2 dt dx dy 
\end{align*}
for $C = C(n,m,\varphi^{(0)},\alpha) \in (0,\infty)$.  Using the change of variable $x' = x-t\xi$, 
\begin{align} \label{lemma3_9_eqn6}
	&\hspace{-.5in}\int_{B_1(0)} \int_0^1 |D\varphi(x-t\xi,y)|^2 dt dx dy 
	\leq C \sup_{\partial B^2_1(0) \times \mathbb{R}^{n-2}} |D\varphi|^2 \int_{B^2_1(0)} \int_0^1 |x-t\xi|^{2\alpha-2} dt dx \nonumber 
	\\&\hspace{1in}\leq C \sup_{\partial B^2_1(0) \times \mathbb{R}^{n-2}} |D\varphi|^2 \int_0^1 \int_{B^2_{1+t|\xi|}(0)} |x'|^{2\alpha-2} dx' dt  
	\leq C \sup_{\partial B^2_1(0) \times \mathbb{R}^{n-2}} |D\varphi|^2 
\end{align}
for $C = C(n,\alpha) \in (0,\infty)$.  Hence,  
\begin{equation} \label{lemma3_9_eqn7}
	\rho^{-n-2\alpha+1/2} \int_{B_{\rho}(Z)} \mathcal{G}(u(X),\varphi(X-Z))^2 dX \leq C \int_{B_1(0)} \mathcal{G}(u(X),\varphi(X))^2 dX + C |\xi|^2 
\end{equation}
for some constant $C = C(n,m,\varphi^{(0)},\alpha) \in (0,\infty)$.  For the third term on the right-hand side of (\ref{lemma3_9_eqn5}), by direct computation considering the cases where $\alpha = 1/2$, $\alpha = 1$, and $\alpha > 1$ separately 
\begin{equation} \label{lemma3_9_eqn8}
	\int_{B_{\rho}(Z) \cap \{|x| \geq |\xi|/\vartheta\}} |x|^{2\alpha-4} |\xi|^4 \leq C (\rho^{2\alpha-5/2} + |\xi|^{2\alpha-5/2}) \rho^{n-2} |\xi|^4
\end{equation}
for some constant $C = C(n,\alpha, \varphi^{(0)}) \in (0,\infty).$  In fact, in the cases where $\alpha = 1/2$ or $\alpha > 1$ we can bound the left-hand side of (\ref{lemma3_9_eqn8}) by $C (\rho^{2\alpha-2} + |\xi|^{2\alpha-2}) \rho^{n-2} |\xi|^4$ with $C = C(n, \a)$, and in the case where $\alpha = 1$ we can bound the left-hand side of (\ref{lemma3_9_eqn8}) by $C \rho^{n-2} |\xi|^4 |\log (|\xi| + \r) \leq C \rho^{n-2} |\xi|^{7/2}$ with $C = C(n, \a, \varphi^{(0)})$ (since $\r > (\vartheta^{-1}-1)|\xi|$, or else $B_{\r}(Z) \cap 
\{|x| >|\xi|/\vartheta\}  = \emptyset$).  For the last term on the right-hand side of (\ref{lemma3_9_eqn5}),  
\begin{equation} \label{lemma3_9_eqn9}
	\int_{B_{\rho}(Z) \cap \{|x| \leq |\xi|/\vartheta\}} |D_x \varphi(X)|^2 |\xi|^2 
	\leq C \rho^{n-2} \int_{B^2_{|\xi|/\vartheta}(0)} |x|^{2\alpha-2} |\xi|^2
	\leq C \rho^{n-2} |\xi|^{2\alpha+2}
\end{equation}
for some constant $C = C(n,\alpha,\varphi^{(0)}) \in (0,\infty)$.  Therefore, by (\ref{lemma3_9_eqn5}), (\ref{lemma3_9_eqn7}), (\ref{lemma3_9_eqn8}), and (\ref{lemma3_9_eqn9}), 
\begin{equation} \label{lemma3_9_eqn10}
	\rho^{n+2\alpha-2} |\xi|^2 
	\leq C \int_{B_1(0)} \mathcal{G}(u(X),\varphi(X))^2 dX 
	+ C (\rho^{3/2} + \rho^{-5/2} |\xi|^2 + \rho^{-2\alpha} |\xi|^{2\alpha-1/2}) \rho^{n+2\alpha-2} |\xi|^2 
\end{equation}
for some  $C = C(n,m,\varphi^{(0)},\alpha) \in (0,\infty)$.  By Lemma \ref{coarse_graph}, given $\tau > 0$ for $\varepsilon_0 = \varepsilon_0(n,m,\varphi^{(0)},\tau)$ sufficiently small, $|\xi| \leq \tau$.  Choosing $\rho$ and $\tau$ small enough that $C (\rho^{3/2} + \rho^{-5/2} \tau^2 + \rho^{-2\alpha} \tau^{2\alpha-1/2}) < 1/2$, (\ref{lemma3_9_eqn10}) yields 
\begin{equation} \label{lemma3_9_eqn11}
	|\xi|^2 \leq C \int_{B_1(0)} \mathcal{G}(u(X),\varphi(X))^2 dX 
\end{equation}
for some constant $C = C(n,m,\varphi^{(0)},\alpha) \in (0,\infty)$.  By (\ref{lemma3_9_eqn7}) and (\ref{lemma3_9_eqn11}), 
\begin{equation*}
	\int_{B_1(0)} \mathcal{G}(u(X),\varphi(X-Z))^2 dX \leq C \int_{B_1(0)} \mathcal{G}(u(X),\varphi(X))^2 dX . \qedhere
\end{equation*}
\end{proof}

\begin{corollary} \label{cor6_5} 
Let $\varphi^{(0)} \in W^{1,2}(\mathbb{R}^n;\mathcal{A}_2(\mathbb{R}^m))$ be a Dirichlet minimizing two-valued function given by (\ref{varphi0}) for some $c^{(0)} \in \mathbb{C}^m \setminus \{0\}$ and $\alpha \in \{1/2,1,3/2,2,\ldots\}$.  Let $\gamma, \tau, \sigma \in (0,1).$ There are $\varepsilon_0 = \varepsilon_0(\varphi^{(0)},\tau,\gamma) \in (0, 1)$ and $\beta_0 = \beta_0(\varphi^{(0)}) \in  (0, 1)$ such that if $\varphi \in \Phi_{\varepsilon_0}(\varphi^{(0)})$, $u \in \mathcal{F}^{\text{Dir}}_{\varepsilon_0}(\varphi^{(0)})$, $v$ is as in Lemma \ref{coarse_graph} with $\beta = \beta_0$, and $Z = (\xi,\eta) \in \Sigma_u \cap B_{1/2}(0)$ with $\mathcal{N}_u(Z) \geq \alpha$, then 
\begin{equation*} \label{thm3_1_eqn1}
	\int_{B_{\gamma}(0)} R_Z^{2-n} \left| \frac{\partial (u/R_Z^{\alpha})}{\partial R_Z} \right|^2 
	\leq C \int_{B_1(0)} \mathcal{G}(u,\varphi)^2 
\end{equation*}
where $R_Z = |X-Z|$ and $C = C(n,m,\varphi^{(0)},\alpha,\gamma) \in (0,\infty);$  furthermore,  
\begin{equation*} 
	\int_{B_{\gamma}(0)} \frac{\mathcal{G}(u,\varphi)^2}{|X-Z|^{n-1-\sigma}} 
	+ \int_{B_{\gamma}(0) \cap \{ |x| > \tau \}} \frac{|v(X,\varphi(X)) - D_x \varphi(X) \cdot \xi|^2}{|X-Z|^{n+2\alpha-\sigma}} 
	\leq C_{1} \int_{B_1(0)} \mathcal{G}(u,\varphi)^2 
\end{equation*}
where $C_{1} = C_{1}(n,m,\varphi^{(0)},\alpha,\gamma,\sigma) \in (0,\infty).$ In particular, the constants $C, C_{1}$ are both independent of $\tau$. 
\end{corollary}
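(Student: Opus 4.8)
The plan is to regard both displayed inequalities as the estimates of Theorem~\ref{thm6_2}(a) and Corollary~\ref{cor6_3} transplanted to the singular point $Z$. The device for the transplant is the discussion preceding Corollary~\ref{cor6_4} --- for $\varepsilon_0$ small enough the rescaling of $u$ about $Z$ at a fixed small scale again lies in $\mathcal{F}^{\mathrm{Dir}}_{\varepsilon_0}(\varphi^{(0)})$, has the origin as a singular point, and has frequency $\mathcal{N}_u(Z)\ge\alpha$ there --- together with Corollary~\ref{cor6_4}, which furnishes $\operatorname{dist}(Z,\{0\}\times\mathbb{R}^{n-2})^2\le C\int_{B_1(0)}\mathcal{G}(u,\varphi)^2$ and $\int_{B_1(0)}\mathcal{G}(u,\varphi(\cdot-Z))^2\le C\int_{B_1(0)}\mathcal{G}(u,\varphi)^2$. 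The structural fact that makes the transplant clean is that, since $\varphi$ is independent of the $y$-variables, its translate $\varphi(\cdot-Z)$ is homogeneous of degree $\alpha$ about the \emph{point} $Z$; hence $\partial\bigl(\varphi(\cdot-Z)/R_Z^\alpha\bigr)/\partial R_Z\equiv0$, and writing $R_Z\partial_{R_Z}u-\alpha u=(x-\xi)\cdot D_x(u-\varphi(\cdot-Z))-\alpha(u-\varphi(\cdot-Z))+(y-\eta)\cdot D_y u$ (here $Z=(\xi,\eta)$), the translate drops out of the radial deviation exactly as $\varphi^{(0)}$ dropped out at the origin in the proof of Theorem~\ref{thm6_2}.

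For the first estimate, on a fixed ball $B_{r_0}(Z)\subset B_{3/4}(0)$ the transplanted Theorem~\ref{thm6_2}(a) --- applied with comparison function $\varphi$ and with Corollary~\ref{cor6_4} used on its right-hand side --- gives $\int_{B_{r_0}(Z)}R_Z^{2-n}\bigl|\partial(u/R_Z^\alpha)/\partial R_Z\bigr|^2\le C\int_{B_1(0)}\mathcal{G}(u,\varphi)^2$. On $B_\gamma(0)\setminus B_{r_0}(Z)$, where $R_Z$ is bounded above and below, I would use the identity above for $R_Z\partial_{R_Z}u-\alpha u$: the first two groups are controlled by the zeroth-order bound of Corollary~\ref{cor6_4} and by a Caccioppoli inequality for $u-\varphi(\cdot-Z)$ (from (\ref{monotonicity_identity1}) where $u$ and $\varphi(\cdot-Z)$ are single-valued and from the coarse graphical $r^2$-weighted bounds of Lemma~\ref{coarse_graph} near the two axes, together with the elementary estimates $\mathcal{G}(\varphi(\cdot-Z),\varphi)\le C|\xi|r^{\alpha-1}$, $\mathcal{G}(D\varphi(\cdot-Z),D\varphi)\le C|\xi|r^{\alpha-2}$ and $|\xi|^2\le C\int_{B_1(0)}\mathcal{G}(u,\varphi)^2$), while the $y$-derivative group is bounded by Theorem~\ref{thm6_2}(b) transplanted to $Z$, which furnishes exactly $\int|D_yu|^2\le C\int_{B_1(0)}\mathcal{G}(u,\varphi)^2$; a covering argument over points of $\Sigma_u$ of frequency $\ge\alpha$ near the axis of $\varphi$ (which by Corollary~\ref{cor6_4} all lie within $C\varepsilon_0$ of that axis) upgrades the scale-$r_0$ estimates to one on all of $B_\gamma(0)$.

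For the second estimate, the first summand is treated the same way: on $B_{r_0}(Z)$ the transplanted Corollary~\ref{cor6_3} gives $\int R_Z^{-(n+2\alpha-\sigma)}\mathcal{G}(u,\varphi(\cdot-Z))^2\le C\int_{B_1(0)}\mathcal{G}(u,\varphi)^2$, and since $R_Z^{-(n-1-\sigma)}\le R_Z^{-(n+2\alpha-\sigma)}$ for $R_Z\le1$, the triangle inequality and (\ref{lemma3_9_eqn2}) reduce matters to bounding $\int_{B_\gamma(0)}|\xi|^2 r^{2\alpha-2}R_Z^{-(n-1-\sigma)}$, which is finite with a $\xi$-independent constant precisely because $\alpha\ge1/2$ and $\sigma<1$; the part of $B_\gamma(0)$ outside $B_{r_0}(Z)$, where the weight is bounded, is handled directly by Corollary~\ref{cor6_4}. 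For the last summand one works on $\{|x|>\tau\}$, where Lemma~\ref{coarse_graph} applies and where, because Lemma~\ref{coarse_graph} forces $|\xi|\le C\varepsilon_0\le\vartheta\tau\le\vartheta|x|$, so does the Taylor expansion (\ref{lemma3_9_eqn0})--(\ref{lemma3_9_eqn1}), giving $\bigl|v(X,\varphi(X))-D_x\varphi(X)\cdot\xi\bigr|^2\le C\mathcal{G}(u(X),\varphi(X-Z))^2+C|x|^{2\alpha-4}|\xi|^4$; integrated against $R_Z^{-(n+2\alpha-\sigma)}$, the first term is handled by the transplanted Corollary~\ref{cor6_3} and Corollary~\ref{cor6_4}, the error term by using $R_Z\ge|x-\xi|\ge|x|/2$ on $\{|x|>\tau\}$ to bound it by $C|\xi|^4$ times a negative power of $\tau$, hence by $|\xi|^2\le C\int\mathcal{G}(u,\varphi)^2$ once $\varepsilon_0$ is taken small depending on $\tau$, and the contribution of $B_\gamma(0)\cap\{|x|>\tau\}\setminus B_{r_0}(Z)$ by $\bigl|v(X,\varphi(X))-D_x\varphi(X)\cdot\xi\bigr|^2\le2|\hat v|^2+2|D_x\varphi\cdot\xi|^2$, the bound $\int_U(|\hat v|^2+r^2|D\hat v|^2)\le C\int_{B_1(0)}\mathcal{G}(u,\varphi)^2$ of Lemma~\ref{coarse_graph}, and $\int_{B_\gamma(0)}|D\varphi|^2\le C$ (finite and $\tau$-independent since $\alpha\ge1/2$), together with $|\xi|^2\le C\int\mathcal{G}(u,\varphi)^2$.

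The hard part will be the assertion that $C$ and $C_1$ are independent of $\tau$. The crude pointwise bound $|\hat v|\le Cr^\alpha$ of Lemma~\ref{coarse_graph} is useless near the axis, since its $R_Z^{-(n+2\alpha-\sigma)}$-weighted $L^2$ mass over $\{r>\tau\}$ diverges as $\tau\downarrow0$ when $n\ge3$; every near-axis contribution must instead be routed through the \emph{weighted} decay of the transplanted Corollary~\ref{cor6_3} (equivalently, through the smallness of $u-\varphi(\cdot-Z)$ in that weighted norm), and the residual $\tau$-dependent errors --- which always carry a high power of $|\xi|$ --- must be absorbed by taking $\varepsilon_0$, hence $|\xi|$, small depending on $\tau$ and then invoking Corollary~\ref{cor6_4}, after which the final constants depend on $n,m,\varphi^{(0)},\alpha,\gamma,\sigma$ alone. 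Two further points of care are the two-valued pairing implicit in replacing $\mathcal{G}(u,\varphi(\cdot-Z))^2$ by $|v(\cdot,\varphi)-D_x\varphi\cdot\xi|^2$ up to controlled errors on $\{|x|>\tau\}$ --- exactly the content of (\ref{lemma3_9_eqn0})--(\ref{lemma3_9_eqn1}) --- and the passage from the scale-$r_0$ balls about $Z$, on which transplanting is available, to the full ball $B_\gamma(0)$. For $u\in\mathcal{F}^{\mathrm{Harm}}_{\varepsilon_0}(\varphi^{(0)})$ the argument is identical, using the harmonic versions of Theorem~\ref{thm6_2} and Corollary~\ref{cor6_3} and Lemma~\ref{separation_lemma_h} in place of Lemma~\ref{separation_lemma_dm} inside Lemma~\ref{coarse_graph}; this is the one spot where a minor modification is needed, which will be pointed out at the end of the section.
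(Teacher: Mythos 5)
Your proposal follows the paper's route: transplant Theorem~\ref{thm6_2} and Corollary~\ref{cor6_3} to the point $Z$ via the rescaling discussion preceding Corollary~\ref{cor6_4}, use Corollary~\ref{cor6_4} to replace $|\xi|^2$ and $\int\mathcal{G}(u,\varphi(\cdot-Z))^2$ by $\int_{B_1(0)}\mathcal{G}(u,\varphi)^2$, convert between $\mathcal{G}(u,\varphi(\cdot-Z))$ and $|v(\cdot,\varphi)-D_x\varphi\cdot\xi|$ on $\{|x|>\tau\}$ through the Taylor expansion (\ref{lemma3_9_eqn0})--(\ref{lemma3_9_eqn1}), handle the region away from $Z$ trivially because the weights are bounded there, and absorb the $\tau$-dependent error (which carries $|\xi|^4$) by taking $\varepsilon_0$ small depending on $\tau$ (the paper takes $\varepsilon_0<\tau^2$ so that $|\xi|^4/\tau^4\leq|\xi|^2$ and $|X-Z|\geq\tau/2$ on $\{|x|>\tau\}$), which is exactly how the $\tau$-independence of $C,C_1$ is obtained. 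The treatment of the two summands of the second estimate, including the $\alpha=1/2$ case of the $\int|D\varphi(x-t\xi,y)|^2|\xi|^2$ term and the splitting of the far region as $2|\hat v|^2+2|D_x\varphi\cdot\xi|^2$, coincides with the paper's (\ref{thm3_1_eqn3})--(\ref{thm3_1_eqn9}).

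One step in your treatment of the first estimate would fail as stated: the ``covering argument over points of $\Sigma_u$ of frequency $\geq\alpha$ near the axis of $\varphi$'' is not available here, because the hypotheses of Corollary~\ref{cor6_5} supply only the single point $Z$ with $\mathcal{N}_u(Z)\geq\alpha$; the density-of-such-points hypothesis belongs to Corollary~\ref{alpha=1/2} and Corollary~\ref{cor6_6}, not to this corollary. Likewise, Theorem~\ref{thm6_2}(b) transplanted to $Z$ only controls $\int|D_yu|^2$ on balls centred at $Z$ that fit inside $B_1(0)$ (radius $<1/2$), not on all of $B_\gamma(0)$. Neither ingredient is needed: on the region where $R_Z$ is bounded below you can use $D_y\varphi(X-Z)=0$ to write $(y-\eta)\cdot D_yu=(y-\eta)\cdot D_y\bigl(u-\varphi(\cdot-Z)\bigr)$ and fold the $y$-derivatives into your Caccioppoli/graphical estimate for the difference $u-\varphi(\cdot-Z)$ (via Lemma~\ref{coarse_graph} near the axis and interior estimates away from it), together with Corollary~\ref{cor6_4}; with that replacement your near/far decomposition for the first estimate is sound, and indeed supplies detail the paper compresses into ``the first conclusion obviously holds.''
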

\begin{proof}
By Theorem~\ref{thm6_2} and Corollary~\ref{cor6_4}, the first conclusion obviously holds.  By (\ref{lemma3_9_eqn2}), 
\begin{align} \label{thm3_1_eqn3}
	&\hspace{-.5in}\int_{B_{\gamma}(0)} \frac{\mathcal{G}(u(X),\varphi(X))^2}{|X-Z|^{n-1-\sigma}} \nonumber
	\\&\leq 2 \int_{B_{1/4}(Z)} \frac{\mathcal{G}(u(X),\varphi(X-Z))^2}{|X-Z|^{n-1-\sigma}} 
		+ 2 \int_{B_{1/4}(Z)} \int_0^1 \frac{|D\varphi(x-t\xi,y)|^2 |\xi|^2}{|X-Z|^{n-1-\sigma}} dt dx dy \nonumber 
		\\&\hspace{5mm} + 4^{n-1-\sigma} \int_{B_1(0)} \mathcal{G}(u(X),\varphi(X))^2. 
\end{align}
We need to bound the terms on the right-hand side of (\ref{thm3_1_eqn3}).  For the first term on the right-hand side of (\ref{thm3_1_eqn3}), observe that by Theorem~\ref{thm6_2}, (\ref{lemma3_9_eqn2}), (\ref{lemma3_9_eqn6}) and Corollary~\ref{cor6_4}, 
\begin{align} \label{thm3_1_eqn4}
	&\hspace{-.5in}\int_{B_{1/4}(Z)} \frac{\mathcal{G}(u(X),\varphi(X-Z))^2}{|X-Z|^{n+2\alpha-\sigma}} \leq C \int_{B_{1/2}(Z)} \mathcal{G}(u(X),\varphi(X-Z))^2 \nonumber
	\\&\leq C \int_{B_1(0)} \mathcal{G}(u(X),\varphi(X))^2 + C \int_{B_{1/2}(Z)} \int_0^1 |D\varphi(x-t\xi,y)|^2 |\xi|^2 dt dx dy \nonumber
	\\&\leq C \int_{B_1(0)} \mathcal{G}(u(X),\varphi(X))^2 + C \int_{B_{1/2}(Z)} \int_0^1 |x-t\xi|^{2\alpha-2} |\xi|^2 dt dx dy \nonumber 
	\\&\leq C \int_{B_1(0)} \mathcal{G}(u(X),\varphi(X))^2 + C |\xi|^2  \leq C \int_{B_1(0)} \mathcal{G}(u(X),\varphi(X))^2 
\end{align}  
for $C = C(n,m,\varphi^{(0)},\alpha) \in (0,\infty)$.  For the second term on the right-hand side of (\ref{thm3_1_eqn3}), first observe that if $\alpha \geq 1$ then by Corollary~\ref{cor6_4}, 
\begin{align} \label{thm3_1_eqn5}
	\int_{B_{1/4}(Z)} \int_0^1 \frac{|x-t\xi|^{2\alpha-2} |\xi|^2}{|X-Z|^{n-1-\sigma}} dt dx dy 
	\leq \int_{B_{1/4}(Z)} \int_0^1 \frac{|\xi|^2}{|X-Z|^{n-1-\sigma}} dt dx dy  
	= C |\xi|^2 \leq C \int_{B_1(0)} \mathcal{G}(u,\varphi)^2. 
\end{align}
for $C = C(n,m,\varphi^{(0)},\alpha) \in (0,\infty)$.  If instead $\alpha = 1/2$, then again by 
Corollary~\ref{cor6_4}, 
\begin{align} \label{thm3_1_eqn6}
	\int_{B_{1/4}(Z)} \int_0^1 \frac{|x-t\xi|^{2\alpha-2} |\xi|^2}{|X-Z|^{n-1-\sigma}} dt dx dy 
	\leq {}& \int_0^1 \int_{B_{1/4}(Z)} \frac{|\xi|^2}{|x-t\xi| |x-\xi|^{1-\sigma/2} |y-\zeta|^{n-2-\sigma/2}} dx dy dt \nonumber\\
	&\hspace{-1.5in} \leq \int_0^1 \int_{B^{n-2}_{1/4}(\zeta)} \int_{\{|x-t\xi| \leq |x-\xi|\}} \frac{|\xi|^2}{|x-t\xi|^{2-\sigma/2} |y-\zeta|^{n-2-\sigma/2}} dx dy dt 
		\nonumber \\&\hspace{-1in}+ \int_0^1 \int_{B^{n-2}_{1/4}(\zeta)} \int_{\{|x-t\xi| \geq |x-\xi|\}} \frac{|\xi|^2}{|x-\xi|^{2-\sigma/2} |y-\zeta|^{n-2-\sigma/2}} 
		dx dy dt \nonumber
	\\&\hspace{1in} \leq C |\xi|^2 \leq C \int_{B_1(0)} \mathcal{G}(u,\varphi)^2. 
\end{align}
for some constant $C = C(n,m,\alpha,\sigma) \in (0,\infty)$.  Putting (\ref{thm3_1_eqn3}), (\ref{thm3_1_eqn4}), (\ref{thm3_1_eqn5}), and (\ref{thm3_1_eqn6}) together yield 
\begin{equation*}
	\int_{B_{\gamma}(0)} \frac{\mathcal{G}(u(X),\varphi(X))^2}{|X-Z|^{n-1-\sigma}} \leq C \int_{B_1(0)} \mathcal{G}(u(X),\varphi(X))^2  
\end{equation*}
for $C = C(n,m,\varphi^{(0)},\alpha,\gamma,\sigma) \in (0,\infty)$.  By (\ref{lemma3_9_eqn0}), assuming $\varepsilon_0$ is small enough that $|\xi| \leq \vartheta \tau$ and using the fact that the graphs of $\varphi^{(0)}$, $\varphi$, and $u$ are embedded in $B_{\gamma}(0) \cap \{|x| \geq \tau \}$, 
\begin{align} \label{thm3_1_eqn7}
	&\hspace{-.5in}\int_{B_{\gamma}(0) \cap \{ |x| > \tau \}} \frac{|v(X,\varphi(X)) - D_x \varphi(X) \cdot \xi|^2}{|X-Z|^{n+2\alpha-\sigma}} \nonumber 
	\\&\leq C \int_{B_{1/4}(Z)} \frac{\mathcal{G}(u(X),\varphi(X-Z))^2}{|X-Z|^{n+2\alpha-\sigma}}  
		+ C \tau^{2\alpha-4} |\xi|^4 \int_{B_{1/4}(Z) \cap \{|x| > \tau\}} \frac{1}{|X-Z|^{n+2\alpha-\sigma}}  \nonumber 
		\\&\hspace{5mm} + C 4^{n+2\alpha-\sigma} \int_{(B_{\gamma}(0) \setminus B_{1/4}(Z)) \cap \{ |x| > \tau \}} (|v(X,\varphi(X))|^2 + |x|^{2\alpha-2} |\xi|^2) 
\end{align}
for $C = C(\varphi^{(0)}) \in (0,\infty)$, so applying (\ref{thm3_1_eqn4}), Theorem~\ref{thm6_2}, and Corollary~\ref{cor6_4}, 
\begin{align} \label{thm3_1_eqn8}
	&\hspace{-.5in}\int_{B_{\gamma}(0) \cap \{ |x| > \tau \}} \frac{|v(X,\varphi(X)) - D_x \varphi(X) \cdot \xi|^2}{|X-Z|^{n+2\alpha-\sigma}} \nonumber 
	\\&\leq C \tau^{2\alpha-4} |\xi|^4 \int_{B_{1/4}(Z) \cap \{|x| > \tau\}} \frac{1}{|X-Z|^{n+2\alpha-\sigma}}  
		+ C \int_{B_1(0)} \mathcal{G}(u(X),\varphi(X))^2 
\end{align}
for $C = C(n,m,\varphi^{(0)},\alpha,\gamma,\sigma) \in (0,\infty)$.  By Corollary~ \ref{cor6_4}, $|\xi| < C\varepsilon_0$ for some $C = C(n,m,\varphi^{(0)},\alpha) \in (0,\infty)$.  Take $\varepsilon_0 < \tau^2$ so that $|X-Z| \geq \tau/2$ if $X = (x,y)$ with $|x| > \tau$ and $|\xi|^2/\tau^4 < 1$ and thus 
\begin{align} \label{thm3_1_eqn9}
	\tau^{2\alpha-4} |\xi|^4 \int_{B_{1/4}(Z) \cap \{|x| > \tau\}} \frac{1}{|X-Z|^{n+2\alpha-\sigma}} \nonumber 
	&\leq \frac{|\xi|^4}{\tau^4} \int_{B_{1/4}(Z) \cap \{|x| > \tau\}} \frac{1}{|X-Z|^{n-\sigma}} \nonumber 
	\\&\leq C|\xi|^2 
	\leq C \int_{B_1(0)} \mathcal{G}(u(X),\varphi(X))^2 
\end{align}
for $C = C(n,m,\alpha,\varphi^{(0)}, \s) \in (0,\infty)$, where the last inequality follows from Corollary~ \ref{cor6_4}.  By (\ref{thm3_1_eqn8}) and (\ref{thm3_1_eqn9}), 
\begin{align*}
	\int_{B_{\gamma}(0) \cap \{ |x| > \tau \}} \frac{|v(X,\varphi(X)) - D_x \varphi(X) \cdot \xi|^2}{|X-Z|^{n+2\alpha-\sigma}} 
	\leq C \int_{B_1(0)} \mathcal{G}(u(X),\varphi(X))^2 
\end{align*}
for $C = C(n,m,\varphi^{(0)},\alpha,\gamma,\sigma) \in (0,\infty)$.
\end{proof}

\begin{corollary} \label{cor6_6} 
Let $\varphi^{(0)} \in W^{1,2}(\mathbb{R}^n;\mathcal{A}_2(\mathbb{R}^m))$ be a Dirichlet minimizing two-valued function given by (\ref{varphi0}) for some $c^{(0)} \in \mathbb{C}^m \setminus \{0\}$ and $\alpha \in \{1/2,1,3/2,2,\ldots\}$.  Let  $\sigma, \tau, \delta \in (0,1)$.  There are $\varepsilon_0 = \varepsilon_0(\varphi^{(0)},\tau) > 0$ and $\beta_0 = \beta_0(\varphi^{(0)}) > 0$ such that if $\varphi \in \Phi_{\varepsilon}(\varphi^{(0)})$ and $u \in \mathcal{F}^{\text{Dir}}_{\varepsilon}(\varphi^{(0)})$ with $\varepsilon \leq \min\{\varepsilon_0,\delta\}$, and if 
\begin{equation*} 
	B_{\delta}(0,y) \cap \{ Z \in B_1(0) : \mathcal{N}_u(Z) \geq \mathcal{N}_{\varphi}(0) = \alpha \} \neq \emptyset \text{ for each } y \in B^{n-2}_{1/2}(0), \;\;then
\end{equation*}

\begin{equation*}
\int_{B_{1/2}(0)} \frac{\mathcal{G}(u,\varphi)^2}{r_{\delta}^{1-\sigma}} 
	\leq C \int_{B_1(0)} \mathcal{G}(u,\varphi)^2,
\end{equation*}	
where $C = C(n,m,\varphi^{(0)},\alpha,\sigma) \in (0,\infty)$ and $r_{\delta}(X) = \max\{|x|,\delta\}$ where $X = (x, y) \in {\mathbb R}^{2} \times {\mathbb R}^{n-2}$.
\end{corollary}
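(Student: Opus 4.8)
\noindent{\bf Proof strategy.} The idea is to derive Corollary~\ref{cor6_6} from the point-centred decay estimate of Corollary~\ref{cor6_5} by a dyadic decomposition of $B_{1/2}(0)$ in the variable $r=|x|$, using at each scale a covering of $B^{n-2}_{1/2}(0)$ by balls of that radius. Since $r_\delta$ is bounded below on $B_{1/2}(0)$ whenever $\delta$ is, we may assume $\delta$ is as small as we like; write $E=\int_{B_1(0)}\mathcal{G}(u,\varphi)^2$. Any point $Z=(\xi,\eta)$ supplied by the hypothesis has $\mathcal{N}_u(Z)\ge\alpha\ge 1/2>0$, so (by symmetry of $u$) $Z\in\Sigma_u$, and $Z\in B_\delta(0,y)$ with $y\in B^{n-2}_{1/2}(0)$ gives $|\xi|<\delta$; thus $Z$ lies within $\delta$ of the axis $\{0\}\times\mathbb{R}^{n-2}$ and, for $\delta$ small, in $B_{3/4}(0)$. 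Taking $\varepsilon_0,\beta_0$ no larger than those of Corollary~\ref{cor6_5} and applying that corollary with its parameter $\sigma$ replaced by $\sigma/2$ (in the routine form, obtained by covering, valid for $Z\in\Sigma_u\cap B_{3/4}(0)$ with the integral over $B_{1/2}(0)$), we get
$$\int_{B_{1/2}(0)}\frac{\mathcal{G}(u,\varphi)^2}{|X-Z|^{n-1-\sigma/2}}\ \le\ C\,E,\qquad C=C(n,m,\varphi^{(0)},\alpha,\sigma),$$
for every such $Z$.

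Next comes the central scale-by-scale excess bound: for each $\rho\in(0,1/2]$ that is a negative integer power of $2$ or equals $\delta$,
$$\int_{B_{1/2}(0)\cap\{|x|\le 2\rho\}}\mathcal{G}(u,\varphi)^2\ \le\ C\,\rho^{1-\sigma/2}\,E.$$
To prove this, cover $B^{n-2}_{1/2}(0)$ by at most $C\rho^{-(n-2)}$ balls $B^{n-2}_\rho(y_l)$ and, for each $l$, pick a hypothesis point $Z_l=(\xi_l,\eta_l)$ over $y_l$; since $|\xi_l|<\delta\le\rho$ and $|\eta_l-y_l|<\delta\le\rho$, one has $|X-Z_l|\le C\rho$ whenever $X=(x,y)\in B_{1/2}(0)$ with $|x|\le 2\rho$ and $|y-y_l|\le\rho$. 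As $n-1-\sigma/2>0$, on any such region $\mathcal{G}(u,\varphi)^2\le C\rho^{n-1-\sigma/2}\,|X-Z_l|^{-(n-1-\sigma/2)}\,\mathcal{G}(u,\varphi)^2$; integrating, applying the displayed consequence of Corollary~\ref{cor6_5} at $Z_l$, and summing over the at most $C\rho^{-(n-2)}$ indices $l$ gives the claim.

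Finally, sum over scales. Decompose $B_{1/2}(0)=A_\infty\cup\bigcup_{j\ge 1}A_j$ with $A_\infty=B_{1/2}(0)\cap\{|x|\le\delta\}$ and $A_j=B_{1/2}(0)\cap\{\max(2^{-j-1},\delta)<|x|\le 2^{-j}\}$ (empty when $2^{-j}\le\delta$). On $A_j$, $r_\delta=|x|>2^{-j-1}$, so by the scale-by-scale bound at $\rho=2^{-j}$,
$$\int_{A_j}\mathcal{G}(u,\varphi)^2\,r_\delta^{-(1-\sigma)}\le C\,2^{j(1-\sigma)}\int_{A_j}\mathcal{G}(u,\varphi)^2\le C\,2^{j(1-\sigma)}2^{-j(1-\sigma/2)}E=C\,2^{-j\sigma/2}E,$$
which sums over $j\ge 1$ to $C(\sigma)E$; and on $A_\infty$, $r_\delta=\delta$, so
$$\int_{A_\infty}\mathcal{G}(u,\varphi)^2\,r_\delta^{-(1-\sigma)}=\delta^{-(1-\sigma)}\int_{A_\infty}\mathcal{G}(u,\varphi)^2\le C\,\delta^{-(1-\sigma)}\delta^{1-\sigma/2}E=C\,\delta^{\sigma/2}E\le CE.$$
Adding these gives the corollary. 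The one substantive point — which is precisely what makes the final constant independent of $\delta$ — is this bookkeeping: Corollary~\ref{cor6_5} must be invoked with the slightly stronger exponent $n-1-\sigma/2$, so that at scale $\rho$ the decay $\rho^{n-1-\sigma/2}$ strictly beats the product of the number $\rho^{-(n-2)}$ of covering balls and the weight $\rho^{-(1-\sigma)}$, leaving the summable factor $\rho^{\sigma/2}$. The rest — the inequality $|X-Z_l|\le C\,r_\delta(X)$ (which holds because $Z_l$ sits within $\delta$ of the axis above a nearby net point), the membership $Z\in\Sigma_u$, and the passage between $B_{1/2}(0)$ and $B_{3/4}(0)$ in applying Corollary~\ref{cor6_5} — is routine.
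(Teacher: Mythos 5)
Your argument is correct and follows essentially the paper's own proof: both rest on the point-centred estimate of Corollary~\ref{cor6_5} applied with $\sigma$ replaced by $\sigma/2$, a covering at each scale $\rho \in [\delta,1/4]$ of the tube $\{|x| \leq 2\rho\}$ by $O(\rho^{-(n-2)})$ balls containing hypothesis points (yielding $\int_{B_{1/2}(0)\cap\{|x|\leq 2\rho\}}\mathcal{G}(u,\varphi)^2 \leq C\rho^{1-\sigma/2}\int_{B_1(0)}\mathcal{G}(u,\varphi)^2$), and a summation over scales. The only cosmetic difference is that you sum dyadically in $\rho$, whereas the paper multiplies the scale-wise bound by a power of $\rho$ and integrates continuously over $\rho \in (\delta,1/4]$.
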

\begin{proof} 
By hypothesis, for every $z \in B^{n-2}_{1/2}(0)$, there is a $Z = (\xi,\eta) \in B_{\delta}(0,z)$ such that $\mathcal{N}_{u}(Z) \geq \alpha$.  By Corollary~\ref{cor6_5}, for all $\rho \in (\delta,1/4]$, 
\begin{equation} \label{cor3_2_eqn1}
	\rho^{-n+1+\sigma} \int_{B_{\rho}(0,z)} \mathcal{G}(u,\varphi)^2 \leq C \int_{B_1(0)} \mathcal{G}(u,\varphi)^2. 
\end{equation}
We can cover $B_{1/2}(0) \cap B^2_{\rho/2}(0) \times \mathbb{R}^{n-2}$ by $N \leq C(n,k) \rho^{2-n}$ balls $B_{\rho}(0,z_j)$ with $z_j \in B^{n-2}_{1/2}(0)$.  Use the fact that (\ref{cor3_2_eqn1}) 
holds for $z = z_j$ and sum over $j$ to obtain 
\begin{equation} \label{cor3_2_eqn3}
	\rho^{-1+\sigma} \int_{B_{1/2}(0) \cap B_{\rho/2}^2(0) \times \mathbb{R}^{n-2}} \mathcal{G}(u,\varphi)^2 \leq C \int_{B_1(0)} \mathcal{G}(u,\varphi)^2
\end{equation}
for $\rho \in (\delta,1/4]$.  Replace $\sigma$ with $\sigma/2$ in (\ref{cor3_2_eqn3}), then multiply by $\rho^{-1+\sigma/2}$ and integrate with respect to $\rho \in (\delta,1/4]$ to get the desired conclusion. 
\end{proof}

Now we want to verify that Theorem~\ref{thm6_2} and Corollaries~\ref{cor6_3}--\ref{cor6_6} all hold if we suppose $u \in \mathcal{F}^{\text{Harm}}_{\varepsilon_0}(\varphi^{(0)})$ and $\alpha \geq 3/2.$ 
The results and their proofs in this case are identical to what is given above, except for the following two points concerning Corollary~\ref{cor6_5}:\\ 
\noindent
(i) Since $\alpha \geq 1,$  we have the stronger estimate 
\begin{equation*}
	\int_{B_{\gamma}(0)} \frac{\mathcal{G}(u,\varphi)^2}{|X-Z|^{n-\sigma}} \leq C \int_{B_1(0)} \mathcal{G}(u,\varphi)^2 
\end{equation*}
for $C = C(n,m,\varphi^{(0)},\alpha,\gamma,\sigma) \in (0,\infty).$ (The weaker estimate given in the statement of Corollary~\ref{cor6_5} however still suffices for the proofs of the main theorems.) \\
\noindent 
(ii) We need to modify slightly the proof of the estimate  
\begin{equation} \label{thm3_1h_eqn1}
	\int_{B_{\gamma}(0) \cap \{ |x| > \tau \}} \frac{|v(X,\varphi(X)) - D_x \varphi(X) \cdot \xi|^2}{|X-Z|^{n+2\alpha-\sigma}} \leq C \int_{B_1(0)} \mathcal{G}(u,\varphi)^2 
\end{equation}
 to account for the fact that the graphs of $\varphi^{(0)}$, $\varphi$, and $u$ might have transverse self-intersections in $B_{\gamma}(0) \setminus \{ |x| \geq \tau \}$.  Recall that for $\vartheta \in (0,1)$ and $X = (x,y) \in B_1(0)$, $\varphi = \{ \pm \varphi_1 \}$ on $B^2_{\vartheta |x|}(x) \times \mathbb{R}^{n-2}$ for some harmonic single-valued function $\varphi_1$.  Instead of (\ref{lemma3_9_eqn0}), Taylor's theorem applied to $\varphi_1$ and the homogeneity of $\varphi$ implies that there is a $\vartheta = \vartheta(\varphi^{(0)}) \in (0,1)$ such that if $|\xi| \leq \vartheta |x|$ then 
\begin{equation} \label{thm3_1h_eqn2}
	|\varphi(X) + v(X,\varphi(X)) - \varphi(X-Z)| = |v(X,\varphi(X)) - D_x \varphi(X) \cdot \xi)| + \mathcal{R}, 
\end{equation}
for $\mathcal{R}$ such that $|\mathcal{R}| \leq C |x|^{\alpha-2} |\xi|^2$ for some $C = C(\varphi^{(0)}) \in (0,\infty)$.  Recall that $u(X) = \varphi(X)+v(X,\varphi(X))$ in the sense that (\ref{v_defn1}) holds.  Thus assuming $\varepsilon_0$ is small enough that $|\xi| \leq \vartheta \tau$, 
\begin{align} \label{thm3_1h_eqn3}
	&\hspace{-.7in}\int_{B_{\gamma}(0) \cap \{ |x| > \tau \}} \frac{|v(X,\varphi(X)) - D_x \varphi(X) \cdot \xi|^2}{|X-Z|^{n+2\alpha-\sigma}} \nonumber\\ 
	&\leq 2 \int_{B_{1/4}(Z) \cap \{ |x| > \tau \}} \frac{|\varphi(X) + v(X,\varphi(X)) - \varphi(X-Z)|^2}{|X-Z|^{n+2\alpha-\sigma}} \nonumber
		\\&\hspace{.5in}+ C \tau^{2\alpha-4} |\xi|^4 \int_{B_{1/4}(Z) \cap \{|x| > \tau\}} \frac{1}{|X-Z|^{n+2\alpha-\sigma}}  \nonumber 
		\\&\hspace{1in}+ 2 \cdot 4^{n+2\alpha-\sigma} \int_{(B_1(0) \setminus B_{1/4}(Z)) \cap \{ |x| > \tau \}} (|v(X,\varphi(X))|^2 + C |x|^{2\alpha-2} |\xi|^2).  
\end{align}
It suffices to show that whenever $r_0^2 + |y_0-\zeta|^2 < 1/16$, 
\begin{equation} \label{thm3_1h_eqn4}
	\int_{\partial B^2_{r_0}(\xi) \times \{y_0\}} |\varphi(X) + v(X,\varphi(X)) - \varphi(X-Z)|^2 
	\leq C \int_{\partial B^2_{r_0}(\xi) \times \{y_0\}} \mathcal{G}(u(X),\varphi(X-Z))^2 
\end{equation}
for a constant $C = C(n, m, \varphi^{(0)}, \a) \in (0, \infty);$ in particular, $C$ is independent of $r_{0}$ and $y_{0}$. But this follows by computations similar to those in the proof of Lemma~\ref{separation_lemma_h}.

\section{Asymptotic decay for blow-ups} \label{lineartheory_section}
\setcounter{equation}{0}
Fix $\varphi^{(0)}$ as in (\ref{varphi0}). Our main result in this section (Lemma~\ref{lemma4_14}) is an $L^{2}$ decay estimate for a class of functions $w \in C^2(\op{graph} \varphi^{(0)} |_{B_1(0) \setminus \{0\} \times \mathbb{R}^{n-2}};\mathbb{R}^m) \cap L^2(\op{graph} \varphi^{(0)} |_{B_1(0)};\mathbb{R}^m, p^{\star}{\mathcal L}^{n})$  satisfying certain integral hypotheses and the property that the associated two-valued function $w(X, \varphi^{(0)}(X))$ is harmonic in $B_{1}(0) \setminus \{0\} \times {\mathbb R}^{n-2}$. Here $p$ denotes the restriction to ${\rm graph} \, \varphi^{(0)}$ of the orthogonal projection $p \, : \, {\mathbb R}^{n+m} \to {\mathbb R}^{n}$ and ${\mathcal L}^{n}$ denotes the Lebesgue measure on ${\mathbb R}^{n}$. This result is needed for the proof of Lemma \ref{lemma1}, which we shall give in the next section, in which such $w$ arise as ``blow-ups'' of certain sequences of two-valued Dirichlet energy minimizing or $C^{1, \mu}$ harmonic functions converging in $L^{2}$ to $\varphi^{(0)}$.   

We begin with the following classification result for such $w$ which are also homogeneous of degree $\a$ (= the degree of homogeneity of $\varphi^{(0)}$).

\begin{lemma} \label{lemma4_2} 
Let $\sigma \in (0,1)$.  Let $\varphi^{(0)} : \mathbb{R}^n \rightarrow \mathcal{A}_2(\mathbb{R}^m)$ be a two-valued function given by (\ref{varphi0}) for some $c^{(0)} \in \mathbb{C}^m \setminus \{0\}$ and $\alpha \in \{1/2,1,3/2,2,\ldots\}$.  Let $w \in C^2(\op{graph} \varphi^{(0)} |_{B_1(0) \setminus \{0\} \times \mathbb{R}^{n-2}};\mathbb{R}^m) \cap L^2(\op{graph} \varphi^{(0)} |_{B_1(0)};\mathbb{R}^m, p^{\star} \, {\mathcal L}^{n})$ be such that $w(X,\varphi^{(0)}(X))$ ($\equiv \{w(X, +\varphi_{1}^{(0)}(X)), w(X, -\varphi_{1}^{(0)}(X))\}$, where $\varphi^{(0)}(X) = \{\pm\varphi_{1}^{(0)}(X)\}$) is a homogeneous degree $\alpha$, harmonic, symmetric 2-valued function on $B_1(0) \setminus \{0\} \times \mathbb{R}^{n-2}$ satisfying 
\begin{equation} \label{lemma4_2_decay}
	\int_{B_1(0)} \frac{|w(re^{i\theta},y,\varphi^{(0)}(re^{i\theta},y)) - \kappa(re^{i\theta},y,\varphi^{(0)}(re^{i\theta},y))|^2}{r^{2+2\alpha-\sigma}} < \infty 
\end{equation}
for some 
\begin{equation*}
	\kappa(re^{i\theta},y,\varphi^{(0)}(re^{i\theta},y)) = \kappa_1(r,y) D_1 \varphi^{(0)}(re^{i\theta},y) + \kappa_2(r,y) D_2 \varphi^{(0)}(re^{i\theta},y)
\end{equation*}
with $\kappa_1, \kappa_2 \in L^{\infty}(B_1(0);\mathbb{R}^m)$.  If $\alpha = 1/2$, further assume that 
\begin{equation} \label{lemma4_2_alphaishalf}
	\lim_{r \downarrow 0} \frac{\partial^2}{\partial r \partial y_{p}} \int_{S^1} r w(re^{i\theta},y,\varphi^{(0)}(re^{i\theta},y)) D_i\varphi^{(0)}(re^{i\theta},y) d\theta = 0  
\end{equation} 
for $i = 1,2$.  Then
\begin{align} \label{lemma4_2_eqn1}
	&\hspace{-.2in}w(re^{i\theta},y,\varphi^{(0)}(re^{i\theta},y)) 
	\nonumber \\&\hspace{.5in} = a_0 r^{\alpha} \cos(\alpha \theta) + b_0 r^{\alpha} \sin(\alpha \theta) 
	+ \sum_{j=1}^{n-2} \left( a_j D_1 \varphi^{(0)}(re^{i\theta},y) y_j + b_j D_2 \varphi^{(0)}(re^{i\theta},y) y_j \right) 
\end{align}
for some $a_0,b_0 \in \mathbb{R}^m$ and $a_j,b_j \in \mathbb{R}$ for $j = 1,\ldots,n-2$. 
\end{lemma}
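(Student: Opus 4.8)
The plan is to expand $w(re^{i\theta},y,\varphi^{(0)}(re^{i\theta},y))$ into a Fourier series in $\theta$ and use the harmonicity together with the homogeneity and the decay hypothesis~(\ref{lemma4_2_decay}) to kill all but finitely many modes. First I would treat the case $\alpha=k/2$ with $k$ even (so $\varphi^{(0)}$ is a genuine two-valued function whose graph is a union of two embedded sheets) and the case $k$ odd (where $\varphi^{(0)}$ has a branch point and $\theta$ ranges over $[0,4\pi]$) uniformly by parametrising over the covering $\theta\mapsto re^{i\theta/2}$ of a punctured disk, as in Remark~\ref{graph_rmk}. Restricting $w$ to $\{r=1\}$ and writing $w(e^{i\theta},y,\varphi^{(0)}) = \sum_{\ell} (a_\ell(y)\cos(\ell\theta/2) + b_\ell(y)\sin(\ell\theta/2))$ for suitable half-integer frequencies, the harmonicity of the symmetric two-valued function $w(X,\varphi^{(0)}(X))$ on $B_1(0)\setminus\{0\}\times\mathbb{R}^{n-2}$ forces each Fourier coefficient, as a function of $(r,y)$, to satisfy a second-order ODE/PDE whose only solutions bounded appropriately near $r=0$ are of the form $r^{\ell/2}$ times a $y$-polynomial; since $w$ is moreover homogeneous of degree $\alpha = k/2$, the $y$-dependence can only appear through $r^{-1}$ times a linear factor, i.e.\ through $D_1\varphi^{(0)}$ and $D_2\varphi^{(0)}$ (which are homogeneous of degree $\alpha-1$) paired with a linear function of $y$.

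The decay hypothesis~(\ref{lemma4_2_decay}) is what eliminates the remaining low-frequency modes. Subtracting the allowed term $\kappa(X,\varphi^{(0)}(X)) = \kappa_1 D_1\varphi^{(0)} + \kappa_2 D_2\varphi^{(0)}$ — which exactly carries the $r^{\alpha-1}$ behaviour — the finiteness of $\int_{B_1(0)} r^{-2-2\alpha+\sigma}|w-\kappa|^2$ says that $w-\kappa$ must decay strictly faster than $r^{\alpha}$ as $r\downarrow 0$. Combined with the homogeneity of $w$ and the list of admissible homogeneous harmonic modes, this leaves precisely the two genuine degree-$\alpha$ modes $a_0 r^\alpha\cos(\alpha\theta) + b_0 r^\alpha\sin(\alpha\theta)$ together with the degree-$\alpha$ terms of the form $(a_j D_1\varphi^{(0)} + b_j D_2\varphi^{(0)})y_j$, $j=1,\dots,n-2$; all other modes either fail the integrability or fail the homogeneity. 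When $\alpha=1/2$ the two genuine modes $r^{1/2}\cos(\theta/2)$, $r^{1/2}\sin(\theta/2)$ are not excluded by~(\ref{lemma4_2_decay}) alone (they are exactly homogeneous of the critical degree and the $y$-independent part of $w$ can in principle still contain a spurious piece linear in $y$ coming from the frequency-$1/2$ mode interacting with $D_i\varphi^{(0)}$); the extra hypothesis~(\ref{lemma4_2_alphaishalf}), which says a certain mixed $r$--$y_p$ derivative of the frequency-zero-in-$y$ projection of $w$ vanishes at the origin, is precisely what rules that out, forcing the coefficients $a_j, b_j$ to be constants rather than affine in $y$.

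I expect the main obstacle to be the bookkeeping of the two-valued/branched structure when $k$ is odd: one must be careful that the Fourier expansion is in $\theta/2$ over a period $4\pi$, that the symmetry condition $w(re^{i(\theta+2\pi)},\cdot) = -w(re^{i\theta},\cdot)$ selects the correct (odd-in-$k$) half-integer frequencies, and that $D_1\varphi^{(0)}$, $D_2\varphi^{(0)}$ are themselves two-valued with the matching branching so that the pairing $w\cdot D_i\varphi^{(0)}$ is single-valued and the integrations by parts in $\theta$ are legitimate. A secondary technical point is justifying that $w$, a priori only $C^2$ away from the axis and $L^2$ up to it, actually admits the Fourier expansion with coefficients smooth in $(r,y)$ for $r>0$ and that termwise differentiation is valid — this follows from interior elliptic estimates for the harmonic function on the (possibly branched) smooth manifold $\op{graph}\varphi^{(0)}$ restricted to $\{r>0\}$, combined with the $L^2$ bound to control behaviour as $r\downarrow 0$. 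Once these structural points are in place, the rest is the ODE analysis and a direct matching of exponents, which is routine.
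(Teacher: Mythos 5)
Your overall strategy---Fourier decomposition in $\theta$ over the period-$4\pi$ parametrisation of ${\rm graph}\,\varphi^{(0)}$, then elimination of modes using homogeneity and (\ref{lemma4_2_decay})---is the same as the paper's, but the decisive step is missing. After projecting onto a $\theta$-eigenfunction with eigenvalue $\lambda_l$, the coefficient $w_l(r,y)$ satisfies $r^{-1}\partial_r(r\,\partial_r w_l) - \lambda_l r^{-2} w_l + \Delta_y w_l = 0$, a PDE in the $n-1$ variables $(r,y)$, not an ODE; your claim that its homogeneous solutions ``bounded appropriately near $r=0$'' are automatically $r^{\ell/2}$ times a $y$-polynomial is false and in effect assumes the conclusion. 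Writing $w_l(r,y) = r^{\alpha}\psi(y/r)$, the degree-$\alpha$ homogeneous solutions are parametrised by solutions $\psi$ of the degenerate elliptic equation $\Delta_z \psi + z_i z_j D_{z_i z_j}\psi - (2\alpha-1) z_i D_{z_i}\psi + (\alpha^2-\lambda_l)\psi = 0$ on $\mathbb{R}^{n-2}$, an infinite-dimensional family (one radial ODE solution, regular at the origin, for each spherical harmonic on $S^{n-3}$), all exactly homogeneous of degree $\alpha$, smooth for $r>0$, and many of them locally $L^2$ across the axis. So ``fail the homogeneity'' eliminates nothing, and the real content of the lemma is to show that the growth restriction at $|z|\to\infty$ encoded by (\ref{lemma4_2_decay}) forces $\psi$ to be affine, constant (only when $\lambda_l=\alpha^2$), or zero. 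That requires genuine arguments, not exponent matching: in the paper this is done by passing to $v = D_p\psi$, using a weak formulation with a logarithmic cutoff to get a Liouville-type conclusion when $\lambda_l \geq (\alpha-1)^2$, and by an asymptotic ODE analysis at infinity showing that any nonzero mode with $\lambda_l < (\alpha-1)^2$ grows too fast to be compatible with the decay integral. None of this is present in your outline, and ``routine ODE analysis'' does not supply it.

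Your account of the case $\alpha = 1/2$ is also not quite right. The extra hypothesis (\ref{lemma4_2_alphaishalf}) is not there to make $a_j, b_j$ constants; its role is to allow the even reflection of $r^{1/2} D_{y_p} w_{l_0}$ across $\{r=0\}$ to be harmonic on all of $\mathbb{R}^{n-1}$, whence, being homogeneous of degree zero, it is constant, giving $r^{1/2} w_{l_0} = a r + b\cdot y$. This separate argument is needed precisely because for $\alpha=1/2$ the single eigenvalue $\lambda_{l_0} = (\alpha-1)^2 = \alpha^2$ carries both the genuine degree-$\alpha$ modes and the $D_i\varphi^{(0)} y_j$ terms, so the decay hypothesis (and the log-cutoff argument used for the other modes) cannot by itself rule out non-affine $\psi$ with linear growth. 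Your plan should be revised to include these steps explicitly before the remaining bookkeeping (the $4\pi$-periodic branched parametrisation, termwise differentiation via interior elliptic estimates) can be called routine.
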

\begin{proof} 
We follow the argument of  \cite{SimonCTC}, Lemma~4.2 with modifications necessary to account for the fact that the degree of homogeneity $\a$ takes various values; in \cite{SimonCTC}, $\a$ is always one. Note in particular the case $\a = 1/2$, which requires the additional hypothesis (\ref{lemma4_2_alphaishalf}). Note also that if $n=2$ or $n=3$, certain steps of the argument below need to be altered in obvious ways  which we shall not comment on any further. 

Recall from Remark \ref{graph_rmk} that the graph of $\varphi^{(0)}$ is a branched submanifold parameterized by $(re^{i\theta},y,\op{Re}(c^{(0)} r^{\alpha} e^{i\a\theta}))$ for $r \geq 0$, $\theta \in [0,4\pi]$, and $y \in \mathbb{R}^{n-2}$.  Let $\phi_1,\phi_2,\ldots \, : \, {\mathbb R} \to {\mathbb R}^{m}$ be the $4\pi$ periodic functions whose restriction to $[0, 4\pi]$ is an ($L^2$) orthonormal basis for $L^2([0,4\pi]; {\mathbb R}^{m})$ consisting of eigenfunctions such that $\phi_l''(\theta) + \lambda_l \phi_l(\th) = 0$ in $(0,4\pi)$, where the labeling is such that $\lambda_1 \leq \lambda_2 \leq \ldots$.  We can regard the $\phi_k$ as functions on $\op{graph} \varphi^{(0)}$ that are independent of $r$ and $y$. 
Let $l_0$ be the smallest positive integer such that $\lambda_{l_0} = (\alpha-1)^2$.  We may assume the span of $\phi_{l_0}$ and $\phi_{l_0+1}$ equals the span of $D_1 \varphi^{(0)}(e^{i\theta})$ and $D_2 \varphi^{(0)}(e^{i\theta})$.  Consider the Fourier coefficients $w_l$ of $w$ given by 
\begin{equation*}
	w_l(r,y) = \frac{1}{4\pi} \int_0^{4\pi} w(re^{i\theta},y,\varphi^{(0)}(re^{i\theta},y)) \phi_l(\theta) d\theta, 
\end{equation*}
Each $w_l \in C^{\infty}((0,\infty) \times \mathbb{R}^{n-2}; {\mathbb R}^{m})$ is homogeneous of degree $\alpha$ single-valued functions satisfying 
\begin{equation} \label{lemma4_2_eqn2}
	\frac{1}{r} \frac{\partial}{\partial r} \left( r \frac{\partial w_l}{\partial r} \right) - \frac{\lambda_l}{r^2} w_l + \Delta_y w_l = 0. 
\end{equation} 
Fix $l$.  By (\ref{lemma4_2_decay}), if $\phi_l$ is in the span of $D_1 \varphi^{(0)}(e^{i\theta})$ and $D_2 \varphi^{(0)}(e^{i\theta})$ then $\lambda_l = (\alpha-1)^2$ and 
\begin{equation} \label{lemma4_2_decay1a}
	\int_{B_1(0)} \frac{|w_l - \kappa_l(r,y)|^2}{r^{2+2\alpha-\sigma}} < \infty, 
\end{equation}
where $\kappa_l = \kappa_l(r,y)$ in $L^{\infty}((0,\infty) \times \mathbb{R}^{n-2})$ is given by 
\begin{equation*}
	\kappa_l(r,y) = \frac{1}{4\pi} \int_0^{4\pi} \kappa(re^{i\theta},y,\varphi^{(0)}(re^{i\theta},y)) \phi_l(\theta) d\theta. 
\end{equation*}
If $\phi_l$ is not in the span of $D_1 \varphi^{(0)}(e^{i\theta})$ and $D_2 \varphi^{(0)}(e^{i\theta})$ then 
\begin{equation} \label{lemma4_2_decay1b}
	\int_{B_1(0)} \frac{|w_l|^2}{r^{2+2\alpha-\sigma}} < \infty. 
\end{equation}
Let $w_l(r,y) = r^{\alpha} \psi(y/r)$ where $\psi(z) = w_l(1,z)$ is in $C^{\infty}(\mathbb{R}^{n-2}; {\mathbb R}^{m})$.  Then (\ref{lemma4_2_eqn2}) becomes 
\begin{equation} \label{lemma4_2_eqn3}
	\Delta_z \psi + z_i z_j D_{z_i z_j} \psi - (2\alpha-1) z_i D_{z_i} \psi + (\alpha^2 - \lambda_l) \psi = 0.  
\end{equation}
By (\ref{lemma4_2_decay1a}) and (\ref{lemma4_2_decay1b}), if $\phi_l$ is in the span of $D_1 \varphi^{(0)}(e^{i\theta})$ and $D_2 \varphi^{(0)}(e^{i\theta})$ then  
\begin{equation} \label{lemma4_2_decay2a}
	\int_{S^{n-3}} \int_1^{\infty} r^{1-\sigma} \left| \frac{\psi(r\omega)}{r} - \kappa(r\omega) \right|^2 dr d\omega < \infty 
\end{equation}
for some $\kappa(z)$ in $L^{\infty}((0,\infty) \times \mathbb{R}^{n-2})$ and otherwise 
\begin{equation} \label{lemma4_2_decay2b}
	\int_{S^{n-3}} \int_1^{\infty} r^{-1-\sigma} |\psi(r\omega)|^2 dr d\omega < \infty. 
\end{equation}
By (\ref{lemma4_2_eqn3}), $v = D_p \psi$, where $p \in \{1,2,\ldots,n\}$, satisfies 
\begin{equation} \label{lemma4_2_eqn4}
	\Delta_z v + z_i z_j D_{z_i z_j} v - (2\alpha-3) z_i D_{z_i} v + ((\alpha-1)^2 - \lambda_l) v = 0.  
\end{equation}
Write $\psi$ in terms of its Fourier expansion 
\begin{equation*}
	\psi(r\omega) = \sum_{k=0}^{\infty} \gamma_k(r) \chi_k(\omega) 
\end{equation*}
for $r \geq 0$ and $\omega \in S^{n-3}$, where $\gamma_k : (0,\infty) \rightarrow \mathbb{R}$ and $\chi_k$ denote eigenfunctions for the Laplacian on $S^{n-3}$ such that $\Delta_{S^{n-3}} \chi_k + \mu_k(\mu_k+n-4) \chi_k = 0$ on $S^{n-3}$ for some integer $\mu_k \geq 0$.  Replace $\psi(r\omega)$ with $\gamma_k(r) \chi_k(\omega)$ in $C^{\infty}(\mathbb{R}^{n-2} \setminus \{0\})$ and note that (\ref{lemma4_2_eqn3})  and (\ref{lemma4_2_eqn4}) still hold for the new $\psi$ and $v = D_p \psi$ and one of (\ref{lemma4_2_decay2a}) and (\ref{lemma4_2_decay2b}) hold for the new $\psi$ depending on the values of $\lambda_l$ and $\alpha$.  (\ref{lemma4_2_eqn3}) yields 
\begin{equation} \label{lemma4_2_eqn5}
	(1+r^2) \frac{\partial^2 \gamma_k}{\partial r^2} + ((n-3) r^{-1} - (2\alpha-1) r) \frac{\partial \gamma_k}{\partial r} - \mu_k(\mu_k+n-4) r^{-2} \gamma_k 
	+ (\alpha^2 - \lambda_l) \gamma_k = 0.  
\end{equation}
Since $\psi$ is bounded near $0$, by elementary ODE estimates we obtain 
\begin{equation*}
	\sup_{\rho/2 < |z| < \rho} |D\psi| \leq C \rho^{-1} \sup_{\rho/4 < |z| < 2\rho} |\psi|. 
\end{equation*}
for some constant $C = C(n,l,k) \in (0,\infty)$. 

In order to prove (\ref{lemma4_2_eqn1}), we want to show the following: If $\phi_l$ is in the span of $D_1 \varphi^{(0)}(e^{i\theta})$ and $D_2 \varphi^{(0)}(e^{i\theta})$, then $\psi(z) = a + b \cdot z$ for some $a \in \mathbb{R}$ and $b \in \mathbb{R}^{n-2}$ if $\alpha = 1/2$ and $\psi(z) = b \cdot z$ for some $b \in \mathbb{R}^{n-2}$ if $\alpha \geq 1$.  If $\alpha \geq 1$ and $\lambda_l = \alpha^2$, then $\psi(z) = ar$ for some $a \in \mathbb{R}$.  In all other cases, $\psi(z) = 0$.  We will do so by considering following three cases:
\begin{enumerate}
\item[(1)] $\alpha = 1/2$ and $\phi_l$ is in the span of $\{D_1 \varphi^{(0)}(e^{i\theta}), D_2 \varphi^{(0)}(e^{i\theta})\}$;
\item[(2)] either $\alpha = 1/2$ and $\phi_l$ is not in the span of $\{D_1 \varphi^{(0)}(e^{i\theta}), D_2 \varphi^{(0)}(e^{i\theta})\}$ or $\alpha \geq 1$ and $\lambda_l \geq (\alpha-1)^2$;  
\item[(3)] $\alpha \geq 1$ and $\lambda_l < (\alpha-1)^2$. 
\end{enumerate}

Suppose case (1) occurs.  It follows from (\ref{lemma4_2_eqn2}) that $r^{1/2} w_{l}(r,y)$ is harmonic on $(0,\infty) \times \mathbb{R}^{n-2}$.  For $p = 1,2,\ldots,n-2$, define $\hat w_p$ on $\mathbb{R}^{n-1}$ to be the even extension in the $r$ variable of $r^{1/2} D_{y_p} w_{l}(r,y)$ given by $\hat w_p(r,y) = |r|^{1/2} D_{y_p} w_{l}(|r|,y)$ for all $r \in \mathbb{R}$ and $y \in \mathbb{R}^{n-2}$.  By (\ref{lemma4_2_alphaishalf}), $\hat w_p$ is harmonic on $\mathbb{R}^{n-1}$, and since $\hat w_{p}$ is homogeneous degree zero on $\mathbb{R}^{n-1}$, it follows that $\hat w_{p}$ is constant.  Thus $r^{1/2} D_{y_p} w_{l}(r,y)$ is constant on $(0,\infty) \times \mathbb{R}^{n-2}$ and consequently $r^{1/2} w_{l}(r,y) = a r + b \cdot y$ for some $a \in \mathbb{R}$ and $b \in \mathbb{R}^{n-1}$.  Equivalently, $\psi(z) = a + b \cdot z$.

Suppose case (2) occurs.  We can write (\ref{lemma4_2_eqn4}) as 
\begin{equation} \label{lemma4_2_eqn6}
	\frac{\partial}{\partial r} \left( g \frac{\partial v}{\partial r} \right) + \frac{g}{r^2 (1+r^2)} \Delta_{S^{n-3}} v 
		+ \frac{((\alpha-1)^2 - \lambda_l) g}{1+r^2} v = 0 
\end{equation}
where $g = r^{n-3} (1+r^2)^{-(2\alpha+n-6)/2}$.  Since $r^{-1} \nabla_{S^{n-3}} v$ is bounded as $r \downarrow 0$, we can write (\ref{lemma4_2_eqn6}) in the weak form 
\begin{equation} \label{lemma4_2_eqn5w}
	\int_0^{\infty} \int_{S^{n-3}} \left( g \frac{\partial v}{\partial r} \frac{\partial \zeta}{\partial r} 
	+ \frac{g}{r^2 (1+r^2)} \nabla^{S^{n-3}}  v \cdot \nabla^{S^{n-3}} \zeta - \frac{((\alpha-1)^2 - \lambda_l) g}{1+r^2} v \zeta \right) d\omega dr = 0 
\end{equation}
for all $\zeta \in C_c^{\infty}(\mathbb{R}^{n-2})$.  Replace $\zeta$ in (\ref{lemma4_2_eqn5w}) with $v \zeta^2$ where $\zeta = \zeta(r)$ is in $C^{\infty}(0,\infty)$ and is constant near $r = 0$ and vanishes for large $r$ and apply Cauchy's inequality to get 
\begin{align} \label{equation5a}
	&\int_0^{\infty} \int_{S^{n-3}} \left( g \left| \frac{\partial v}{\partial r} \right|^2 
	+ \frac{g}{r^2 (1+r^2)} |\nabla^{S^{n-3}} v|^2 + \frac{(\lambda_l - (\alpha-1)^2) g}{1+r^2} v^2 \right) \zeta^2 d\omega dr \nonumber 
	\\&\leq 4 \int_0^{\infty} g |\zeta'(r)|^2 \int_{S^{n-3}} v^2 d\omega dr 
\end{align}
for some constant $C \in (0,\infty)$.  Given $\rho \in (1,\infty)$, let $\zeta$ be the logarithmic cutoff function 
\begin{equation*}
	\zeta(r) = \left\{ \begin{array}{ll}
		1 &\text{if } r \leq \rho \\
		-\log(r/\rho^2)/\log(\rho) &\text{if } \rho^2 < r < \rho \\
		0 &\text{if } r \geq \rho^2 
	\end{array} \right.
\end{equation*}
to get that if $\alpha \geq 1$, $\lambda_l = (\alpha-1)^2$, and $\phi_l$ is in the span of $D_1 \varphi^{(0)}(e^{i\theta})$ and $D_2 \varphi^{(0)}(e^{i\theta})$ then by (\ref{lemma4_2_decay2a})
\begin{align*}
	&\int_0^{\rho} \int_{S^{n-3}} \left( g \left| \frac{\partial v}{\partial r} \right|^2 + \frac{g}{r^2 (1+r^2)} |\nabla^{S^{n-3}} v|^2 \right) d\omega dr 
	\\& \leq \frac{4}{\log^2(\rho)} \int_{\rho}^{\rho^2} r^{1-2\alpha} \int_{S^{n-3}} v^2 d\omega dr 
	\\& \leq \frac{C}{\log^2(\rho)} \int_{\rho/2}^{2\rho^2} r^{-1-2\alpha} \int_{S^{n-3}} \psi^2 d\omega dr 
	\\& \leq \frac{C}{\log^2(\rho)} \int_{\rho/2}^{2\rho^2} r^{1-2\alpha} \int_{S^{n-3}} \left| \frac{\psi(r\omega)}{r} - \kappa(r\omega) \right|^2 d\omega dr 
		+ \frac{C}{\log(\rho)} \rho^{2-2\alpha} \sup |\kappa|^2
	\\& \leq \frac{C}{\log^2(\rho)} \rho^{-2\alpha+\sigma} + \frac{C}{\log(\rho)} \rho^{2-2\alpha} \sup |\kappa|^2 
	\rightarrow 0 \text{ as } \rho \downarrow \infty, 
\end{align*}
where $C = C(n) \in (0,\infty)$, so $v$ is constant and thus $\psi(z) = a + b z$ for some $a \in \mathbb{R}$ and $b \in \mathbb{R}^m$.  By (\ref{lemma4_2_eqn3}), $b = 0$.  Similarly if either $\phi_l$ is not in the span of $D_1 \varphi^{(0)}(e^{i\theta})$ and $D_2 \varphi^{(0)}(e^{i\theta})$ or $\lambda_l > (\alpha-1)^2$ then by (\ref{lemma4_2_decay2b}) 
\begin{align*}
	&\int_0^{\rho} \int_{S^{n-3}} \left( g \left| \frac{\partial v}{\partial r} \right|^2 + \frac{g}{r^2 (1+r^2)} |\nabla^{S^{n-3}} v|^2 
		+ \frac{(\lambda_l - (\alpha-1)^2) g}{1+r^2} v^2 \right) d\omega dr 
	\\& \leq \frac{4}{\log^2(\rho)} \int_{\rho}^{\rho^2} r^{1-2\alpha} \int_{S^{n-3}} v^2 d\omega dr 
	\\& \leq \frac{C}{\log^2(\rho)} \int_{\rho/2}^{2\rho^2} r^{-1-2\alpha} \int_{S^{n-3}} \psi^2 d\omega dr 
	\\& \leq \frac{C}{\log^2(\rho)} \rho^{-2\alpha+\sigma} \rightarrow 0 \text{ as } \rho \downarrow \infty 
\end{align*}
so $v \equiv 0$ and thus $\psi(z) = a$ for some $a \in \mathbb{R}$.  By (\ref{lemma4_2_eqn3}), either $a = 0$ or $\lambda_l = \alpha^2$.

Suppose case (3) occurs.  Recall from (\ref{lemma4_2_eqn5}) that $\psi(r\omega) = \gamma_k(r) \varphi_k(\omega)$ where 
\begin{equation*}
	(1+r^2) \frac{\partial^2 \gamma_k}{\partial r^2} + ((n-3) r^{-1} - (2\alpha-1) r) \frac{\partial \gamma_k}{\partial r} 
	- \mu_k(\mu_k+n-4) r^{-2} \gamma_k + (\alpha^2 - \lambda_l) \gamma_k = 0.  
\end{equation*}
By using the series expansions near $\pm \infty$, any solution $\gamma_k$ to (\ref{lemma4_2_eqn5}) that is not identically zero must satisfy 
\begin{equation*}
	\liminf_{\rho \rightarrow \infty} \rho^{-1-2\alpha+2\sqrt{\lambda_l}} \int_{\rho/2 < r < \rho} |\gamma_k(r)|^2 dr > 0, 
\end{equation*}
contradicting (\ref{lemma4_2_decay2b}).  Therefore $\gamma_k$ is identically zero, hence $\psi(z) = 0$. 
\end{proof}

In Lemma~\ref{lemma4_12} and Lemma~\ref{lemma4_14} below, we shall denote by ${\mathcal L}$ the subspace of the Hilbert space $L^2(\op{graph} \varphi^{(0)} |_{B_{1}(0)}; {\mathbb R}^{m}, p^{\star} \, {\mathcal L}^{n})$ spanned by the set of functions 
$$\left\{a_0 r^{\alpha} \cos(\alpha \theta), \; b_0 r^{\alpha} \sin(\alpha \theta), \; D_i \varphi^{(0)}(re^{i\theta},y) y_j \, : \,  a_0,b_0 \in \mathbb{R}^m, \; i \in \{1,2\},  \; j \in\{1,\ldots,n-2\}\right\}$$ and  
let $\mathcal{L}^{\perp}$ denote the orthogonal compliment of $\mathcal{L}$ in 
$L^2(\op{graph} \varphi^{(0)}|_{B_{1}(0)}; {\mathbb R}^{m}, p^{\star} \, {\mathcal L}^{n})$.

\begin{lemma} \label{lemma4_12} 
Let $\varphi^{(0)} : \mathbb{R}^n \rightarrow \mathcal{A}_2(\mathbb{R}^m)$ be a two-valued function given by (\ref{varphi0}) for some $c^{(0)} \in \mathbb{C}^m \setminus \{0\}$ and $\alpha \in \{1/2,1,3/2,2,\ldots\}$.  Given $\sigma \in (0,1)$ and $\beta_1, \beta_2 \in (0,\infty)$, there exists a constant $\beta = \beta(n,\alpha,\varphi^{(0)},\sigma,\beta_1,\beta_2) \in (0,\infty)$ such that if $w \in \mathcal{L}^{\perp}$ such that $w(X,\varphi^{(0)}(X))$ is a harmonic symmetric two-valued function on $B_1(0) \setminus \{0\} \times \mathbb{R}^{n-2}$, 
\begin{equation} \label{lemma4_12_decay-0}
	\int_{B_{1/4}((0, z))} \frac{|w(X,\varphi^{(0)}(X)) - (\lambda_{1}(z)D_{1}\varphi^{(0)}(X) + \lambda_{2}(z) D_{2}\varphi^{(0)}(X))|^2}{|X - (0, z)|^{n+2\alpha-\sigma}} 
	\leq \beta_1 \int_{B_1(0)} |w(X,\varphi^{(0)}(X))|^2
\end{equation}
for each $z \in B_{1/2}^{n-2}(0)$ and some bounded functions $\lambda_{1}, \lambda_{2} \, : \, B^{n-2}_{1/2}(0) \to {\mathbb R}$ with 
$${\rm sup}_{B^{n-2}_{1/2}(0)} \, \left(|\lambda_{1}(z)|^{2} + |\lambda_{2}(z)|^{2} \right)\leq \b_{2}\int_{B_1(0)} |w(X,\varphi^{(0)}(X))|^2$$
and (\ref{lemma4_2_alphaishalf}) holds if $\alpha = 1/2$, then 
\begin{equation*} \label{lemma4_12_conclusion}
	\int_{B_1(0)} |w(X,\varphi^{(0)}(X))|^2 
	\leq \beta \int_{B_1(0) \setminus B_{1/4}(0)} \left| \frac{\partial}{\partial R} \left( \frac{w(X,\varphi^{(0)}(X))}{R^{\alpha}} \right) \right|^2. 
\end{equation*}
\end{lemma}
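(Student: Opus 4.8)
The plan is to argue by contradiction and compactness, following the proof of Lemma~4.12 of~\cite{SimonCTC}. If the conclusion fails, then for some fixed $\sigma,\beta_1,\beta_2$ there are $w_k$ satisfying the hypotheses of the lemma with, after multiplying each $w_k$ by a constant, $\int_{B_1(0)}|w_k(X,\varphi^{(0)}(X))|^2 = 1$ and
\[
	\int_{B_1(0)\setminus B_{1/4}(0)}\left|\frac{\partial}{\partial R}\left(\frac{w_k(X,\varphi^{(0)}(X))}{R^{\alpha}}\right)\right|^2 \to 0 .
\]
Abbreviating $w_k(X):=w_k(X,\varphi^{(0)}(X))$, I would first record an elementary ``radial Poincar\'e'' inequality: integrating the identity $w_k(R\omega)R^{-\alpha} = w_k(R_0\omega)R_0^{-\alpha} + \int_{R_0}^{R}\partial_s(w_k(s\omega)s^{-\alpha})\,ds$ against suitable weights over the ranges $R_0\in(1/4,1/2)$, $R\in(1/2,1)$ (and a companion estimate for the trace on $\partial B_{1/2}(0)$) yields
\[
	\int_{B_1(0)}|w_k|^2 \le C\int_{B_{1/2}(0)}|w_k|^2 + C\int_{B_1(0)\setminus B_{1/4}(0)}\left|\frac{\partial}{\partial R}\left(\frac{w_k}{R^{\alpha}}\right)\right|^2 ,\qquad C=C(n,\alpha),
\]
so $\int_{B_{1/2}(0)}|w_k|^2 \ge 1/(2C)$ for all large $k$. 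The same inequality applied to differences $w_k-w_\ell$ will later serve to upgrade convergence all the way out to $\partial B_1(0)$.

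Next I would establish compactness. On compact subsets of $B_1(0)\setminus\{0\}\times\mathbb{R}^{n-2}$ the two sheets of $w_k$ are single-valued harmonic, so standard interior estimates give uniform $C^2$ bounds there from $\|w_k\|_{L^2(B_1(0))}=1$. Near the axis, the decay hypothesis~(\ref{lemma4_12_decay-0}) together with the bound on $\lambda_1,\lambda_2$ gives, for $z\in B^{n-2}_{1/2}(0)$ and $\rho\le 1/4$, $\int_{B_\rho((0,z))}|w_k|^2 \le C(\beta_1+\beta_2)(\rho^{n+2\alpha-\sigma}+\rho^{n+2\alpha-2})$, and covering a neighborhood of the axis inside $B_{1/2}(0)$ by $\sim\rho^{-(n-2)}$ such balls yields uniform non-concentration of $\int|w_k|^2$ near the axis (note every exponent $n+2\alpha-\sigma-(n-2)$, $n+2\alpha-2-(n-2)$ is positive). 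Hence a subsequence $w_k$ converges strongly in $L^2(B_{1/2}(0);\mathbb{R}^m,p^{\star}\mathcal{L}^n)$ and in $C^2_{\mathrm{loc}}(B_1(0)\setminus\{0\}\times\mathbb{R}^{n-2})$ to some $w_\infty$, and the radial Poincar\'e inequality applied to $w_k-w_\infty$ then promotes this to strong convergence in $L^2(B_1(0);\mathbb{R}^m,p^{\star}\mathcal{L}^n)$. The limit $w_\infty(\cdot,\varphi^{(0)}(\cdot))$ is a harmonic symmetric two-valued function on $B_1(0)\setminus\{0\}\times\mathbb{R}^{n-2}$ with $\int_{B_{1/2}(0)}|w_\infty|^2\ge 1/(2C)>0$, so $w_\infty\not\equiv0$; and since the right-hand side of the displayed limit tends to $0$, $\partial(w_\infty/R^{\alpha})/\partial R=0$ a.e.\ on $B_1(0)\setminus\overline{B_{1/4}(0)}$ away from the axis, so by real analyticity of the harmonic sheets in $R$ along rays, $w_\infty$ is homogeneous of degree $\alpha$ on $B_1(0)\setminus\{0\}\times\mathbb{R}^{n-2}$, hence on $B_1(0)$, with $\|w_\infty\|_{L^2(B_1(0))}\in(0,\infty)$.

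To finish, I would pass the $z$-ball estimates~(\ref{lemma4_12_decay-0}) to the limit, using Fatou in $k$ together with weak-$*$ limits $\lambda_i^{(k)}\rightharpoonup\lambda_i^{\infty}$ of the uniformly bounded $\lambda_i^{(k)}$, and then exploit the homogeneity of $w_\infty$ to reduce the estimates near the whole axis and near the origin to a single admissible decay estimate; this verifies the hypotheses of Lemma~\ref{lemma4_2} for $w_\infty$ (with $\kappa=\kappa_1 D_1\varphi^{(0)}+\kappa_2 D_2\varphi^{(0)}$, $\kappa_1,\kappa_2\in L^\infty$, and with~(\ref{lemma4_2_alphaishalf}) when $\alpha=1/2$, which passes to the limit by the same bounds). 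Lemma~\ref{lemma4_2} then gives that $w_\infty(\cdot,\varphi^{(0)}(\cdot))$ has the form~(\ref{lemma4_2_eqn1}), i.e.\ $w_\infty\in\mathcal{L}$. On the other hand each $w_k\in\mathcal{L}^\perp$, a closed (finite-codimensional) subspace of $L^2(\op{graph}\varphi^{(0)}|_{B_1(0)};\mathbb{R}^m,p^{\star}\mathcal{L}^n)$, so the strong limit $w_\infty$ also lies in $\mathcal{L}^\perp$; hence $w_\infty\in\mathcal{L}\cap\mathcal{L}^\perp=\{0\}$, contradicting $w_\infty\not\equiv0$. (When $n=2$ or $n=3$ the argument requires only the usual minor adjustments.) The step I expect to be the main obstacle is this compactness/identification: extracting uniform non-concentration of $\int|w_k|^2$ near the $(n-2)$-dimensional axis from~(\ref{lemma4_12_decay-0}) and the $\lambda_i$ bound (and carrying strong $L^2$ convergence out to $\partial B_1(0)$), and then checking that the $z$-dependent corrections $\lambda_i^{\infty}(z)D_i\varphi^{(0)}$ assemble, using the homogeneity of $w_\infty$, into a genuinely admissible $\kappa$ so that Lemma~\ref{lemma4_2} applies.
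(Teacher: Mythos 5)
Your strategy is essentially the paper's: argue by contradiction with the normalization $\int_{B_1(0)}|w_k|^2=1$ and vanishing radial term, get $C^2_{\rm loc}$ compactness away from the axis plus uniform non-concentration of $\int|w_k|^2$ near the axis from (\ref{lemma4_12_decay-0}) and the $\lambda$-bound, identify the homogeneous limit via Lemma~\ref{lemma4_2}, and contradict orthogonality to $\mathcal{L}$, with the radial FTC/Poincar\'e estimate on the outer annulus closing the loop. Your execution differs at the end: you upgrade to strong $L^2(B_1(0))$ convergence by applying the radial estimate to $w_k-w_\infty$ (valid once homogeneity of $w_\infty$ is known, since then $\partial_R(w_\infty/R^\alpha)=0$) and conclude $w_\infty\in\mathcal{L}^\perp$ by closedness; the paper never claims convergence beyond $B_{1/2}(0)$, instead showing $\int w_k\cdot w\to 0$ by a limiting pairing argument over intermediate annuli and shrinking tubes around the axis, proving $w\equiv 0$ and then contradicting the normalization. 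Both variants work, and yours is arguably a cleaner way to organize the same ingredients.

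The one step that is inadequate as stated is the passage of (\ref{lemma4_12_decay-0}) to the limit via ``weak-$*$ limits'' of $\lambda_i^{(k)}$: the inequality is pointwise in $z$ with the integrand quadratic in $\lambda_i^{(k)}(z)$, and weak-$*$ convergence in $L^\infty$ does not preserve it. The paper handles this by first proving a uniform H\"older estimate for $\lambda_1,\lambda_2$ (estimate (\ref{kappa-cont-1})), obtained by comparing the hypothesis at two centers $z_1,z_2$ with $\rho\approx|z_1-z_2|$ and using the $L^2$-orthogonality of $D_1\varphi^{(0)}$ and $D_2\varphi^{(0)}$ on circles, so that the $\lambda$'s converge (locally uniformly) along a subsequence and (\ref{lemma4_12_decay-0}) passes to the limit for $w_\infty$. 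Alternatively your plan can be repaired without producing measurable limit functions: for each fixed $z$ extract a convergent subsequence of the bounded numbers $\lambda_i^{(k)}(z)$ and apply Fatou (pointwise convergence of $w_k$ away from the axis suffices), and then construct the $\kappa$ of Lemma~\ref{lemma4_2} by fiberwise minimization over the rotationally symmetric balls $B_\rho((0,z))$ -- exactly the paper's construction -- so measurability of the limiting $\lambda$ is never needed. Finally, note that Lemma~\ref{lemma4_2} asks for the weighted decay on $B_1(0)$ while your (and the paper's) estimates live on $B_{1/2}(0)$; the homogeneity-rescaling step you flag is indeed what bridges this, and the paper glosses over the same point.
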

\begin{proof} Let the hypotheses be as in the lemma. We claim first that $w$  satisfies 
\begin{equation} \label{lemma4_12_decay}
	\int_{B_{1/2}(0)} \frac{|w(X,\varphi^{(0)}(X)) - \kappa(X,\varphi^{(0)}(X))|^2}{r^{2+2\alpha-\sigma}} 
	\leq C \int_{B_1(0)} |w(X,\varphi^{(0)}(X))|^2
\end{equation}
for some constant $C = C(\b_{1}) \in (0, \infty)$ and for some function $\k$ of the form 
\begin{equation}\label{kappa-cont-1-1}
\kappa(re^{i\th}, y, \varphi^{(0)}(re^{i\th}, y)) = \kappa_1(r,y) D_1 \varphi^{(0)}(re^{i\theta},y) + \kappa_2(r,y) D_2 \varphi^{(0)}(re^{i\theta},y)
\end{equation}
with  
\begin{equation} \label{lemma4_12_kappaest}
	|\kappa_1(r,y)|^2 + |\kappa_2(r,y)|^2 \leq \beta_2 \int_{B_1(0)} |w(X,\varphi^{(0)}(X))|^2,
\end{equation}
and that the functions $\lambda_{j}$, $j=1, 2$ satisfy the uniform continuity estimate 
\begin{equation}\label{kappa-cont-1}
\sup_{z_{1}, z_{2} \in B^{n-2}_{1/2}(0), \, z_{1} \neq z_{2}} \, \frac{|\lambda_{j}(z_{1}) - \lambda_{j}(z_{2})|^{2}}{|z_{1} - z_{2}|^{2 - \s}} \leq C\int_{B_{1}(0)} |w(X, \varphi^{(0)}(X))|^{2},
\end{equation}
where $C = C(n, \b_{1}, \b_{2}, \s) \in (0, \infty).$

To see (\ref{lemma4_12_decay}), note that by hypothesis, we have for each $z \in B^{n-2}_{1/2}(0)$ and $\r \in (0, 1/4)$, 
\begin{eqnarray}\label{kappa-cont}
	&&\hspace{-.5in}\rho^{-n-2\alpha+\sigma} \int_{B_{\rho}(0,z)} |w(X,\varphi^{(0)}(X)) - (\lambda_{1}(z)D_{1} \varphi^{(0)}(X) + \lambda_{2}(z)D_{2}\varphi^{(0)}(X))|^2\nonumber\\ 
	&&\hspace{3in}\leq \b_{1} \int_{B_1(0)} |w(X,\varphi^{(0)}(X))|^2.
\end{eqnarray}
For each $(r,y)$ with $r^{2} + |y|^{2} < 1/2$,   choose $\kappa_1(r,y), \kappa_2(r,y) \in \mathbb{R}$ to satisfy 
\begin{align*}
	&\hspace{-.5in}\int_0^{4\pi} |w(re^{i\theta},y,\varphi^{(0)}(re^{i\theta},y)) - \kappa_1(r,y) D_1 \varphi^{(0)}(re^{i\th}) - \kappa_2(r,y) D_2 \varphi^{(0)}(re^{i\th})|^2 d\theta. 
	\\&= \inf \int_0^{4\pi} |w(re^{i\theta},y,\varphi^{(0)}(re^{i\theta},y)) - \m_1 D_1 \varphi(re^{i\th}) - \m_2 D_2 \varphi(re^{i\th})|^2 d\theta, 
\end{align*}
where the infimum is taken over all $\m_1, \m_2 \in \mathbb{R}$ such that $|\m_1|^2 + |\m_2|^2 \leq \b_{2} \int_{B_1(0)} |w(X, \varphi^{(0)}(X))|^{2}$. Thus $\kappa = \kappa(re^{i\theta},y,\varphi^{(0)}(re^{i\theta},y)) = \kappa_1(r,y) D_1 \varphi^{(0)}(re^{i\theta},y) + \kappa_2(r,y) D_2 \varphi^{(0)}(re^{i\theta},y)
$ satisfies 
\begin{equation} \label{cor3_2_eqn2}
	\rho^{-n-2\alpha+\sigma} \int_{B_{\rho}(0,z)} |w(X,\varphi^{(0)}(X)) - \kappa(X,\varphi^{(0)}(X))|^2
	\leq \b_{1} \int_{B_1(0)} |w(X, \varphi^{(0)})|^2
\end{equation}
for each $z \in B_{1/2}^{n-2}(0)$ and $\r \in (0, 1/4).$ A straightforward covering argument then yields 
\begin{equation*} 
	\rho^{-2-2\alpha+\sigma} \int_{B_{1/2}(0) \cap \{r< \r/2\}} |w(X,\varphi^{(0)}(X)) - \kappa(X,\varphi^{(0)}(X))|^2
	\leq C\b_{1} \int_{B_1(0)} |w(X, \varphi^{(0)}(X))|^2
\end{equation*}
for $\r \in (0, 1/4)$ where $C = C(n) \in (0, \infty)$; replacing $\s$ with $\s/2$ in this, multiplying by $\rho^{-1+\sigma/2}$ and integrate with respect to $\rho \in (0,1/4)$ gives (\ref{lemma4_12_decay}).

To see (\ref{kappa-cont-1}), take two points $z_{1}, z_{2} \in B_{1/2}^{n-2}(0)$ with $0< |z_{1} - z_{2}| < 1/8$, and apply 
(\ref{kappa-cont}) with $z = z_{j}$, $j=1, 2$ and $\r = 2|z_{1} - z_{2}|$ to deduce, with the help of the triangle inequality, that 

\begin{eqnarray*}
&&|z_{1} - z_{2}|^{-n-2\a +\s}\int_{B_{|z_{1} - z_{2}|}(0, z_{1})} |(\lambda_{1}(z_{1}) - \lambda_{1}(z_{2})) D_{1}\varphi^{(0)}(X) + 
(\lambda_{2}(z_{1}) - \lambda_{2}(z_{2}))D_{2}\varphi^{(0)}(X)|^{2}\nonumber\\ 
&&\hspace{3in}\leq C\int_{B_{1}(0)} |w(X, \varphi^{(0)}(X)|^{2}
\end{eqnarray*}
where $C = C(n, \b_{1}, \s) \in (0, \infty).$ In view of $L^{2}(S^{1})$ orthogonality of $\left.D_{1}\varphi^{(0)}\right|_{r=1}$ and 
$\left.D_{2}\varphi^{(0)}\right|_{r=1}$, and the hypothesis that ${\rm sup}_{B^{n-2}_{1/2}(0)} \, \left(|\lambda_{1}(z)|^{2} + |\lambda_{2}(z)|^{2} \right)\leq \b_{2}\int_{B_1(0)} |w(X,\varphi^{(0)}(X))|^2$, 
this readily implies (\ref{kappa-cont-1}). 

Now suppose the lemma is false. Then for every integer $k \geq 1$ there is $w_k \in \mathcal{L}^{\perp}$ and functions $\lambda_{1, k}, \lambda_{2, k} \, : \, B^{n-2}_{1/2}(0) \to {\mathbb R}$ 
satisfying the hypotheses of the lemma with $w_{k}, \lambda_{k, 1}, \lambda_{k, 2}$ in place of $w, \lambda_{1}, \lambda_{2}$ and yet 
\begin{equation} \label{kappa-cont-2-2}
	\int_{B_1(0)} |w_{k}(X,\varphi^{(0)}(X))|^2 
	> k \int_{B_1(0) \setminus B_{1/4}(0)} \left| \frac{\partial}{\partial R} \left( \frac{w_{k}(X,\varphi^{(0)}(X))}{R^{\alpha}} \right) \right|^2. 
\end{equation}
By scaling we may suppose that 
\begin{equation*}
	\int_{B_1(0)} |w_k(X,\varphi^{(0)}(X))|^2 = 1
\end{equation*}
for all $k$.  By (\ref{lemma4_12_decay}) with $w_k$ in place of $w$, 
\begin{equation} \label{lemma4_12_eqn1}
	\int_{B_{1/2}(0) \cap \{ |x| \leq \delta \}} |w_k(X,\varphi^{(0)}(X))|^2 \leq C (\beta_1 + \b_{2}) \delta^2  
\end{equation}
for each $\d \in (0, 1/4)$ and some constant $C = C(n) \in (0,\infty)$.  
By standard elliptic estimates, $\sup_k \|w_k\|_{C^3(K)} < \infty$ for every compact subset $K$ of $B_1(0) \setminus \{0\} \times B_{1}^{n-2}(0)$ and hence there exists a two valued function $w = w(X, \varphi^{(0)}(X))$ in $C^{2} \, (B_{1}(0) \setminus \{0\} \times {\mathbb R}^{n-2})$ such that after passing to a subsequence, $w_k$ converges to $w$ in $C^2(K)$ for every compact subset $K$ of  $B_1(0) \setminus \{0\} \times B_{1}^{n-2}(0)$ and $\Delta w(X,\varphi^{(0)}(X)) = 0$ in $B_1(0) \setminus \{0\} \times B^{n-2}_{1}(0)$.  By (\ref{lemma4_12_eqn1}), $w_k(X, \varphi^{(0)}(X))$ in fact converges to $w(X, \varphi^{(0)}(X))$ in $L^{2}(B_{1/2}(0)).$ Since for $j=1, 2$ and all $k= 1, 2, 3, \ldots,$
$$\sup_{B_{1/2}^{n-2}(0)} |\lambda_{k, j}(z)|^{2} + 
\sup_{z_{1}, z_{2} \in B_{1/2}^{n-2}(0), \, z_{1} \neq z_{2}} \frac{|\lambda_{k, j}(z_{1}) - \lambda_{k, j}(z_{2})|^{2}}{|z_{1} - z_{2}|^{2 - \s}} \leq C,$$ 
where $C = C(n, \b_{1}, \b_{2}, \s) \in (0, \infty)$, it follows that  $w$ satisfies 

\begin{equation} \label{kappa-cont-3}
	\int_{B_{1/4}((0, z))} \frac{|w(X,\varphi^{(0)}(X)) - (\lambda_{1}(z)D_{1}\varphi^{(0)}(X) + \lambda_{2}(z) D_{2}\varphi^{(0)}(X))|^2}{|X - (0, z)|^{n+2\alpha-\sigma}} 
	\leq \beta_1 
\end{equation}
for each $z \in B_{1/2}^{n-2}(0)$ and some functions  $\lambda_{1}, \lambda_{2} \, : \, B^{n-2}_{1/2}(0) \to {\mathbb R}$ with 
$${\rm sup}_{B^{n-2}_{1/2}(0)} \, \left(|\lambda_{1}(z)|^{2} + |\lambda_{2}(z)|^{2} \right)\leq \b_{2}.$$
This implies, by the argument leading to (\ref{lemma4_12_decay}), that 
\begin{equation*}
	\int_{B_{1/2}(0)} \frac{|w(X,\varphi^{(0)}(X)) - \kappa(X,\varphi^{(0)}(X))|^2}{r^{2+2\alpha-\sigma}} < C
\end{equation*}
for some constant $C = C(\b_{1}) \in (0, \infty)$ and some function $\kappa$ of the form (\ref{kappa-cont-1-1}) with $\kappa_1, \kappa_2 \in L^{\infty}(B_1(0);\mathbb{R}^m)$.  By (\ref{kappa-cont-2-2}), $w(X,\varphi^{(0)}(X))$ is homogeneous degree $\alpha$ in $\left(B_1(0) \setminus B_{1/4}(0)\right) \setminus \{0\} \times {\mathbb R}^{n-2}$.  Since $\Delta w(X,\varphi^{(0)}(X)) = 0$ in $B_1(0) \setminus \{0\} \times B^{n-2}_1(0)$, by unique continuation $w(X,\varphi^{(0)}(X))$ is homogeneous degree $\alpha$ in $B_1(0) \setminus \{0\} \times {\mathbb R}^{n-2}$.  Thus by Lemma \ref{lemma4_2}, $w \in \mathcal{L}$.  Since $w_{k} \in {\mathcal L}^{\perp}$, we then have that 
$\int_{B_{1}(0)} w_{k}(X, \varphi^{(0)}(X)) \cdot w(X, \varphi^{(0)}(X)) = 0$ and thus 
for each $\s \in (1/2, 1)$ and $\d \in (0, 1/2)$, 
\begin{eqnarray*}
&&\int_{B_{\s}(0) \setminus B_{\d}^{2}(0) \times {\mathbb R}^{n-2}}w_{k}(X, \varphi^{(0)}(X)) \cdot w (X, \varphi^{(0)}(X))\nonumber\\
&&\hspace{1in} = -\int_{B_{\s}(0) \cap B_{\d}^{2}(0) \times {\mathbb R}^{n-2}}w_{k}(X, \varphi^{(0)}(X)) \cdot w (X, \varphi^{(0)}(X))\nonumber\\
&&\hspace{2in}-  \int_{B_{1}(0) \setminus B_{\s}(0)}w_{k}(X, \varphi^{(0)}(X)) \cdot w (X, \varphi^{(0)}(X))
\end{eqnarray*}
whence
\begin{eqnarray*}
&&\left|\int_{B_{\s}(0) \setminus B_{\d}^{2}(0) \times {\mathbb R}^{n-2}}w_{k}(X, \varphi^{(0)}(X)) \cdot w (X, \varphi^{(0)}(X))\right|\nonumber\\ 
&&\hspace{1in}\leq \left(\int_{B_{\s}(0) \cap B_{\d}^{2}(0) \times {\mathbb R}^{n-2}}|w|^{2}\right)^{1/2} + \left(\int_{B_{1}(0) \setminus B_{\s}(0)}|w|^{2}\right)^{1/2};
\end{eqnarray*}
letting $k \to \infty$ first and then $\s \to 1$, $\d \to 0^{+}$ in this, we conclude that $w \equiv 0$.  Thus $w_k \rightarrow 0$ in $L^2(\op{graph} \varphi^{(0)} \cap B_{1/2}(0) \times \mathbb{R}^m)$.

Now observe that for $\rho_1,\rho_2 \in (1/4,1)$ and $\omega \in S^{n-1}$, 
\begin{equation*}
	\left| \frac{w_k(\rho_1 \omega,\varphi^{(0)}(\rho_1 \omega))}{\rho^{\alpha}} - \frac{w_k(\rho_2 \omega,\varphi^{(0)}(\rho_2 \omega))}{\rho_2^{\alpha}} \right| 
	\leq \int_{1/4}^1 \left| \frac{\partial}{\partial R} \left( \frac{w_k(R \omega,\varphi^{(0)}(R \omega))}{R^{\alpha}} \right) \right| .
\end{equation*}
Thus by the triangle inequality and Cauchy-Schwartz, 
\begin{equation*}
	|w_k(\rho_1 \omega,\varphi^{(0)}(\rho_1 \omega))|^2 \leq C \left( |w_k(\rho_2 \omega,\varphi^{(0)}(\rho_2 \omega))|^2 
		+ \int_{1/4}^1 \left| \frac{\partial}{\partial R} \left( \frac{w_k(R \omega,\varphi^{(0)}(R \omega))}{R^{\alpha}} \right) \right|^2 \right) 
\end{equation*}
for some constant $C = C(\alpha) \in (0,\infty)$.  Multiply both sides by $\rho_1^{n-1} \rho_2^{n-1}$ and integrate over $\omega \in S^{n-1}$, $\rho_1 \in (1/4,1)$, and $\rho_2^{n-1}$ and integrate over $\rho_2 \in (1/4,1/2)$ to get 
\begin{align*}
	&\int_{B_1(0) \setminus B_{1/4}(0)} |w_k(X,\varphi^{(0)}(X))|^2 
	\\&\leq C \left( \int_{B_{1/2}(0) \setminus B_{1/4}(0)} |w_k(X,\varphi^{(0)}(X))|^2 
		+ \int_{B_1(0) \setminus B_{1/2}(0)} \left| \frac{\partial}{\partial R} \left( \frac{w_k(X,\varphi^{(0)}(X))}{R^{\alpha}} \right) \right|^2 \right) .
\end{align*}
for some constant $C = C(n,\alpha) \in [1,\infty)$.  Thus 
\begin{align*}
	&1 = \int_{B_1(0)} |w_k(X,\varphi^{(0)}(X))|^2 
	\\&\leq C \left( \int_{B_{1/2}(0)} |w_k(X,\varphi^{(0)}(X))|^2 
		+ \int_{B_1(0) \setminus B_{1/2}(0)} \left| \frac{\partial}{\partial R} \left( \frac{w_k(X,\varphi^{(0)}(X))}{R^{\alpha}} \right) \right|^2 \right) ,
\end{align*}
which yields a contradiction since the right-hand side converges to zero as $k \rightarrow \infty$. 
\end{proof}

In the next lemma, we shall use the following notation: Given $w \in L^2(\op{graph} \varphi^{(0)}|_{B_{1}(0)}; {\mathbb R}^{m}, p^{\star} \, {\mathcal L}^{n})$ and $\rho \in (0,1)$, we let $\psi_{\rho} \in \mathcal{L}$ be such that 
\begin{equation*}
	\int_{B_{\rho}(0)} |w(X,\varphi^{(0)}(X)) - \psi_{\rho}(X,\varphi^{(0)}(X))|^2 
	= \inf_{\psi \in \mathcal{L}} \int_{B_{\rho}(0)} |w(X,\varphi^{(0)}(X)) - \psi(X,\varphi^{(0)}(X))|^2 
\end{equation*}
and let 
\begin{equation*}
	w_{\rho} = w - \psi_{\rho}. 
\end{equation*}
By standard Hilbert space theory, $\psi_{\rho}$ exists and $w_{\rho}$ is orthogonal to ${\mathcal L}$ in the Hilbert space $L^{2}(\op{graph} \varphi^{(0)}|_{B_{\r}(0)}; {\mathbb R}^{m}, p^{\star} \, {\mathcal L}^{n})$. 

\begin{lemma} \label{lemma4_14} 
Let $\varphi^{(0)} : \mathbb{R}^n \rightarrow \mathcal{A}_2(\mathbb{R}^m)$ be a two-valued function given by (\ref{varphi0}) for some $c^{(0)} \in \mathbb{C}^m \setminus \{0\}$ and $\alpha \in \{1/2,1,3/2,2,\ldots\}$.  Let $\vartheta \in (0,1/8)$, $\sigma \in (0,1)$, and $\beta_1, \beta_2 \in (0,\infty)$.  If $w \in C^2(\op{graph} \varphi^{(0)} |_{B_1(0) \setminus \{0\} \times \mathbb{R}^{n-2}};\mathbb{R}^m) \cap L^2(\op{graph} \varphi^{(0)} |_{B_1(0)};\mathbb{R}^m)$ such that $w(X,\varphi^{(0)}(X))$ is a harmonic symmetric 2-valued function on $B_1(0) \setminus \{0\} \times \mathbb{R}^{n-2}$ and for each $\rho \in [\vartheta,1/4]$ and $z \in B^{n-2}_{\r/2}(0)$,
\begin{eqnarray*} 
	&&\hspace{-.7in}\r^{n+2\a - \s}\int_{B_{\r/4}((0, z))} \frac{|w_{\r}(X,\varphi^{(0)}(X)) - (\r\lambda_{1, \, \r}(z)D_{1}\varphi^{(0)}(X) + \r\lambda_{2, \, \r}(z) D_{2}\varphi^{(0)}(X))|^2}{|X - (0, z)|^{n+2\alpha-\sigma}}\nonumber\\ 
	&&\hspace{3in}\leq \beta_1 \int_{B_{\r}(0)} |w_{\r}(X,\varphi^{(0)}(X))|^2
\end{eqnarray*}
for some  $\lambda_{1, \rho}, \lambda_{2, \rho} \in L^{\infty}(B^{n-2}_{\r/2}(0);\mathbb{R}^m)$ with 
\begin{equation*} 
	|\lambda_{1, \rho}(z)|^2 + |\lambda_{2, \rho}(z)|^2 \leq \beta_2 \rho^{-n- 2\a} \int_{B_{\rho}(0)} |w_{\rho}(X,\varphi^{(0)}(X))|^2, 
\end{equation*}
\begin{equation} \label{lemma4_14_eqn1}
	\int_{B_{\rho/4}(0)} R^{2-n} \left| \frac{\partial}{\partial R} \left( \frac{w(X,\varphi^{(0)}(X))}{R^{\alpha}} \right) \right|^2 
	\leq \beta_2 \r^{-n-2\a}\int_{B_{\rho}(0)} |w_{\rho}(X,\varphi^{(0)}(X))|^2, 
\end{equation}
and (\ref{lemma4_2_alphaishalf}) holds if $\alpha = 1/2$, then 
\begin{equation} \label{lemma4_14_eqn2}
	\vartheta^{-n-2\alpha} \int_{B_{\vartheta}(0)} |w_{\vartheta}(X,\varphi^{(0)}(X))|^2 \leq C \vartheta^{2\mu} \int_{B_1(0)} |w_1(X,\varphi^{(0)}(X))|^2
\end{equation}
for some constants $\mu \in (0,1)$ and $C \in (0, \infty)$ depending only on $n$, $\alpha$, $\varphi^{(0)}$, $\sigma$, $\beta_1$, $\beta_2$.  Note that in particular $C$ and $\mu$ are independent of $\vartheta$.  
\end{lemma}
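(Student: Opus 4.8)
## Proof strategy for Lemma~\ref{lemma4_14}

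\textbf{Overall approach.} This is the classical ``decay estimate implies improvement at a smaller scale'' lemma in the style of Simon's Lemma~1 of \cite{SimonCTC}. The heart of the matter is that Lemma~\ref{lemma4_12} gives a \emph{reverse Poincar\'e/gap} inequality comparing the full $L^2$ norm of a function orthogonal to $\mathcal{L}$ with the $L^2$ norm of its radial homogeneity defect $\partial_R(w/R^\alpha)$ on an annulus, while hypothesis (\ref{lemma4_14_eqn1}) bounds that homogeneity defect on \emph{every} dyadic ball $B_{\rho/4}(0)$ by the $L^2$ norm of $w_\rho$ on $B_\rho(0)$. Combining these across scales $\rho = \vartheta^j$ forces geometric decay. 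So the plan is: (1) fix the scale normalisation; (2) apply Lemma~\ref{lemma4_12} at a \emph{single} unit scale after rescaling; (3) iterate to get the decay at scale $\vartheta$.

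\textbf{Step 1: rescaling and reduction.} For $\rho \in (0,1]$, write $w^{(\rho)}(X,\varphi^{(0)}(X)) = \rho^{-\alpha} w(\rho X, \varphi^{(0)}(\rho X))$; because $\varphi^{(0)}$ is homogeneous of degree $\alpha$, $w^{(\rho)}(X,\varphi^{(0)}(X))$ is again a harmonic symmetric two-valued function on $B_1(0)\setminus\{0\}\times\mathbb{R}^{n-2}$, the hypothesis (\ref{lemma4_2_alphaishalf}) is scale-invariant, and the quantity $\rho^{-n-2\alpha}\int_{B_\rho(0)}|w_\rho|^2$ equals $\int_{B_1(0)}|(w^{(\rho)})_1|^2$ (with $\psi_1$ the $\mathcal{L}$-projection of $w^{(\rho)}$ on $B_1(0)$, which corresponds to $\rho^{-\alpha}\psi_\rho(\rho\,\cdot)$ since $\mathcal{L}$ is $\alpha$-homogeneity/scaling invariant). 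Similarly the hypotheses on $B_{\rho/4}((0,z))$ and (\ref{lemma4_14_eqn1}) become precisely the hypotheses of Lemma~\ref{lemma4_12} (with $w^{(\rho)}$, $\lambda_{j,\rho}(\rho z)$ in place of $w$, $\lambda_j$, and the same $\beta_1,\beta_2$), after noting $\int_{B_{1/4}(0)}R^{2-n}|\partial_R(w^{(\rho)}/R^\alpha)|^2 \le \beta_2 \int_{B_1(0)}|(w^{(\rho)})_1|^2$ and that the pointwise radial integrand controls the $L^2$-norm of $\partial_R(w^{(\rho)}/R^\alpha)$ on the annulus $B_1\setminus B_{1/4}$ up to the $R^{2-n}$ weight (which is bounded above and below there). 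Hence Lemma~\ref{lemma4_12} applies with constant $\beta = \beta(n,\alpha,\varphi^{(0)},\sigma,\beta_1,\beta_2)$.

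\textbf{Step 2: one-step improvement.} Applying Lemma~\ref{lemma4_12} to $w^{(\rho)}$ (more precisely to $(w^{(\rho)})_1 = w^{(\rho)} - \psi_1 \in \mathcal{L}^\perp$, which still satisfies the harmonicity and the decay hypotheses since subtracting an element of $\mathcal{L}$ — an $\alpha$-homogeneous harmonic function — changes neither) gives
\begin{equation*}
\int_{B_1(0)} |(w^{(\rho)})_1|^2 \le \beta \int_{B_1(0)\setminus B_{1/4}(0)} \left| \frac{\partial}{\partial R}\left( \frac{(w^{(\rho)})_1}{R^\alpha} \right) \right|^2 = \beta \int_{B_1(0)\setminus B_{1/4}(0)} \left| \frac{\partial}{\partial R}\left( \frac{w^{(\rho)}}{R^\alpha} \right) \right|^2,
\end{equation*}
the last equality because $\psi_1/R^\alpha$ is radially constant ($\psi_1$ being $\alpha$-homogeneous). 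Undoing the scaling, this reads $\rho^{-n-2\alpha}\int_{B_\rho(0)}|w_\rho|^2 \le C\rho^{-n-2\alpha}\int_{B_\rho(0)\setminus B_{\rho/4}(0)}R^{2-n}|\partial_R(w/R^\alpha)|^2$ (absorbing the $R^{2-n}$ weight which on the annulus is comparable to $\rho^{2-n}$). Now using the fact that $\psi_\vartheta$ is the \emph{minimizer}, $\int_{B_\vartheta}|w_\vartheta|^2 \le \int_{B_\vartheta}|w_\rho - (\text{its }\mathcal{L}\text{-projection on }B_\vartheta)|^2 \le \int_{B_\vartheta}|w_\rho|^2$ for any $\rho \ge \vartheta$; and more importantly, for $\rho_1 < \rho_2$ one has $\int_{B_{\rho_1}}|w_{\rho_2}|^2 \le (\rho_1/\rho_2)^{n+2\alpha}\cdot(\text{const})\int_{B_{\rho_2}}|w_{\rho_2}|^2$ would be too crude; instead combine the one-step estimate with (\ref{lemma4_14_eqn1}) itself. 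The cleanest route: let $E(\rho) = \rho^{-n-2\alpha}\int_{B_\rho(0)}|w_\rho|^2$. The one-step estimate bounds $E(\rho)$ by the annular homogeneity defect; telescoping the annular defects $\sum_{j\ge 0}\int_{B_{\vartheta^j}\setminus B_{\vartheta^{j+1}}} R^{2-n}|\partial_R(w/R^\alpha)|^2 = \int_{B_1} R^{2-n}|\partial_R(w/R^\alpha)|^2$, which by (\ref{lemma4_14_eqn1}) at $\rho=1$ is $\le \beta_2 E(1)$, shows $E(\vartheta^j)$ is summable; a standard iteration argument (exactly as in \cite{SimonCTC}, deriving $E(\vartheta\rho) \le \frac{1}{2}E(\rho) + C\int_{B_\rho\setminus B_{\vartheta\rho}}R^{2-n}|\partial_R(w/R^\alpha)|^2$ after choosing $\vartheta$ small relative to $1/\beta$, hence $E(\vartheta^{j+1}) \le \frac12 E(\vartheta^j) + (\text{telescoping term})$) then yields $E(\vartheta) \le C\vartheta^{2\mu} E(1)$ with $\mu \in (0,1)$ and $C$ depending only on the listed quantities and \emph{not} on $\vartheta$ once $\vartheta < \vartheta_0(n,\alpha,\varphi^{(0)},\sigma,\beta_1,\beta_2)$; for $\vartheta \in [\vartheta_0, 1/8)$ the conclusion is trivial by crudely enlarging $C$. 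Since $E(\vartheta) = \vartheta^{-n-2\alpha}\int_{B_\vartheta}|w_\vartheta|^2$ and $E(1) = \int_{B_1}|w_1|^2$, this is exactly (\ref{lemma4_14_eqn2}).

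\textbf{Expected main obstacle.} The genuinely delicate point is verifying that the rescaled function $w^{(\rho)}$ (or rather $(w^{(\rho)})_1$) actually satisfies the hypothesis (\ref{lemma4_12_decay-0}) of Lemma~\ref{lemma4_12} with $\beta_1$ \emph{unchanged} — in particular that replacing $w$ by $w_\rho$ does not destroy the local decay bounds near the axis, and that the affine ``$\lambda$-correction'' functions transform correctly under the rescaling and the subtraction of $\psi_\rho$. One must check that $\psi_\rho$, being a sum of $r^\alpha\cos\alpha\theta$, $r^\alpha\sin\alpha\theta$, and $D_i\varphi^{(0)}\,y_j$ terms, contributes to the local expansion near $(0,z)$ only through terms already of the form $\lambda_i(z)D_i\varphi^{(0)}$ plus higher-order remainders absorbed into the $|X-(0,z)|^{n+2\alpha-\sigma}$-weighted integral — this is where the structure of $\mathcal{L}$ (and the fact that $D_i\varphi^{(0)}(X)y_j$ restricted near the axis is, to leading order in $|X-(0,z)|$, a multiple of $D_i\varphi^{(0)}(X)$ with coefficient $z_j$) must be used carefully. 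The iteration itself and the scaling bookkeeping are routine once this is in place.
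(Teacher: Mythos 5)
Your Step 1 (rescaling so that Lemma \ref{lemma4_12} applies at each scale $\rho\in[\vartheta,1/4]$) is exactly the first half of the paper's argument, and the ``main obstacle'' you flag is in fact a non-issue: the hypotheses of the lemma are already phrased for $w_\rho$ itself, and $\mathcal{L}$ is invariant under $X\mapsto\rho X$, $w\mapsto\rho^{-\alpha}w(\rho\,\cdot)$, so no analysis of how $\psi_\rho$ interacts with the $\lambda$-corrections near the axis is required. The genuine gap is in your iteration. The recursion you invoke, $E(\vartheta\rho)\le\tfrac12 E(\rho)+C\int_{B_\rho\setminus B_{\vartheta\rho}}R^{2-n}|\partial_R(w/R^\alpha)|^2$, is not derived from the available estimates; it needs ``$\vartheta$ small relative to $1/\beta$'' even though $\vartheta$ is given and the constants must be $\vartheta$-independent; and, most importantly, even granted it cannot yield (\ref{lemma4_14_eqn2}): a single step from scale $1$ to scale $\vartheta$ gives only a fixed factor $\tfrac12$ plus an additive defect term of size $C\beta_2\int_{B_1}|w_1|^2$, with no smallness as $\vartheta\to0$, while iterating at scales $\vartheta^j$, $j\ge2$, is both pointless for a conclusion at scale $\vartheta$ and illegitimate, since the hypotheses are only assumed for $\rho\in[\vartheta,1/4]$. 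Summability of the annular defects likewise carries no rate.

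What is missing is the hole-filling step, which is where $\vartheta^{2\mu}$ actually comes from. Combining the (rescaled) conclusion of Lemma \ref{lemma4_12} with hypothesis (\ref{lemma4_14_eqn1}) at the \emph{same} scale $\rho$ gives
\begin{equation*}
\int_{B_{\rho/4}(0)}R^{2-n}\left|\frac{\partial}{\partial R}\left(\frac{w}{R^{\alpha}}\right)\right|^{2}
\le\beta\beta_{2}\int_{B_{\rho}(0)\setminus B_{\rho/4}(0)}R^{2-n}\left|\frac{\partial}{\partial R}\left(\frac{w}{R^{\alpha}}\right)\right|^{2},
\end{equation*}
and adding $\beta\beta_{2}$ times the left-hand side to both sides yields $\int_{B_{\rho/4}}\le\gamma\int_{B_{\rho}}$ (same integrand) with $\gamma=\beta\beta_{2}/(1+\beta\beta_{2})\in(0,1)$. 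Iterating this at the \emph{fixed} ratio $4$ through the scales $\rho=4^{-j}\in[\vartheta,1/4]$ — all of which lie in the hypothesis range — gives geometric decay of the homogeneity defect down to scale $\vartheta$, with exponent $2\mu=-\log\gamma/\log 4$ independent of $\vartheta$; one then concludes by one more application of the rescaled Lemma \ref{lemma4_12} at $\rho=\vartheta$ together with (\ref{lemma4_14_eqn1}) at the top scale. In short, the decay in $\vartheta$ is carried by the defect integral $\int R^{2-n}|\partial_R(w/R^\alpha)|^2$, not by a contraction of $E(\rho)$ at ratio $\vartheta$, and the iteration must run at a fixed ratio through intermediate scales rather than jumping from $1$ to $\vartheta$.
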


\begin{proof} 
By Lemma \ref{lemma4_12}, 
\begin{equation} \label{lemma4_14_eqn3}
	\rho^{-n-2\alpha} \int_{B_{\rho}(0)} |w_{\rho}(X,\varphi^{(0)}(X))|^2 
	\leq \beta \int_{B_{\rho}(0) \setminus B_{\rho/4}(0)} R^{2-n} 
		\left| \frac{\partial}{\partial R} \left( \frac{w(X,\varphi^{(0)}(X))}{R^{\alpha}} \right) \right|^2 
\end{equation}
for some $\beta = \beta(n,\alpha,\varphi^{(0)},\sigma,\beta_1,\beta_2) \in (0,\infty)$.  By (\ref{lemma4_14_eqn1}) and (\ref{lemma4_14_eqn3}), 
\begin{equation} \label{lemma4_14_eqn4}
	\int_{B_{\rho/4}(0)} R^{2-n} \left| \frac{\partial}{\partial R} \left( \frac{w(X,\varphi^{(0)}(X))}{R^{\alpha}} \right) \right|^2 
	\leq \beta \beta_2 \int_{B_{\rho}(0) \setminus B_{\rho/4}(0)} R^{2-n} 
		\left| \frac{\partial}{\partial R} \left( \frac{w(X,\varphi^{(0)}(X))}{R^{\alpha}} \right) \right|^2 . 
\end{equation}
Adding $\beta \beta_2$ times the left-hand side of (\ref{lemma4_14_eqn4}) to both sides of (\ref{lemma4_14_eqn4}) we get 
\begin{equation} \label{lemma4_14_eqn5}
	\int_{B_{\rho/4}(0)} R^{2-n} \left| \frac{\partial}{\partial R} \left( \frac{w(X,\varphi^{(0)}(X))}{R^{\alpha}} \right) \right|^2 
	\leq \gamma \int_{B_{\rho}(0)} R^{2-n} \left| \frac{\partial}{\partial R} \left( \frac{w(X,\varphi^{(0)}(X))}{R^{\alpha}} \right) \right|^2 
\end{equation}
for $\gamma = \beta \beta_2/(1+\beta \beta_2) \in (0,1)$.  Iterating (\ref{lemma4_14_eqn5}) with $\rho = 4^{-j}$ for $j = 1,\ldots,k$, where $4^{-k-1} < \vartheta \leq 4^{-k}$, we get 
\begin{equation*}
	\int_{B_{4^{-k}}(0)} R^{2-n} \left| \frac{\partial}{\partial R} \left( \frac{w(X,\varphi^{(0)}(X))}{R^{\alpha}} \right) \right|^2 
	\leq \gamma^k \int_{B_{1/4}(0)} R^{2-n} \left| \frac{\partial}{\partial R} \left( \frac{w(X,\varphi^{(0)}(X))}{R^{\alpha}} \right) \right|^2, 
\end{equation*}
and thus 
\begin{equation} \label{lemma4_14_eqn7}
	\int_{B_{\vartheta}(0)} R^{2-n} \left| \frac{\partial}{\partial R} \left( \frac{w(X,\varphi^{(0)}(X))}{R^{\alpha}} \right) \right|^2 
	\leq\vartheta^{2\mu} \int_{B_{1/4}(0)} R^{2-n} \left| \frac{\partial}{\partial R} \left( \frac{w(X,\varphi^{(0)}(X))}{R^{\alpha}} \right) \right|^2, 
\end{equation}
where $2\mu = -\log \g/\log 4.$  By combining (\ref{lemma4_14_eqn1}), (\ref{lemma4_14_eqn3}), and (\ref{lemma4_14_eqn7}), we conclude (\ref{lemma4_14_eqn2}). 
\end{proof}

\section{Proofs of the main results} \label{prooflemma_section}
\setcounter{equation}{0}
We can now complete the proofs of Lemma~\ref{lemma1} and Theorems~\ref{theorem2}, ~\ref{theorem3}, \ref{no-gaps} arguing more or less as in \cite{SimonCTC}. We begin with Lemma~\ref{lemma1}.

\begin{proof}[Proof of Lemma~\ref{lemma1}] Let $\vartheta \in (0,1/4)$ and $\varphi^{(0)}$ be fixed as in the lemma.  Consider case (a), i.e. the case that $\varphi^{(0)}$ is Dirichlet energy minimizing. The proof of the lemma in case (b), i.e. when $\varphi^{(0)} \in C^{1, 1/2}$ with $D\varphi^{(0)}(0) = \{0, 0\}$ requires only the obvious changes. Let $0 < \varepsilon_j \leq \delta_j \downarrow 0$, $u_j \in \mathcal{F}^{\text{Dir}}_{\varepsilon_j}(\varphi^{(0)})$ be such that alternative (i) in Lemma \ref{lemma1} is false with $\delta_j$, $u_j$ in place of $\delta_{0}$, $u$ respectively, and let $\varphi_j \in \widetilde{\Phi}_{\varepsilon_j}(\varphi^{(0)})$ .  We want to show that for some constants $\gamma \in [1,\infty)$, $\mu \in (0,1)$ and $C \in (0,\infty)$ depending only on $n$, $m$, and $\vartheta$, and for infinitely many $j$, there exists $\widetilde{\varphi}_j \in \widetilde{\Phi}_{\gamma \varepsilon_j}(\varphi^{(0)})$ such that 
\begin{equation} \label{lemma1_eqn1} 
	\vartheta^{-n-2\a}\int_{B_{\vartheta}(0)} \mathcal{G}(u_j,\widetilde{\varphi}_j)^2 \leq C\vartheta^{2\mu} \int_{B_1(0)} \mathcal{G}(u_j,\varphi_j)^2.
\end{equation}
 By the arbitrariness of the sequences this will complete the proof of 
Lemma~\ref{lemma1}.  Observe that by definition $\varphi_{j}^{\prime}  \equiv \varphi_j(e^{-A_j}(\cdot) ) \in \Phi_{\varepsilon_j}(\varphi^{(0)})$ for some matrix $A_j \in \mathcal{S}$ with $|A_j| \leq \varepsilon_j$. 

Let $\beta_0 = \beta_0(\varphi^{(0)}) > 0$ be as in Theorem \ref{thm6_2} and Corollary \ref{cor6_6} hold.  Let $\tau_j \downarrow 0$ slowly enough that Lemma~\ref{coarse_graph} and Theorem~\ref{thm6_2} hold with $\gamma = 3/4$, $\tau = \tau_j$, $\beta = \beta_0$, $\sigma = 1/2$, $u=u_{j}$, and $\varphi = \varphi_j^{\prime}$.   By Lemma~\ref{coarse_graph} we have an open set 
$U_{j} \subset B_{1}(0) \setminus \{0\} \times {\mathbb R}^{n-2}$ and $v_j \in C^2(\op{graph} \varphi_{j}^{\prime} |_{U_j},\mathbb{R}^m)$ such that $U_j \supset \{ (x,y) \in B_{3/4}(0) \cap \{ |x| \geq \tau_j \}$, 
\begin{equation*}
	u_j(e^{-A_j} X) = \{ \varphi_{j,1}^{\prime}(X) + v_j(X,\varphi_{j,1}^{\prime}(X)), -\varphi_{j,1}^{\prime}(X) + v_j(X,-\varphi_{j,1}^{\prime}(X)) \} 
\end{equation*} 
for $X \in U_j$ and $v_j(X,\varphi_{j}^{\prime}(X))$ is a harmonic two-valued function, where we have written 
$\varphi_{j}^{\prime} = \{\pm\varphi_{j, 1}^{\prime}\}$ locally away from $\{0\} \times {\mathbb R}^{n-2}$ for a single-valued harmonic function $\varphi_{j, 1}^{\prime}$. Assume $\tau_j \downarrow 0$ slowly enough that $\varepsilon_j/\tau_j^{3/2} \rightarrow 0$ and let $\rho \in [\vartheta,1/2]$. By Theorem~\ref{thm6_2} and Corollary~\ref{cor6_6} (taken with $\rho^{-2\alpha} u_j(\rho e^{-A_j} X)$ in place of $u$ and $\varphi_j^{\prime}$ in place of $\varphi$), we have that 
 \begin{equation} \label{lemma1_eqn3} 
	 \int_{B_{\rho/2}(0)} R^{2-n} \left| \frac{\partial (u_j/R^{\alpha})}{\partial R} \right|^2 
	\leq C \rho^{-n-2\alpha} \int_{B_{\r}(0)} \mathcal{G}(u_j,\varphi_j)^2 
\end{equation}
for $C = C(n,m,\varphi^{(0)},\alpha) \in (0,\infty)$, and 
\begin{equation} \label{lemma1_eqn4} 
		 \rho^{1/2} \int_{B_{\rho/2}(0)} \frac{\mathcal{G}(u_j,\varphi_j)^2}{r_{\delta\r}^{1/2}} \leq 
		C\int_{B_{\rho}(0)} \mathcal{G}(u_j,\varphi_j)^2. 
\end{equation}
for $\delta \geq 2\delta_j/\vartheta,$ where $C = C(n,m,\varphi^{(0)},\alpha) \in (0,\infty)$, $r = |x|$ and$r_{\delta} = \max\{r,\delta\}$. Moreover, for each $z \in B_{\r/2}^{n-2}(0)$, there exists 
$Z_{j}  = (\xi_{j}, \z_{j}) \in B_{\d_{j}}^{n}((0, z))$ with ${\mathcal N}_{u_{j}}(Z_{j}) \ge \alpha$, so that by Corollary~\ref{cor6_4},  
\begin{equation}\label{lemma1_eqn4-0}
\r^{-2}|\xi_{j}|^{2} \leq C\r^{-n-2\a}\int_{B_{\r}(0)} {\mathcal G} \, (u_{j}, \varphi_{j})^{2}
\end{equation}
and by Corollary~\ref{cor6_5}, 
\begin{equation}\label{lemma1_eqn4-1}
\r^{n+2\a-\s}\int_{B_{\r/2}((0,  z)) \cap \{ |x| > \vartheta^{-1}\tau_{j}\r \}} \frac{|v_{j}(X,\varphi_{j}(X)) -  D_x \varphi_{j}(X) \cdot \xi_{j}|^2}{|X-\r Z_{j}|^{n+2\alpha-\sigma}} 
	\leq C_{1} \int_{B_{\r}(0)} \mathcal{G}(u_{j},\varphi_{j})^2. 
\end{equation}
Note that in particular the constants $C$, $C_{1}$ in (\ref{lemma1_eqn3})--(\ref{lemma1_eqn4-1}) are independent of $\vartheta$. 

Next let 
\begin{equation*}
	w_j = v_j/E_j \hspace{3mm} \text{where} \hspace{3mm} E_j = \left( \int_{B_1(0)} \mathcal{G}(u_j,\varphi_j)^2 \right)^{1/2}.
\end{equation*}
By standard estimates for harmonic functions, for every compact set $K \subseteq B_1(0) \setminus \{0\} \times \mathbb{R}^{n-2}$, 
\begin{equation} \label{lemma1_eqn6} 
	\|w_j(X,\varphi_{j}^{\prime}(X))\|_{C^3(K)} \leq C
\end{equation}
for $j$ sufficiently large depending on $K$, where $C = C(n,m,K) \in (0,\infty)$.  Recall that $\varphi^{(0)}(re^{i\theta},y) = \op{Re}(c^{(0)} r^{\alpha} e^{i\alpha \theta})$ and $\varphi_j^{\prime}(re^{i\theta},y) = \op{Re}(c_j r^{\alpha} e^{i\alpha \theta})$ for some constants $c^{(0)}, c_j \in \mathbb{C}^m \setminus \{0\}$ with 
\begin{equation} \label{lemma1_eqn7} 
	|c_j - c^{(0)}| \leq C \varepsilon_j
\end{equation}
for some constant $C = C(n,\alpha) \in (0,\infty)$ and thus 
\begin{equation} \label{lemma1_eqn8} 
	\varphi_j^{\prime}(X) = \{ \varphi_1^{(0)}(X) + \psi_j(X,\varphi_1^{(0)}(X)), -\varphi_1^{(0)}(X) + \psi_j(X,-\varphi_1^{(0)}(X)), 
\end{equation}
where $\varphi^{(0)}(X) = \{ \pm \varphi_1^{(0)}(X) \}$ and  $\psi_j \in C^2(\op{graph} \varphi^{(0)} |_{B_1(0) \setminus B_{\tau_j/2}^{2}(0) \times \mathbb{R}^{n-2}};\mathbb{R}^m)$ is defined by 
\begin{equation} \label{lemma1_eqn9} 
	\psi_j(re^{i\theta},y,\op{Re}(c^{(0)} r^{\alpha} e^{i\alpha \theta})) = \op{Re}((c_j - c^{(0)}) r^{\alpha} e^{i\alpha \theta}). 
\end{equation}
By (\ref{lemma1_eqn6}), (\ref{lemma1_eqn7}), (\ref{lemma1_eqn8}) and (\ref{lemma1_eqn9}), there exists $w \in C^{2} \, ({\rm graph} \, \varphi^{(0)} \setminus \{0\} \times {\mathbb R}^{n-2} \times {\mathbb R}; {\mathbb R}^{m})$ with  the associated two-valued function $w(X,\varphi^{(0)}(X))$ harmonic on $B_1(0) \setminus \{0\} \times \mathbb{R}^{n-2}$ such that $\psi_j+ w_j$ converges to $w$ in $C^2(\op{graph} \varphi^{(0)} |_K;\mathbb{R}^m)$ for each compact set $K \subseteq B_1(0) \setminus \{0\} \times \mathbb{R}^{n-2}.$ By (\ref{lemma1_eqn4}) with $\r = 1/2$
\begin{equation*} 
	\int_{B_{1/4}(0) \cap B^2_{\delta}(0) \times \mathbb{R}^{n-2}} \mathcal{G}(u_j,\varphi_j)^2 
	\leq C \delta^{1/2} \int_{B_{1/2}(0)} \mathcal{G}(u_j,\varphi_j)^2 \leq C \delta^{1/2} E_j^2
\end{equation*}
for $C\delta_j \leq \delta <1/2$, so $\psi_j+ w_j$ converges to $w$ in $L^2(\op{graph} \varphi^{(0)} |_{B_{1/2}(0)};\mathbb{R}^m)$ and 
\begin{equation} \label{lemma1_eqn10} 
	\lim_{j \rightarrow \infty} \int_{B_{\rho}(0)} E_j^{-2} \mathcal{G}(u_j,\varphi_j)^2 = \int_{B_{\rho}(0)} |w(X,\varphi^{(0)}(X))|^2
\end{equation}
for each $\r \in (0, 1/2]$. 

Now (\ref{lemma1_eqn3}) implies that for each $\rho \in [\vartheta,1/4]$, 
\begin{equation} \label{lemma1_eqn11} 
	\int_{B_{\rho/2}(0)} R^{2-n} \left| \frac{\partial (w(X,\varphi^{(0)}(X))/R^{\alpha})}{\partial R} \right|^2 
	\leq C \rho^{-n-2\alpha} \int_{B_1(0)} |w(X,\varphi^{(0)}(X))|^2 
\end{equation}
where $C = C(n,m,\varphi^{(0)},\alpha) \in (0,\infty)$. We claim that for  each $\r \in (\vartheta, 1/4]$ and $z \in B_{\r/2}^{n-2}(0)$, there exist $\lambda_{\r}(z) = (\lambda_{1, \r}(z), \lambda_{2, \r}(z)) \in {\mathbb R}^{2}$ such that  
\begin{equation}\label{lemma1_eqn11-0}
 |\lambda_{\r}(z)|^{2} \leq C\r^{-n-2\a}\int_{B_{\r}(0)}|w(X, \varphi^{(0)}(X)|^{2} \;\;\; {\rm and}
 \end{equation} 
\begin{equation}\label{lemma1_eqn11-1}
\r^{n+2\a-\s}\int_{B_{\r/2}((0, z))} \frac{|w(X,\varphi^{(0)}(X)) -  \r D_x \varphi^{(0)}(X) \cdot \lambda_{\r}(z)|^2}{|X-(0,  z)|^{n+2\alpha-\sigma}} 
	\leq C_{1} \int_{B_{\r}(0)} |w(X, \varphi^{(0)}(X))|^2 
\end{equation}
where $C = C(n, m, \varphi^{(0)}, \a) \in (0, \infty)$ and $C_{1} = C_{1}(n, m. \varphi^{(0)}, \a, \s) \in (0, \infty).$
Indeed, given $z \in B_{\r/2}^{n-2}(0)$, if we choose $Z_{j} = (\xi_{j}, \z_{j}) \in B_{\d_{j}}((0, z))$ with 
${\mathcal N}_{u_{j}}(Z_{j}) \geq \a$, then by (\ref{lemma1_eqn4-0}), after passing to a subsequence, $E_{j}^{-1}\xi_{j} \to \lambda(z)$ for some $\lambda(z) \in {\mathbb R}^{2}$ satisfying 
$$\r^{-2}|\lambda(z)|^{2} \leq C \r^{-n-2\a}\int_{B_{\r}(0)}|w(X, \varphi^{(0)}(X))|^{2},$$ and by 
(\ref{lemma1_eqn4-1}), 
$$\r^{n+2\a-\s}\int_{B_{\r/2}((0, z))} \frac{|w(X,\varphi^{(0)}(X)) -  D_x \varphi^{(0)}(X) \cdot \lambda(z)|^2}{|X-(0,  z)|^{n+2\alpha-\sigma}} 
	\leq C_{1} \int_{B_{\r}(0)} |w(X, \varphi^{(0)}(X))|^2,$$ 
with both inequalities valid for $\r \in (\vartheta, 1/4]$. Note that $\lambda(z)$ satisfying the preceding inequality is unique (and hence is independent of the sequence $\{Z_{j}\}$) since if the inequality also holds with $\lambda^{\prime}(z) \in {\mathbb R}^{2}$ in place of $\lambda(z)$, then by the triangle inequality we have that $\int_{B_{1/8}((0, z))} \frac{|D_{x}\varphi^{(0)}(X) \cdot (\lambda(z) - \lambda^{\prime}(z))|^2}{|X-(0,  z)|^{n+2\alpha-\sigma}} < \infty,$ which in terms of the co-ordinates $(r, \th, y^{\prime})$ where $X - (0, z) = (x, y - z) = (re^{i\th}, y^{\prime}) \in {\mathbb R}^{2} \times {\mathbb R}^{n-2}$, implies that 
 $$\int_{B_{1/4}^{n-2}(0)}\int_{0}^{1/4}\int_{0}^{4\pi}\frac{r^{2\a - 1}|g(\th) \cdot(\lambda(z) - \lambda^{\prime}(z))|^{2}}{(r^{2} + |y^{\prime}|^{2})^{(n+2\a - \s)/2}} \, d\th drdy^{\prime} < \infty$$
 where $g= \left.D_{x}\varphi^{(0)}\right|_{r=1} = (\left. D_{1}\varphi^{(0)}\right|_{r=1}, \left.D_{2}\varphi^{(0)}\right|_{r=1});$  in view of $L^{2}([0, 4\pi])$ orthogonality of $\left. D_{1}\varphi^{(0)}\right|_{r=1}$ and $\left.D_{2}\varphi^{(0)}\right|_{r=1},$ this is impossible unless 
 $\lambda = \lambda^{\prime}.$ Setting $\lambda_{\r}(z) = \r^{-1}\lambda(z)$, we obtain (\ref{lemma1_eqn11-0}) and (\ref{lemma1_eqn11-1}) as claimed. Note that the constants $C$, $C_{1}$ in (\ref{lemma1_eqn11}), (\ref{lemma1_eqn11-0}) and (\ref{lemma1_eqn11-1}) are independent of $\vartheta$. 

Now let $\gamma \geq 1$ arbitrary.  Observe that the estimates above all hold if we replace $\varphi_j \in \widetilde{\Phi}_{\varepsilon_j}(\varphi^{(0)})$ with $\widetilde{\varphi}_j \in \widetilde{\Phi}_{\gamma \varepsilon_j}(\varphi^{(0)})$.  In this case we get $\tilde v_j$, $\tilde w$ in place of $v_j$, $w$ respectively and for $j$ sufficiently large depending on $\gamma$, (\ref{lemma1_eqn10}) and (\ref{lemma1_eqn11}) hold with $\widetilde{\varphi}_{j}$, $\widetilde{w}$ in place of $\varphi_{j},$ $w$, and (\ref{lemma1_eqn11-0}), (\ref{lemma1_eqn11-1}) hold with some function $\widetilde{\lambda}$ in place of $\lambda$ and $\widetilde{w}$ in place of $w$.  Note in particular the constants in these estimates do not depend on $\gamma$.  Recall that $\mathcal{L}$ is the set of functions on the graph of $\left.\varphi^{(0)}\right|_{B_{1}(0)}$ with values in $\mathbb{R}^m$ spanned by $a_0 r^{\alpha} \cos(\alpha \theta)$ and $b_0 r^{\alpha} \sin(\alpha \theta)$ for $a_0,b_0 \in \mathbb{R}^m$ and $D_i \varphi^{(0)}(re^{i\theta},y) y_j$ for $i = 1,2$ and $j = 1,\ldots,n-2$.  Let $\psi \in \mathcal{L}$ with $\sup_{B_1(0)} |\psi| \leq \beta$ for some $\beta \in (0,\infty)$ to be determined, and write $\psi$ as 
\begin{equation*}
	\psi((re^{i\theta},y),\op{Re}(c^{(0)} r^{\alpha} e^{i\alpha \theta})) 
	= \op{Re} (a r^{\alpha} e^{i\alpha \theta} + \alpha b_1 \cdot y c^{(0)} r^{\alpha-1} e^{i(\alpha-1) \theta} 
		+ i\alpha b_2 \cdot y c^{(0)} r^{\alpha-1} e^{i(\alpha-1) \theta})
\end{equation*} 
for some $a \in \mathbb{C}^m$ and $b_1,b_2 \in \mathbb{R}^{n-2}$.  Let $B = (B_{ij}) \in \mathcal{S}$ be the skew symmetric matrix whose first row is $(0,0,b_1)$ and second row is $(0,0,b_2)$.  Then we can find $\widetilde{\varphi}_j \in \widetilde{\Phi}_{\gamma \varepsilon_j}(\varphi^{(0)})$, where $\gamma = \gamma(\varphi^{(0)},\beta) \geq 1$, such that 
\begin{equation*}
	\widetilde{\varphi}_j(X) = \{ \pm (\varphi^{(0)}(X) + \psi_j(X,\varphi^{(0)}(X)) + E_j \psi(X,\varphi^{(0)}(X)) + \mathcal{R}_j) \}
\end{equation*}
on $B_1(0) \setminus B^2_{\tau_j}(0) \times \mathbb{R}^{n-2}$, where $|\mathcal{R}_j|/E_j \rightarrow 0$ uniformly on $B_1(0) \setminus B^2_{\tau}(0) \times \mathbb{R}^{n-2}$ for each $\t \in (0, 1/2)$ as $j \rightarrow \infty$, and in fact $\widetilde{\varphi}_j$ satisfies 
\begin{equation*}
	\widetilde{\varphi}_j(X) = \{ \pm (\op{Re}((c_j + E_j a)(x_1+ix_2)^{\alpha})) \circ e^{E_j B} \},  
\end{equation*}
i.e. equals the composition of the two-valued function $X \mapsto \{ \pm \op{Re}((c_j + E_j a)(x_1+ix_2)^{\alpha}) \}$ and the rotation $e^{E_j B}$.  Note that this follows using the assumption that $\tau_j \rightarrow 0$ slowly enough that $\varepsilon_j/\tau_j^{3/2} \rightarrow 0$.  Thus 
\begin{equation*}
	\tilde v_j(X,\widetilde{\varphi}_j(X)) = v_j(X,\varphi^{(0)}(X) + \psi_j(X,\varphi^{(0)}(X)) - E_j \psi(X,\varphi^{(0)}(X)) + \mathcal{R}'_j 
\end{equation*}
on $B_1(0) \setminus B^2_{\tau_j}(0) \times \mathbb{R}^{n-2}$, where $|\mathcal{R}'_j|/E_j \rightarrow 0$ uniformly on $B_1(0) \setminus B^2_{\tau}(0) \times \mathbb{R}^{n-2}$ for each $\t \in (0, 1/2)$ as $j \rightarrow \infty$.  Hence as $j \rightarrow \infty$, $\tilde w_j = \tilde v_j/E_j$ converges to $\tilde w = w - \psi$.  In place of (\ref{lemma1_eqn10})-(\ref{lemma1_eqn11-1}), we have for each $\rho \in [\vartheta,1/4]$ and $z \in B_{\r/2}^{n-2}(0)$,
\begin{equation} \label{lemma1_eqn15} 
	\lim_{j \rightarrow \infty} \int_{B_{\rho}(0)} E_j^{-2} \mathcal{G}(u_j,\widetilde{\varphi}_j)^2 
	= \int_{B_{\rho}(0)} |w(X,\varphi^{(0)}(X)) - \psi(X,\varphi^{(0)}(X))|^2; 
\end{equation}
\begin{equation} \label{lemma1_eqn16} 
	\int_{B_{\rho/2}(0)} R^{2-n} \left| \frac{\partial (w(X,\varphi^{(0)}(X))/R^{\alpha})}{\partial R} \right|^2 
	\leq C \rho^{-n-2\alpha} \int_{B_1(0)} |w(X,\varphi^{(0)}(X)) - \psi(X,\varphi^{(0)}(X))|^2; 
\end{equation}
\begin{eqnarray}\label{lemma1_eqn16-1}
&&\hspace{-.7in}\r^{n+2\a-\s}\int_{B_{\r/2}((0, z))} \frac{|w(X,\varphi^{(0)}(X))  - \psi(X, \varphi^{(0)}(X)) -  \r D_x \varphi^{(0)}(X) \cdot \lambda_{\psi, \r}(z)|^2}{|X-(0,  z)|^{n+2\alpha-\sigma}}\nonumber\\ 
&&\hspace{2in}	\leq C_{1} \int_{B_{\r}(0)} |w(X, \varphi^{(0)}(X)) - \psi(X, \varphi^{(0)}(X))|^2 
\end{eqnarray}
for $\lambda_{\psi, \r}(z) \in {\mathbb R}^{2}$ with 
\begin{equation}\label{lemma1_eqn16-0}
 |\lambda_{\psi, \r}(z)|^{2} \leq C\r^{-n-2\a}\int_{B_{\r}(0)}|w(X, \varphi^{(0)}(X)|^{2}
 \end{equation} 
where $C = C(n,m,\varphi^{(0)},\alpha) \in (0,\infty)$ and $C_{1} = C_{1}(n, m, \varphi^{(0)}, \a,\s) \in (0, \infty)$.  Note that (\ref{lemma1_eqn15})-(\ref{lemma1_eqn16-0}) hold true for arbitrary $\rho \in [\vartheta,1]$ and arbitrary $\beta \geq 1$ with constants $C,$ $C_{1}$ independent of $\vartheta$ and $\beta$. 

Now for given $\rho \in [\vartheta,1/4]$, choose $\psi_{\rho} \in \mathcal{L}$ to be the homogeneous degree $\alpha$ function such that 
\begin{equation*}
	\int_{B_{\rho}(0)} |w(X,\varphi^{(0)}(X)) - \psi_{\rho}(X,\varphi^{(0)}(X))|^2 
	= \inf_{\psi \in \mathcal{L}} \int_{B_{\rho}(0)} |w(X,\varphi^{(0)}(X)) - \psi(X,\varphi^{(0)}(X))|^2
\end{equation*}
so that $w-\psi_{\rho}$ is $L^{2}$-orthogonal to ${\mathcal L}$ on $B_{\r}(0)$.  Since $\int_{B_1(0)} |w(X,\varphi^{(0)}(X))|^2 \leq 1$, 
\begin{equation*}
	\rho^{-n-2\alpha} \int_{B_{\rho}(0)} |\psi_{\rho}(X,\varphi^{(0)}(X))|^2 \leq 4\vartheta^{-n-2\alpha}. 
\end{equation*}
Thus by standard estimates for single-valued harmonic functions and homogeneity, 
\begin{equation*}
	\sup_{B_1(0)} |\psi_{\rho}(X,\varphi^{(0)}(X))| \leq C \vartheta^{-n/2-\alpha} 
\end{equation*}
for some $C = C(n,m) \in (0,\infty)$ and all $\rho \in [\vartheta,1/4]$.  Thus 
(\ref{lemma1_eqn15})-(\ref{lemma1_eqn16-0}) hold with $\beta = C\vartheta^{-n/2-\alpha}$, $\gamma = \gamma(n,m,\varphi^{(0)},\alpha,\vartheta) \geq 1$, and $\psi = \psi_{\rho}.$ Consequently,  if $\alpha \geq 1$,  we may apply Lemma \ref{lemma4_14} (with $\sigma = 1/2$) to conclude that  
\begin{equation*}
	\vartheta^{-n-2\alpha} \int_{B_{\vartheta}(0)} |w(X,\varphi^{(0)}(X)) - \psi_{\vartheta}(X,\varphi^{(0)}(X))|^2 
	\leq C \vartheta^{2\mu} \int_{B_1(0)} |w(X,\varphi^{(0)}(X))|^2 \leq C \vartheta^{2\mu}
\end{equation*}
for some constants $\mu \in (0,1)$ and $C \in (0,\infty)$ depending only on $n$, $m$, $\alpha$, $\varphi^{(0)}$, where we used the fact that $\int_{B_1(0)} |w(X,\varphi^{(0)}(X))|^2 = 1$.  By (\ref{lemma1_eqn15}) with $\rho = \vartheta$, we get that 
\begin{equation*}
	\vartheta^{-n-2\alpha} \int_{B_{\vartheta}(0)} \mathcal{G}(u_j,\widetilde{\varphi}_j)^2 \leq 2C \vartheta^{2\mu} E_j^2, 
\end{equation*}
for $j$ sufficiently large, which is (\ref{lemma1_eqn1}) as required, in case $\a \geq 1$. 

To complete the proof in case $\a = 1/2$, we only need to show the additional fact that $w$ satisfies hypothesis  (\ref{lemma4_2_alphaishalf}) of Lemma \ref{lemma4_2}, for then, we may apply Lemma~\ref{lemma4_14} exactly as above to reach the preceding conclusion for sufficiently large $j$. So suppose $\alpha = 1/2$.  We may apply Corollary~\ref{alpha=1/2} with $\t>0$ arbitrary and 
with $u_{j}, \varphi_{j}$ in place of $u$, $\varphi$ for sufficiently large $j,$ divide both sides of the resulting inequality by $E_{j}$ and first let $j \to \infty$ and then let $\t \to 0$ to conclude, 
keeping in mind that $D_{y_{q}} \, w \in L^{2}({\rm graph} \, \varphi^{(0)}; {\mathbb R}^{m})$ for each $q=1, 2, \ldots, n-2$, that 
$$\int_{B_{1/2}(0)} D_{j}\left(rw(re^{i\th},y, \varphi^{(0)}(re^{i\th})) \cdot D_{i} \varphi^{(0)}(re^{i\th})\right) D_{j}D_{y_{p}}\z \, dr d\theta dy = 0$$
for any $\z = \widetilde\z(|x|, y)$ where $\widetilde{\z}  = \widetilde{\z}(r, y) \in C^{\infty}_{c}(B^{n-1}_{1/2}(0))$ with $D_{r}\widetilde\z = 0$ in a neighborhood of $r = 0.$  It follows from this  that the even extension in the $r$ variable of the function 
 $$(r, y) \mapsto r\int_{0}^{4\pi} D_{y_{p}}w(re^{i\th}, y, \varphi^{(0)}(re^{i\th})) \cdot D_{j} \varphi^{(0)}(re^{i\th})d\th$$ is harmonic on $B_{1/2}^{n-1}(0)$, so in particular (\ref{lemma4_2_alphaishalf}) holds. The proof of the lemma is thus complete.
\end{proof}

The following result can now be deduced by iteratively applying  Lemma~\ref{lemma1} the same way as in the proof of Theorem 1 of \cite{SimonCTC}. We sketch its proof for the reader's  convenience.  

\begin{proposition} \label{theorem1} let $\varphi^{(0)}$ be as in  (\ref{varphi0}). Suppose either  
(a) $\varphi^{(0)} \in W^{1,2}(\mathbb{R}^n;\mathcal{A}_2(\mathbb{R}^m))$ is 
Dirichlet energy minimizing  or,   (b) $\varphi^{(0)} \in C^{1,1/2}(\mathbb{R}^n;\mathcal{A}_2(\mathbb{R}^m))$ and $D\varphi^{(0)}(0) = \{0, 0\}$.
 There are $\varepsilon, \delta_0, \mu \in (0,1)$ depending only on $n$, $m$, and $\varphi^{(0)}$ such that  if either
\begin{itemize}
\item[(1)] $u \in \mathcal{F}_{\varepsilon}^{\text{Dir}}(\varphi^{(0)})$ where $\varphi^{(0)}$ is as in (a), or 
\item[(2)] $u \in \mathcal{F}^{\text{Harm}}_{\varepsilon}(\varphi^{(0)})$ where $\varphi^{(0)}$ is as in (b), 
\end{itemize}
then 
\begin{equation*} 
	\{ X \in \Sigma_u \cap B_{1/2}(0) : \mathcal{N}_u(X) \geq \alpha \} = S \cup T \;\;\; \mbox{in case of (1) and}
\end{equation*}
\begin{equation*} 
	\{ X \in {\mathcal K}_u \cap B_{1/2}(0) : \mathcal{N}_u(X) \geq \alpha \} = S \cup T \;\;\; \mbox{in case of (2)},
\end{equation*}
where $S$ is contained in a properly embedded $(n-2)$-dimensional $C^{1,\mu}$ submanifold $\Gamma$ of $B_{1/2}(0)$ with $\mathcal{H}^{n-2}(\Gamma \cap B_{1/2}(0)) \leq \omega_{n-2}$ and $T  \subseteq \bigcup_{j=1}^{\infty} B_{\rho_j}(X_j)$ for some family of balls $B_{\rho_j}(X_j)$ with $\sum_{j} \rho_j^{n-2} \leq 1-\delta_0$. 
\end{proposition}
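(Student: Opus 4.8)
The plan is to deduce the proposition by iterating Lemma~\ref{lemma1} on dyadic scales about points of the relevant singular set, exactly as L.~Simon iterates his Lemma~1 in the proof of Theorem~1 of \cite{SimonCTC}. First I would pin down the auxiliary parameter $\vartheta$: applying Lemma~\ref{lemma1} once (say with $\vartheta = 1/8$) records the constants $\gamma\in[1,\infty)$, $\mu\in(0,1)$, $C\in(0,\infty)$ occurring in its alternative~(ii), all depending only on $n,m,\varphi^{(0)},\alpha$; I then fix the working value of $\vartheta = \vartheta(n,m,\varphi^{(0)},\alpha)\in(0,1/4)$ small enough that $C\vartheta^{2\mu}\le\tfrac14$, take $\delta_0,\varepsilon_0\in(0,1/4)$ to be the constants that Lemma~\ref{lemma1} supplies for this $\vartheta$, and finally shrink $\varepsilon\in(0,\varepsilon_0)$ so that the accumulated deviation of the approximating functions built in the iteration never leaves $\widetilde\Phi_{\varepsilon_0}(\varphi^{(0)})$ and every rescaling of $u$ encountered stays in $\mathcal F^{\mathrm{Dir}}_{\varepsilon_0}(\varphi^{(0)})$ (resp.\ $\mathcal F^{\mathrm{Harm}}_{\varepsilon_0}(\varphi^{(0)})$).

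Write $\Sigma = \Sigma_u\cap\{X:\mathcal N_u(X)\ge\alpha\}$ (resp.\ $\mathcal K_u\cap\{X:\mathcal N_u(X)\ge\alpha\}$). For each $Z\in\Sigma\cap B_{1/2}(0)$ I would run the following induction on scales $\vartheta^j$: having produced $\varphi_j^Z\in\widetilde\Phi_{\varepsilon_0}(\varphi^{(0)})$ with normalised excess $E_j := \vartheta^{-j(n+2\alpha)}\int_{B_{\vartheta^j}(Z)}\mathcal G(u,\varphi_j^Z)^2$, apply Lemma~\ref{lemma1} to the rescaled pair $(u_{Z,\vartheta^j},\varphi_j^Z)$. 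If alternative~(ii) holds I obtain $\varphi_{j+1}^Z$ with $E_{j+1}\le C\vartheta^{2\mu}E_j\le\tfrac14E_j$; since by construction $\varphi_{j+1}^Z$ differs from $\varphi_j^Z$ by an amount of order $E_j^{1/2}$, the geometric decay of the $E_j$ makes $(\varphi_j^Z)_j$ Cauchy in $L^2(B_1(0))$ and, by the choice of $\varepsilon$, keeps it inside $\widetilde\Phi_{\varepsilon_0}(\varphi^{(0)})$, so the iteration continues. I declare $Z\in S$ if alternative~(ii) occurs at every stage (with suitable centres), and $Z\in T$ otherwise, in the latter case recording the first scale $\vartheta^{j(Z)}$ and centre at which alternative~(i) holds.

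For $Z\in S$ the geometric decay yields, after interpolating between consecutive dyadic scales, a unique homogeneous cylindrical limit $\varphi_Z = \lim_j\varphi_j^Z\in\widetilde\Phi_{C\varepsilon}(\varphi^{(0)})$ and the decay estimate $\sigma^{-n-2\alpha}\int_{B_\sigma(Z)}\mathcal G(u,\varphi_Z)^2\le C\sigma^{2\mu}\int_{B_1(0)}\mathcal G(u,\varphi^{(0)})^2$ for all $\sigma\in(0,1/2]$; in particular $Z$ lies on the (rotated and translated) axis of $\varphi_Z$. Comparing $\varphi_Z$ with $\varphi_{Z'}$ for nearby $Z,Z'\in S$ and invoking Corollary~\ref{cor6_4} (which controls the distance of any frequency-$\ge\alpha$ point to the axis of the best-fitting cylindrical function by the excess), I would show that the map sending $Z$ to the axis of $\varphi_Z$ is $C^{0,\mu}$ on $S$; Simon's argument then patches the resulting nearly flat local graphs into a single properly embedded $(n-2)$-dimensional $C^{1,\mu}$ submanifold $\Gamma\supset S$ of $B_{1/2}(0)$, and since $\varepsilon$ is small, $\Gamma$ is a nearly flat graph over an $(n-2)$-plane, giving $\mathcal H^{n-2}(\Gamma\cap B_{1/2}(0))\le\omega_{n-2}$.

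Finally, for $Z\in T$ alternative~(i) at scale $\vartheta^{j(Z)}$ gives, after rescaling back, a ball of radius $\sim\delta_0\vartheta^{j(Z)}$ on which $\Sigma$ has no point of frequency $\ge\alpha$, i.e.\ a hole of definite relative size; performing a Vitali/stopping-time selection over these first bad scales produces a countable cover $\{B_{\rho_j}(X_j)\}$ of $T$, and the $\delta_0$-holes together with the almost-flatness forced by $\varepsilon$ small give the content bound $\sum_j\rho_j^{n-2}\le 1-\delta_0$ after relabelling $\delta_0$. I expect the main obstacle to be precisely this last step --- organising the stopping-time covering so that overlaps are controlled and the ``density drop'' coming from the holes is genuinely reflected in the strict inequality $\sum_j\rho_j^{n-2}\le 1-\delta_0$, which is exactly what makes the proposition iterable in the proofs of Theorems~\ref{theorem2}, \ref{theorem3} and \ref{no-gaps} --- while a secondary, bookkeeping-type difficulty is tracking the rotation and coefficient parameters of the $\varphi_j^Z$ through the iteration to be sure they stay in the small neighbourhood where Lemma~\ref{lemma1} applies.
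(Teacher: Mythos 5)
Your proposal is correct and takes essentially the same route as the paper's proof: fix $\vartheta$ with $C\vartheta^{2\mu}<1/4$, iterate Lemma~\ref{lemma1} at scales $\vartheta^{j}$, take $S$ to be the points where alternative (ii) holds at every scale (where the geometric excess decay plus Corollary~\ref{cor6_4} yield a unique cylindrical limit and a $C^{1,\mu}$ graphical description of $S$), and cover the remaining points, stratified by their first bad scale, using the holes furnished by alternative (i) via the stopping-time covering of pp.~642--643 of \cite{SimonCTC}. The paper's own argument is likewise a sketch that defers precisely this covering step to \cite{SimonCTC}, so your identification of it as the main remaining work matches the paper.
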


\begin{proof} Consider case (1). The proof in case (2) is similar. Choose $\vartheta \in (0,1/8)$ such that $C \vartheta^{2\mu} < 1/4$ for $C$ and $\mu$ as in Lemma \ref{lemma1} and let $\varepsilon_0$ and $\delta_0$ be as in Lemma \ref{lemma1}.  We can assume $\varepsilon_0$ is smaller than $\varepsilon_0$ of Theorem~\ref{thm6_2} and Corollaries~\ref{cor6_3}-\ref{cor6_6} with $\gamma$ and $\sigma$ both equal to $1/2$ and $\tau = 1/100$.  Suppose $u \in \mathcal{F}^{\text{Dir}}_{\varepsilon}(\varphi^{(0)})$ for $C\varepsilon \leq \varepsilon_0$ for $C = C((n,m,\alpha,\varphi^{(0)}) \geq 1$ to be chosen.  Define 
\begin{equation*}
	\Sigma_u^* = \{ X \in B_{1/2}(0) : \mathcal{N}_u(X) \geq \alpha \}. 
\end{equation*}
We define sets $\Upsilon_0,\Upsilon_1,\Upsilon_2,\Upsilon_3,\ldots$ and $\Upsilon_{\infty}$ as follows: Define $\Upsilon_0$ to be the set of $Y \in \Sigma_u^*$ such that $u(Y+\rho X)$ satisfies alternative (i) to Lemma \ref{lemma1} for some $\rho \in [\vartheta,1]$.  For $j \geq 1$, let $\Upsilon_j$ be the set of $Y \in \Sigma_u^*$ such that $u(Y+\vartheta^i X)$ does not satisfy alternative (i) to Lemma \ref{lemma1} for $i = 0,1,\ldots,j$ and $u(Y+\theta^{j+1} X)$ satisfies alternative (i) to Lemma \ref{lemma1}.  Let $\Upsilon_{\infty}$ be the set of $Y \in \Sigma_u^*$ such that $u(Y+\vartheta^i X)$ does not satisfy alternative (i) to Lemma \ref{lemma1} for all $i = 0,1,2,\ldots$.  

If $\Upsilon_0 \neq \emptyset$, by Lemma \ref{coarse_graph}, $\Sigma_u^* \cap B_1(0) \subseteq B^2_{\delta(\varepsilon)}(0) \times \mathbb{R}^{n-2}$ for some $\delta(\varepsilon)$ such that $\delta(\varepsilon) \rightarrow 0$ as $\varepsilon \downarrow 0$, thus we trivially have Theorem \ref{theorem1} for $S = \emptyset$ and $T = \Sigma_u^*$.  From now on we may assume that $\Upsilon_0 = \emptyset$.  We want to show that Theorem \ref{theorem1} holds with $S = \Upsilon_{\infty}$ and $T = (\Sigma_u^* \setminus \Upsilon_{\infty}) \cup (\Sigma_u \cap B_1(0) \setminus B_{1/2}(0))$. 

Suppose $Y \in \Upsilon_{\infty}$.  By inductively applying alternative (ii) in Lemma \ref{lemma1}, we define $\varphi_i \in \widetilde{\Phi}_{\varepsilon_0}(\varphi^{(0)})$ for $i = 0,1,2,\ldots$ as follows.  Let $\varphi_0 = \varphi^{(0)}$.  For each $i = 1,2,\ldots$, choose $\varphi_i \in \widetilde{\Phi}_{\gamma \varepsilon_0}(\varphi^{(0)})$ such that 
\begin{equation} \label{thm1_eqn1}
	\vartheta^{-(n+2\alpha)i} \int_{B_{\vartheta^i}(0)} \mathcal{G}(u(Y+X),\varphi_i(X))^2 dX
	\leq \frac{1}{4} \vartheta^{-(n+2\alpha)(i-1)} \int_{B_{\vartheta^{i-1}}(0)} \mathcal{G}(u(Y+X),\varphi_{i-1}(X))^2 dX.    
\end{equation}
By the definition of $\widetilde{\Phi}_{\gamma \varepsilon_0}(\varphi^{(0)})$, there is a rotation $q_i$ of $\mathbb{R}^n$ given by $q_i = e^{A_i}$ for some skew symmetric matrix $A_i \in \mathcal{S}$ with $|A_i| \leq \gamma \varepsilon_0$ such that $\varphi_i(q_i X) \in \Phi_{\gamma \varepsilon_0}(\varphi^{(0)})$.  Note that by (\ref{thm1_eqn1}), 
\begin{equation} \label{thm1_eqn2}
	\vartheta^{-(n+2\alpha)i} \int_{B_{\vartheta^i}(0)} \mathcal{G}(u(Y+X),\varphi_i(X))^2 dX 
	\leq 4^{-i} \int_{B_1(0)} \mathcal{G}(u(Y+X),\varphi^{(0)}(X))^2 dX \leq 4^{-i} \varepsilon^2 
\end{equation}
and thus 
\begin{equation} \label{thm1_eqn3}
	\int_{B_1(0)} \mathcal{G}(\varphi_i(X),\varphi_{i-1}(X))^2 dX \leq C 4^{-i} \varepsilon^2
\end{equation}
for some constant $C = C(n,m,\alpha,\varphi^{(0)}) \in (0,\infty)$.  Thus 
\begin{equation} \label{thm1_eqn4}
	\int_{B_1(0)} \mathcal{G}(\varphi_i(X),\varphi^{(0)}(X))^2 dX \leq C 4^{-i} \varepsilon^2 < \varepsilon_0^2
\end{equation}
for some constant $C = C(n,m,\alpha,\varphi^{(0)}) \in (0,\infty)$, using our assumption that $C\varepsilon \leq \varepsilon_0$ for some constant $C \in (0,\infty)$, so $\varphi_i \in \widetilde{\Phi}_{\varepsilon_0}(\varphi^{(0)})$.  By (\ref{thm1_eqn3}), $\varphi_i$ converges in $L^2(B_1(0),\mathbb{R}^m)$ to some $\varphi_Y \in \widetilde{\Phi}_{\varepsilon_0}(\varphi^{(0)})$ and 
\begin{equation*}
	\int_{B_1(0)} \mathcal{G}(\varphi_i(X),\varphi_Y(X))^2 dX \leq C 4^{-i} \varepsilon^2
\end{equation*}
for some constant $C = C(n,m,\alpha,\varphi^{(0)}) \in (0,\infty)$.  Thus by (\ref{thm1_eqn2}),  
\begin{equation*}
	\vartheta^{-(n+2\alpha)i} \int_{B_{\vartheta^i}(0)} \mathcal{G}(u(Y+X),\varphi_Y(X))^2 dX 
	\leq C 4^{-i} \varepsilon^2
\end{equation*}
for some constant $C = C(n,m,\alpha,\varphi^{(0)}) \in (0,\infty)$ for $i = 1,2,3,\ldots$.  Given $\rho \in (0,1]$, choose $i$ such that $\vartheta^{i-1} < \rho \leq \vartheta^i$ to get 
\begin{equation} \label{thm1_eqn5}
	\rho^{-n-2\alpha} \int_{B_{\rho}(0)} \mathcal{G}(u(Y+X),\varphi_Y(X))^2 dX 
	\leq C \rho^{2\bar \mu} \varepsilon^2,
\end{equation}
where $\bar \mu = -\log \vartheta/\log 2$ and $C = C(n,m,\alpha,\varphi^{(0)}) \in (0,\infty)$.  Clearly by (\ref{thm1_eqn5}), $\varphi_Y$ is unique for each $Y \in \Upsilon_{\infty}$.  Observe that by (\ref{thm1_eqn4}) $\varphi_i \in \widetilde{\Phi}_{C\varepsilon}(\varphi^{(0)})$ there is a rotation $q_Y$ of $\mathbb{R}^n$ such that $\varphi_Y(q_Y X) \in \Phi_{\varepsilon_0}(\varphi^{(0)})$ and $|q_Y - I| \leq C\varepsilon$.  By the estimate on $|\xi|^2$ in Corollary~\ref{cor6_4} and by (\ref{thm1_eqn5}), 
\begin{equation*} 
	\rho^{-1-1\bar \mu} \op{dist} \, (q_Y^{-1}(\Sigma_u^* - Y) \cap B_{\rho}(0), \{0\} \times \mathbb{R}^{n-2})\leq C\varepsilon. 
\end{equation*}
Thus by a standard argument based on (\ref{thm1_eqn2}) and (\ref{thm1_eqn3}), 
\begin{equation*}
	|q_{Y_1} - q_{Y_2}| \leq C \varepsilon |Y_1 - Y_2|^{\bar \mu}
\end{equation*}
for every $Y_1,Y_2 \in \Upsilon_{\infty}$.  Thus $\Upsilon_{\infty} = \op{graph} f \cap B_{1/2}(0)$ is the graph of a function $f \in C^{1,\bar \mu}(B^{n-2}_{1/2}(0),\mathbb{R}^2)$ such that $\|f\|_{C^{1,\bar \mu}(B^{n-2}_{1/2}(0))} \leq C\varepsilon$. 

Now suppose $Y \in \Upsilon_j$ for some $1 \leq j < \infty$.  Now iteratively applying alternative (ii) in Lemma \ref{lemma1} only gives us $\varphi_i \in \widetilde{\Phi}_{\gamma \varepsilon_0}(\varphi^{(0)})$ for $i = 0,1,2,\ldots,j$ such that $\varphi_0 = \varphi^{(0)}$ and (\ref{thm1_eqn1}) holds for $i = 1,2,\ldots,j$.  Take $\varphi_Y = \varphi_j$.  Note that $\varphi_Y$ is no longer unique.  By the argument above, we obtain for $\rho \in [\vartheta^j,1]$ that 
\begin{equation*}
	\rho^{-1-\bar \mu} \op{dist}\, (q_Y(\Sigma_u^* - Y) \cap B_{\rho}(0), \{0\} \times \mathbb{R}^{n-2}) \leq C\varepsilon. 
\end{equation*}
where $C = C(n,m,\alpha,\varphi^{(0)}) \in (0,\infty)$ is a constant and $\bar \mu = -\log \vartheta/\log 2$ as above, and for some rotation $q_Y$ of $\mathbb{R}^n$, $\varphi_Y(q_Y X) \in \Phi_{\gamma \varepsilon_0}(\varphi^{(0)})$ and $|q_Y - I| \leq C\varepsilon$.  Hence 
\begin{equation} \label{thm1_eqn6}
	\rho^{-1} \op{dist} \, (\Sigma_u^* \cap B_{\rho}(Y), Y + \{0\} \times \mathbb{R}^{n-2})\leq C\varepsilon. 
\end{equation}
for every $Y \in \Upsilon_j$ and $\rho \in [\vartheta^j,1]$.  By the definition of $\Upsilon_j$, $u(Y+\vartheta^{j+1}X)$ satisfies alternative (i) in Lemma \ref{lemma1}, i.e. 
\begin{equation} \label{thm1_eqn7}
	\forall Y \in \Upsilon_j, \, \exists Z \in Y + \{0\} \times B_{\vartheta^{j+1}}^{n-2}(0) \text{ such that } B_{\delta_0 \vartheta^{j+1}}(Z) \cap \Sigma_u^* = \emptyset. 
\end{equation}
Now arguing exactly as in pp. 642-643 of ~\cite{SimonCTC} (using (\ref{thm1_eqn6}),(\ref{thm1_eqn7}) in place of (12), (13) on p. 642 of \cite{SimonCTC}) we obtain a covering of $\bigcup_{1 \leq i < \infty} \Upsilon_i$ by balls $B_{\rho_j}(X_j)$, $j = 1,2, 3, \ldots,$ such that $\sum_{j} \rho_j^{n-2} \leq 1-\delta_0$. 
\end{proof}

\begin{proof}[Proof of Theorem~\ref{no-gaps}]
Consider the case of two-valued Dirichlet energy minimizers. The proof in the case of two-valued $C^{1, \mu}$ harmonic functions is similar.  Let $\varphi^{(0)}$ be as in (\ref{varphi0}) and Dirichlet energy minimizing, with $\alpha = \alpha_{0}$ where $\alpha_{0}$ is the minimum possible degree of homogeneity for such $\varphi^{(0)}$  (thus $\alpha_{0} = 1/2$). We claim that for each $\d \in (0, 1/2)$, there exists $\varepsilon = \varepsilon(n, m, \d, \varphi^{(0)}) \in (0, 1/2)$ such that if  
$u \in W^{1, 2} \, (B_{2}(0); {\mathcal A}_{2}({\mathbb R}^{m}))$ is a symmetric Dirichlet energy minimizing function with 
$\int_{B_{2}(0)} {\mathcal G} (u, \varphi^{(0)})^{2} < \varepsilon$, then 
$\{X ,\ : \, {\mathcal N}_{u}(X) \geq \alpha_{0}\} \cap B_{\d}(0, y) \neq \emptyset$ for each $y \in B_{1}^{n-2}(0).$ To see this, first note that we may choose such $\varepsilon$ so that 
${\mathcal B}_{u} \cap B_{\d}(0,y) \neq \emptyset$ for each $y \in B_{1}^{n-2}(0).$ It is then automatic that  ${\mathcal H}^{n-2} \, ({\mathcal B}_{u} \cap B_{\d}(0,y)) > 0$ for each $y \in B_{1}^{n-2}(0),$ for if 
not, we can find $y \in B_{1}^{n-2}(0)$ and $Z \in {\mathcal B}_{u} \cap B_{\d}(0, y)$  such that 
for $\r \in (0, {\rm dist} \, (Z, \partial \, B_{\d}(0, y)),$ $B_{\r}(Z) \setminus B_{u}$ is simply connected
(see Appendix to \cite{SW}),  and hence $\left.u\right|_{B_{\r}(Z)}$ is given by two single-valued harmonic functions contradicting the assumption that $Z \in {\mathcal B}_{u}$. Thus 
${\mathcal H}^{n-2} \, ({\mathcal B}_{u} \cap B_{\d}(0,y)) > 0$ for each $y \in B_{1}^{n-2}(0)$ and 
hence by Lemma~\ref{stratification_lemma} and the definition of $\alpha_{0}$, $\{X ,\ : \, {\mathcal N}_{u}(X) \geq \alpha_{0}\} \cap B_{\d}(0, y) \neq \emptyset$ for each $y \in B_{1}^{n-2}(0)$ as claimed. 

Now given $Z \in B_{2}(0)$ with ${\mathcal N}_{v}(Z) = \alpha_{0}$, let $\varphi^{(Z)}$ be any blow-up of $v = u - u_{a}$ at $Z$. Since ${\mathcal N}_{\varphi^{(Z)}}(0)$ ( $= \a_{0}$) is not an integer, we have that 
$0 \in {\mathcal B}_{\varphi^{(Z)}}$ and hence (reasoning as above)  ${\mathcal H}^{n-2} \, ({\mathcal B}_{\varphi^{(Z)}} \cap B_{1}(0)) >0.$ By Lemma~\ref{stratification_lemma}, it then follows that 
${\mathcal H}^{n-2} \, (\{X \, : \, {\mathcal N}_{\varphi^{(Z)}}(X) \geq \a_{0}\} \cap B_{1}(0)) >0.$ On the other hand  
${\mathcal N}_{\varphi^{(Z)}}(X) \leq {\mathcal N}_{\varphi^{(Z)}}(0) = \a_{0}$ for any $X$, so this implies that 
${\mathcal H}^{n-2} \, (\{X \, : \, {\mathcal N}_{\varphi^{(Z)}}(X) = \a_{0}\} \cap B_{1}(0)) >0$;  in particular there are $(n-2)$ linearly independent vectors $X_{1}, X_{2}, \ldots, X_{n-2} \in B_{1}(0)$ with 
${\mathcal N}_{\varphi^{(Z)}}(X_{j}) = {\mathcal N}_{\varphi^{(Z)}}(0)$ implying that $\varphi^{(Z)}$ is cylindrical. Choosing now $\vartheta  = \vartheta(n, m, \varphi^{(Z)}) \in (0, 1/8)$ such that $C \vartheta^{2\mu} < 1/4$ and taking $\d = \d_{0}(n, m, \varphi^{(Z)})$ in the claim  of the preceding paragraph, where $C = C(n, m, \varphi^{(Z)})$, $\mu = \mu(n, m, \varphi^{(Z)})$ and $\d_{0}(n, m, \varphi^{(Z)})$ are as in Lemma~\ref{lemma1} (taken with $\varphi^{(0)} = \varphi^{(Z)}$),  we can then argue as in the proof of Proposition~\ref{theorem1}, 
using the claim in the preceding paragraph to rule out, in each application of Lemma~\ref{lemma1} in that argument, option (i) of  Lemma~\ref{lemma1}. This leads to the desired conclusion that ${\mathcal B}_{u}$ is an $(n-2)$-dimensional $C^{1,\a}$ submanifold near $Z$. 
\end{proof}

\begin{proof}[Proof of Theorem~\ref{theorem2}] Similar to the proof of Theorem~$2^{\prime}$ in~\cite{SimonCTC}, so we will only sketch the proof here.  Since $\op{dim} \Sigma^{(n-3)}_u \leq n-3$, it suffices to consider the set $\Sigma^*_u$ of points $Y \in \Sigma_u$ at which there is at least one blow up $\varphi$ with $\op{dim} S(\varphi) = n-2$.  Let $Y_0 \in \Sigma^*_u$ and $\varphi^{(0)}$ be a blow up of $u$ at $Y_0$.  By the definition of blow up we see that for every $\varepsilon > 0$ there exists a $\sigma = \sigma(\varepsilon) > 0$ such that $B_{R(\varepsilon)}(0) \subset B_{(1-|Y_0|)/\sigma}(Y_0)$ and 
\begin{equation*}
	\int_{B_1(0)} \mathcal{G}(u_{Y_0,\sigma},\varphi^{(0)})^2 < \varepsilon^2, \quad 
	N_{u_{Y_0,\sigma},0}(R(\varepsilon)) - \alpha < \delta(\varepsilon),
\end{equation*}
where $R(\varepsilon)$ and $\delta(\varepsilon)$ are as in Lemma \ref{lemma2_4}.  Let $\bar u = u_{Y_0,\sigma}$.  Given $\rho_0 > 0$, define the outer measure 
\begin{equation*}
	\mu_{\rho_0}(A) = \inf \sum_{i=1}^N \omega_{n-2} \sigma_i^{n-2}
\end{equation*}
where the infimum is taken over all finite covers of $A$ by balls $B^{n+m}_{\sigma_i}(q_i)$, $i = 1,\ldots,N$, with $\sigma_i \leq \rho_0$ for all $i$.  Choose a finite collection of balls $B^{n+m}_{\sigma_i}(q_i)$, $i = 1,\ldots,N$, that cover $\Sigma^{+}_{\a} \cap \overline{B_{1}(0)},$ where $\Sigma^+_{\alpha} = \Sigma_{\bar u} \cap \{ Y : \mathcal{N}_{\bar u}(Y) \geq \alpha \},$ such that $\sigma_i \leq \rho_0$ for all $i$ and 
\begin{equation*}
	\sum_{i=1}^N \omega_{n-2} \sigma_i^{n-2} \leq \mu_{\rho_0}(\Sigma^+_{\alpha}) + 1. 
\end{equation*}
By removing balls $B_{\sigma_i}(Y_i)$ that do not intersect $\Sigma^+_{\alpha}$, we may assume $B_{\sigma_i}(Y_i) \cap \Sigma^+_{\alpha} \neq \emptyset$ for all $i = 1,\ldots,N$.  Let $Z_i \in B_{\sigma_i}(Y_i) \cap \Sigma^+_{\alpha}$ for each $i \in \{1, 2, \ldots, N\}$.  By Lemma \ref{lemma2_4}, for suitably small $\varepsilon > 0$ and each $i$, either there is a homogeneous degree $\alpha$, symmetric Dirichlet energy minimizing function $\varphi_i \in W^{1,2}_{\rm loc}(\mathbb{R}^n;{\mathcal A}_{2}(\mathbb{R}^m))$ such that $\dim S(\varphi_{i}) = n-2$ and 
\begin{equation} \label{thm2_eqn1}
	\int_{B_1(0)} \mathcal{G}(\bar u_{Y_i,2\rho_i},\varphi_i)^2 < \varepsilon^2 
\end{equation}
or 
\begin{equation} \label{thm2_eqn2}
	\{ X \in \Sigma_{\bar u} \overline{B_{2\rho_i}(Y_i)} : \mathcal{N}_{\bar u}(X) \geq \alpha \} \subset \{ X : \op{dist}(X,Y_i+L) < \varepsilon \} 
\end{equation}
for some $(n-3)$-dimensional subspace $L$.  Note that this uses the fact that for every homogeneous degree $\alpha$ Dirichlet minimizing symmetric two-valued function $\varphi \in W^{1,2}(\mathbb{R}^n;\mathbb{R}^m)$ such that $\dim S(\varphi) = n-2$, $\alpha = k/2$ for some integer $k \geq 1$ and thus the set of all such $\alpha$ is discrete.  Using the fact that either (\ref{thm2_eqn1}) or (\ref{thm2_eqn2}) hold, iteratively apply Theorem \ref{theorem1} exactly as in the argument in~\cite{SimonCTC} to reach the desired conclusions.
\end{proof}

\begin{proof}[Proof of Theorem~\ref{theorem3}]
Argue as for the proof of Theorem~\ref{theorem2}, making use of the version of Proposition~\ref{theorem1} for two-valued $C^{1, \mu}$ harmonic functions. 
\end{proof}

\begin{proof}[Proof of Theorem A of the introduction]
Let $\Omega \subset {\mathbb R}^{n}$ be open and consider the case that $u \in W^{1, 2} \, (\Omega; {\mathcal A}_{2}({\mathbb R}^{m})$ is Dirichlet energy minimizing. The proof in case $u$ is $C^{1, \mu}$ and harmonic is similar. Let $B \subset \Omega$ be a closed ball. By Theorem~\ref{theorem2}, there exists a finite set $\{\a_{1}, \a_{2}, \ldots, \a_{k}\} \subset \{1/2, 1, 3/2, 2, \ldots\}$ such that if $Z \in B \cap \Sigma_{u}$ and $v$ has a cylindrical blow-up at $Z$, then ${\mathcal N}_{v}(Z)  = \a_{j}$ for some $j \in \{1, 2, \ldots, k\}.$ 
Let $V_{j} = V_{\a_{j}}$ for $j= 1, 2, \ldots, k$ where $V_{\a}$ is the open set given by Theorem~\ref{theorem2}, so that $V_{j} \supset \{Z \in \Omega \, : \, {\mathcal N}_{v}(Z) = \a_{j} \; \mbox{and $v$ has a cylindrical blow-up at $Z$}\}$. Set $\a_{0} = 1/2$, $\a_{k+1} = \infty$ and let, for $j=0, 1, 2, \ldots, k$, 
$$\G_{j} = \{Z \in B \cap \Sigma_{u} \, : \, \a_{j} \leq {\mathcal N}_{v}(Z) < \a_{j+1}\} \cap V_{j}$$ 
and 
$$\widetilde{\G}_{j} = \{Z \in B \cap \Sigma_{u} \, : \, \a_{j} \leq {\mathcal N}_{v}(Z) < \a_{j+1}\} \setminus V_{j}.$$
Then by Theorem~\ref{theorem2}, $\G_{j}$ is locally $(n-2)$-rectifiable (with locally finite measure), 
and by  Lemma~\ref{stratification_lemma}, ${\rm dim}_{\mathcal H} \, (\widetilde{\G}_{j}) \leq n-3.$ 
Moreover, by upper semi-continuity of ${\mathcal N}_{v}(\cdot)$, each of $\G_{j}$, $\widetilde{\G}_{j}$ is the intersection of an open set and a closed set and hence is locally compact.  And of course
$B \cap \Sigma_{u} = \cup_{j=0}^{k} \G_{j} \cup {\widetilde \G}_{j}.$ 
\end{proof}

\begin{proof}[Proof of Theorem B  of the introduction] Again consider the case of two-valued Dirichlet energy minimizing functions. The proof in the case of $C^{1, \mu}$ harmonic functions is similar. Set $h = u_{a}$, the average of the two values of $u$, which by [\cite{Almgren}, Theorem~2.6] is a single-valued harmonic function, and let $v = u - h.$ It follows from the argument of Theorem~\ref{theorem2} and Theorem~\ref{theorem3} that corresponding to  ${\mathcal H}^{n-2}$-a.e. point $Z \in {\mathcal B}_{u},$ there exists a symmetric two-valued homogeneous cylindrical energy minimizing function 
$\varphi^{(Z)}$ and  a number $\r_{Z}  \in (0, 1/2)$ such that for appropriate $\vartheta \in (0, 1/8)$ depending on $\varphi^{(Z)}$, the hypotheses of Lemma~\ref{lemma1} are satisfied with 
$\varphi^{(Z)}$ in place of $\varphi^{(0)}$ and $v_{Z, \r_{Z}}  = v(Z + \r_{Z} (\cdot))/\|v(Z + \r_{Z}(\cdot))\|_{L^{2}(B_{1}(0))}$ in place of $u$, but option (i) of the conclusion of Lemma~\ref{lemma1} fails with  $\varphi^{(Z)}$ in place of $\varphi^{(0)}$ and $v_{Z, \r_{Z}}(\vartheta^{j}(\cdot))$ in place of $u$  for each $j=0, 1, 2, \ldots.$ Hence, as in the argument of Proposition~\ref{theorem1}, Lemma~\ref{lemma1}  
leads to the desired unique blow-up $\widetilde{\varphi}^{(Z)}$ at any such $Z$, given by 
 $\widetilde{\varphi}^{(Z)}(Q_{Z}(X)) = \{\pm{\rm Re} \, \left(c_{Z}(x_{1} + ix_{2})^{k_{Z}/2}\right)\}$  for some orthogonal rotation $Q_{Z}$ of ${\mathbb R}^{n}$ and $c_{Z} \in {\mathbb C}^{m} \setminus \{0\};$ moreover, Lemma~\ref{lemma1} also gives the $L^{2}$ decay estimate, as stated in the conclusion of the theorem, for the error $\{\pm \e_{Z}\}$. 
 \end{proof}

\begin{proof}[Proof of Theorem C of the introduction] Let $Z_{0}$ be as in the theorem. By Theorem~\ref{no-gaps} and its proof, there exists $\r_{Z_{0}} >0$ such that ${\mathcal B}_{u} \cap B_{\r_{Z_{0}}}(Z_{0})$ is an $(n-2)$-dimensional $C^{1, \a}$ submanifold, and for each $Z \in {\mathcal B}_{u} \cap B_{\r_{Z_{0}}}(Z_{0})$, the conclusions of Theorem B hold with with $k_{Z} = k_{Z_{0}}$, $C_{Z} = C_{Z_{0}}$, $\g_{Z} = \g_{Z_{0}}$ and $\r_{Z} = \r_{Z_{0}}/4.$ In particular, for each $Z \in B_{u} \cap B_{\r_{Z_{0}}/2}(Z_{0})$, 
$$\s^{-n}\int_{B_{\s}(0)} |\e_{Z}|^{2} \leq C_{Z_{0}}\s^{k_{Z_{0}}+ \g_{Z_{0}}}$$ 
for $s \in (0, \r_{Z_{0}}/4).$ It only remains to establish the sup estimate on $\{\pm \e_{Z}\}$. Fix 
$Z \in {\mathcal B}_{u} \cap B_{\r_{Z_{0}}/2}(Z_{0})$ and let 
$u_{Z}(X) = u(Z + Q_{Z} \, X)$ and $\s \in (0, \r_{Z_{0}}/4),$ where $Q_{Z}$ is the orthogonal rotation of ${\mathbb R}^{n}$ as in Theorem B. Note that by Corollary~\ref{cor6_4} and the preceding estimate, 
$${\mathcal B}_{u_{Z}} \cap B_{\s/2}(0) \subset  \{X \, : \, {\rm dist} \, (X, \{0\} \times {\mathbb R}^{n-2}) \leq C_{Z_{0}}\s^{1+\g_{Z_{0}}/2}\},$$ 
so if $X \in B_{\s/2}(0)$ and ${\rm dist} \, (X, \{0\} \times {\mathbb R}^{n-2}) > \s^{1+\g_{Z_{0}}/(2n)} \equiv \d$, then 
${\mathcal B}_{u_{Z}} \cap B_{\d/2}(X) = \emptyset$  provided $\r_{Z_{0}}$ is sufficiently small;   hence by standard estimates for harmonic functions we have that $|\e_{Z}(X)|^{2} \leq C \d^{-n} \int_{B_{\d/2}(X)} |\e_{Z}|^{2} \leq C \left(\frac{\s}{\d}\right)^{n} \s^{-n}\int_{B_{\s}(0)}|\e_{Z}|^{2} \leq CC_{Z_{0}}\s^{k_{Z_{0}} +\g_{Z_{0}}/2}$ where $C = C(n).$  If on the other hand $X \in B_{\s/2}(0)$ and ${\rm dist} \, (X, \{0\} \times {\mathbb R}^{n-2}) \leq  \d$, then, with $r = \frac{1}{2}{\rm dist} \, (X, {\mathcal B}_{u_{Z}})$ and $v_{Z}(X) = u_{Z}(X) - h(Z + Q_{Z}X)$, we have by Theorem B, the triangle inequality and estimates for harmonic functions, $|\e_{Z}(X)|^{2} \leq 2|v_{Z}(X)|^{2} + C_{1}\left({\rm dist} \, (X, \{0\} \times {\mathbb R}^{n-2})\right)^{k_{Z}} \leq Cr^{-n}\int_{B_{r}(X)} |v_{Z}|^{2} + C_{1}\d^{k_{Z_{0}}}$ where $C_{1} = C_{1}(Z_{0})$ and $C = C(n).$ Choosing $Y \in {\mathcal B}_{u_{Z}}$ such that ${\rm dist} \, (X, {\mathcal B}_{u_{Z}}) = |X - Y|$, we see with the help of (\ref{doubling_estimate2}) that $r^{-n} \int_{B_{r}(X)} |v_{Z}|^{2} \leq r^{-n}\int_{B_{2r}(Y)}|v_{Z}|^{2} \leq 2^{n}\left(\frac{8r}{\r_{Z_{0}}}\right)^{k_{Z_{0}}} \left(\frac{\r_{Z_{0}}}{4}\right)^{-n} \int_{B_{\r_{Z_{0}}/2}(0)}|v_{Z}|^{2}$, and hence, since $\int_{B_{\r_{Z_{0}}/2}(0)} |v_{Z}|^{2} \leq 
\int_{B_{\r_{0}}(Z_{0})}|u - h|^{2}$ and $r \leq \frac{1}{2} \, {\rm dist} \, (X, \{0\} \times {\mathbb R}^{n-2}),$ it follows that $|\e_{Z}(X)|^{2} \leq C\d^{k_{Z_{0}}} = C\s^{k_{Z_{0}} + k_{Z_{0}}\g_{Z_{0}}/(2n)} \leq 
C\s^{k_{Z_{0}} + \g_{Z_{0}}/(2n)}$ where $C$ is independent of $\s$ and $Z$. The theorem is thus proved.
\end{proof}

\end{document}